\newtheorem{theorem}{Theorem}[section]
\newtheorem{lemma}[theorem]{Lemma}
\newtheorem{corollary}[theorem]{Corollary}
\newtheorem{proposition}[theorem]{Proposition}
\theoremstyle{definition}
\newtheorem{remark}[theorem]{Remark}
\newtheorem{definition}[theorem]{Definition}
\theoremstyle{remark}
\newtheorem{notation}{Notation}
\renewcommand{\eqref}[1]{(\ref{#1})}
\renewcommand{\bigskip}{\vspace{0.2cm}}
\begin{document}

\title{{\Large \textbf{On the 
cuspidal representations of 
${\rm GL}_2(F)$
of level $1$ or $1/2$ 
in the cohomology of 
 the Lubin-Tate 
 space $\mathcal{X}(\pi^2)$}}}

\maketitle

\begin{center}
\textbf{ Tetsushi Ito, 
Yoichi Mieda and Takahiro Tsushima}\\
\end{center}

\maketitle
\begin{abstract}
In \cite{T2}, we explicitly
compute the stable reduction of the Lubin-Tate space
$\mathcal{X}(\pi^2)$, in the 
equal characteristic case.
This paper is a continuation of \cite{T2}.
In this paper, we compute 
defining equations of irreducible 
components which 
appear in the Lubin-Tate space
$\mathcal{X}(\pi^2)$ in the mixed 
characteristic case.
We also determine the action of ${\rm GL}_2$, 
the action of the central 
division algebra of invariant $1/2$
 and the inertia action
 on the components in 
 the stable reduction of $\mathcal{X}(\pi^2)$.
As a result, in a sense, we observe
that the local Jacquet-Langlands correspondence 
and the local Langlands correspondence 
for cuspidal representations
of ${\rm GL}_2(F)$ of level $1$ or $1/2$
are realized in the cohomology of the Lubin-Tate space
$\mathcal{X}(\pi^2)$.
\end{abstract}

\section{Introduction}
Let $F$ be a non-archimedean local field
 with ring of integers $\mathcal{O}_F$, 
  uniformizer $\pi$
 and residue field $k$ 
 of characteristic $p>0.$
 Let $|k|=q.$ 
 We fix an algebraic 
 closure $F^{\rm ac}$ of $F$. 
 Its completion is denoted 
 by $\mathbf{C}.$
 We write $k^{\rm ac}$ 
 for the residue field of $\mathbf{C}.$
 Let $v(\cdot)$  denote the valuation of 
 $\mathbf{C}$ normalized such that $v(\pi)=1.$
 Let $F^{\rm nr}$ denote 
 the maximal unramified extension of $F$
 in $F^{\rm ac}$ and $F_0$ 
 denote its completion.
Let $\Sigma$ denote the unique, up to isomorphism, 
one-dimensional 
formal $\mathcal{O}_F$-module
 of height $h$ over
$k^{\rm ac}.$ 
Then, the {\it Lubin-Tate space} 
$\mathcal{X}(\pi^n)/F_0$ 
of level $n$ and height $h$
means a generic 
fiber of a formal scheme, 
which is a deformation 
space of $\Sigma$ equipped with
Drinfeld level $\pi^n$-structure. 
Then, the space 
$\mathcal{X}(\pi^n)$ 
is a rigid 
analytic variety 
of dimension $h-1$ 
over 
$F_0.$
In the remainder 
of this introduction, 
we assume $p \neq 2$ and $h=2.$ 
The existence of 
the stable model 
of a curve with genus greater than $1$
is guaranteed by the work 
of Deligne-Mumford in \cite{DM}.
In \cite{W3}, Jared Weinstein determines 
irreducible components
in the stable reduction of $\mathcal{X}(\pi^n)$
up to purely inseparable map, 
when $F=\mathbb{Q}_p$ and 
${\rm char}\ F=p>0$, by using the 
Deligne-Carayol theorem 
for ${\rm GL}_2$ 
and the Bushnell-Kutzko type theory.
See also a series of papers \cite{W}-\cite{W4}.

In this paper, we compute 
irreducible components 
in the stable reduction
of $\mathcal{X}(\pi^2)$ 
explicitly for any $F$, by using blow-up.
The stable reduction of $\mathcal{X}(\pi^2)$
has actions of 
$G_2^F:={\rm GL}_2(\mathcal{O}_F/\pi^2),$
the central division algebra 
$\mathcal{O}^{\times}_D$
of invariant $1/2$ 
and the inertia group $I_F.$
Then, we analyze \'{e}tale cohomologies
of the components in the stable reduction
 of $\mathcal{X}(\pi^2)$ as a 
 $G_2^F \times 
 \mathcal{O}^{\times}_D \times 
 I_F$-representation.
 As a result, in a sense, we observe
that the local Jacquet-Langlands correspondence 
and the local Langlands correspondence 
for cuspidal representations
of ${\rm GL}_2(F)$ of level $1$ or $1/2$
are realized in the cohomology of the Lubin-Tate space
$\mathcal{X}(\pi^2)$.
These are done without depending on
Deligne-Carayol's theorem for ${\rm GL}_2.$
Rather, to check Deligne-Carayol's theorem 
for ${\rm GL}_2$ purely locally 
in a lower level is our aim 
in this paper.
In \cite{T2}, in a case 
${\rm char}\ F=p>0,$ 
we completely 
compute the stable 
reduction of 
$\mathcal{X}(\pi^2)$
explicitly.
This paper is a sequence of loc.\ cit.
Similar computations of 
the stable models 
are found in 
\cite{CM}, \cite{CW}, 
\cite{E}, \cite{E2} and \cite{T}. 
In particular, in \cite{CM}, 
very significant 
and stimulus developments 
 are done
  to compute the stable model
of a curve.  
See also \cite{DR} and \cite{KM}.
We write $\overline{\mathcal{X}(\pi^2)}$
for the stable reduction of 
$\mathcal{X}(\pi^2).$

We define 
several subspaces $\mathbf{Y}_{3,1, \ast }, 
\mathbf{Y}_{2,2}$
and $\mathbf{Z}_{1,1, \ast }$ 
with $\ast \in 
\mathbb{P}^1(\mathbb{F}_q)$
 of $\mathcal{X}(\pi^2)$, 
 and compute their reduction
 in the mixed characteristic case, 
 in section \ref{com3}. 
We identify $\mathcal{X}(1)$ with 
an open unit ball $B(1) \ni u$ appropriately.
Then, under 
the natural 
projection 
$\mathcal{X}(\pi^2) 
\to \mathcal{X}(1) 
\simeq B(1),$ 
the subspaces 
$\bigcup_{\ast \in 
\mathbb{P}^1(\mathbb{F}_q)}\mathbf{Y}_{3,1, \ast }, 
\mathbf{Y}_{2,2}$
and $\bigcup_{\ast  \in \mathbb{P}^1(\mathbb{F}_q)}
\mathbf{Z}_{1,1 \ast }$ in $\mathcal{X}(\pi^2)$
are over the loci $v(u)=1/(q+1)$, $v(u) \geq q/(q+1)$
and $v(u)=1/2$ 
respectively.
 To compute their reduction, we choose 
 some simple model of the universal formal 
 $\mathcal{O}_F$-module over 
 ${\rm Spf}\ 
 \mathcal{O}_{F_0}[[u]],$ 
 which is 
  given in \cite{GH}.
 See subsection \ref{gh1} 
 for more details on this model.
It is well-known that the set 
$\pi_0(\mathcal{X}(\pi^n)_{\mathbf{C}})$
of geometrically connected components 
of $\mathcal{X}(\pi^n)$
is identified with $(\mathcal{O}_F/\pi^n)^{\times}$.
Furthermore, all geometrically connected components
are defined over the classical Lubin-Tate extension 
$F_n/F_0.$ 
See subsection 
\ref{geo} for 
the definition 
of $F_n.$
Now, we fix an identification
$\pi_0(\mathcal{X}(\pi^n)) \simeq 
(\mathcal{O}_F/\pi^n)^{\times}$ appropriately. See 
Theorem \ref{caq} for the identification.
We choose one connected component 
$\mathcal{X}^{i}(\pi^2) \subset \mathcal{X}(\pi^2) 
\times_{F_0}F_2$
with $i \in (\mathcal{O}_F/\pi^2)^{\times}.$
Let $\bar{i}$ be the image of $i$
by the canonical map
$(\mathcal{O}_F/\pi^2)^{\times} \to k^{\times}.$
For a subspace 
$W \subset \mathcal{X}(\pi^2)$, 
we write $W^{i}$ for the intersection
$(W\times_{F_0}F_2) 
\cap \mathcal{X}^{i}(\pi^2)$.
For an affine curve $X$ over $k^{\rm ac}$,
we write $X^c$ for the smooth compactification of 
the normalization of $X.$
The genus of $X$ means the genus of $X^c.$
Then, for
 each $\ast \in \mathbb{P}^1(\mathbb{F}_q),$
 the reduction $\overline{\mathbf{Y}}^{i}_{3,1,\ast }$
 is defined by $x^qy-xy^q=1$ with genus $q(q-1)/2$ and 
 $\overline{\mathbf{Y}}^{i,c}_{3,1,\ast}$ 
 appears in  
 $\overline{\mathcal{X}(\pi^2)}.$
 On the other hand, for
 each $\ast \in \mathbb{P}^1(\mathbb{F}_q),$
 the reduction $\overline{\mathbf{Z}}^{i}_{1,1,\ast }$
 is defined by $Z^q=X^{q^2-1}+X^{-(q^2-1)}$ with genus $0$.
 This curve has singularities at $X \in \mu_{2(q^2-1)}.$
 Then, by analyzing these singular points, 
 we find $2(q^2-1)$ components 
 $\{W^{i,c}_{\ast,\zeta}\}_{\zeta \in \mu_{2(q^2-1)}}$ 
 having an affine model $a^q-a=s^2$ 
 with genus $(q-1)/2.$
 Then, the components 
 $\{Z_{\ast}:=
 \overline{\mathbf{Z}}^{i,c}_{1,1,\ast }\}_{\ast \in 
 \mathbb{P}^1(\mathbb{F}_q)}$ appear
 in $\overline{\mathcal{X}(\pi^2)}.$
 In $\overline{\mathcal{X}(\pi^2)},$ the components
 $\{W^{i,c}_{\ast,\zeta}\}_{\zeta \in \mu_{2(q^2-1)}}$
 attach to $Z_{\ast }$ at distinct 
 $2(q^2-1)$ points of $Z_{\ast}$ over 
 the singular points of 
 $\overline{\mathbf{Z}}^{i}_{1,1,\ast }.$
Furthermore, 
the reduction $\overline{\mathbf{Y}}^{i}_{2,2}$
 is defined by the following equations
 \[
 x^qy-xy^q=\bar{i},\ Z^q=x^{q^3}y-xy^{q^3}.
 \]
This affine curve has singularities 
at 
\[
\mathcal{S}^{i}_{00}:=\{
(x_0,y_0,Z_0) \in \overline
{\mathbf{Y}}^i_{2,2}(k^{\rm ac})\ |\  
x_0^{q^2-1}=y_0^{q^2-1}=-1
\}.
\]
We have $|\mathcal{S}^{i}_{00}|=
|{\rm SL}_2(\mathbb{F}_q)|=q(q^2-1).$
By analyzing 
these singular points, 
we find $q(q^2-1)$
components $\{X^{i,c}_j\}_{j 
\in \mathcal{S}^{i}_{00}}$,  
each having an affine model 
$X^q+X=Y^{q+1}$ with genus $q(q-1)/2.$
Then, in 
$\overline{\mathcal{X}(\pi^2)}$, 
$\overline{\mathbf{Y}}^{i,c}_{2,2}$
appears and 
$\{X^{i,c}_j\}_{j 
\in \mathcal{S}^{i}_{00}}$ attach to 
$\overline{\mathbf{Y}}^{i,c}_{2,2}$ 
at distinct $q(q^2-1)$
points above the points 
$\mathcal{S}^i_{00}$.

In section \ref{acd1}-\ref{aci1}, 
by using the explicit computation 
of components in $\overline{\mathcal{X}(\pi^2)}$
explained above, 
 we also determine the (right)
  action of 
$G_2^F \times \mathcal{O}^{\times}_F \times I_F$
 on the components
  explicitly.
To determine the action of 
$\mathcal{O}^{\times}_D$, we use the description
of the action of $\mathcal{O}^{\times}_D$ on 
the Lubin-Tate space in \cite{GH}. 
See \ref{waq1} for more details.
Let $\varphi$ denote a prime element 
of $D$, and 
$U_D^n \subset \mathcal{O}^{\times}_D$
 a subgroup $1+(\varphi^n).$
Then, 
the group $\mathcal{O}^{\times}_D$
acts on the stable reduction 
$\overline{\mathcal{X}(\pi^2)}$
by factoring 
through the finite quotient
$\mathcal{O}^{\times}_3:
=
\mathcal{O}_D^{\times}/U_D^3.$
On the other hand, 
we 
compute the 
inertia
 action on 
 the basis of \cite{CM2}.

Let $l \neq p$ be a prime number.
In section \ref{acu1}, 
we analyze the following \'{e}tale cohomologies,
 by using 
the explicit description of the action of 
$\mathbf{G}:=G_2^F
 \times \mathcal{O}^{\times}_3 \times I_F$, 
\[
W:=\bigoplus_{i \in 
(\mathcal{O}_F/\pi^2)^{\times}} 
\bigoplus_{j \in \mathcal{S}^i_{00}}
H^1(X^{i,c}_j,\overline{\mathbb{Q}}_l),\ 
W':=\bigoplus_{(i,\ast ,y_0) \in 
(\mathcal{O}_F/\pi^2)^{\times} \times 
\mathbb{P}^1(\mathbb{F}_q) 
\times \mu_{2(q^2-1)}}
H^1(W^{i,c}_{\ast ,y_0},\overline{\mathbb{Q}}_l)
\]
as $\mathbf{G}$-representations.
Since the dual graph of the stable reduction 
$\overline{\mathcal{X}(\pi^n)}$ is a tree, 
the spaces $W$ and $W'$ are
 $\mathbf{G}$-subrepresentations 
 of 
the \'{e}tale cohomology 
$H^1(\mathcal{X}(\pi^2)_{\mathbf{C}},
\overline{\mathbb{Q}}_l).$
Note that the group $\mathbf{G}$
acts on $H^1(\mathcal{X}(\pi^2)_{\mathbf{C}},
\overline{\mathbb{Q}}_l)$ 
on the left.
By investigating 
$W$ (resp.\ $W'$) 
as in Corollary \ref{lap} (resp.\ 
Corollary \ref{dek2}) as 
a $\mathbf{G}$-representation, 
 we show 
that the local Jacquet-Langlands 
correspondence 
and the $\ell$-adic local Langlands correspondence 
for unramified (resp.\ ramified) cuspidal representations
of ${\rm GL}_2(F)$ of level $1$ (resp.\ level $1/2$)
are realized in the cohomology group $W$. (resp.\ $W'.$)
To check these things, we heavily depend on 
 \cite{BH}.
See \ref{con} for more details.
This is a part of Carayol's program in \cite{Ca}.
 See also \cite{Bo} and \cite{HT}. 
For general $h$, T.\ Yoshida computes 
the stable model of $\mathcal{X}(\pi)$
 and studies 
its cohomology in \cite{Y}.

In a subsequent paper \cite{IMT}, 
we will investigate 
a relationship 
between $W$ and the cohomology
$H_c^2$ of some Lusztig surface 
for $G_2^F.$
The third author thanks Professor T.\ Saito 
for helpful comments.
He also thanks to N.\ Imai for very 
fruitful discussions around topics in this
paper.
\begin{notation}
We fix some notations in this paper.
Let $F$ be a non-archimedean local field
 with the ring of integers $\mathcal{O}_F$, 
 the uniformizer $\pi$
 and the residue field 
 $k=\mathbb{F}_q$ of characteristic $p>0.$
 We fix an algebraic closure 
 $F^{\rm ac}$ of $F$. 
 The completion of $F^{\rm ac}$ is 
 denoted by $\mathbf{C}.$
 We write 
 $\mathcal{O}_{\mathbf{C}}$
 for the ring of integers of $\mathbf{C}$.
 We write $k^{\rm ac}$ 
 for the residue field of $\mathbf{C}.$
 For an element $a \in 
\mathcal{O}_{\mathbf{C}}$,
we denote by $\bar{a}$ the image of $a$
by the reduction map 
$\mathcal{O}_{\mathbf{C}}
\to k^{\rm ac}.$
 Let $v(\cdot)$  denote 
 the normalized valuation of 
 $\mathbf{C}$ such that $v(\pi)=1$ 
 and $|\cdot|$
the absolute value 
given by $|x|=p^{-v(x)}$.
 Let $F^{\rm nr}$ denote 
 the maximal unramified extension of $F$
 in $F^{\rm ac}$ and $F_0$ 
 denote its completion.
 Let $E/F$ denote the quadratic unramified extension.
Let $\mathcal{R}=|\mathbf{C}^\ast|=p^{\mathbb{Q}}.$
We let $L$
be a complete subfield of $\mathbf{C}.$
For $r \in \mathcal{R},$
we let $B_L[r]$ and $B_L(r)$
denote the closed and open disks over $L$
of radius $r$
around $0,$ i.e.\
the rigid spaces over $L$
whose $\mathbf{C}$-valued points
are $\{x \in \mathbf{C}:\ |x| \leq r\}$ 
and $\{x \in \mathbf{C}:|x|<r\}$
respectively.
If $r, s \in \mathcal{R}$ and $r \leq s,$
let $A_L[r,s]$ and $A_L(r,s)$ be 
the rigid spaces over $L$
whose $\mathbf{C}$-valued points are 
$\{x \in \mathbf{C} :\ r \leq |x| \leq s\}$ and 
$\{x \in \mathbf{C}:\ r<|x|<s\},$
which we call a closed annulus and an open annulus respectively.
Furthermore, we write $C_L[r]$ for $A_L[r,r]$, which we call
a circle.
We simply write $G_n^F$ for ${\rm GL}_2(\mathcal{O}_F/\pi^n).$
Let $D$ be an $F$-algebra which is a division algebra, 
with center $F$ and dimension $4.$
Let $\mathcal{O}_D$ be the ring of integers
of $D$ and $\varphi $ a prime element of $D$ such
 that $\varphi^2=\pi.$ 
We set $U_D^n:=1+(\varphi^n) \subset D^{\times}.$
The group $U_D^n$ is a compact open normal
subgroup of $D^{\times}.$
We put $\mathcal{O}^{\times}_n:=\mathcal{O}^{\times}_D/U_D^n.$
Let ${\rm Nrd}_{D/F}:D^{\times} \to F^{\times}$
be the reduced norm map, and ${\rm Trd}_{D/F}:D \to F$
the reduced trace map.
For an element $x \in (\mathcal{O}_F/\pi^2)^{\times},$
we write $x=a_0+a_1\pi$ with 
$a_0 \in \mu_{q-1}(\mathcal{O}_F)$ and 
$a_1 \in \mu_{q-1}(\mathcal{O}_F) \cup \{0\}.$
Then, we identify $(\mathcal{O}_F/\pi^2)^{\times}$
with $\mathbb{F}^{\times}_q \times \mathbb{F}_q$
by the following map $x=a_0+a_1\pi \mapsto 
(\bar{a}_0,\frac{\bar{a}_1}{\bar{a}_0}).$
Let $W_F$ denote the Weil group of $F$ and 
$I_F \subset W_F$ denote the inertia subgroup. 
For a prime number $l \neq p$ and 
a finite abelian group $A$, we set
 $A^{\vee}:={\rm Hom}
 (A,\overline{\mathbb{Q}}_l^{\times}).$ 
If we have $v(f-g)>\alpha$ with $\alpha 
\in \mathbb{Q}_{ \geq 0}$, we write $f \equiv g\ 
({\rm mod}\ \alpha+).$
For an affine curve $X/k^{\rm ac},$
the genus of $X$ means the genus of 
the smooth compactification $X^c$ of 
the normalization of $X.$
\end{notation}

\section{Preliminaries from
 \cite{W2}, \cite{W3} and \cite{GH}} 
In this section, we collect several things and facts 
around 
the Lubin-Tate space 
$\mathcal{X}(\pi^n)$, for example, formal 
$\mathcal{O}_F$-module, good
model of the universal $\mathcal{O}_F$-module 
$\mathcal{F}^{\rm univ}$, 
a set of geometrically connected components of 
$\mathcal{X}(\pi^n)$, 
 the definition of the action of
 $\mathcal{O}^{\times}_D$ on $\mathcal{X}(\pi^n)$ 
 etc.\ from \cite{W2}, \cite{W3} and \cite{GH}. 
Furthermore, we define 
several subspaces in $\mathcal{X}(\pi^2),$
whose reduction will
 be computed in the proceeding section.
\subsection{definition of formal modules}
We begin with the 
definition of formal $\mathcal{O}_F$-modules.
\begin{definition}
Let $R$ be a commutative $\mathcal{O}_F$-algebra, 
with structure map $i:\mathcal{O}_F
\longrightarrow R.$
A formal one-dimensional $\mathcal{O}_F$-module $\mathcal{F}$
is a power series 
$\mathcal{F}(X,Y)=X+Y+\cdots \in R[[X,Y]]$
which is commutative, associative, 
admits $0$ as an identity,
together with a power series $[a]_{\mathcal{F}}(X) \in R[[X]]$
for each $a \in \mathcal{O}_F$ satisfying 
$[a]_{\mathcal{F}}(X) \equiv i(a) X\ {\rm mod}\ X^2$
and $\mathcal{F}([a]_{\mathcal{F}}(X),
[a]_{\mathcal{F}}(Y))=[a]_{\mathcal{F}}(\mathcal{F}(X,Y)).$
\end{definition}

The addition law on a formal 
$\mathcal{O}_F$-module $\mathcal{F}$
will usually be written 
$X+_{\mathcal{F}}Y.$
If $R$ is a $k$-algebra, 
we either have $[\pi]_{\mathcal{F}}(X)=0$
or else $[\pi]_{\mathcal{F}}(X)=f(X^{q^h})$
for some power series $f(X)$
with $f'(0) \neq 0.$
In the latter case, we say $\mathcal{F}$
has height $h$ over $R.$
Let $\Sigma$
be a one-dimensional 
formal $\mathcal{O}_F$-module
over $k^{\rm ac}$ of height $h.$
The formal $\mathcal{O}_F$-module 
$\Sigma$ is unique up to isomorphism.
Furthermore, a model for $\Sigma$
is given by the simple rules 
when ${\rm char}\ F>0$
$$X+_{\Sigma}Y=X+Y,\ 
[\zeta]_{\Sigma}(X)=\zeta X\ 
(\zeta \in k),\ 
[\pi]_{\Sigma}(X)=X^{q^h}.$$

The functor of deformations of $\Sigma$ 
to complete local Noetherian $\mathcal{O}_{F_0}$-algebra
is reresentable by a universal deformation
$\mathcal{F}^{\rm univ}$
over an algebra $\mathcal{A}(1)$
which is isomorphic to the power series ring 
$\mathcal{O}_{F_0}[[u_1,..,u_{h-1}]]$
in $(h-1)$ variables, cf \cite{Dr}.
That is, if $A$ is a complete local 
$\mathcal{O}_{F_0}$-algebra with maximal ideal $P,$
then, the isomorphism classes of deformations of $\Sigma$ to $A$
are given exactly by specializing each $u_i$
to an element of $P$ in $\mathcal{F}^{\rm univ}.$

\subsection{Moduli of deformations with level structure}
\label{sb1}
Let $A$ be a complete local $\mathcal{O}_F$-algebra
with maximal ideal $M,$ and let $\mathcal{F}$
be a one-dimensional $\mathcal{O}_F$-module 
over $A,$ and let $h \geq 1$
be the height of $\mathcal{F} \otimes (A/M).$
\begin{definition}
Let $n \geq 1.$
A Drinfeld level $\pi^n$-structure
on $\mathcal{F}$ is an 
$\mathcal{O}_F$-module homomorphism
$$\phi:(\pi^{-n}\mathcal{O}_F/\mathcal{O}_F)^h 
\longrightarrow M$$
for which
the relation
$$\prod_{x \in (\pi^{-1}\mathcal{O}_F/\mathcal{O}_F)^h}
(X-\phi(x))\ |\ [\pi]_{\mathcal{F}}(X)$$
holds in $A[[X]].$
If $\phi$ is a Drinfeld level 
$\pi^n$-structure, the image of $\phi$
of the standard basis
elements $(\pi^{-n},0,..,0),\dots,(0,0,..,\pi^{-n})$
of $(\pi^{-n}\mathcal{O}_F/\mathcal{O}_F)^h$
form a Drinfeld basis of $\mathcal{F}[\pi^n].$
\end{definition}

Fix a formal $\mathcal{O}_F$-module $\Sigma$
of height $h$ over $k^{\rm ac}.$
Let $A$ be a noetherian local 
$\mathcal{O}_{F_0}$-algebra
such that the structure morphism 
$\mathcal{O}_{F_0} \longrightarrow A$
induces an isomorphism between residue fields.
A deformation of $\Sigma$
with level $\pi^n$-structure 
over $A$ is a triple $(\mathcal{F},\eta,\phi)$
where $\mathcal{F}$ is a formal $\mathcal{O}_F$-module over $A$, 
$\eta:\Sigma \overset{\sim}{\to } 
\mathcal{F} \otimes k^{\rm ac}$
is an isomorphim of 
$\mathcal{O}_F$-modules 
over $k^{\rm ac}$
and $\phi$ is a Drinfeld 
level $\pi^n$-structure
on $\mathcal{F}.$
\begin{proposition}(\cite{Dr})
The functor which assigns to each $A$
as above the set of deformations of $\Sigma$
with Drinfeld level $\pi^n$-structure over $A$
is represented by a regular local ring $\mathcal{A}(\pi^n)$
of relative 
dimension $h-1$ over 
$\mathcal{O}_{F_0}.$
Let $X_1^{(n)},...,X_{h}^{(n)}$
be the corresponding
Drinfeld basis for $\mathcal{F}^{\rm univ}[\pi^n].$
Then, these elements form a set of 
regular parameters for $\mathcal{A}(\pi^n).$
\end{proposition}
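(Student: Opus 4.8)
The plan is to exhibit $\mathcal{A}(\pi^n)$ explicitly as an honest quotient of a power series ring over $\mathcal{A}(1)$, so that representability becomes a formality, and then to concentrate on the two substantive assertions: that this quotient is regular and that the Drinfeld basis elements form a regular system of parameters. First I would use the universal deformation $\mathcal{F}^{\rm univ}$ over $\mathcal{A}(1)\simeq\mathcal{O}_{F_0}[[u_1,\dots,u_{h-1}]]$ together with the Gross--Hopkins model $[\pi]_{\mathcal{F}^{\rm univ}}(X)=\pi X+u_1X^q+\dots+u_{h-1}X^{q^{h-1}}+X^{q^h}$. Since $\mathcal{F}^{\rm univ}\otimes k^{\rm ac}=\Sigma$ has height $h$, this series reduces modulo $\mathfrak{m}_{\mathcal{A}(1)}$ to $X^{q^h}$, so $[\pi]_{\mathcal{F}^{\rm univ}}(X)$ is a distinguished polynomial of degree $q^h$; iterating, $[\pi^n]_{\mathcal{F}^{\rm univ}}(X)$ is distinguished of degree $q^{nh}$. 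A Drinfeld level $\pi^n$-structure over $A$ is the datum of a tuple $(x_1,\dots,x_h)\in M^h$ (the images of the standard basis) defining an $\mathcal{O}_F$-module homomorphism $\phi$ and satisfying the divisibility of the definition; by Weierstrass preparation that divisibility is equivalent to a single polynomial identity
\[
[\pi]_{\mathcal{F}}(X)=w(X)\prod_{c\in(\mathcal{O}_F/\pi)^h}\Bigl(X-[\pi^{n-1}]_{\mathcal{F}}\bigl([c_1]_{\mathcal F}(x_1)+_{\mathcal F}\dots+_{\mathcal F}[c_h]_{\mathcal F}(x_h)\bigr)\Bigr)
\]
for a unit $w(X)$. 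Hence the functor is represented by $\mathcal{A}(\pi^n):=\mathcal{A}(1)[[X_1,\dots,X_h]]/I_n$, where $I_n$ is generated by the $[\pi^n]_{\mathcal{F}^{\rm univ}}(X_i)$ together with the coefficients of the difference of the two sides of this identity for the universal tuple, and $X_i^{(n)}$ is the image of $X_i$.

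Next I would prove regularity by comparing coefficients of $X^j$, $1\le j\le q^h-1$, in the displayed identity inside $\mathcal{A}(\pi^n)$. Writing $v(c):=[c_1]_{\mathcal F}(X_1^{(n)})+_{\mathcal F}\dots+_{\mathcal F}[c_h]_{\mathcal F}(X_h^{(n)})$, the element $[\pi^{n-1}]_{\mathcal F}(v(c))$ is a power series in $X_1^{(n)},\dots,X_h^{(n)}$ with zero constant term, so every coefficient of the right-hand product, being an elementary symmetric function of the $[\pi^{n-1}]_{\mathcal F}(v(c))$, lies in the ideal $(X_1^{(n)},\dots,X_h^{(n)})$; comparing with the left-hand side in the Gross--Hopkins model gives $\pi,u_1,\dots,u_{h-1}\in(X_1^{(n)},\dots,X_h^{(n)})$. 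Therefore $\mathfrak{m}_{\mathcal{A}(1)}\mathcal{A}(\pi^n)\subseteq(X_1^{(n)},\dots,X_h^{(n)})$ and the maximal ideal of $\mathcal{A}(\pi^n)$ equals $(X_1^{(n)},\dots,X_h^{(n)})$, whence $\dim\mathcal{A}(\pi^n)\le h$. For the reverse inequality I would note that $\mathcal{A}(\pi^n)$ is module-finite over $\mathcal{A}(1)$ (each $X_i^{(n)}$ is a root of the distinguished polynomial $[\pi^n]_{\mathcal{F}^{\rm univ}}$) and that its base change to ${\rm Frac}\,\mathcal{A}(1)$ is nonzero — in characteristic $0$ the group scheme $\mathcal{F}^{\rm univ}[\pi^n]$ is finite \'etale of rank $q^{nh}$, its geometric points form a free $\mathcal{O}_F/\pi^n$-module of rank $h$, and ordered bases exist — so $\mathcal{A}(1)\hookrightarrow\mathcal{A}(\pi^n)$ and $\dim\mathcal{A}(\pi^n)=\dim\mathcal{A}(1)=h$ by going-up. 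Thus $\mathcal{A}(\pi^n)$ is regular of dimension $h$ with regular system of parameters $X_1^{(n)},\dots,X_h^{(n)}$; being regular it is a domain, hence $\mathcal{O}_{F_0}$-flat, and its special fibre has dimension $h-1$, i.e.\ it is of relative dimension $h-1$ over $\mathcal{O}_{F_0}$. Representability is then routine: a deformation with level structure over a Noetherian local $\mathcal{O}_{F_0}$-algebra $A$ with residue field $k^{\rm ac}$ is exactly a homomorphism $\mathcal{A}(1)\to A$ together with a tuple in $\mathfrak{m}_A^h$ killing $I_n$.

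The main obstacle is the passage from the bottom-level divisibility in the definition to the full-level polynomial identity used above: the fact, special to one-dimensional formal $\mathcal{O}_F$-modules, that the single condition $\prod_{\bar x}(X-\phi(\bar x))\mid[\pi]_{\mathcal F}(X)$ already forces the higher Drinfeld conditions and, via Weierstrass preparation, turns into the clean product expansion exhausting $[\pi]_{\mathcal F}$ up to a unit. This is Drinfeld's lemma on level structures, and with it in hand one must still run the ``full set of roots'' bookkeeping carefully over the highly non-reduced ring $\mathcal{A}(1)$, the degree count $q^{nh}$ coming entirely from the height-$h$ hypothesis. Everything else — Weierstrass preparation, the dimension count, and deducing $\mathcal{O}_{F_0}$-flatness from regularity — is standard.
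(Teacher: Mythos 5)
The paper offers no proof of this proposition at all: it is quoted from Drinfeld's paper [Dr], so the only comparison available is with the standard proof in the cited literature. Your sketch essentially reproduces that standard argument, but repackaged: you get $\mathfrak{m}_{\mathcal{A}(\pi^n)}=(X_1^{(n)},\dots,X_h^{(n)})$ by comparing coefficients of $X^j$, $1\le j\le q^h-1$, in $[\pi]_{\mathcal F}(X)=w(X)\prod_c\bigl(X-[\pi^{n-1}]_{\mathcal F}(v(c))\bigr)$ (correct: $v(0)=0$, all $[\pi^{n-1}]_{\mathcal F}(v(c))$ lie in the ideal, and the chosen normal form lets you solve for $\pi,u_1,\dots,u_{h-1}$), and then you get $\dim=h$ from finiteness over $\mathcal{A}(1)$ plus nonemptiness of the generic fibre. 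Drinfeld instead runs an induction adjoining one basis vector at a time and proves finite flatness of $\mathcal{A}(\pi^n)$ over $\mathcal{A}(1)$ along the way; your shortcut avoids that induction but makes everything hinge on the generic-fibre step, and that is exactly where your write-up has a gap.

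Concretely: you assert that a basis of $\mathcal{F}^{\rm univ}[\pi^n](\overline{K})$, $K={\rm Frac}\,\mathcal{A}(1)$, gives a point of ${\rm Spec}\,\mathcal{A}(\pi^n)$, but with your presentation $\mathcal{A}(\pi^n)=B/I_n$ (where $I_n$ is generated by the coefficients of the Weierstrass remainder $R$ of dividing $[\pi]_{\mathcal F}(X)$ by $\Pi(X)=\prod_c(X-a_c)$ over $B=\mathcal{A}(1)[[X_1,\dots,X_h]]/([\pi^n]_{\mathcal F}(X_i))$) this needs an argument. You cannot simply ``evaluate'' the power-series divisibility at a $\overline{K}$-point: the homomorphism $B\to\overline{K}$ is not local, so power series do not specialize, and division with remainder by $\Pi$ is not unique in $\overline{K}[[X]]$ (there $\Pi$ becomes $X$ times a unit), so knowing $\overline{\Pi}\mid\overline{[\pi]}$ does not by itself force $\overline{R}=0$. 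The repair is short but must be said: $R(a_c)=[\pi]_{\mathcal F}(a_c)=[\pi^n]_{\mathcal F}(v(c))=0$ already in $B$, since $\Pi(a_c)=0$ and the $X_i$ are killed by $[\pi^n]$; hence at a basis point the polynomial $\overline{R}$, of degree $<q^h$, vanishes at the $q^h$ pairwise distinct images $\overline{a}_c$ (distinctness uses functoriality of the finite flat group scheme and the basis property), so $\overline{R}=0$ and the point kills $I_n$; then a prime of $\mathcal{A}(\pi^n)$ over $(0)\subset\mathcal{A}(1)$ gives $\dim\ge h$ without needing the injectivity claim $\mathcal{A}(1)\hookrightarrow\mathcal{A}(\pi^n)$. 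Two smaller corrections: the proposition covers the equal-characteristic case too, so replace ``in characteristic $0$ the torsion is \'etale'' by the argument that $[\pi^n]$ kills $\mathcal{F}^{\rm univ}[\pi^n]$ and acts as $\pi^n$ on $\omega$, hence $\omega=0$ once $\pi$ is inverted; and in mixed characteristic $[\pi]_u(X)$ is a power series of Weierstrass degree $q^h$, not a distinguished polynomial (your use of Weierstrass preparation absorbs this, and one should check that its $X^{q^i}$-coefficient is $u_i$ times a unit modulo $(\pi,u_1,\dots,u_{i-1})$). Finally, the ``main obstacle'' you flag, namely deducing the higher Drinfeld conditions from the level-one divisibility, is never used in your argument: only the level-one condition enters, so that paragraph is a red herring rather than a defect.
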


There is a finite injection of 
$\mathcal{O}_{F_0}$-algebras $[\pi]_u
:\mathcal{A}(\pi^n)
\hookrightarrow \mathcal{A}(\pi^{n+1})$
corresponding to the obvious degeneration map of functors.
We therefore may consider $\mathcal{A}(\pi^{n})$
as a subalgebra of $\mathcal{A}(\pi^{n+1}),$
with the equation
$[\pi]_u(X_i^{(n)})=X_i^{(n+1)}$
holding in $\mathcal{A}(\pi^{n+1}).$
Let $X(\pi^n)={\rm Spf}\ 
\mathcal{A}(\pi^n),$
so that $X(\pi^n) 
\longrightarrow 
{\rm Spf}\ \mathcal{O}_{F_0}$
is formally smooth of relative dimension
$h-1.$ Let $\mathcal{X}(\pi^n)$
be the generic fiber of $X(\pi^n).$
Then, $\mathcal{X}(\pi^n)$
is a rigid analytic variety of dimension 
$h-1$ over 
$F_0$.
We call $\mathcal{X}(\pi^n)$
the {\it Lubin-Tate space of level $n.$}
The coordinates $X_i^{(n)}$
are then analytic functions on $\mathcal{X}(\pi^n)$
with values in the open unit disc.
We have that $\mathcal{X}(1)$
is the rigid analytic 
open unit polydisc of dimension $h-1.$
The group 
${\rm GL}_h(\mathcal{O}_F/\pi^n\mathcal{O}_F)$
acts on the right on $\mathcal{X}(\pi^n)$
and on the left on $A(\pi^n).$
The degeneration map 
$\mathcal{X}(\pi^n) 
\longrightarrow \mathcal{X}(1)$
is Galois with group 
${\rm GL}_h(\mathcal{O}_F/\pi^n\mathcal{O}_F).$
For an element $M \in {\rm GL}_h(\mathcal{O}_F/\pi^n\mathcal{O}_F)$
and an analytic function $f$ on $\mathcal{X}(\pi^n),$
we write $M(f)$
for the translated function $z \mapsto f(zM).$
When $f$ happens to be one of the parameters $X_i^{(n)},$
there is a natural definition of $M(X_i^{(n)})$
when $M \in M_h(\mathcal{O}_F/\pi^n\mathcal{O}_F)$
is an arbitrary matrix: if $M=(a_{ij}),$ then 
$$M(X_i^{(n)})=[a_{j1}]_{\mathcal{F}^{\rm univ}}(X_1^{(n)})+
_{\mathcal{F}^{\rm univ}} \dots 
+_{\mathcal{F}^{\rm univ}}
[a_{jh}]_{\mathcal{F}^{\rm univ}}(X_h^{(n)}).$$

\subsection{The universal deformation in the equal characteristic case}
We briefly recall a simple model of 
$\mathcal{F}^{\rm univ}$ given 
in \cite[2.2]{W2}, 
\cite[3.8]{W4} 
and in \cite[Proposition 5.1.1]{St2}.
Assume ${\rm char}\ F=p>0,$ 
so that $F=k((\pi))$
is the field of Laurent series over $k$
in one variable,
with the ring of integers $\mathcal{O}_F=k[[\pi]].$

The universal deformation $\mathcal{F}^{\rm univ}$ of $\Sigma$
 has a simple model over $\mathcal{A}(1) \simeq 
\mathcal{O}_{F_0}[[u_1,..,u_{h-1}]]:$
$$X+_{\mathcal{F}^{\rm univ}}Y=X+Y$$
$$[\zeta]_{\mathcal{F}^{\rm univ}}(X)=\zeta X, \zeta \in k$$
\begin{equation}\label{rp0}
[\pi]_{\mathcal{F}^{\rm univ}}(X)
=\pi X+u_1 X^q+\cdots+u_{h-1}X^{q^{h-1}}+X^{q^h}.
\end{equation}
In particular, 
we have the following, in the case $h=2,$
\begin{equation}\label{rp1}
[\pi]_{\mathcal{F}^{\rm univ}}(X)=X^{q^2}+uX^q+\pi X
\end{equation}
over ${X}(1) \simeq {\rm Spf}\ 
\mathcal{O}_{F_0}[[u]].$

\subsection{The universal deformation in the 
mixed characteristic case}\label{gh1}
From now on, we assume $h=2.$
In this subsection, we fix an identification
${X}(1) \simeq {\rm Spf}\ 
\mathcal{O}_{F_0}[[u]]
$ over which
the universal formal $\mathcal{O}_F$-module
$\mathcal{F}^{\rm univ}$
has a simple form 
(``$\mathcal{O}_F$-typical'') 
in the mixed characteristic case.

Let $\mathcal{F}$ be a formal 
$\mathcal{O}_F$-module of 
dimension $1$ over an 
$\mathcal{O}_F$-algebra $R$.
Let $w$ be an invariant differential on 
$\mathcal{F}$. 
See \cite{GH}[section 3] for more details.
We assume that $R$ is flat over 
$\mathcal{O}_F.$
Then, a {\it logarithm} 
of $\mathcal{F}$
means a unique isomorphism
$\mathcal{F} \overset{\sim}{\to}
\mathbb{G}_a$ over $R \otimes F$
with $df=w.$
In the following, 
we consider a logarithm $f$
which has the following form
\[
f(X)=X+\sum_{k \geq 1}b_kX^{q^k}
\]
with $b_k \in R \otimes F.$
By \cite[(5.3) and 
(12.3)]{GH}, 
the universal logarithm $f(X)$ 
over $\mathcal{O}_F[[u]]$ 
satisfies the following 
Hazewinkel's 
``functional equation'' 
in \cite[21.5]{Haz}
\begin{equation}\label{so}
f(X)=X+\frac{f(uX^q)}{\pi}
+\frac{f(X^{q^2})}{\pi}.
\end{equation}
If we write
$f(X)=\sum_{k \geq 0}b_kX^{q^k} 
\in F[[u,X]],$
the coefficients 
$\{b_k\}_{k \geq 0}$ 
satisfy the following
\[b_0=1,\ \pi 
b_k=\sum_{0 \leq j \leq k-1}b_jv_{k-j}^{q^{j}}\]
where $v_1=u, v_2=1, v_k=0\ (k \geq 3).$
For example, we have the followings
\[b_0=1,\ 
b_1=\frac{u}{\pi},\ 
 b_2=\frac{1}{\pi}\biggl(1+\frac{u^{q+1}}{\pi}\biggr),\ 
 b_3=\frac{1}{\pi^2}\biggl(u+u^{q^2}+\frac{u^{q^2+q+1}}{\pi}
 \biggr),\]
 \begin{equation}\label{for}
 b_4=\frac{1}{\pi^2}\biggl(
 1+\frac{u^{q+1}+u^{q^3+1}+u^{q^2(q+1)}}{\pi}
 +\frac{u^{(q+1)(q^2+1)}}{\pi^2}\biggr), \cdots.
 \end{equation}
By \cite[Proposition 5.7]{GH}
 or \cite[21.5]{Haz}, if we set
as follows
\begin{equation}\label{sos}
\mathcal{F}^{\rm univ}(X,Y):=f^{-1}(f(X)+f(Y))
\end{equation}
\begin{equation}\label{soso}
[a]_{\mathcal{F}^{\rm univ}}(X):=f^{-1}(af(X))
\end{equation}
for $a \in \mathcal{O}_F,$
it is known that
these power series 
have coefficients in $\mathcal{O}_F[[u]]$
and define the universal formal 
$\mathcal{O}_F$-module $\mathcal{F}^{\rm univ}$
 over $\mathcal{O}_F[[u]]$ with logarithm $f(X).$
In the following, we simply write
$[a]_u$ for $[a]_{\mathcal{F}^{\rm univ}}$ and 
$X+_uY$ for $\mathcal{F}^{\rm univ}(X,Y)$.
Then, we have $[\zeta]_u(X)=\zeta X$
for $\zeta \in \mu_{q-1}(\mathcal{O}_F).$
Note that the above model 
$\mathcal{F}^{\rm univ}$ exists 
for general $h$, 
which is given in \cite{GH}.

We have the following approximation formula
for $[\pi]_u$, which plays a key role when 
we compute irreducible components 
in the stable reduction of $\mathcal{X}(\pi^2).$
\begin{lemma}\label{all}
We assume that ${\rm char}\ F=0.$
Let $e_{F/\mathbb{Q}_p}=v(p)$ be 
the absolute ramification index.
\\1.\ Then, we have the
 following congruence 
\[
[\pi]_{u}(X) \equiv 
\pi X+uX^q
+X^{q^2}
-\biggl(\frac{q}{\pi}\biggr)
u^2X^{q(q^2-q+1)}\] 
\[({\rm mod}\ (\pi^3,\pi^2X^{q^4-q^2+1},\pi u
X^{q^3-q^2+1}, \pi u^2 X,  
\pi^2uX^q,u^3X^{q^2+1},X^{q^5},u^5,u^4X^{q^2})),\] 
\\2.\ If $e_{F/\mathbb{Q}_p} \geq 2$, we have 
the following 
congruence 
\[
[\pi]_{u}(X) \equiv 
\pi X+uX^q(1-\pi^{q-1})
+X^{q^2}\] 
\[({\rm mod}\ (X^{q^5},u^{2q+1},
\pi^3,\pi^2uX^{q+1},\pi
 u^2 X^{q(q^2-q+1)})).
\]
\\3.\ If $e_{F/\mathbb{Q}_p} \geq 3$, we have
 the following congruence 
 \[
 [\pi]_{u}(X) \equiv 
\pi X+uX^q(1-\pi^{q-1})
+X^{q^2}\] 
\[
({\rm mod}\ (u^{q^2+q+1},X^{q^6},\pi 
u^{2q+1},
\pi^4,\pi^3uX^{q+1},\pi^2
 u^2 X^{q(q^2-q+1)})).
 \]
\end{lemma}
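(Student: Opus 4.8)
The plan is to compute $[\pi]_u(X)$ from its definition \eqref{soso} together with the functional equation \eqref{so}. Rewriting \eqref{so} as $\pi f(X)=\pi X+f(uX^q)+f(X^{q^2})$ and using that $f$ intertwines the formal group law with ordinary addition, one applies $f^{-1}$ to get the clean identity
\[
[\pi]_u(X)=f^{-1}(\pi X)\,+_u\,uX^q\,+_u\,X^{q^2},
\]
which isolates the three ``main terms'' $\pi X$, $uX^q$, $X^{q^2}$ and reduces the problem to controlling (i) the tail of $f^{-1}(\pi X)$ and (ii) the mixed monomials produced by the group law $+_u$. For (i), writing $f^{-1}(Z)=\sum_{k\ge 0}c_kZ^{q^k}$ and reading off from $f\circ f^{-1}=\mathrm{id}$ the recursion $c_0=1$, $c_n=-\sum_{j=1}^{n}b_jc_{n-j}^{q^j}$ (with $b_k$ as in the list before \eqref{for}) gives $c_1=-u/\pi$, $c_2=b_1^{q+1}-b_2$, and in general $v(c_k)\ge-(q^{k-1}+\cdots+q+1)$. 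Hence $f^{-1}(\pi X)=\pi X+c_1\pi^qX^q+c_2\pi^{q^2}X^{q^2}+\cdots=\pi X-u\pi^{q-1}X^q+\cdots$, and for $k\ge 2$ the monomial $c_k\pi^{q^k}X^{q^k}$ has coefficient of valuation $\ge q^k-(q^{k-1}+\cdots+1)$, which for $q\ge 3$ already exceeds every $\pi$-exponent in the three moduli; so modulo those ideals $f^{-1}(\pi X)\equiv\pi X-u\pi^{q-1}X^q$. In part 1 even $-u\pi^{q-1}X^q$ is absorbed by the generator $\pi^2uX^q$ because $q-1\ge 2$ (recall $p\ne 2$, so $q\ge 3$), whereas in parts 2 and 3 it survives and produces the coefficient $u(1-\pi^{q-1})$ of $X^q$.

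For (ii) I would expand the group law: from $X+_uY=f^{-1}(f(X)+f(Y))$ and $f^{-1}(Z)=Z+c_1Z^q+\cdots$ one sees that the lowest mixed monomials of $X+_uY$ are those of total degree $q$, with coefficients $c_1\binom{q}{i}$ for $0<i<q$, the higher ones having strictly larger $X$-degree while gaining extra powers of $u$. Forming $(uX^q)+_uX^{q^2}$ one first checks that the lower-degree contributions simplify --- the $b_1u^q$ and $b_1$ coefficients created by $f(uX^q)+f(X^{q^2})$ are cancelled by the nonlinear part of $f^{-1}$, leaving $X^{q^2}$ with coefficient exactly $1$ and reducing the $X^{q^3}$-coefficient to monomials that lie in the ideal --- and that the one mixed monomial which genuinely survives is
\[
c_1\binom{q}{1}(uX^q)(X^{q^2})^{q-1}=-\tfrac{u}{\pi}\cdot q\cdot u\,X^{q+q^2(q-1)}=-\tfrac{q}{\pi}\,u^2X^{q(q^2-q+1)},
\]
the error term of part 1. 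The remaining mixed monomials of $(uX^q)+_uX^{q^2}$ (those with $2\le i\le q-1$, or of total degree $>q$) and those of the outer sum $f^{-1}(\pi X)+_u\big((uX^q)+_uX^{q^2}\big)$ (the leading one being $-q\pi^{q-2}u^2X^{2q-1}$) must be checked to lie in the relevant ideal. Assembling the pieces gives $[\pi]_u(X)\equiv\pi X+uX^q(1-\pi^{q-1})+X^{q^2}-\tfrac{q}{\pi}u^2X^{q(q^2-q+1)}$ modulo everything discarded; the three parts then differ only in which of these last three corrections survive, which is governed by the valuations $v(\pi^{q-1})=q-1\ge 2$ and $v(q/\pi)=f\,e_{F/\mathbb{Q}_p}-1$: once $e_{F/\mathbb{Q}_p}\ge 2$ the $u^2$-correction is swallowed by $\pi u^2X^{q(q^2-q+1)}$, and once $e_{F/\mathbb{Q}_p}\ge 3$ by the still coarser modulus of part 3, while the $X^q$-correction disappears only in part 1.

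The main obstacle is organisational rather than conceptual: having isolated the surviving monomials, one must run through the finite list of monomials $u^iX^j$ below the degree cut-off ($X^{q^5}$, resp.\ $X^{q^6}$) that are not manifestly in the given ideal and verify, for each, both its coefficient in $[\pi]_u(X)$ and its divisibility, keeping track simultaneously of the $\pi$-adic valuation, the $u$-degree and the $X$-degree. The points that need genuine care are the ``accidental'' cancellations in the $+_u$-expansion described above, and the verification that no mixed monomial other than $-\tfrac{q}{\pi}u^2X^{q(q^2-q+1)}$ escapes the ideal; here one repeatedly uses the inequalities $q^3-q^2+1<q^3<q^4-q^2+1$, $q(q^2-q+1)<q^4$ and $q\ge 3$, together with the explicit shapes of $b_2,b_3,b_4$ in \eqref{for}, to decide which monomials get absorbed.
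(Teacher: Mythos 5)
Your starting identity $[\pi]_u(X)=f^{-1}(\pi X)+_u uX^q+_u X^{q^2}$ is a correct reformulation of $f([\pi]_u(X))=\pi f(X)$ via \eqref{so}, and you locate the right surviving corrections: $-b_1(\pi X)^q=-u\pi^{q-1}X^q$ from $f^{-1}(\pi X)$, and the cross term $c_1\binom{q}{1}(uX^q)(X^{q^2})^{q-1}=-(q/\pi)u^2X^{q(q^2-q+1)}$ from the group law. In substance this is the same direct computation from $f([\pi]_u(X))=\pi f(X)$ and \eqref{for} that the paper performs. The gap is in how you control everything else. Since ${\rm char}\ F=0$, the compositional inverse of $f(Z)=\sum_k b_kZ^{q^k}$ is \emph{not} of the form $\sum_k c_kZ^{q^k}$, and the recursion $c_n=-\sum_{j}b_jc_{n-j}^{q^j}$ you extract from $f\circ f^{-1}={\rm id}$ is the characteristic-$p$ identity: it uses $f^{-1}(Z)^{q^j}=\sum_k c_k^{q^j}Z^{q^{j+k}}$, which fails here. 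Already the coefficient of $Z^{2q-1}$ in $f^{-1}$ is $qb_1^2=qu^2/\pi^2\neq 0$, and $f^{-1}$ has nonzero coefficients in all degrees $\equiv 1\pmod{q-1}$, typically divisible by $q$ but carrying negative powers of $\pi$. Hence your valuation bound $v(c_k)\ge-(q^{k-1}+\cdots+1)$, your value of $c_2$, and the conclusion $f^{-1}(\pi X)\equiv \pi X-u\pi^{q-1}X^q$ modulo the stated ideals are not justified as written; the same defect propagates into the two $+_u$-expansions, where you keep only the degree-$q$ cross terms $c_1\binom{q}{i}$ and dismiss the rest by appealing to the same false structure.

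This matters because the omitted terms are of exactly the species the lemma is about: $q$ times a monomial with a negative power of $\pi$, e.g.\ $qu^2\pi^{2q-3}X^{2q-1}$ coming from $qb_1^2(\pi X)^{2q-1}$, or $-qu^qX^{q^2-q+1}$ coming from $c_1\binom{q}{1}(\pi X)(uX^q)^{q-1}$. Each such term does in fact land in the relevant ideal (using $v(q)=e_{F/\mathbb{Q}_p}f$ and the generators $\pi u^2X$, $\pi^3$, etc.), but that has to be argued term by term, and for that you need a lower bound on the valuations of \emph{all} coefficients of $f^{-1}$ in the variables $\pi,u$, not only the $q$-power ones; without it the finite list of monomials you defer to a final check is not even correctly enumerated. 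The two clean repairs are: (a) avoid $f^{-1}$ altogether, as the paper does, writing $[\pi]_u(X)$ as an undetermined series with coefficients in $\mathcal{O}_F[[u]]$ (integrality is guaranteed by \eqref{soso}), substituting into $f([\pi]_u(X))=\pi f(X)$ and comparing coefficients using the explicit $b_1,\dots,b_4$ of \eqref{for}; or (b) prove by induction from $f\circ f^{-1}={\rm id}$ a correct valuation bound for every coefficient of $f^{-1}$ (using $v(b_k)\ge -k$) and then verify that each monomial of $f^{-1}\bigl(\pi X+f(uX^q)+f(X^{q^2})\bigr)$ other than the four you keep lies in the stated ideals.
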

\begin{proof}
These assertions follow
from direct computations by using 
the relationship $f([\pi]_{u}(X))=\pi f(X)$ 
and (\ref{for}).
\end{proof}


Throughout the paper, 
$\mathcal{F}^{\rm univ}/
{X}(1)$
means the universal
 formal $\mathcal{O}_F$-module 
 given
in (\ref{rp1}) 
in the equal
 characteristic case,  
  and  (\ref{sos}) 
  in the mixed 
  characteristic case
with the identification 
$\mathcal{X}(1) 
\simeq {\rm Spf}\ 
\mathcal{O}_{F_0}[[u]]$ 
respectively. By $X(1) \simeq 
{\rm Spf}\ 
\mathcal{O}
_{F_0}[[u]],$ we also identify
$\mathcal{X}(1) \simeq B(1) \ni u.$

\subsection{Geometrically connected 
components of the Lubin-Tate space 
$\mathcal{X}(\pi^n)$ in 
\cite{W3}}\label{geo}
We collect some facts 
on the geometrically 
connected components
of $\mathcal{X}(\pi^n)$ 
from \cite[subsection 3.6]{W3}.

Let ${\rm LT}$ be a 
one-dimensional 
formal $\mathcal{O}_F$-module
over ${\mathcal{O}}_{F_0}$
for which ${\rm LT} 
\otimes k^{\rm ac}$ has height one;
this is unique up to isomorphism.
In the following, 
we choose a model of
 ${\rm LT}$ such that 
\[
[\pi]_{\rm LT}(X)=\pi X-X^{q}.
\]
See \cite[Section 3]{W2}.
For $n \geq 1,$ let 
$F_n:=F_0({\rm LT}[\pi^n]
(\mathbf{C})).$
Then, by the 
classical Lubin-Tate theory, 
$F_n/F_0$
is an abelian extension
 with Galois group
$(\mathcal{O}_F/\pi^n
\mathcal{O}_F)^{\times},$
and the union 
$\bigcup_{n \geq 1}F_n$
is the completion of 
the maximal 
abelian extension of $F.$ 

Let $\Sigma$ be a one-dimensional 
formal $\mathcal{O}_F$-module 
over $k^{\rm ac}$
of height $2$ as 
in subsection \ref{sb1}.
Then, we have $\mathcal{O}_D
={\rm End}_{k^{\rm ac}}(\Sigma).$
We have the following right action of 
$\mathcal{O}^{\times}_D$ on 
$\mathcal{X}(\pi^n).$
Let $b \in \mathcal{O}^{\times}_D.$
Then, $b$ acts on $\mathcal{X}(\pi^n)$
by $(\mathcal{F},\eta,\phi) 
\mapsto (\mathcal{F},\eta \circ b,\phi).$
We will introduce another description
of the above $\mathcal{O}^{\times}_D$-action 
in the proceeding subsection. 

All geometrically connected components 
of $\mathcal{X}(\pi^n)$
are defined over $F_n,$ 
and they are in bijection 
with primitive elements 
in the free of rank
$1$ $(\mathcal{O}_F/\pi^n\mathcal{O}_F)$-module
 ${\rm LT}[\pi^n](\mathbf{C}).$
 Now, we identify 
 ${\rm Gal}(\mathbf{C}/F_0) \simeq 
 I_F={\rm Gal}(\mathbf{C}/F^{\rm ur})$
 by the canonical restriction.
For the following theorem, we refer to \cite{St} 
and \cite[Theorem 3.2]{W3}:

\begin{theorem}\label{caq}(\cite{St})
For each $n \geq 1,$ there exists a locally constant 
rigid analytic morphism
$\Delta^{(n)}:\mathcal{X}(\pi^n) 
\longrightarrow {\rm LT}[\pi^n]$
which surjects 
onto the subset of ${\rm LT}[\pi^n]$ 
which are primitive. 
For a triple $(g,b,\tau) 
\in {\rm GL}_2(\mathcal{O}_F)
 \times \mathcal{O}^{\times}_D 
\times I_F,$ we have
$$\Delta^{(n)} \circ (g,b,\tau)
=[\delta(g,b,\tau)]_{\rm LT}\bigl(\Delta^{(n)}\bigr),$$
where $\delta$ is the homomorphism
\begin{equation}\label{cnn}
\delta:{\rm GL}_2(F) \times D^{\times} \times W_F 
\to F^{\times};
(g,b,w) \mapsto {\rm det} (g) \times 
{\rm Nrd}_{D/F}(b)^{-1} \times 
\mathbf{a}_F(w)^{-1}
 \end{equation}
and 
$\mathbf{a}_F:W_F^{\rm ab} \overset{\sim}{\to} F^{\times} $ 
is the reciprocity map from
local class field theory, normalized so that
$\mathbf{a}_F$ sends a geometric Frobenius element to 
 a prime element.
The geometric fiber of $\Delta^{(n)}$ are connected.
\end{theorem}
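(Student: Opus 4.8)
The plan is to construct $\Delta^{(n)}$ as the ``determinant'' of the universal deformation, using top exterior powers of formal $\mathcal{O}_F$-modules. For a one-dimensional formal $\mathcal{O}_F$-module $\mathcal{F}$ of height $2$ over an $\mathcal{O}_F$-algebra, the exterior power $\bigwedge^2_{\mathcal{O}_F}\mathcal{F}$ is a one-dimensional formal $\mathcal{O}_F$-module of height $1$, whose reduction $\bigwedge^2_{\mathcal{O}_F}\Sigma$ is the unique height-one formal $\mathcal{O}_F$-module over $k^{\rm ac}$; we identify the latter with ${\rm LT}\otimes_{\mathcal{O}_{F_0}}k^{\rm ac}$ once and for all. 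Since the deformation functor of a height-one formal $\mathcal{O}_F$-module has no moduli (it is pro-represented by $\mathcal{O}_{F_0}$, with universal object ${\rm LT}\otimes\mathcal{O}_{F_0}$), the rigidification $\eta$ of the universal triple $(\mathcal{F}^{\rm univ},\eta,\phi)$ over $\mathcal{A}(\pi^n)$ yields a canonical isomorphism of deformations $\bigwedge^2_{\mathcal{O}_F}\mathcal{F}^{\rm univ}\cong{\rm LT}\otimes_{\mathcal{O}_{F_0}}\mathcal{A}(\pi^n)$. Applying $\bigwedge^2_{\mathcal{O}_F}$ to $\phi$ produces a Drinfeld level $\pi^n$-structure on ${\rm LT}\otimes\mathcal{A}(\pi^n)$, that is, a single point of ${\rm LT}[\pi^n]$ over the generic fibre, namely $\phi(e_1)\wedge\phi(e_2)$ for $e_1,e_2$ the standard basis. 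Passing to generic fibres gives $\Delta^{(n)}\colon\mathcal{X}(\pi^n)\to{\rm LT}[\pi^n]$; it is locally constant because ${\rm LT}[\pi^n]$ is a zero-dimensional rigid space over $F_0$, and it lands in the primitive points because $\phi(e_1)\wedge\phi(e_2)$ has exact order $\pi^n$. The one delicate input in this step is the existence and functoriality of the exterior power of a formal module, which I would take from the general theory (or work out explicitly from the logarithm in this height-$2$ case).

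The equivariance formula is then obtained by chasing the three actions through the functor $\bigwedge^2_{\mathcal{O}_F}$. If $g\in{\rm GL}_2(\mathcal{O}_F)$ acts by $\phi\mapsto\phi\circ g$, the rigidification is unchanged and $\bigwedge^2(\phi\circ g)=(\bigwedge^2\phi)\circ[\det g]$, so $\Delta^{(n)}$ gets multiplied by $[\det g]_{\rm LT}$. If $b\in\mathcal{O}_D^{\times}={\rm Aut}_{k^{\rm ac}}(\Sigma)$ acts by $\eta\mapsto\eta\circ b$, the level structure is unchanged but the rigidification of $\bigwedge^2_{\mathcal{O}_F}\mathcal{F}^{\rm univ}$ is twisted by the action of $b$ on $\bigwedge^2_{\mathcal{O}_F}\Sigma$, which is multiplication by the reduced norm ${\rm Nrd}_{D/F}(b)\in\mathcal{O}_F^{\times}$; hence the canonical identification with ${\rm LT}\otimes\mathcal{A}(\pi^n)$ changes by $[{\rm Nrd}_{D/F}(b)]^{-1}$ and $\Delta^{(n)}$ gets multiplied by $[{\rm Nrd}_{D/F}(b)^{-1}]_{\rm LT}$. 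Finally $\Delta^{(n)}$ is defined over $F_0$, hence ${\rm Gal}(\mathbf{C}/F_0)$-equivariant for the natural Galois action on ${\rm LT}[\pi^n](\mathbf{C})$, and under ${\rm Gal}(\mathbf{C}/F_0)\simeq I_F$ that action is $[\mathbf{a}_F(\tau)^{-1}]_{\rm LT}$ by classical Lubin-Tate reciprocity with the normalization fixed in the statement. Combining the three contributions gives $\Delta^{(n)}\circ(g,b,\tau)=[\delta(g,b,\tau)]_{\rm LT}(\Delta^{(n)})$ with $\delta$ as in \eqref{cnn}, and surjectivity onto the primitive points follows because the ${\rm GL}_2(\mathcal{O}_F)$-factor alone already moves $\phi(e_1)\wedge\phi(e_2)$ through every $(\mathcal{O}_F/\pi^n)^{\times}$-multiple.

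The remaining assertion, that the geometric fibres of $\Delta^{(n)}$ are connected, is the heart of the matter, and I expect it to be the main obstacle. On generic fibres $\mathcal{X}(\pi^n)\to\mathcal{X}(1)$ is finite \'{e}tale and Galois with group $G_n^F={\rm GL}_2(\mathcal{O}_F/\pi^n)$ (over the characteristic-zero base $\mathcal{X}(1)$ the Drinfeld level structures are full level structures), and $\mathcal{X}(1)_{\mathbf{C}}$ is a connected open unit disc; therefore $\pi_0(\mathcal{X}(\pi^n)_{\mathbf{C}})$ is a single orbit $H\backslash G_n^F$, where $H$ is the geometric monodromy subgroup. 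The equivariance formula together with surjectivity of $\Delta^{(n)}$ onto the $|(\mathcal{O}_F/\pi^n)^{\times}|$ primitive points forces $H\subseteq{\rm SL}_2(\mathcal{O}_F/\pi^n)$, and the geometric fibres of $\Delta^{(n)}$ are connected precisely when $H={\rm SL}_2(\mathcal{O}_F/\pi^n)$ --- equivalently, when the image of the monodromy representation of $\mathcal{X}(1)_{\mathbf{C}}$ on the $\pi$-adic Tate module of $\mathcal{F}^{\rm univ}$ is all of ${\rm SL}_2(\mathcal{O}_F)$. This ``big monodromy'' statement is the crux; I would prove it by induction on $n$, reducing to the surjectivity of the monodromy of $\mathcal{X}(\pi^{n})\to\mathcal{X}(\pi^{n-1})$ onto $\ker\bigl({\rm SL}_2(\mathcal{O}_F/\pi^{n})\to{\rm SL}_2(\mathcal{O}_F/\pi^{n-1})\bigr)$, checked by a ramification analysis of this cover along the boundary of $\mathcal{X}(1)$, or alternatively by using the Gross-Hopkins period morphism $\mathcal{X}(1)_{\mathbf{C}}\to\mathbf{P}^1_{\mathbf{C}}$ and the connectedness of the ball $\mathcal{X}(1)_{\mathbf{C}}$. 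Granting this, $\Delta^{(n)}$ induces a bijection $\pi_0(\mathcal{X}(\pi^n)_{\mathbf{C}})\simeq(\mathcal{O}_F/\pi^n)^{\times}$, so each geometric fibre of $\Delta^{(n)}$ is a single connected component.
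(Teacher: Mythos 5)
The paper does not actually prove this statement: Theorem \ref{caq} is quoted from \cite{St} and \cite[Theorem 3.2]{W3}, so there is no internal argument to compare with, and your proposal has to be judged as a self-contained proof of Strauch's theorem. On that standard it has a genuine gap. The construction of $\Delta^{(n)}$ as $\phi(e_1)\wedge\phi(e_2)$ and the three equivariance computations (multiplication by $[\det g]_{\rm LT}$, by $[{\rm Nrd}_{D/F}(b)^{-1}]_{\rm LT}$ via the induced action on $\bigwedge^2\Sigma$, and by $[\mathbf{a}_F(\tau)^{-1}]_{\rm LT}$ via classical Lubin--Tate theory) are the standard route and are fine in outline, although even here you are outsourcing a nontrivial input: the existence and functoriality of $\bigwedge^2_{\mathcal{O}_F}$ of a one-dimensional formal $\mathcal{O}_F$-module of height $2$ is immediate only in equal characteristic (the Moore determinant, which the paper mentions); in mixed characteristic it is a theorem in its own right and must be cited, not waved at.

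The real problem is the last assertion, which you yourself call the heart of the matter. Your reduction is correct: writing $\pi_0(\mathcal{X}(\pi^n)_{\mathbf{C}})=H\backslash G_n^F$ with $H$ the geometric monodromy group, the equivariance plus surjectivity of $\Delta^{(n)}$ forces $H\subseteq {\rm SL}_2(\mathcal{O}_F/\pi^n)$, and connectedness of the geometric fibres is equivalent to $H={\rm SL}_2(\mathcal{O}_F/\pi^n)$. But you do not prove this big-monodromy statement; you only name two possible strategies. The suggestion via the Gross--Hopkins period morphism and the connectedness of the ball does not do the job as stated: connectedness (indeed simple connectedness in any naive sense) of $\mathcal{X}(1)_{\mathbf{C}}$ gives no lower bound on which subgroup of ${\rm GL}_2(\mathcal{O}_F/\pi^n)$ the covering $\mathcal{X}(\pi^n)_{\mathbf{C}}\to\mathcal{X}(1)_{\mathbf{C}}$ realizes, and the period map is a morphism out of level $0$, so it sees the level structure only through exactly the monodromy one is trying to bound from below. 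The alternative, an inductive ramification analysis of $\mathcal{X}(\pi^n)\to\mathcal{X}(\pi^{n-1})$ near the boundary, is precisely where all the work lies --- it is the substance of Strauch's paper --- and none of it is carried out here. So as written the proposal establishes the equivariance formula and the reduction of connectedness to surjective monodromy, but the key step remains an unproved claim; either supply that argument or, as the paper does, cite \cite{St} for it.
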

\begin{remark}
In Theorem \ref{caq}, we 
consider the {\it right}
action of ${\rm GL}_2(\mathcal{O}_F)
 \times \mathcal{O}^{\times}_D 
\times I_F$
on the Lubin-Tate space 
$\mathcal{X}(\pi^n)\times_{F_0} 
\mathbf{C}.$ See \cite[1.3]{Ca} for more 
detail on group action on the Lubin-Tate tower.
\end{remark}
We write $\pi_0(\mathcal{X}(\pi^n))$ for 
the set of geometrically connected components of 
$\mathcal{X}(\pi^n).$
Then, by Theorem \ref{caq}, we can identify 
$\pi_0(\mathcal{X}(\pi^n)) \simeq 
(\mathcal{O}_F/\pi^n)^{\times}$, and 
know the action of 
${\rm GL}_2(\mathcal{O}_F)
 \times \mathcal{O}^{\times}_D 
\times I_F$ on it.
For $i \in (\mathcal{O}_F/\pi^n)^{\times}$,
let $\mathcal{X}^i(\pi^n) \subset 
\mathcal{X}(\pi^n) \times_{F_0} F_n$
 denote the connected 
component 
corresponding to $i$ under the above identification
$\pi_0(\mathcal{X}(\pi^n)) \simeq 
(\mathcal{O}_F/\pi^n)^{\times}$.
Then, for a subspace 
$W \subset \mathcal{X}(\pi^n),$
we write $W^i$ for the intersection
$(W \times_{F_0} F_n)
 \cap \mathcal{X}^i(\pi^n)$.

\subsection{Action of the division algebra 
$\mathcal{O}^{\times}_D$ 
on the Lubin-Tate space 
$\mathcal{X}(\pi^n)\ (n \geq 0)$}\label{waq1}
In this subsection, we 
recall a description of 
the right
 action of $\mathcal{O}_D^{\times}$ 
on the Lubin-Tate 
space $\mathcal{X}(\pi^n)$ 
from \cite{GH}.
Recall that we set 
$X(\pi^n):={\rm Spf}\ 
\mathcal{A}(\pi^n)$ and 
$\mathcal{X}(\pi^n)$
is the generic 
fiber of $X(\pi^n).$
For a formal scheme $X$ 
over 
${\rm Spf}\ \mathcal{O}_{F_0}$
and a finite extension $L/F,$
we simply 
write $X\times_{F}L$ for 
the base change 
$X \times_{{\rm Spf}\ 
\mathcal{O}_{F_0}} {\rm Spf}\ 
\mathcal{O}_{\hat{L}^{\rm ur}}.$
Let $E/F$ be the unramified 
quadratic extension.
Let $\sigma \neq 1 \in {\rm Gal}(E/F)$.
The ring of integers 
$\mathcal{O}_D$ 
has the following 
description
$\mathcal{O}_D 
\simeq \mathcal{O}_{E} \oplus 
\varphi \mathcal{O}_{E}$
with $a \varphi=\varphi a^{\sigma}$ for $a 
\in \mathcal{O}_{E}.$
Let $b=\alpha+\varphi 
\beta \in \mathcal{O}^{\times}_D$
with $\alpha \in
 \mathcal{O}^{\times}_{E}$ and $\beta
 \in \mathcal{O}_E.$
By \cite[Proposition 14.7]{GH}, 
we have the following map
\begin{equation}\label{caa0}
b:{X}(1)\times_F E 
\to {X}(1)\times_F E.
\end{equation}
In the following, 
we recall the 
definition of the map
(\ref{caa0}).

For an element $z \in \mathcal{O}_{E}$,
we denote by $\bar{z}$ for the image of 
$z$ by the canonical map $\mathcal{O}_{E}
\to \mathbb{F}_{q^2}$.
By \cite[Section 13]{GH}, 
$b=\alpha+\varphi \beta
\in \mathcal{O}_D^{\times}
 \simeq 
{\rm Aut}(\Sigma)$ is written as follows
as an element of $\mathbb{F}_{q^2}[[X]],$
\begin{equation}\label{cop0}
b(X) \equiv \bar{a}_0 X+\bar{a}_1X^{q^2}+(\bar{\beta} X)^q\ 
({\rm mod}\ (X^{q^3}))
\end{equation}
where we write $\alpha=\sum_{i \geq 0} a_i \pi^i$
with $a_i \in \mu_{q^2-1}(\mathcal{O}_E) \cup\{0\}.$
Hence, we acquire the following congruence 
\begin{equation}\label{cop}
b^{-1}(X) \equiv 
\frac{X-(\bar{\beta}/\bar{a}_0)^qX^q
+\bigl(-(\bar{a}_1/\bar{a}_0)
+(\bar{\beta}/\bar{a}_0)^{q+1}
\bigr)X^{q^2}}{\bar{a}_0}\ 
({\rm mod}\ (X^{q^3})).
\end{equation}
Let $\tilde{b} \in \mathcal{O}_{E}[[X]]$ 
denote a lifting of $b \in 
\mathbb{F}_{q^2}[[X]].$
Let $\mathcal{F}_{\tilde{b}}/\mathcal{X}(1) 
\times_F E$ 
denote the universal formal 
$\mathcal{O}_F$-module 
whose multiplications
are defined as follows
\[
\mathcal{F}_{\tilde{b}}(X,Y)={\tilde{b}}^{-1} 
\circ \mathcal{F}(\tilde{b}(X),\tilde{b}(Y)),\ 
[\pi]_{\mathcal{F}_{\tilde{b}}}(X)=\tilde{b}^{-1}
\circ [\pi]_u \circ \tilde{b}(X).
\]
We simply write $[\pi]_{\tilde{b}}$ for 
$[\pi]_{\mathcal{F}_{\tilde{b}}}$.
Then, clealy we have the
 following isomorphism as in \cite[(14.4)]{GH}
\[
\tilde{b}^{-1}:\mathcal{F}^{\rm univ} 
\overset{\sim}{\to}
\mathcal{F}_{\tilde{b}}:\ 
(X,u) \mapsto (\tilde{b}^{-1}(X),u)
\]
and the following equality
\begin{equation}\label{caa1}
[\pi]_b(\tilde{b}^{-1}(X))=
\tilde{b}^{-1} 
([\pi]_u(X)). 
\end{equation}
Since $\mathcal{F}_{\tilde{b}}$ is 
the universal 
formal 
$\mathcal{O}_F$-module
over ${X}(1) \times_F E$,
we have the following map
\begin{equation}\label{caa-1}
b:{X}(1) \times_F E
\to {X}(1)
\end{equation}
as in \cite[(14.5)]{GH}.
This map depends only on $b$.
The map (\ref{caa-1}) clearly extends to
the following map as in 
\cite[Proposition 14.7]{GH}
\begin{equation}\label{caa-2}
b:{X}(1) \times_F E \to 
{X}(1) \times_F E.
\end{equation}
This map is the map (\ref{caa0})
which we want. 
The map $b$ is  
an automorphism, which is
 proved in loc.\ cit.
Let $b^\ast \mathcal{F}^{\rm univ}/
{X}(1) \times_F E$ 
denote the pull-back of 
the universal formal 
$\mathcal{O}_F$-module 
 $\mathcal{F}^{\rm univ}/{X}(1)$
 by the map (\ref{caa-1}).
 Then, we have the following 
 unique isomorphism
 by \cite[(14.6)]{GH}
 \[
 j:b^\ast \mathcal{F}^{\rm univ} \overset{\sim}{\to} 
 \mathcal{F}_{\tilde{b}}:\ 
 (X,u) \mapsto (j(X),u)\]
such that 
$j(X) \equiv X\ ({\rm mod}\ (\pi,u)).$
Hence, we acquire the following equality
\begin{equation}\label{caa2}
[\pi]_{b^\ast \mathcal{F}^{\rm univ}}(j^{-1}(X))
=j^{-1}([\pi]_b(X)).
\end{equation}

On the other hand, we have the following isomorphism
\[
b^\ast \mathcal{F}^{\rm univ}
 \overset{\sim}{\to}
\mathcal{F}^{\rm univ}:\ 
(X',u) \mapsto (X',b(u)).\ 
\]
We consider 
the following embedding
\begin{equation}\label{ea}
{X}(\pi^n) \hookrightarrow
 \mathcal{F}^{\rm univ}[\pi^n] 
\times_{{X}(1)} \mathcal{F}^{\rm univ}[\pi^n] 
\ni (X_n,Y_n,u).
\end{equation}
The image of this embedding 
is determined by the Drinfeld condition 
for $(X_n,Y_n).$
In the remainder of this paper, 
we always consider the space 
${X}(\pi^n)$ as a subspace of 
the product $ \mathcal{F}^{\rm univ}[\pi^n] 
\times_{{X}(1)} \mathcal{F}^{\rm univ}[\pi^n].$
Then, 
we have the following
well-defined map
\begin{equation}\label{dac1}
b:{X}(\pi^n) \times_F E \to 
{X}(\pi^n) \times_F E:\ 
(X_n,Y_n,u) \mapsto 
(j^{-1} \circ \tilde{b}^{-1}(X_n),
j^{-1} \circ \tilde{b}^{-1}(Y_n),
b(u)).
\end{equation}
This is the action of 
$b \in \mathcal{O}^{\times}_D$ on 
${X}(\pi^n).$
This action also induces the action of 
$\mathcal{O}^{\times}_D$ on 
$\mathcal{X}(\pi^n)$.
We set
\begin{equation}\label{diq}
b^\ast (X):=j^{-1} \circ \tilde{b}^{-1}(X).
\end{equation}

\subsection{Several subspaces in 
$\mathcal{X}(\pi^n)$}\label{sub}
We will define several subspaces in the
 Lubin-Tate space $\mathcal{X}(\pi^n).$
We identify $\mathcal{X}(1)$ 
with an open unit ball
$B(1) \ni u$ as in subsection \ref{gh1}. 
Let $n \geq 1$ be a positive integer.
Let 
\[p_n:\mathcal{X}(\pi^n) 
\to \mathcal{X}(1)\ :\ (\mathcal{F}, \eta, \phi) 
\mapsto (\mathcal{F},\eta)
\]
be the canonical projection.
Let $\mathbf{TS}^0$ be the 
closed disc $B[p^{-\frac{q}{q+1}}] 
\subset \mathcal{X}(1).$ 
This is called the 
``too-supersingular locus.''
We define subspaces 
$\mathbf{Y}_{2n-m,m}\ (1 \leq m \leq n)$ 
as follows;
\[\mathbf{Y}_{n,n}:=p_n^{-1}(\mathbf{TS}^0) 
\subset \mathcal{X}(\pi^n),\]
\[\mathbf{Y}_{2n-m,m}:=
p_n^{-1}(C[p^{-\frac{1}
{q^{n-m-1}(q+1)}}]) 
\subset \mathcal{X}(\pi^n)\ 
(1 \leq m \leq n-1).\]

Let $n \geq 2$ be a positive integer.
For $1 \leq m \leq n-1,$ we define subspaces $\mathbf{Z}_{2(n-1)-m,m}$
as follows
\[\mathbf{Z}_{2(n-1)-m,m}:=p_n^{-1}(C[p^{-\frac{1}{2q^{(n-1)-m}}}]) \subset 
\mathcal{X}(\pi^n)\ (1 \leq m \leq n-1).\]

We introduce several subspaces in 
the spaces $\mathbf{Y}_{3,1}$ and $\mathbf{Z}_{1,1},$
whose reduction appears in the stable reduction of the Lubin-Tate space
 $\mathcal{X}(\pi^2).$

As in (\ref{ea}), we consider
$X(\pi^n)$ as a formal subscheme of
 $\mathcal{F}^{\rm univ}[\pi^n] \times_{X(1)}
  \mathcal{F}^{\rm univ}[\pi^n] \ni (X_n,Y_n,u).$
\begin{definition}\label{fd}
1.\ Let $(X_2,Y_2,u) \in \mathbf{Y}_{3,1,0} \subset \mathbf{Y}_{3,1}$
be a subspace 
defined by the following conditions;
\[v(u)=\frac{1}{q+1},\ 
v(X_1)=\frac{q}{q^2-1},\ 
v(X_2)=\frac{1}{q(q^2-1)},\ v(Y_1)=\frac{1}{q(q^2-1)},v(Y_2)=\frac{1}{q^3(q^2-1)}.
\]
\\2. Let 
$(X_2,Y_2,u) \in \mathbf{Y}_{3,1,\infty} \subset \mathbf{Y}_{3,1}$
be a subspace defined by the following condition;
$(X_2,Y_2,u) \in \mathbf{Y}_{3,1,\infty}$
is equivalent to 
$(Y_2,X_2,u) \in \mathbf{Y}_{3,1,0}.$
\\3. Let $(X_2,Y_2,u) 
\in \mathbf{Y}_{3,1,c} 
\subset \mathbf{Y}_{3,1}$
be a subspace defined by the following conditions;
\[
v(u)=\frac{1}{q+1},\ v(X_1)=v(Y_1)=\frac{1}{q(q^2-1)},\ 
v(X_2)=v(Y_2)=\frac{1}{q^3(q^2-1)}.
\]
4.\ Let $(X_2,Y_2,u) \in 
\mathbf{Z}_{1,1,0} \subset \mathbf{Z}_{1,1}$
be a subspace 
defined by the following conditions;
\[
v(u)=\frac{1}{2},\ 
v(X_1)=\frac{1}{2(q-1)},\ 
v(X_2)=\frac{1}{2q^2(q-1)},\ 
v(Y_1)=\frac{1}{2q(q-1)},\ 
v(Y_2)=\frac{1}{2q^3(q-1)}.
\]
\\5. Let $(X_2,Y_2,u) \in 
\mathbf{Z}_{1,1,\infty} \subset \mathbf{Z}_{1,1}$
be a subspace defined by the following condition;
$(X_2,Y_2,u) \in \mathbf{Z}_{1,1,\infty}$
is equivalent to $(Y_2,X_2,u) \in \mathbf{Z}_{1,1,0}.$
\\6. Let $(X_2,Y_2,u) \in  \mathbf{Z}_{1,1,c} \subset \mathbf{Z}_{1,1}$
be a subspace defined by the following conditions;
\[
v(u)=\frac{1}{2},\ 
v(X_1)=v(Y_1)=\frac{1}{2q(q-1)},\ 
v(X_2)=v(Y_2)=\frac{1}{2q^3(q-1)}.
\]
\end{definition}
\begin{lemma}
Let the notation be as above.
\\1.\ The space $\mathbf{Y}_{3,1}$
has the following description
$$\mathbf{Y}_{3,1}=\mathbf{Y}_{3,1,0}
 \coprod \mathbf{Y}_{3,1,\infty} \coprod
\mathbf{Y}_{3,1,c}.$$
\\2.\ The space $\mathbf{Z}_{1,1}$
has the following description
$$\mathbf{Z}_{1,1}=\mathbf{Z}_{1,1,0}
 \coprod \mathbf{Z}_{1,1,\infty} \coprod
\mathbf{Z}_{1,1,c}.$$
\end{lemma}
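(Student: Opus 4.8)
The goal is to decompose $\mathbf{Y}_{3,1}$ and $\mathbf{Z}_{1,1}$ into the three explicitly-described pieces of Definition \ref{fd}. Since the two statements are entirely parallel (one is the $v(u)=1/(q+1)$ locus, the other the $v(u)=1/2$ locus), I will only describe the argument for $\mathbf{Y}_{3,1}$; the argument for $\mathbf{Z}_{1,1}$ is obtained by the obvious substitution of valuations. Recall that $\mathbf{Y}_{3,1}=p_2^{-1}(C[p^{-1/(q+1)}])$, so a point of $\mathbf{Y}_{3,1}$ is a triple $(X_2,Y_2,u)$ with $v(u)=1/(q+1)$ lying in $X(\pi^2)\subset \mathcal{F}^{\rm univ}[\pi^2]\times_{X(1)}\mathcal{F}^{\rm univ}[\pi^2]$. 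The plan is: first, pin down the possible valuations of $X_1,Y_1$ (the level-$1$ coordinates, where $X_1=[\pi]_u(X_2)$, $Y_1=[\pi]_u(Y_2)$) using the Newton polygon of $[\pi]_u$; second, pin down the valuations of $X_2,Y_2$ in terms of those of $X_1,Y_1$ by the same method applied one level down; third, check the three resulting valuation profiles are exactly those in cases 1--3 of the definition and that they are mutually exclusive and exhaustive.

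\textbf{Step 1: valuations at level one.} On the locus $v(u)=1/(q+1)$, the approximation formula of Lemma \ref{all} (or (\ref{rp1}) in equal characteristic) gives $[\pi]_u(X)\equiv \pi X+uX^q+X^{q^2}\pmod{\text{higher}}$. Its Newton polygon over the region $v(X)>0$ has vertices governed by the three monomials $\pi X$, $uX^q$, $X^{q^2}$; the relevant side is the one joining $(q,v(u))=(q,1/(q+1))$ to $(q^2,0)$, which has slope $-\tfrac{1}{q(q+1)}$. Since $X_1\in \mathcal{F}^{\rm univ}[\pi]$, i.e.\ $[\pi]_u(X_1)=0$, the nonzero roots $X_1$ of $[\pi]_u$ have $v(X_1)\in\{1/(q(q^2-1)),\ q/(q^2-1)\}$: the first batch of $q^2-q$ roots comes from the segment of slope $-1/(q(q+1))$ and the remaining $q-1$ roots from the segment joining $(1,0)$ to $(q,1/(q+1))$ of slope $-1/(q(q+1))\cdot\tfrac{?}{}$ — here I will carry out the short Newton-polygon computation carefully, obtaining $v(X_1)=q/(q^2-1)$ for that batch. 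Thus each of $X_1,Y_1$ independently takes one of these two valuations, giving a priori four combinations for $(v(X_1),v(Y_1))$.

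\textbf{Step 2: valuations at level two, and bookkeeping.} For each choice of $v(X_1)$, apply the Newton polygon of $[\pi]_u$ again to solve $[\pi]_u(X_2)=X_1$ for $X_2$ (and similarly $Y_2$), reading off $v(X_2)$ from which side of the polygon the relevant root lies on; the value $v(X_2)=v(X_1)/q^2$ when $v(X_1)$ is small enough that the dominant balance is between $X^{q^2}$ and the constant $X_1$. One then observes that exactly one of the four combinations — namely $v(X_1)=v(Y_1)=q/(q^2-1)$ — is impossible on $\mathbf{Y}_{3,1}$ (it would force $(X_1,Y_1)$ to generate too small a subgroup to satisfy the Drinfeld condition for a full level-$\pi$ structure, equivalently it lies over a different annulus), leaving precisely the three profiles: $(v(X_1),v(Y_1))=(q/(q^2-1),\ 1/(q(q^2-1)))$, its transpose, and $(1/(q(q^2-1)),\,1/(q(q^2-1)))$. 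These are exactly cases 1, 2 (via the $X\leftrightarrow Y$ symmetry used to \emph{define} $\mathbf{Y}_{3,1,\infty}$), and 3 of Definition \ref{fd}, and the level-two valuations listed there are then forced by Step 2. Mutual disjointness is clear since the three profiles are distinct, and exhaustiveness is the content of the Drinfeld-condition exclusion of the fourth profile. The identical argument with $1/(q+1)$ replaced by $1/2$ and the polygon of $[\pi]_u$ on $v(u)=1/2$ yields the statement for $\mathbf{Z}_{1,1}$.

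\textbf{Main obstacle.} The delicate point is Step 2's exclusion of the "both large" profile: showing that $v(X_1)=v(Y_1)=q/(q^2-1)$ cannot occur requires using that $(X_1,Y_1)$ is a genuine Drinfeld basis of $\mathcal{F}^{\rm univ}[\pi]$ (so that $X_1,Y_1$ are $\mathcal{O}_F$-linearly independent mod the maximal ideal in the appropriate sense), together with the precise shape of the $q^2-1$ nonzero $\pi$-torsion points and their pairwise $\mathcal{F}$-differences; concretely, one must verify that the $\mathcal{O}_F/\pi$-span of two points both of valuation $q/(q^2-1)$ contains only $q$ elements of that structure, contradicting $|\mathcal{F}^{\rm univ}[\pi]|=q^2$. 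The rest is routine Newton-polygon bookkeeping, which I will present compactly.
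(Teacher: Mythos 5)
The paper states this lemma immediately after Definition \ref{fd} and gives no proof at all, so there is no argument of the authors to compare with; your Newton-polygon plus Drinfeld-basis route is the natural (and surely intended) justification, and in substance it is correct. A few repairs before you write it out. First, fix the slope bookkeeping in Step 1: for $[\pi]_u(X)/X$ on the locus $v(u)=1/(q+1)$ the relevant points are $(0,1)$, $(q-1,1/(q+1))$, $(q^2-1,0)$, and the two segments have slopes $-q/(q^2-1)$ (giving the $q-1$ roots of valuation $q/(q^2-1)$, i.e.\ the nonzero points of the canonical subgroup) and $-1/(q(q^2-1))$ (giving the $q^2-q$ roots of valuation $1/(q(q^2-1))$); the slope $-1/(q(q+1))$ you quote, and the stray ``?'', must be corrected, although the valuations you finally assert are the right ones. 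Second, in Step 2 note the degenerate case: when $v(X_1)=q/(q^2-1)$ the points $(0,v(X_1))$, $(q,1/(q+1))$, $(q^2,0)$ are collinear, so all solutions of $[\pi]_u(X_2)=X_1$ still have $v(X_2)=v(X_1)/q^2=1/(q(q^2-1))$, which is exactly what Definition \ref{fd}.1 requires; when $v(X_1)=1/(q(q^2-1))$ the polygon is a single segment and $v(X_2)=1/(q^3(q^2-1))$. Third, your exclusion of the profile $v(X_1)=v(Y_1)=q/(q^2-1)$ is the right mechanism and should be phrased as follows: on the generic fibre the Drinfeld level-$\pi^2$ condition says $(X_2,Y_2)$ is an $\mathcal{O}_F/\pi^2$-basis of $\mathcal{F}^{\rm univ}[\pi^2]$, hence $(X_1,Y_1)$ is an $\mathbb{F}_q$-basis of $\mathcal{F}^{\rm univ}[\pi]$ (in particular both are nonzero, which you tacitly use); since $v(x+_{\mathcal{F}}y)\geq \min(v(x),v(y))$ and $v([a](x))\geq v(x)$, the set of $\pi$-torsion points of valuation $\geq q/(q^2-1)$ is an $\mathbb{F}_q$-submodule of order $q$, so two independent points cannot both lie in it. The aside ``equivalently it lies over a different annulus'' should be dropped: $v(u)$ is fixed on all of $\mathbf{Y}_{3,1}$, and the exclusion is purely this linear-independence argument. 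Finally, for completeness you must check that the terms suppressed in Lemma \ref{all} (for instance $(q/\pi)u^2X^{q(q^2-q+1)}$ and the terms in the stated ideal) lie strictly above the polygons used; this is routine on these loci, and the identical computation with $1/(q+1)$ replaced by $1/2$ (valuations $1/(2(q-1))$ and $1/(2q(q-1))$ at level one) gives part 2, as you indicate.
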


\section{Computation of 
irreducible components 
in the stable reduction of $\mathcal{X}(\pi^2)$}
\label{com3}
From this section until the end of this paper, 
we assume $p \neq 2.$
In this section, we will compute 
the reduction of the spaces
$\mathbf{Y}_{2,2},\mathbf{Y}_{3,1}$ 
and $\mathbf{Z}_{1,1}$ by using the approximation formula
for $[\pi]_u$ in Lemma \ref{all}.
In the equal characteristic case, we have already computed 
the reduction of them in \cite{T2}, hence we assume
${\rm char}\ F=0$ in this section.
However, the computation in this section 
also holds in the equal characteristic case.
In the equal characteristic case, the computation 
in loc.\ cit.\ is easier than 
the one in this section,
because the rigid analytic morphism
$\Delta^{(n)}:\mathcal{X}(\pi^n) \longrightarrow {\rm LT}[\pi^n]$
in subsection 2.5 is given by the Moore determinant,
 which has a very simple 
form. See \cite{W2} for more details on the Moore determinant.  
We briefly give a summary of results in this section.
The reduction of the space $\mathbf{Y}_{3,1,0}$
has $q(q-1)$ connected components and 
each component is defined by 
$x^qy-xy^q=\zeta$ 
with some $\zeta \in \mathbb{F}^{\times}_q.$
Similarly, the reduction of the space $\mathbf{Y}_{3,1,c}$
has $q(q-1)^2$ connected components and each is defined by 
$x^qy-xy^q=\zeta$
with some $\zeta \in \mathbb{F}^{\times}_q.$
The reduction of the space $\mathbf{Z}_{1,1,0}$
has $q(q-1)$ connected components and each is defined by
\[Z^q=y^{q^2-1}+y^{-(q^2-1)}.\]
This affine curve with genus $0$ 
has singular points at $y \in \mu_{2(q^2-1)}.$
By blowing-up these points, we obtain 
$2(q^2-1)$ irreducible components with genus $(q-1)/2$,
 defined by $a^q-a=s^2.$
Similar things happen 
for the reduction of the space $\mathbf{Z}_{1,1,c}.$

The reduction of the space $\mathbf{Y}_{2,2}$
has $q(q-1)$ connected components and each component is defined by
\[Z^q=x^{q^3}y-xy^{q^3},\ x^qy-xy^q=\zeta\]
with some $\zeta \in \mathbb{F}^{\times}_q.$
This affine curve with genus $q(q-1)/2$
has $q(q^2-1)$ singular points at 
$(x,y)=(x_0,y_0) \in (k^{\rm ac})^2$ 
with $x_0^{q^2-1}=y_0^{q^2-1}=-1.$
Then, by blowing-up these points, we find $q(q^2-1)$ 
irreducible components
defined by $a^q-a=t^{q+1}.$

By \cite[Proposition 5.1]{W3}, it is 
guaranteed that the reduction of 
the spaces, explained above,
 really appears 
in the stable reduction
 $\overline{\mathcal{X}(\pi^2)}.$
 See also 
 \cite[Proposition 4.37]{Liu}
  for an analogous statement 
  in the language of schemes.

\subsection{Computation of 
the reduction of $\mathbf{Y}_{2,2}$}\label{yo1}
In this subsection, we 
compute the reduction 
of the space $\mathbf{Y}_{2,2}$
 by using the approximation 
 formula for $[\pi]_u$ 
 in Lemma \ref{all}.1.
 Let 
 $\pi_0(\mathbf{Y}_{2,2})$ denote 
 the set of geometrically 
 connected components of $\mathbf{Y}_{2,2}.$
First, we check that 
the reduction $\overline{\mathbf{Y}}_{2,2}$
has $q(q-1)$ connected components
 and fix an identification 
$\pi_0(\mathbf{Y}_{2,2}) \simeq 
(\mathcal{O}_F/\pi^2)^{\times}$.
Furthermore, we show that each component
is defined by 
the following equations
\[Z^q=x^{q^3}y-xy^{q^3},\ x^qy-xy^q=\zeta
\]
with some $\zeta \in \mathbb{F}^{\times}_q.$

Let $(X_2,Y_2)$ denote the Drinfeld $\pi^2$-basis
of $\mathcal{F}^{\rm univ}.$
We set $X_1:=[\pi]_{u}(X_2)$
 and $Y_1:=[\pi]_u(Y_2).$
On the space $(X_2,Y_2,u) 
\in \mathbf{Y}_{2,2},$
we have the followings as in
 subsection \ref{sub}
\[v(u) \geq \frac{q}{q+1},\ 
v(X_1)=v(Y_1)=\frac{1}{q^2-1},\ 
v(X_2)=v(Y_2)=\frac{1}{q^2(q^2-1)}.\]
We choose an element $\kappa_1$ such that
$\kappa_1^{q^3(q^2-1)}=\pi$ with 
$v(\kappa_1)=1/q^3(q^2-1).$
We set $\kappa:=\kappa_1^q$ and 
put $\gamma:=\kappa^{(q-1)(q^2-1)}$ with 
$v(\gamma)=(q-1)/q^2.$
We write
 $\gamma^{1/q(q^2-1)}$
for an 
element 
$\kappa_1^{q-1}$.
Then, we change variables as follows
$u=\kappa^{q^{3}(q-1)}u_0,\ 
X_1=\kappa^{q^2}x_1,\ 
Y_1=\kappa^{q^2}y_1,\ 
X_2=\kappa x$
and $Y_2=\kappa y.$
By $0=[\pi]_u(X_1)=[\pi]_u(Y_1)$ 
and Lemma \ref{all}.1,
we acquire the following
congruence
\begin{equation}\label{aw0}
u_0=-x_1^{q(q-1)}
-\frac{1}{x_1^{q-1}}
=-y_1^{q(q-1)}-\frac{1}{y_1^{q-1}}\ ({\rm mod}\ 1+).
\end{equation}
Let $f(X,Y)$ be 
a polynomial with coefficients in 
$\mathcal{O}_F$ such 
that $X^q-Y^q=(X-Y)^q+\pi 
f(X,Y).$
By the Drinfeld
 condition for $(X_1,Y_1)$,
we have $x_1 \neq y_1.$ 
Hence, 
we acquire 
\[(x_1^qy_1-x_1y_1^q)^{q-1}
=1-\pi 
\frac{f(x_1^qy_1,x_1y_1^q)}
{x_1^qy_1-x_1y_1^q}\ 
({\rm mod}\ 1+).
\]
This splits 
to $(q-1)$
 congruences 
 as follows
\begin{equation}\label{aw1}
x_1^qy_1-x_1y_1^q
=\zeta+
\pi f(x_1^qy_1,
x_1y_1^q)\ 
({\rm mod}\ 1+)
\end{equation}
with $\zeta 
\in \mu_{q-1}
(\mathcal{O}_F).$

By $X_1=[\pi]_u(X_2)$ 
and $Y_1=[\pi]_u(Y_2)$ and Lemma \ref{all}.1,
 we acquire the following 
congruences
\begin{equation}\label{aw2}
x_1 \equiv x^{q^2}
+\gamma^q u_0 x^q+\gamma^{q+1}x,\ 
y_1 \equiv y^{q^2}+\gamma^q u_0 
y^q+\gamma^{q+1}y\ ({\rm mod}\ 1+).
\end{equation}
We set
\begin{equation}\label{aw3'}
\mathcal{Z}:
=(x^qy-xy^q)^{q}
-\gamma (x^{q^3}y-xy^{q^3}).
\end{equation}
Substituting (\ref{aw2})
to (\ref{aw1}), we acquire the 
following congruence by using (\ref{aw0})
\begin{equation}\label{aw4}
\mathcal{Z}^q-\gamma^q\mathcal{Z}
=\zeta+\pi
(f^q-f)\ 
({\rm mod}\ 1+)
\end{equation}
where we write 
$f$ for 
$f(x^{q^2}y^q,
x^qy^{q^2}).$
We choose an element $\gamma_0$
such that
$\gamma_0^q-\gamma^q\gamma_0=\zeta.$
We set as follows
\begin{equation}\label{aw5}
\mathcal{Z}=
(x^qy-xy^q)^{q}
-\gamma (x^{q^3}y-xy^{q^3})=
\gamma_0-\gamma^{\frac{q}{q-1}} c.
\end{equation}
We set $\mu:=c+f.$
Then, by substituting (\ref{aw5}) to 
(\ref{aw4}), and 
dividing it by $\pi$, 
 we acquire $\mu^q 
 \equiv 
 \mu\ ({\rm mod}\ 0+).$
Therefore, the set 
$\pi_0(\mathbf{Y}_{2,2})$
 is identified with
$(\bar{\zeta},\bar{\mu}) 
\in \mathbb{F}^{\times}_q 
\times \mathbb{F}_q.$
Hence, we fix the 
following identification 
\begin{equation}\label{fc}
\pi_0(\mathbf{Y}_{2,2}) 
\simeq 
(\mathcal{O}_F/\pi^2)^{\times} 
\simeq 
\mathbb{F}_q^{\times} 
\times \mathbb{F}_q
 \ni (\bar{\zeta},
 \frac{\bar{\mu}}
 {\bar{\zeta}}).
\end{equation}


We choose an element $\tilde{\gamma}_0$
such that
$\tilde{\gamma}_0^q=\gamma_0.$
We set as follows
\begin{equation}\label{aw6}
x^qy-xy^q=\tilde{\gamma}_0+\gamma^{1/q}Z_1.
\end{equation}
By substituting (\ref{aw6}) 
to (\ref{aw5}) and 
dividing it by $\gamma$, 
we obtain the following congruence 
\begin{equation}\label{aw7}
Z_1^q \equiv x^{q^3}y-xy^{q^3}
-\gamma^{1/(q-1)}c\ ({\rm mod}\ (1/q^2)+).
\end{equation}
Hence, we have proved 
the following proposition by (\ref{aw6}) and (\ref{aw7}).
\begin{proposition}
The reduction $\overline{\mathbf{Y}}_{2,2}$
 has $q(q-1)$ connected components, 
 and each component is defined by the following equations
 \[x^qy-xy^q=\zeta,\ Z^q=x^{q^3}y-xy^{q^3}\]
 with some $\zeta \in \mathbb{F}^{\times}_q.$
\end{proposition}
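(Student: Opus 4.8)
The plan is to realize $\mathbf{Y}_{2,2}$ inside the formal model $X(\pi^2)\subset\mathcal{F}^{\rm univ}[\pi^2]\times_{X(1)}\mathcal{F}^{\rm univ}[\pi^2]$ with coordinates $(X_2,Y_2,u)$ as in \eqref{ea}, to set $X_1:=[\pi]_u(X_2)$ and $Y_1:=[\pi]_u(Y_2)$, and to start from the valuations of $u,X_1,X_2,Y_1,Y_2$ on $\mathbf{Y}_{2,2}$ recorded in subsection \ref{sub}. First I would fix a root $\kappa_1$ of $\pi$ of the appropriate denominator, set $\kappa:=\kappa_1^q$ and $\gamma:=\kappa^{(q-1)(q^2-1)}$, and rescale $u=\kappa^{q^3(q-1)}u_0$, $X_1=\kappa^{q^2}x_1$, $Y_1=\kappa^{q^2}y_1$, $X_2=\kappa x$, $Y_2=\kappa y$, so that $u_0,x_1,y_1,x,y$ all acquire valuation $0$ and their reductions serve as coordinates on $\overline{\mathbf{Y}}_{2,2}$.

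Next I would extract the two defining congruences by feeding the approximation formula of Lemma \ref{all}.1 into the relations $[\pi]_u(X_1)=[\pi]_u(Y_1)=0$ and $X_1=[\pi]_u(X_2)$, $Y_1=[\pi]_u(Y_2)$, each time discarding the terms lying in the stated error ideal. The torsion relations yield \eqref{aw0}, that is $u_0\equiv -x_1^{q(q-1)}-x_1^{-(q-1)}\equiv -y_1^{q(q-1)}-y_1^{-(q-1)}$ mod $1+$; the Drinfeld condition for $(X_1,Y_1)$ forces $x_1\neq y_1$, so $x_1^qy_1-x_1y_1^q$ is a unit, and comparing the two expressions for $u_0$ gives $(x_1^qy_1-x_1y_1^q)^{q-1}\equiv 1$ mod $1+$, which splits into the $q-1$ congruences \eqref{aw1} indexed by $\zeta\in\mu_{q-1}(\mathcal{O}_F)$. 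The relations $X_i=[\pi]_u(\cdot)$ give \eqref{aw2}, expressing $x_1,y_1$ through $x,y$ with coefficients involving $\gamma$ and $u_0$.

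I would then substitute \eqref{aw2} into \eqref{aw1} and reorganise around the quantity $\mathcal{Z}=(x^qy-xy^q)^q-\gamma(x^{q^3}y-xy^{q^3})$ of \eqref{aw3'}, producing the Artin--Schreier-type congruence $\mathcal{Z}^q-\gamma^q\mathcal{Z}\equiv\zeta+\pi(f^q-f)$ mod $1+$, which is \eqref{aw4}. Solving $\gamma_0^q-\gamma^q\gamma_0=\zeta$ and writing $\mathcal{Z}=\gamma_0-\gamma^{q/(q-1)}c$ as in \eqref{aw5} absorbs the constant term, and after dividing by $\pi$ the remainder reads $\mu^q\equiv\mu$ mod $0+$ with $\mu=c+f$, so $\bar\mu\in\mathbb{F}_q$. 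Thus a geometrically connected component of $\mathbf{Y}_{2,2}$ is pinned down by the pair $(\bar\zeta,\bar\mu)\in\mathbb{F}_q^\times\times\mathbb{F}_q$; this gives the count $q(q-1)$ and fixes the identification $\pi_0(\mathbf{Y}_{2,2})\simeq(\mathcal{O}_F/\pi^2)^\times$ of \eqref{fc}. One further rescaling $x^qy-xy^q=\tilde\gamma_0+\gamma^{1/q}Z_1$ as in \eqref{aw6}, substituted into \eqref{aw5} and divided by $\gamma$, yields \eqref{aw7}: $Z_1^q\equiv x^{q^3}y-xy^{q^3}-\gamma^{1/(q-1)}c$ mod $(1/q^2)+$. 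Passing to the reduction annihilates $\gamma^{1/(q-1)}c$ together with the error term, leaving $Z^q=x^{q^3}y-xy^{q^3}$ and $x^qy-xy^q=\zeta$ on the component labelled by $\zeta$; that $\overline{\mathbf{Y}}_{2,2}^{\,c}$ really appears in $\overline{\mathcal{X}(\pi^2)}$ is \cite[Proposition 5.1]{W3}.

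The step I expect to be the main obstacle is the valuation bookkeeping across the three successive changes of variables: one has to verify that every term discarded in applying Lemma \ref{all}.1, and every cross term generated by the rescalings, genuinely lies beyond the relevant cutoff ($1+$ at the first two stages, $0+$ after dividing by $\pi$, and $(1/q^2)+$ at the last), so that the congruences \eqref{aw0}, \eqref{aw1}, \eqref{aw2}, \eqref{aw4} and \eqref{aw7} hold as written and the limiting equations are exactly those in the proposition. Once the error ideals of Lemma \ref{all}.1 are granted, everything else is a routine computation.
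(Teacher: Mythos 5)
Your proposal follows the paper's own argument in subsection \ref{yo1} essentially verbatim: the same rescaling by $\kappa,\gamma$, the same use of Lemma \ref{all}.1 on $[\pi]_u(X_1)=[\pi]_u(Y_1)=0$ and $X_i=[\pi]_u(\cdot)$ to get \eqref{aw0}--\eqref{aw2}, the same auxiliary quantity $\mathcal{Z}$ leading to \eqref{aw4}, the absorption via $\gamma_0$ and $\mu=c+f$ giving the $q(q-1)$ count, and the final substitution \eqref{aw6} yielding \eqref{aw7}. The proof is correct and matches the paper's route, including the expected care with the error ideals of Lemma \ref{all}.1.
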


\subsection{Analysis of 
the singular residue classes in $\mathbf{Y}_{2,2}$}
\label{yo2}
In this subsection, 
we analyze the singular residue classes of $\mathbf{Y}_{2,2}.$
Let $\{\overline{\mathbf{Y}}^i_{2,2}\}_{i=(\zeta,\tilde{\mu}) \in 
\mathbb{F}^{\times}_q \times \mathbb{F}_q}$ denote the connected components
of the reduction $\overline{\mathbf{Y}}_{2,2}.$
Let $i=(\zeta,\tilde{\mu}) \in \mathbb{F}^{\times}_q \times \mathbb{F}_q.$
As in the previous subsection, the reduction
$\overline{\mathbf{Y}}^i_{2,2}$ is defined by the following equations
\[x^qy-xy^q=\zeta,\ Z^q=x^{q^3}y-xy^{q^3}\]
with $\zeta \in 
\mathbb{F}^{\times}_q.$
This affine curve has singular points at the following set
\[\mathcal{S}^i_{00}:=
\{(x_0,y_0,Z_0) \in \overline{\mathbf{Y}}^i_{2,2}(k^{\rm ac})\ |\ 
x_0^{q^2}y_0-x_0y_0^{q^2}=0\}.
\]
Note that we have $|\mathcal{S}^i_{00}|=q(q^2-1)=
|{\rm SL}_2(\mathbb{F}_q)|.$
This set $\mathcal{S}^{i}_{00}$
is identified with the following set
$\{(x_0,y_0) \in ({k^{\rm ac}}^{\times})^2\ 
|\ x_0^qy_0-x_0y_0^q=\zeta,\ 
x_0^{q^2-1}=y_0^{q^2-1}=-1\}.$
For $j:=(x_0,y_0) \in \mathcal{S}^i_{00},$
we denote by $\mathbf{X}^i_j$ the underlying affinoid 
of the singular residue class 
of $(x,y)=j$ in the space $\mathbf{Y}^i_{2,2}.$
Furthermore, we write $X^i_j$ 
for the reduction of the affinoid 
$\mathbf{X}^i_j.$
In this subsection, we compute the reduction $X^i_j.$
 
By using (\ref{aw6}), 
the congruence 
(\ref{aw7}) is rewritten as follows
\begin{equation}\label{aw8}
Z_1^q \equiv \tilde{\gamma}_0^q 
\frac{(\tilde{\gamma}_0^{q-1}+y^{q^2-1})^q}{y^{q^2-1}}
+y^{q(q^2-1)}(\tilde{\gamma}_0+\gamma^{1/q}Z_1)-\gamma^{1/(q-1)}c\ 
({\rm mod}\ (1/q^2)+).
\end{equation}
 We choose an element $\tilde{\gamma}_1$ such that
 $\tilde{\gamma}_1^q
 +\tilde{\gamma}_0
 +\gamma^{1/q}
 \tilde{\gamma}_1=0.$
 Furthermore, choose $y_0,x_0$ such that 
 $y_0^{q^2-1}+\tilde{\gamma}_0^{q-1}=0$
 and $x_0^qy_0-x_0y_0^q
 =\tilde{\gamma}_0+\gamma^{1/q}\tilde{\gamma}_1.$
We set $w:=\gamma^{1/q(q-1)}$ and 
$w_1:=y_0^q \gamma^{1/(q^2-1)}.$
 Then, we have $v(w)=1/q^3$ 
 and $v(w_1)=1/q^2(q+1).$
 
 We change variables as follows
 \begin{equation}\label{ss}
 Z_1=\tilde{\gamma}_1+w a,\  
 y=y_0+w_1z_1.
 \end{equation}
Substituting them to (\ref{aw8}), 
we acquire the following congruence
\begin{equation}\label{aw9}
w^q(a^q+a) 
\equiv -w_1^{q+1}\zeta \biggl(\frac{z_1}{y_0}\biggr)^{q+1}
-\gamma^{1/(q-1)}c_0\ ({\rm mod}\ (1/q^2)+)
\end{equation} 
where we set 
$c_0=\mu
-f(x_0^{q^2}y_0^q,
x_0^qy_0^{q^2})$ with some $\mu \in 
\mu_{q-1}(\mathcal{O}_F)\cup \{0\}.$
Hence, by dividing 
(\ref{aw9}) by $w^q,$ the following congruence
holds
\begin{equation}\label{aw10}
a^q+a=
\zeta z_1^{q+1}-c_0\ ({\rm mod}\ 0+).
\end{equation}
Hence, the reduction $X^i_j$ is defined by the 
following equation
$a^q+a=\zeta z_1^{q+1}
-c_0.$
Therefore, we have proved the
 following Proposition \ref{fou-1}.
We set
\[
\mathcal{S}:=\{((\zeta,\tilde{\mu}),(x_0,y_0)) \in  
(\mathbb{F}^{\times}_q \times \mathbb{F}_q) \times (k^{\rm ac})^2\ |\ 
x_0^qy_0-x_0y_0^q=\zeta,\ x_0^{q^2-1}=y_0^{q^2-1}=-1\}.
\]
For an element
 $((\zeta,\tilde{\mu}),(x_0,y_0)) \in \mathcal{S},$ 
we set 
$i:=(\zeta,\tilde{\mu})$
and $j:=(x_0,y_0) \in
 \mathcal{S}^i_{00}.$
\begin{proposition}\label{fou-1}
Let the notation be as above.
Then, in the stable reduction of the Lubin-Tate space 
$\mathcal{X}(\pi^2),$ there exist $q(q-1) 
\times q(q^2-1)$ irreducible components 
$\{X^{i,c}_j\}_{(i,j) \in \mathcal{S}}$ such that
each $X^{i,c}_j$ has an affine model with an equation
$a^q+a=\zeta z_1^{q+1}-c$ with some 
$c \in \mathbb{F}_q.$
These components attach to the connected component of 
$\overline{\mathbf{Y}}_{2,2}$
at $q(q^2-1)$ singular points. 
\end{proposition}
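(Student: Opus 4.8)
The proof will assemble the computations carried out in this subsection into a single statement. Fix $(i,j) \in \mathcal{S}$, write $i=(\zeta,\tilde{\mu})$ and $j=(x_0,y_0) \in \mathcal{S}^i_{00}$, and let $\mathbf{X}^i_j$ be the affinoid underlying the singular residue class of $(x,y)=j$ in $\mathbf{Y}^i_{2,2}$. The plan is to exhibit the reduction $X^i_j$ explicitly by transporting the congruence \eqref{aw7} through the coordinate change \eqref{ss}. First I would record the valuations $v(w)=1/q^3$ and $v(w_1)=1/q^2(q+1)$ of the scaling parameters $w=\gamma^{1/q(q-1)}$ and $w_1=y_0^q\gamma^{1/(q^2-1)}$, and check that on $\mathbf{X}^i_j$ the coordinates $a,z_1$ determined by $Z_1=\tilde{\gamma}_1+wa$ and $y=y_0+w_1z_1$ take values in the closed unit disc — this is exactly the assertion that $\mathbf{X}^i_j$ is the indicated residue class. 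Substituting these relations into the rewritten form \eqref{aw8} of \eqref{aw7}, and using the defining equations of $\tilde{\gamma}_0$, $\tilde{\gamma}_1$, $x_0$, $y_0$ to cancel the constant terms, will yield \eqref{aw9}; dividing by $w^q$ then gives \eqref{aw10}, i.e.\ $a^q+a \equiv \zeta z_1^{q+1}-c_0\ ({\rm mod}\ 0+)$. Hence the reduction $X^i_j$ is the affine curve $a^q+a=\zeta z_1^{q+1}-c$ with $c=\bar{c}_0 \in \mathbb{F}_q$, the asserted affine model.

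Next I would count the components and confirm that each genuinely appears in $\overline{\mathcal{X}(\pi^2)}$. By \eqref{fc}, the reduction $\overline{\mathbf{Y}}_{2,2}$ has $q(q-1)$ connected components $\overline{\mathbf{Y}}^i_{2,2}$ indexed by $i \in (\mathcal{O}_F/\pi^2)^\times$, and for each $i$ the singular locus $\mathcal{S}^i_{00}$ of $\overline{\mathbf{Y}}^i_{2,2}$ has $|\mathcal{S}^i_{00}|=q(q^2-1)$ points, producing the $q(q-1)\times q(q^2-1)$ pairs $(i,j) \in \mathcal{S}$, hence that many affinoids $\mathbf{X}^i_j$ with pairwise disjoint specializations lying over distinct singular points. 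That the smooth compactification $X^{i,c}_j$ of the normalization of $X^i_j$ occurs as an irreducible component of the stable reduction, attached to $\overline{\mathbf{Y}}^{i,c}_{2,2}$ at the single point over $j$, will follow from \cite[Proposition 5.1]{W3} (cf.\ \cite[Proposition 4.37]{Liu}): blowing up the relevant formal model along the singular residue class makes $\mathbf{X}^i_j$ sit over $j$, and since $a^q+a=\zeta z_1^{q+1}-c$ is a smooth affine curve of genus $q(q-1)/2>0$ it contributes a genuine vertical component meeting $\overline{\mathbf{Y}}^{i,c}_{2,2}$ transversally at one point.

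The main obstacle is the precision bookkeeping in the first step. One must verify that, after dividing \eqref{aw9} by $w^q$, every term not displayed in \eqref{aw10} has strictly positive valuation; this amounts to tracking the valuation of each monomial produced when \eqref{aw2} and the coordinate change \eqref{ss} are substituted into \eqref{aw1}, together with the correction term coming from Lemma \ref{all}.1. In particular one has to see that the contribution of $f(x^{q^2}y^q,x^qy^{q^2})$ and of the higher-order terms collapses, modulo $0+$, to the single constant $c_0=\mu-f(x_0^{q^2}y_0^q,x_0^qy_0^{q^2})$, with no residual $z_1$- or $a$-dependence surviving, and that $\bar{c}_0 \in \mathbb{F}_q$. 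Once this is settled, the smoothness of $X^i_j$ and the transversality of the attachment are immediate, and the proposition follows.
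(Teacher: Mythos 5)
Your proposal is correct and follows essentially the same route as the paper: the same coordinate change $Z_1=\tilde{\gamma}_1+wa$, $y=y_0+w_1z_1$ applied to the congruence \eqref{aw8}, division by $w^q$ to reach $a^q+a\equiv\zeta z_1^{q+1}-c_0\ ({\rm mod}\ 0+)$, the same count $q(q-1)\times q(q^2-1)$ from \eqref{fc} and $|\mathcal{S}^i_{00}|=q(q^2-1)$, and the same appeal to \cite[Proposition 5.1]{W3} (cf.\ \cite[Proposition 4.37]{Liu}) for the appearance of the components in $\overline{\mathcal{X}(\pi^2)}$. The valuation bookkeeping you flag as the main obstacle is exactly what the paper's computation in subsection \ref{yo2} carries out, so no new idea is needed.
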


\subsection{Computation of the reduction of 
the space $\mathbf{Z}_{1,1,\ast}\ 
(\ast=0,\infty)$}\label{zo1}
In this subsection, we calculate the reduction 
of the space $\mathbf{Z}_{1,1,\ast}$ with 
$\ast =0,\infty$ by using 
Lemma \ref{all}.1.
First, we check 
that the reduction
$\overline
{\mathbf{Z}}_{1,1,0}$
has $q(q-1)$ 
connected components.
Secondly, we show that
each component is defined by 
the following equation
\[Z^q=X^{q^2-1}+X^{-(q^2-1)}\]
with genus $0.$
Since we have 
\[
(X_2,Y_2,u) \in \mathbf{Z}_{1,1,0} \Leftrightarrow 
(Y_2,X_2,u) \in \mathbf{Z}_{1,1,\infty} 
\]
by Definition \ref{fd}.5, 
the same things happen for 
$\mathbf{Z}_{1,1,\infty}.$

Let $(X_2,Y_2)$ be a
 Drinfeld $\pi^2$-basis of the universal formal 
$\mathcal{O}_F$-module 
$\mathcal{F}^{\rm univ}.$
We set 
$X_1:=[\pi]_u(X_2)$ and 
$Y_1:=[\pi]_u(Y_2).$
Recall that we 
have the 
followings on the space 
$(X_2,Y_2,u) \in \mathbf{Z}_{1,1,0}$
as in Definition \ref{fd}
\[v(u)=\frac{1}{2},\ 
v(X_1)=\frac{1}{2(q-1)},\ 
v(Y_1)=\frac{1}{2q(q-1)},\ 
v(X_2)=\frac{1}{2q^2(q-1)},\ 
v(Y_2)=\frac{1}{2q^3(q-1)}.
\]

We choose an element $\kappa_1$
such that
$\kappa_1^{2q^4(q-1)}=\pi$ with 
$v(\kappa_1)
=1/2q^4(q-1).$ We set 
$\kappa:=\kappa_1^q.$
We set $\gamma:=\kappa^{q(q-1)^2}$ with 
$v(\gamma)=(q-1)/2q^2.$
We write $\gamma^{\frac{1}{q^2(q-1)}}$
for $\kappa_1^{q-1}$.
We change variables as follows
$u=\kappa^{q^3(q-1)} u_0,\ 
X_1=\kappa^{q^3}x_1,\ 
Y_1=\kappa^{q^2}y_1,\ X_2=\kappa^q x$
and $Y_2=\kappa y.$
Considering $[\pi]_u(X_1)=[\pi]_u(Y_1)=0,$
we acquire the 
following 
congruence 
by using 
Lemma \ref{all}.1
\begin{equation}\label{aq1}
u_0 \equiv -\frac{1}{x_1^{q-1}} 
\equiv -y_1^{q(q-1)}
-\frac{\gamma^q}{y_1^{q-1}}\ 
({\rm mod}\ 1+).
\end{equation} 
By (\ref{aq1}), 
the following 
congruence
holds
\begin{equation}\label{aq4}
x_1 \equiv \frac{\zeta}{y_1^q}
\biggl(1+\frac{\gamma^q}{y_1^{q^2-1}}\biggr)\ 
({\rm mod}\ 1+)
\end{equation}
with $\zeta \in 
\mu_{q-1}(\mathcal{O}_F).$
By considering 
$X_1=[\pi]_u(X_2)$ and $Y_1=[\pi]_u(Y_2),$
we obtain 
the following 
congruences again by Lemma \ref{all}.1
\begin{equation}\label{aq2}
x_1 \equiv x^{q^2}+\gamma^q u_0 x^q
+\gamma^{2q+1}x\ ({\rm mod}\ 1+),\  
y_1 \equiv y^{q^2}
+\gamma^{q+1}u_0y^q\ ({\rm mod}\ ((q+1)/2q)+).
\end{equation}
We set
\begin{equation}\label{aq6}
\mathcal{Z}:=(xy^q)^q
-\gamma\biggl
(xy^{q^3}+\frac{\zeta}{y^{q(q^2-1)}}\biggr).
\end{equation}
By substituting (\ref{aq2}) 
to $(\ref{aq4}) \times y^{q^3}$, we acquire
 the following congruence by using (\ref{aq1})
\begin{equation}\label{olo}
\mathcal{Z}^q-\gamma^{2q}\mathcal{Z}
=\zeta\ ({\rm mod}\ 1+).
\end{equation}
We choose an element $\gamma_0$ such that
$\gamma_0^q-\gamma^{2q}\gamma_0=\zeta.$
Then, by (\ref{olo}), if we set
$\mathcal{Z}=\gamma_0-\gamma^{\frac{2q}{q-1}}\mu,$
we obtain $\mu^q \equiv \mu\ ({\rm mod}\ 0+).$
Similarly as (\ref{fc}), 
we fix the following identification
\begin{equation}\label{fc2}
\pi_0(\mathbf{Z}_{1,1,0}) 
\simeq 
(\mathcal{O}_F/\pi^2)^{\times}
\simeq \mathbb{F}^{\times}_q
\times 
\mathbb{F}_q \in 
(\bar{\zeta},
\frac{\bar{\mu}}{\bar{\zeta}}).
\end{equation}
Now, we choose an element $\mu \in
 \mu_{q-1}(\mathcal{O}_F) \cup\{0\}.$
We have the following congruence 
by (\ref{aq6})
\begin{equation}\label{aq8}
\mathcal{Z}=(xy^q)^q-
\gamma\biggl(xy^{q^3}+\frac{\zeta}{y^{q(q^2-1)}}\biggr)
\equiv
\gamma_0+\gamma^{\frac{2q}{q-1}}\mu\ 
({\rm mod}\ (1/q)+).
\end{equation}
We choose an element $\tilde{\gamma}_0$
such that
$\tilde{\gamma}_0^q=\gamma_0+\gamma^{\frac{2q}{q-1}}\mu.$
Then, 
we set 
\begin{equation}\label{aq7}
xy^q=\tilde{\gamma}_0+\gamma^{1/q}Z_1.
\end{equation}
Substituting this to 
(\ref{aq8}),
 and dividing it by $\gamma,$
the following congruence holds
\begin{equation}\label{aq9}
Z_1^q \equiv xy^{q^3}+
\zeta y^{-q(q^2-1)}\ ({\rm mod}\ (1/2q^2)+).
\end{equation}
Substituting $x=\frac{\tilde{\gamma}_0
+\gamma^{1/q}Z_1}{y^q}$ (\ref{aq7}) to 
the right hand side of the congruence (\ref{aq9}),
we obtain the following congruence
\begin{equation}\label{aq10}
\bigl(Z_1-y^{q^2-1}\tilde{\gamma}_0^{1/q}-
\zeta y^{-(q^2-1)}\bigr)^q 
\equiv \gamma^{1/q}y^{q(q^2-1)}Z_1\ ({\rm mod}\ (1/2q^2)+).
\end{equation}
We introduce a new parameter $Z_2$ such that
\begin{equation}\label{aq11}
y^{q^2-1}\gamma^{1/q^2}Z_2
=Z_1-y^{q^2-1}
\tilde{\gamma}_0^{1/q}-\zeta y^{-(q^2-1)}.
\end{equation}
Substituting this to (\ref{aq10}),
 and dividing it by $\gamma^{1/q}y^{q(q^2-1)},$
we obtain the following
congruence
$Z_2^q \equiv Z_1\ ({\rm mod}\ (1/2q^3)+).$
By substituting this to (\ref{aq11}), 
the following congruence holds 
\begin{equation}\label{aq13}
Z_2^q \equiv \zeta(y^{q^2-1}+y^{-(q^2-1)})
+y^{q^2-1}\gamma^{1/q^2}Z_2\ ({\rm mod}\ (1/2q^3)+).
\end{equation}
Note that we have $\tilde{\gamma}_0^{1/q}
=\gamma_0^{1/q}=\zeta\ ({\rm mod}\ (1/2q^3)+)$.
Therefore, we have proved the following proposition.
\begin{proposition}
The reduction $\overline{\mathbf{Z}}_{1,1,0}$
and $\overline{\mathbf{Z}}_{1,1,\infty}$
has $q(q-1)$ connected components,
 and each component is defined by the following equation
\[Z^q=\zeta(y^{q^2-1}+y^{-(q^2-1)}).\]
\end{proposition}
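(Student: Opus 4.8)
The plan is to work one geometrically connected component at a time and to extract the reduction directly from the defining congruences of $\mathbf{Z}_{1,1,0}$, viewed inside $X(\pi^2) \hookrightarrow \mathcal{F}^{\rm univ}[\pi^2] \times_{X(1)} \mathcal{F}^{\rm univ}[\pi^2] \ni (X_2,Y_2,u)$. Writing $(X_2,Y_2)$ for the Drinfeld $\pi^2$-basis and $X_1 = [\pi]_u(X_2)$, $Y_1 = [\pi]_u(Y_2)$, the valuations of $u,X_1,X_2,Y_1,Y_2$ are prescribed by Definition \ref{fd}.4. First I would choose a root $\kappa_1$ of $\pi$ with $v(\kappa_1) = 1/2q^4(q-1)$ and set $\kappa = \kappa_1^q$, $\gamma = \kappa^{q(q-1)^2}$, so that the substitution $u = \kappa^{q^3(q-1)}u_0$, $X_1 = \kappa^{q^3}x_1$, $Y_1 = \kappa^{q^2}y_1$, $X_2 = \kappa^q x$, $Y_2 = \kappa y$ turns every new coordinate into a unit. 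The whole point of the scaling is that after it the two relations $[\pi]_u(X_1) = [\pi]_u(Y_1) = 0$, expanded by Lemma \ref{all}.1, collapse to the congruence \eqref{aq1}, and the two relations $X_1 = [\pi]_u(X_2)$, $Y_1 = [\pi]_u(Y_2)$ collapse to \eqref{aq2}; here one must check that all the terms discarded in Lemma \ref{all}.1 land, after the rescaling, in ideals of strictly larger valuation than the terms retained.

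Next, eliminating $u_0$ between the two halves of \eqref{aq1} produces \eqref{aq4}, which says that $x_1 y_1^q$ is, modulo $1+$, a $(q-1)$-st root of unity $\zeta$ up to a controlled correction; this is where the first discrete parameter $\zeta \in \mu_{q-1}(\mathcal{O}_F)$ of a component enters. I would then introduce the auxiliary function $\mathcal{Z}$ of \eqref{aq6}, substitute \eqref{aq2} into the relation obtained from \eqref{aq4} after multiplying by $y^{q^3}$, and simplify using \eqref{aq1} to arrive at the Artin–Schreier-type identity $\mathcal{Z}^q - \gamma^{2q}\mathcal{Z} \equiv \zeta \pmod{1+}$ of \eqref{olo}. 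Solving $\gamma_0^q - \gamma^{2q}\gamma_0 = \zeta$ and writing $\mathcal{Z} = \gamma_0 - \gamma^{2q/(q-1)}\mu$ forces $\mu^q \equiv \mu \pmod{0+}$, so $\bar\mu \in \mathbb{F}_q$; together with $\bar\zeta \in \mathbb{F}_q^\times$ this exhibits exactly $q(q-1)$ connected components and fixes the identification $\pi_0(\mathbf{Z}_{1,1,0}) \simeq (\mathcal{O}_F/\pi^2)^\times \simeq \mathbb{F}_q^\times \times \mathbb{F}_q$ of \eqref{fc2}, in parallel with \eqref{fc}.

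Finally, to read off the stated equation on a fixed component I would cascade two further substitutions: first $xy^q = \tilde\gamma_0 + \gamma^{1/q}Z_1$ with $\tilde\gamma_0^q = \gamma_0 + \gamma^{2q/(q-1)}\mu$, which turns \eqref{aq8} into \eqref{aq9}; then, substituting $x = (\tilde\gamma_0 + \gamma^{1/q}Z_1)/y^q$ back in and introducing $Z_2$ via \eqref{aq11}, the congruences collapse first to $Z_2^q \equiv Z_1$ and then to \eqref{aq13}, i.e. $Z_2^q \equiv \zeta(y^{q^2-1} + y^{-(q^2-1)}) + y^{q^2-1}\gamma^{1/q^2}Z_2 \pmod{(1/2q^3)+}$, using $\tilde\gamma_0^{1/q} \equiv \zeta$. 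Since $v(\gamma^{1/q^2}) > 0$, the last term vanishes in the reduction, giving the model $Z^q = \zeta(y^{q^2-1} + y^{-(q^2-1)})$ on each component. For $\mathbf{Z}_{1,1,\infty}$ no new computation is needed: by Definition \ref{fd}.5 the swap $(X_2,Y_2,u) \leftrightarrow (Y_2,X_2,u)$ identifies it with $\mathbf{Z}_{1,1,0}$, so the same count and the same equation follow. The main obstacle throughout is not any single algebraic manipulation but the valuation bookkeeping: at each invocation of Lemma \ref{all}.1 one must confirm that the neglected terms, after the rescalings, have valuation strictly beyond the quoted error order $(\bmod\ \alpha+)$, and that the successive substitutions keep the reduced equation nondegenerate, in particular that the various powers $\gamma^{\bullet}$ die at exactly the right rates.
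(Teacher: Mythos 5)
Your proposal is correct and follows essentially the same route as the paper's own computation in subsection \ref{zo1}: the same rescaling by $\kappa_1,\kappa,\gamma$, the same use of Lemma \ref{all}.1 to obtain \eqref{aq1}--\eqref{aq2}, the same auxiliary $\mathcal{Z}$ leading to \eqref{olo} and the parametrization $(\bar\zeta,\bar\mu)$ of the $q(q-1)$ components, the same cascade of substitutions \eqref{aq7}, \eqref{aq10}, \eqref{aq11} yielding \eqref{aq13}, and the same swap of $X_2$ and $Y_2$ for $\mathbf{Z}_{1,1,\infty}$. No substantive difference from the paper's argument.
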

\begin{proof}
The required assertion for $\overline{\mathbf{Z}}_{1,1,0}$
 follows from (\ref{aq13}).
 Switching the 
 roles of $X_2$ and $Y_2$, we obtain the required assertion
 for $\overline{\mathbf{Z}}_{1,1,\infty}$.
\end{proof}

\subsection{Analysis of the singular 
residue classes of $\mathbf{Z}_{1,1,\ast}\ 
(\ast=0,\infty)$}\label{zo2}
In this subsection, we analyze the singular 
residue classes in the space
$\mathbf{Z}_{1,1,\ast}$ 
with $\ast =0,\infty.$ 
We keep the same 
notation as in the previous subsection.
Let $\{\overline{\mathbf{Z}}^i_{1,1,\ast}\}
_{i \in \mathbb{F}_q^{\times} \times \mathbb{F}_q}$
denote the connected components of 
the reduction $\overline{\mathbf{Z}}_{1,1,\ast}.$
Recall that the reduction 
$\overline{\mathbf{Z}}^i_{1,1,\ast}$ 
is defined by
the following equation
\[Z^q=\zeta (y^{q^2-1}+y^{-(q^2-1)}).\]
This affine curve has singular points at $y 
\in \mu_{2(q^2-1)}.$
We set 
\[\mathcal{S}_0:=(\mathcal{O}_F/\pi^2)^{\times} 
\times \mu_{2(q^2-1)} \ni (i,j).\]
For $(i,j) \in \mathcal{S}_0,$ 
we denote by $\mathbf{W}^i_{0,j}$ 
(resp.\ $\mathbf{W}^i_{\infty,j}$)
the underlying affinoid of the 
singular residue class at $y=j$ 
of the space 
$\mathbf{Z}^i_{1,1,0}.$ (resp.\ 
$\mathbf{Z}^i_{1,1,\infty}.$)
Furthermore, for $\ast=0,\infty$,
  we denote by $W^i_{\ast,j}$
the reduction of the affinoid 
$\mathbf{W}^i_{\ast,j}.$

We choose elements 
$\tilde{\gamma}_1$ and 
$y_0$ such that
$\tilde{\gamma}_1^q=
\iota2\zeta \{1+\gamma^{1/q^2}
\bigl(\frac{\tilde{\gamma}_1}{\zeta}\bigr)\}^{1/2}$
and $y_0^{q^2-1}=
\iota/\{1+\gamma^{1/q^2}\bigl
(\frac{\tilde{\gamma}_1}{\zeta}\bigr)\}^{1/2}$
with $\iota \in \{\pm1\}.$
Set $w:=y_0^{q+1}\gamma^{\frac{1}{q^2(q-1)}}.$
Furthermore, we choose an element $w_1$
such that
$y_0^{q^2-3}(\zeta+\gamma^{1/q^2}\tilde{\gamma}_1)w_1^2=w^q.$
Then, we have $v(w)=1/2q^4$ and $v(w_1)=1/4q^3.$ 

We change variables as follows
\begin{equation}\label{st}
Z_2=\tilde{\gamma}_1+w a,\ y=y_0+w_1 y_1.
\end{equation}
Substituting them to (\ref{aq13}), we acquire the following
\[w^q(a^q-a) \equiv 
y_0^{q^2-3}(\zeta+\gamma^{1/q^2}\tilde{\gamma}_1)w_1^2 y_1^2\ 
({\rm mod}\ (1/2q^3)+).
\]
Hence, by dividing this by $w^q,$ we acquire the following 
$a^q-a=y_1^2\ ({\rm mod}\ 0+).$
Hence, the reduction $W^i_{0,j}$ is defined by $a^q-a=y_1^2.$
Therefore, we have proved the following proposition.
\begin{proposition}\label{fou0}
In the stable reduction of the Lubin-Tate space $\mathcal{X}(\pi^2),$
there exist $q(q-1) \times 2(q^2-1)$ 
irreducible components 
$\{W^{i,c}_{0,j}\}
_{(i,j) \in \mathcal{S}_0}$ (resp.\ 
$\{W^{i,c}_{\infty,j}\}_{(i,j) \in \mathcal{S}_0}$) 
with an affine model $a^q-a=s^2.$
For each $i \in (\mathcal{O}_F/\pi^2)^{\times},$ 
the components $\{W^i_{0,j}\}_{j \in \mu_{2(q^2-1)}}$
(resp.\  $\{W^i_{\infty,j}\}_{j \in \mu_{2(q^2-1)}}$)
 attach 
to the component
 $\overline{\mathbf{Z}}^i_{1,1,0}$
(resp.\ $\overline{\mathbf{Z}}^i_{1,1,\infty}$)
at $2(q^2-1)$ singular points.
\end{proposition}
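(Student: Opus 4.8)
The plan is to read the statement off the local analysis of the singular residue classes of $\mathbf{Z}_{1,1,0}$ and $\mathbf{Z}_{1,1,\infty}$ carried out in the lines just above it, and then to cite \cite[Proposition 5.1]{W3} for the fact that the components produced this way genuinely occur in $\overline{\mathcal{X}(\pi^2)}$.

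First I would pin down the singular locus of the reduction curve $\overline{\mathbf{Z}}^i_{1,1,0}\colon Z^q=\zeta(y^{q^2-1}+y^{-(q^2-1)})$. Writing $g(y)=\zeta(y^{q^2-1}+y^{-(q^2-1)})$, the partial derivative in $Z$ vanishes identically in characteristic $p\mid q$, so the singular points are exactly the zeros of $g'(y)=\zeta(q^2-1)(y^{q^2-2}-y^{-q^2})$; since $q^2-1\not\equiv 0$ in $k$ these are the points with $y\in\mu_{2(q^2-1)}$, and over each such $j$ there is a unique point of the curve (its $Z$-coordinate being the $q$-th root of $g(j)=\pm 2\zeta\neq 0$), giving the claimed $2(q^2-1)$ singular points. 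For each $j\in\mu_{2(q^2-1)}$ one isolates the corresponding singular residue class, i.e.\ the affinoid $\mathbf{W}^i_{0,j}\subset\mathbf{Z}^i_{1,1,0}$ lying over $(y,Z)=j$, whose reduction $W^i_{0,j}$ is what has to be described.

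Next I would run through the change of variables (\ref{st}), namely $Z_2=\tilde{\gamma}_1+wa$ and $y=y_0+w_1y_1$, with the normalizing radicals $\tilde{\gamma}_1,y_0,w,w_1$ chosen exactly as arranged just before the statement so that $v(w)=1/2q^4$ and $v(w_1)=1/4q^3$; substituting into the key congruence (\ref{aq13}) and clearing $w^q$ leaves, modulo $0+$, the equation $a^q-a=y_1^2$. Hence $W^i_{0,j}$ has affine model $a^q-a=s^2$, and its smooth compactification $W^{i,c}_{0,j}$ is an Artin--Schreier curve of genus $(q-1)/2$, which is positive since $p\neq 2$. Because $\mathbf{W}^i_{0,j}$ lies over the single point $y=j$, the reduction map forces $W^{i,c}_{0,j}$ to meet $\overline{\mathbf{Z}}^i_{1,1,0}$ precisely at that singular point, so for each fixed $i$ the $2(q^2-1)$ new components attach to $\overline{\mathbf{Z}}^i_{1,1,0}$ at its $2(q^2-1)$ singular points.

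Finally, letting $i$ range over $(\mathcal{O}_F/\pi^2)^{\times}$, of which there are $|\mathbb{F}_q^{\times}\times\mathbb{F}_q|=q(q-1)$ by the identification (\ref{fc2}), and $j$ over $\mu_{2(q^2-1)}$, one obtains the $q(q-1)\times 2(q^2-1)$ components $\{W^{i,c}_{0,j}\}_{(i,j)\in\mathcal{S}_0}$; that they lie in the stable reduction $\overline{\mathcal{X}(\pi^2)}$ is \cite[Proposition 5.1]{W3}. The assertions for $\{W^{i,c}_{\infty,j}\}_{(i,j)\in\mathcal{S}_0}$ follow word for word after interchanging $X_2$ and $Y_2$, using the equivalence $(X_2,Y_2,u)\in\mathbf{Z}_{1,1,0}\Leftrightarrow(Y_2,X_2,u)\in\mathbf{Z}_{1,1,\infty}$ of Definition \ref{fd}.5. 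I expect the only point really needing care to be the valuation bookkeeping in (\ref{st}) --- checking that the auxiliary radicals are consistently chosen and that every term dropped in passing from (\ref{aq13}) to $a^q-a=y_1^2$ indeed acquires positive valuation after division by $w^q$ --- but this has already been set up by the choices made immediately before the statement, so the proof reduces to assembling (\ref{aq13}) with (\ref{st}).
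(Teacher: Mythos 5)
Your proposal is correct and follows essentially the same route as the paper: the paper's subsection on the singular residue classes of $\mathbf{Z}_{1,1,\ast}$ performs exactly the substitution of (\ref{st}) into (\ref{aq13}), divides by $w^q$ to obtain $a^q-a=y_1^2$ modulo $0+$, handles $\ast=\infty$ by interchanging $X_2$ and $Y_2$, and appeals to \cite[Proposition 5.1]{W3} for the appearance of these components in $\overline{\mathcal{X}(\pi^2)}$. Your extra verification of the singular locus via the Jacobian criterion and the count of components are consistent with the paper's computation, so no gap remains.
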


\subsection{Computation of 
the reduction of 
the space $\mathbf{Z}_{1,1,c}$}\label{zo3}
In this subsection, 
we compute the reduction of the space $\mathbf{Z}_{1,1,c}.$
We prove that the space $\mathbf{Z}_{1,1,c}$
has $q(q-1)^2$ connected components,
 and ech component
is defined by
$Z^q=X^{q^2-1}+X^{-(q^2-1)}.$

Recall that we have the followings 
on the space $\mathbf{Z}_{1,1,c}$
as in Definition \ref{fd}
\[v(u)=\frac{1}{2},\ v(X_1)=v(Y_1)=\frac{1}{2q(q-1)},\ 
v(X_2)=v(Y_2)=\frac{1}{2q^3(q-1)}.\]
Let $\kappa$ and $\gamma^{1/q^2(q-1)}$ be 
as in subsection \ref{zo1}.
We change variables as follows
$u=\kappa^{q^3(q-1)}u_0,\ 
X_1=\kappa^{q^2}x_1,\ 
Y_1=\kappa^{q^2}y_1,\ 
X_2=\kappa x$ and 
$Y_2=\kappa y.$
We put $i:=(2q+1)/2q.$

By $[\pi]_u(X_1)=[\pi]_u(Y_1)=0$ 
and Lemma \ref{all}.1, 
we acquire the following congruence
\begin{equation}\label{aqq0}
u_0=-x_1^{q(q-1)}
-\frac{\gamma^q}{x_1^{q-1}} 
\equiv 
-y_1^{q(q-1)}
-\frac{\gamma^q}{y_1^{q-1}}\ 
({\rm mod}\ (3/2)+).
\end{equation}
Note that we have $v(\gamma^q)+i=3/2.$
Let $f(X,Y)$ be a polynomial 
such that $X^q-Y^q=(X-Y)^q+pf(X,Y).$
By considering $(\ref{aqq0}) \times (x_1y_1)^q$, 
we acquire the following congruence
\begin{equation}\label{aaq1}
x_1^qy_1-x_1y_1^q \equiv 
\zeta \gamma^{\frac{q}{q-1}}+
\biggl(\frac{p}{\gamma^q}\biggr)
f(x_1^qy_1,x_1y_1^q)\ ({\rm mod}\ i+)
\end{equation}
with some $\zeta \in \mu_{q-1}(\mathcal{O}_F).$
Note that we have $(x_1^qy_1-x_1y_1^q)|
f(x_1^qy_1,x_1y_1^q)$
 and $v(x_1^qy_1-x_1y_1^q)=1/2q>0.$
Furthermore, by $X_1=[\pi]_u(X_2), Y_1=[\pi]_u(Y_2)$
and Lemma \ref{all}.1, 
we obtain the following congruence
\begin{equation}\label{aqq2}
x_1 \equiv x^{q^2}
+\gamma^{q+1}u_0x^q
+\gamma^{\frac{2q^2+2q+1}{q}}x
-q \gamma
 u^2_0x^{q(q^2-q+1)} 
 ({\rm mod}\ i+).
\end{equation}
The same congruence as 
(\ref{aqq2}) holds for $(y,y_1).$
On the right hand side of (\ref{aaq1}), 
 we have the following congruence 
\begin{equation}\label{asoi1}
\biggl(\frac{p}{\gamma^q}\biggr)
f(x_1^qy_1,x_1y_1^q) \equiv 
\biggl(\frac{p}{\gamma^q}\biggr)
\{f(x^{q^3}y^{q^2},x^{q^2}y^{q^3})
-(q/p)\gamma^{q+1}u_0^2(xy)^{q^2(q-1)}(x^qy-xy^q)^q\}\ 
({\rm mod}\ i+).
\end{equation}
Note that, if $e_{F/\mathbb{Q}_p} \geq 2,$
this term (\ref{asoi1}) 
is congruent to zero modulo $i+.$ 
We set
\begin{equation}\label{aqq4}
\mathcal{Z}:=(x^qy-xy^q)^q
-\gamma^{\frac{q+1}{q}}(x^{q^3}y-xy^{q^3})
-(p/\gamma^q)^{1/q}f(x^{q^2}y^q,x^qy^{q^2}).
\end{equation}
On the other hand, 
we have the following congruence,  
by substituting (\ref{aqq0}) and (\ref{aqq2})
to $x_1^qy_1-x_1y_1^q,$
\begin{equation}\label{asoi2}
x_1^qy_1-x_1y_1^q \equiv 
\mathcal{Z}^q-\gamma^{2q+1}\mathcal{Z}
+(p/\gamma^q)
f(x^{q^3}y^{q^2},x^{q^2}y^{q^3})
-q\gamma u_0^2
(xy)^{q^2(q-1)}(x^qy-xy^q)^q\
 ({\rm mod}\ i+).
\end{equation}
Hence, (\ref{aaq1}) induces the following 
congruence by (\ref{asoi1}) and (\ref{asoi2}) 
\begin{equation}\label{olo2}
\mathcal{Z}^q
-\gamma^{2q+1}\mathcal{Z} 
\equiv \gamma^{\frac{q}{q-1}}\zeta\ 
({\rm mod}\ i+).
\end{equation}
We choose an element ${\gamma}_0$
such that
$\gamma_0^q-\gamma^{2q+1}\gamma_0
=\gamma^{\frac{q}{q-1}}\zeta.$
Clearly, we have $v(\gamma_0)=1/2q^2.$
Then, if we set $\mathcal{Z}
=\gamma_0+\gamma^{\frac{2q+1}{q-1}}\mu,$
we acquire $\mu^q \equiv \mu\ ({\rm mod}\ 0+)$ 
by (\ref{olo2}).
Hence, by (\ref{aqq4}) and
 $v(f(x^{q^2}y^q,x^qy^{q^2}))=1/2q^2>0$,
we obtain the following congruence
\begin{equation}\label{aqq6}
\mathcal{Z}=(x^qy-xy^q)^q
-\gamma^{\frac{q+1}{q}}(x^{q^3}y-xy^{q^3})
\equiv
\gamma_0+\gamma^{\frac{2q+1}{q-1}}\mu\ 
({\rm mod}\ \biggl(\frac{q+1}{2q^2}\biggr)+).
\end{equation}
We choose an element $\tilde{\gamma}_0$
such that $\tilde{\gamma}_0^q
=\gamma_0+\gamma^{\frac{2q+1}{q-1}}\mu$.
Then, we introduce a new parameter $Z$
 as follows
\begin{equation}\label{t0}
x^qy-xy^q=\tilde{\gamma}_0+
y^{q^2-1}\gamma^{\frac{q+1}{q^2}}
\tilde{\gamma}_0^{1/q}
+y^{q^2-1}\gamma^{\frac{q^2+q-1}{q^2(q-1)}} Z.
\end{equation}
By substituting (\ref{t0})
to the left hand side of the congruence (\ref{aqq6}), 
we obtain the following congruence 
in the same way as in \cite[(4.21), (4.25)]{T}
\begin{equation}\label{zw}
Z^q \equiv \zeta(y^{q^2-1}+y^{-(q^2-1)})
+\gamma_1y^{q^2-1}Z\ ({\rm mod}\ (1/2q^3)+).
\end{equation}
By (\ref{t0}), we have $x^{q-1} \equiv
 y^{q-1}\ ({\rm mod}\ 0+)$. Therefore, we acquire 
 $x \equiv \zeta_1 y\ ({\rm mod}\ 0+)$ with 
 some $\zeta_1 
 \in \mu_{q-1}(\mathcal{O}_F).$
 Hence, the reduction 
 $\overline{\mathbf{Z}}_{1,1,c}$ has 
 $q(q-1)^2$ connected components, which are parametrized
 by $(\bar{\zeta},(\bar{\mu},\bar{\zeta}_1)) \in 
 \mathbb{F}^{\times}_q \times 
 \mathbb{F}^2_q.$
 Furthermore, each component is defined by
 $Z^q=\bar{\zeta}(y^{q^2-1}+y^{-(q^2-1)})$ by (\ref{zw}).
Therefore, we obtain the following propositions.
\begin{proposition}
The space $\mathbf{Z}_{1,1,c}$
has $q(q-1)^2$ connected components and each component 
reduces to the following curve
\begin{equation}\label{cur}
Z^q=\zeta(y^{q^2-1}+y^{-(q^2-1)})
\end{equation}
with some $\zeta \in 
\mathbb{F}^{\times}_q.$
This affine curve has $2(q^2-1)$ singular points at 
$y \in \mu_{2(q^2-1)}.$
\end{proposition}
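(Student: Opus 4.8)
## Proof proposal

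The plan is to follow the same change-of-variables strategy that was successfully carried out for $\mathbf{Z}_{1,1,0}$ in subsection \ref{zo1}, only now keeping track of two extra parameters because $X_2$ and $Y_2$ are now on an equal footing (both have valuation $1/(2q^3(q-1))$). First I would substitute the prescribed valuations from Definition \ref{fd}.6 into the new coordinates $u=\kappa^{q^3(q-1)}u_0$, $X_1=\kappa^{q^2}x_1$, $Y_1=\kappa^{q^2}y_1$, $X_2=\kappa x$, $Y_2=\kappa y$, so that $u_0,x_1,y_1,x,y$ all become units in the relevant ring of integers. The relations $[\pi]_u(X_1)=[\pi]_u(Y_1)=0$, combined with Lemma \ref{all}.1 and the choice $i:=(2q+1)/2q$ (chosen so that $v(\gamma^q)+i=3/2$ and the correction term lands at the right level), give the symmetric congruence \eqref{aqq0} for $u_0$ in terms of $x_1$ and of $y_1$; subtracting the two expressions after multiplying by $(x_1y_1)^q$ produces \eqref{aaq1}, which exhibits $x_1^qy_1-x_1y_1^q$ as $\zeta\gamma^{q/(q-1)}$ up to a $(p/\gamma^q)f$-term and higher-order error, with $\zeta\in\mu_{q-1}(\mathcal{O}_F)$.

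Next I would push the relations $X_1=[\pi]_u(X_2)$, $Y_1=[\pi]_u(Y_2)$ through Lemma \ref{all}.1 to obtain the expansions \eqref{aqq2} for $x_1$ (and its analogue for $y_1$), carefully retaining the $-q\gamma u_0^2 x^{q(q^2-q+1)}$ term because in the $e_{F/\mathbb{Q}_p}=1$ case it contributes at level $i+$; this is exactly the subtlety flagged in \eqref{asoi1}. Substituting these into $x_1^qy_1-x_1y_1^q$ and simplifying with \eqref{aqq0} gives \eqref{asoi2}, expressing that quantity in terms of the combination $\mathcal{Z}$ defined in \eqref{aqq4}. Comparing with \eqref{aaq1} and cancelling the matching $f$-terms yields the clean Artin–Schreier-type congruence \eqref{olo2}: $\mathcal{Z}^q-\gamma^{2q+1}\mathcal{Z}\equiv\gamma^{q/(q-1)}\zeta$. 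Solving $\gamma_0^q-\gamma^{2q+1}\gamma_0=\gamma^{q/(q-1)}\zeta$ and writing $\mathcal{Z}=\gamma_0+\gamma^{(2q+1)/(q-1)}\mu$ forces $\mu^q\equiv\mu$, so $\bar\mu\in\mathbb{F}_q$, giving one of the connected-component invariants.

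With $\mathcal{Z}$ pinned down as in \eqref{aqq6}, I would introduce the final parameter $Z$ via \eqref{t0}, substitute into the left-hand side of \eqref{aqq6}, and — exactly as in \cite[(4.21), (4.25)]{T} — deduce \eqref{zw}: $Z^q\equiv\zeta(y^{q^2-1}+y^{-(q^2-1)})+\gamma_1 y^{q^2-1}Z$ modulo $(1/2q^3)+$, whose reduction is the curve \eqref{cur}. The defining relation \eqref{t0} also forces $x^{q-1}\equiv y^{q-1}$, hence $x\equiv\zeta_1 y$ with $\zeta_1\in\mu_{q-1}(\mathcal{O}_F)$, producing the third invariant; so the components are parametrized by $(\bar\zeta,(\bar\mu,\bar\zeta_1))\in\mathbb{F}_q^\times\times\mathbb{F}_q^2$, giving $q(q-1)^2$ of them. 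The singular locus of \eqref{cur} is where $y^{q^2-1}+y^{-(q^2-1)}$ has a double root, i.e. where its derivative vanishes together with $Z$, which is precisely $y^{2(q^2-1)}=1$, i.e. $y\in\mu_{2(q^2-1)}$, giving $2(q^2-1)$ singular points. The main obstacle is bookkeeping: ensuring every discarded term genuinely lies in the claimed ideal $(\,\cdot\,)+$, especially tracking the $q/p$-type coefficients in \eqref{asoi1}–\eqref{asoi2} in the ramified-versus-unramified cases, and verifying the valuation identities $v(\gamma_0)=1/2q^2$, $v(f(x^{q^2}y^q,x^qy^{q^2}))=1/2q^2$ etc. that make the successive divisions by powers of $\gamma$ legitimate.
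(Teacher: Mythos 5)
Your proposal is correct and follows essentially the same route as the paper's subsection on $\mathbf{Z}_{1,1,c}$: the same change of variables, the same use of Lemma \ref{all}.1 with the exponent $i=(2q+1)/2q$, the same auxiliary quantity $\mathcal{Z}$ leading to \eqref{olo2}, the reparametrization \eqref{t0} giving \eqref{zw}, and the extra invariant $x\equiv\zeta_1 y$ accounting for the $q(q-1)^2$ components. The only nitpick is that the singular locus of \eqref{cur} is determined solely by the vanishing of the $y$-derivative (the $Z$-derivative vanishes identically in characteristic $p$), which still gives exactly $y\in\mu_{2(q^2-1)}$.
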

Let $i \in (\mathcal{O}_F/\pi^2)^{\times}.$
We write $\{\mathbf{Z}^i_{1,1,j}\}
_{j \in \mathbb{F}^{\times}_q}$
for the connected components of 
$\mathbf{Z}^i_{1,1,c}.$
For each $k \in \mu_{2(q^2-1)}$,
 let $\mathbf{W}^i_{j,k} 
 \subset \mathbf{Z}^i_{1,1,j}$
 denote the underlying affinoid of 
 the singular residue class 
 at $y=k.$ 
 Then, by the same computations as the ones 
 in subsection \ref{zo1},
 we acquire the following proposition.
\begin{proposition}\label{fou}
In the stable reduction of $\mathcal{X}(\pi^2)$,
there exist $q(q-1)^2 \times 2(q^2-1)$ 
irreducible components $\{W^{i,c}_{j,k}\}_{(i,j,k) \in 
(\mathcal{O}_F/\pi^2)^{\times} 
\times \mathbb{F}^{\times}_q 
\times \mu_{2(q^2-1)}},$
 with an affine model $a^q-a=s^2.$
For each $(i,j) \in 
(\mathcal{O}_F/\pi^2)^{\times} \times \mathbb{F}^{\times}_q,$  
the components $\{W^{i,c}_{j,k}\}
_{k \in \mu_{2(q^2-1)}}$ attach to the
 component $\overline{\mathbf{Z}}^i_{1,1,j}$
at each singular point.
\end{proposition}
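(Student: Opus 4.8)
The plan is to imitate, almost line for line, the analysis of singular residue classes carried out in subsection~\ref{zo2}, using the congruence \eqref{zw} in place of \eqref{aq13}. Fix $i \in (\mathcal{O}_F/\pi^2)^\times$, $j \in \mathbb{F}_q^\times$ and $k \in \mu_{2(q^2-1)}$, and recall from \eqref{cur} that $\overline{\mathbf{Z}}^i_{1,1,j}$ is the genus-$0$ curve $Z^q = \zeta(y^{q^2-1}+y^{-(q^2-1)})$. A point lying over $y=k$ is singular precisely because the derivative $\frac{d}{dy}\bigl(y^{q^2-1}+y^{-(q^2-1)}\bigr)$ vanishes there, which is exactly the relation $y^{2(q^2-1)}=1$; consequently the first-order term in $y-k$ on the right-hand side of \eqref{zw} disappears, so to see the inserted component one must pass to suitably rescaled coordinates on the affinoid $\mathbf{W}^i_{j,k}$.

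Concretely, I would first choose an element $\tilde{\gamma}_1$ and a lift $y_0 \in \mathcal{O}_{\mathbf{C}}$ of $k$ solving the same pair of implicit equations as in subsection~\ref{zo2}, so that $\tilde{\gamma}_1^q$ matches the value of $\zeta(y^{q^2-1}+y^{-(q^2-1)}) + \gamma_1 y^{q^2-1}Z$ at the relevant point and $y_0^{q^2-1}$ is the corresponding square root; this absorbs both the constant part and the $Z$-linear part of \eqref{zw} into $\tilde{\gamma}_1$. Next I would put $w := y_0^{q+1}\gamma^{1/q^2(q-1)}$ and pick $w_1$ with $y_0^{q^2-3}\bigl(\zeta+\gamma^{1/q^2}\tilde{\gamma}_1\bigr)w_1^2 = w^q$, which gives $v(w)=1/2q^4$ and $v(w_1)=1/4q^3$ exactly as before — here one uses that the element $\gamma$ of subsection~\ref{zo3} has the same valuation $(q-1)/2q^2$ as in subsection~\ref{zo1}, so the valuation bookkeeping is unchanged. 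Substituting $Z = \tilde{\gamma}_1 + w a$ and $y = y_0 + w_1 y_1$ into \eqref{zw}, expanding $\zeta(y^{q^2-1}+y^{-(q^2-1)})$ to second order about $y_0$ (its first-order term being negligible because $y_0$ reduces to a singular point), and dividing the resulting congruence by $w^q$, the term coming from $\gamma_1 y^{q^2-1}Z$ produces precisely the summand $-a$ by the choice of $w$, while the quadratic term produces precisely $y_1^2$ by the choice of $w_1$; one arrives at $a^q - a \equiv y_1^2\ ({\rm mod}\ 0+)$. Hence $\mathbf{W}^i_{j,k}$ reduces to the smooth affine curve $a^q-a=y_1^2$, of genus $(q-1)/2$, and by \cite[Proposition~5.1]{W3} its reduction $W^{i,c}_{j,k}$ appears in $\overline{\mathcal{X}(\pi^2)}$, attached to $\overline{\mathbf{Z}}^i_{1,1,j}$ at the point over $y=k$.

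It remains to run this local computation over all $2(q^2-1)$ singular points of each of the $q(q-1)^2$ connected components of $\mathbf{Z}_{1,1,c}$; since the extra parameter $\bar{\zeta}_1$ that separates these components does not enter the local equation of the singularity, no new behaviour occurs and one obtains the $q(q-1)^2 \times 2(q^2-1)$ components $\{W^{i,c}_{j,k}\}$ claimed in the statement. The step I expect to be the main obstacle is the sharpness of the valuation estimate underlying \eqref{zw}: because $v(w^q)=1/2q^3$ is exactly the modulus appearing there, one must check that every error term — in particular those involving the auxiliary polynomial $f$ and the various powers of $\gamma$, $\gamma_0$ and $\tilde{\gamma}_0$ — acquires strictly positive valuation after division by $w^q$, which is precisely the kind of careful tracking already performed in subsections~\ref{zo3} and \ref{zo2}.
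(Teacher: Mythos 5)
Your proposal is correct and is essentially the paper's own argument: the paper proves Proposition \ref{fou} simply by invoking "the same computations" as in the singular-residue-class analysis of subsection \ref{zo2}, applied to the congruence \eqref{zw}, which is exactly the substitution $Z=\tilde{\gamma}_1+wa$, $y=y_0+w_1y_1$ and division by $w^q$ that you carry out, with the same valuation bookkeeping ($v(\gamma)=(q-1)/2q^2$, $v(w)=1/2q^4$, $v(w_1)=1/4q^3$) and the same count $q(q-1)^2\times 2(q^2-1)$.
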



\subsection{Computation of
 the reduction of $\mathbf{Y}_{3,1,0}$}\label{j1}
In this subsection, under a technical assumption
$e_{F/\mathbb{Q}_p} \geq 2$, 
we compute 
the reduction of the space $\mathbf{Y}_{3,1,0}.$
The reduction $\overline{\mathbf{Y}}_{3,1,0}$
has $q(q-1)$ connected components,
 and each component is defined by 
 $x^qy-xy^q=\bar{\zeta}$
with some $\bar{\zeta} 
\in \mathbb{F}^{\times}_q.$
In the proceeding sections, 
we do
 not use results 
 in subsections \ref{j1} and \ref{j2}.

Recall that we have the following on the space $\mathbf{Y}_{3,1,0}$
in Definition \ref{fd}
\[v(u)=\frac{1}{q+1},\ v(X_1)=\frac{q}{q^2-1},\ v(Y_1)=\frac{1}{q(q^2-1)},\ 
v(X_2)=\frac{1}{q(q^2-1)},\ 
v(Y_2)=\frac{1}{q^3(q^2-1)}.\]
We choose an element $\kappa$ such that
$\kappa^{q^3(q^2-1)}=\pi.$
We set $\gamma:=\kappa^{q(q-1)(q^2-1)}.$
Then, we have $v(\kappa)=1/q^3(q^2-1)$ 
and $v(\gamma)=(q-1)/q^2.$
Furthermore, we set $\gamma^{\frac{1}{q(q-1)}}:=\kappa^{q^2-1}.$
We change variables as follows
$u=\kappa^{q^3(q-1)}u_0,\ X_1=\kappa^{q^4}x_1,\ Y_1=\kappa^{q^2}y_1,
X_2=\kappa^{q^2}x$ and 
$Y_2=\kappa y.$
From now on, we assume $e_{F/\mathbb{Q}_p} \geq 2.$
Then, we acquire the following congruences 
by $[\pi]_u(X_1)=[\pi]_u(Y_1)=0$ and 
Lemma \ref{all}.2
\begin{equation}\label{bak1}
u_0 \equiv -\frac{1}{x_1^{q-1}} \equiv 
-y_1^{q(q-1)}-\frac{\gamma^q}{y_1^{q-1}}\ 
({\rm mod}\ 1+).
\end{equation}
Therefore, we acquire the following congruence
\begin{equation}\label{bak2}
x_1y_1^q \equiv \zeta\biggl(
1+\frac{\gamma^q}{y_1^{q^2-1}}\biggr)\ 
({\rm mod}\ 1+)
\end{equation}
with some $\zeta \in \mu_{q-1}(\mathcal{O}_F).$
On the other hand, by $Y_1=[\pi]_u(Y_2)$,  
$X_1=[\pi]_u(X_2)$ and Lemma \ref{all}.2, 
we acquire the followings
\begin{equation}\label{bak3}
y_1 \equiv y^{q^2}+\gamma u_0y^q\ ({\rm mod}\ (1/q)+),\ 
x_1 \equiv x^{q^2}+u_0 x^q+\gamma^q x\ ({\rm mod}\ 1+).
\end{equation}
By substituting (\ref{bak3}) 
to the right hand side of (\ref{bak1}), 
we acquire the following
\begin{equation}\label{bak5}
u_0 \equiv
-y^{q^3(q-1)}-\gamma^q\biggl(y^{q^2(q^3-2q+1)}
+\frac{1}{y^{q^2(q-1)}}\biggr)
-\gamma^{q+1}y^{q^4-2q^3+q}\ ({\rm mod}\ 1+).
\end{equation}
We set 
\[\mathcal{Z}:=(x^qy^{q^2}-xy^{q^3})-\gamma 
\bigl(x^qy^{q^4-q^3+q}+(\zeta /y^{q(q^2-1)})\bigr).\]
Substituting (\ref{bak3}) and (\ref{bak5}) 
to the term $x_1y_1^q$ in the left hand side of 
the congruence (\ref{bak2}), we acquire the following
by $e_{F/\mathbb{Q}_p} \geq 2$
\begin{equation}\label{bak7}
\mathcal{Z}^q-\gamma^q\mathcal{Z} 
\equiv \zeta\ ({\rm mod}\ 1+).
\end{equation}
We choose an element $\gamma_0$ 
such that $\gamma_0^q-\gamma^q\gamma_0=\zeta.$
Then, we set 
\begin{equation}\label{bakk1}
\mathcal{Z}=\gamma_0+\gamma^{\frac{q}{q-1}}c.
\end{equation}
By substituting this to the congruence 
(\ref{bak7}), and 
dividing it by $\gamma^{\frac{q^2}{q-1}},$ 
we acquire the following congruence
$c^q \equiv c\ ({\rm mod}\ 0+).$
Therefore, we obtain $\bar{c} \in \mathbb{F}_q.$
Hence, $\overline{\mathbf{Y}}_{3,1,0}$
has $q(q-1)$ connected components, 
which are parametrized by 
$(\bar{\zeta},\bar{c}) 
\in \mathbb{F}_q^{\times} \times \mathbb{F}_q$.
Furthermore, each component 
is defined by $x^qy^{q^2}-xy^{q^3}=\bar{\zeta},$
because we have $x^qy^{q^2}-xy^{q^3} 
\equiv \mathcal{Z} \equiv \gamma_0 \equiv 
\zeta\ ({\rm mod}\ 0+)$ by 
(\ref{bakk1}).
By setting as follows
$z:=\frac{xy^q-\bar{\zeta}(1+y^{q^2-1})}{y^{q^2+q-1}},$
we acquire $z^qy-zy^q=\bar{\zeta}.$
Hence, we have proved the following lemma.
\begin{lemma}
We assume $e_{F/\mathbb{Q}_p} \geq 2.$
Then, the reduction $\overline{\mathbf{Y}}_{3,1,0}$
has $q(q-1)$ connected components 
and each component is defined by 
$x^qy-xy^{q}=\bar{\zeta}$ with some $\bar{\zeta} \in 
\mathbb{F}^{\times}_q$.
\end{lemma}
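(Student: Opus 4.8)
The plan is to extract the reduction directly from the congruences assembled above in this subsection, in the same spirit as the parallel computations for $\mathbf{Y}_{2,2}$, $\mathbf{Z}_{1,1,0}$ and $\mathbf{Z}_{1,1,c}$ in subsections \ref{yo1}, \ref{zo1} and \ref{zo3}. With the change of variables $u=\kappa^{q^3(q-1)}u_0$, $X_1=\kappa^{q^4}x_1$, $Y_1=\kappa^{q^2}y_1$, $X_2=\kappa^{q^2}x$, $Y_2=\kappa y$, all of $u_0,x_1,y_1,x,y$ are units on $\mathbf{Y}_{3,1,0}$, so that $\overline{\mathbf{Y}}_{3,1,0}$ is cut out by the reductions of the congruences (\ref{bak1})--(\ref{bak5}). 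The hypothesis $e_{F/\mathbb{Q}_p}\ge 2$ is used precisely here: it lets us invoke the cleaner approximation \ref{all}.2 for $[\pi]_u$ in place of \ref{all}.1, so that the $p$-divisible correction term $-(q/\pi)u^2X^{q(q^2-q+1)}$, which made the analogous computations in subsections \ref{yo1}--\ref{zo3} more delicate, no longer contributes modulo the relevant bounds. This is why the present argument is short.

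First I would combine (\ref{bak1}) with $[\pi]_u(X_1)=[\pi]_u(Y_1)=0$ to obtain $x_1y_1^q\equiv\zeta(1+\gamma^q/y_1^{q^2-1})\ ({\rm mod}\ 1+)$ with $\zeta\in\mu_{q-1}(\mathcal{O}_F)$ as in (\ref{bak2}); the finitely many values of $\zeta$ give the first discrete invariant $\bar\zeta\in\mathbb{F}_q^\times$ and split the space into $q-1$ pieces. Then, plugging the expansions (\ref{bak3}) of $x_1,y_1$ in $x,y$ and the expansion (\ref{bak5}) of $u_0$ in $y$ into (\ref{bak2}), and discarding the $p$-divisible terms thanks to $e_{F/\mathbb{Q}_p}\ge 2$, one collects everything into the Artin--Schreier-type congruence (\ref{bak7}), i.e.\ $\mathcal{Z}^q-\gamma^q\mathcal{Z}\equiv\zeta\ ({\rm mod}\ 1+)$, where $\mathcal{Z}=(x^qy^{q^2}-xy^{q^3})-\gamma\bigl(x^qy^{q^4-q^3+q}+\zeta/y^{q(q^2-1)}\bigr)$. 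Choosing $\gamma_0$ with $\gamma_0^q-\gamma^q\gamma_0=\zeta$ and writing $\mathcal{Z}=\gamma_0+\gamma^{q/(q-1)}c$ as in (\ref{bakk1}), division by $\gamma^{q^2/(q-1)}$ forces $c^q\equiv c\ ({\rm mod}\ 0+)$, hence the second discrete invariant $\bar c\in\mathbb{F}_q$. Thus $\pi_0(\overline{\mathbf{Y}}_{3,1,0})$ is indexed by $(\bar\zeta,\bar c)\in\mathbb{F}_q^\times\times\mathbb{F}_q$, which accounts for the $q(q-1)$ connected components.

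Finally, on each component the congruence $\mathcal{Z}\equiv\gamma_0\equiv\zeta\ ({\rm mod}\ 0+)$ forces the defining equation $x^qy^{q^2}-xy^{q^3}=\bar\zeta$; to put this in the asserted normal form I would set $z:=\bigl(xy^q-\bar\zeta(1+y^{q^2-1})\bigr)/y^{q^2+q-1}$, and a direct computation (using $\bar\zeta^q=\bar\zeta$) yields $z^qy-zy^q=\bar\zeta$. The only genuine obstacle is the valuation bookkeeping in the middle step: one must check that, after all the substitutions, the accumulated error terms (the cross terms coming from (\ref{bak3}) and (\ref{bak5}) together with the $p$-divisible corrections) have valuation strictly greater than $1$, so that (\ref{bak7}) holds exactly modulo $1+$; once this is verified under $e_{F/\mathbb{Q}_p}\ge 2$, the Artin--Schreier structure and the final coordinate change are routine.
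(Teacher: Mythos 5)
Your proposal is correct and follows essentially the same route as the paper: the same change of variables, the use of Lemma \ref{all}.2 under $e_{F/\mathbb{Q}_p}\ge 2$, the splitting by $\zeta\in\mu_{q-1}$ via (\ref{bak2}), the Artin--Schreier congruence (\ref{bak7}) for $\mathcal{Z}$ giving the second invariant $\bar c\in\mathbb{F}_q$, and the final substitution $z=\bigl(xy^q-\bar\zeta(1+y^{q^2-1})\bigr)/y^{q^2+q-1}$ to reach $z^qy-zy^q=\bar\zeta$. The valuation bookkeeping you flag as the only delicate step is exactly what the paper's congruences (\ref{bak3})--(\ref{bak7}) record, so there is no gap.
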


\subsection{Computation of the reduction 
of the space $\mathbf{Y}_{3,1,c}$}\label{j2}
In this subsection, we calculate the reduction
$\overline{\mathbf{Y}}_{3,1,c}.$
By a technical reason, only in this subsection,
we assume that the absolute 
ramification index 
$e_{F/\mathbb{Q}_p} \geq 3.$
Under this assumption, we show that 
the reduction $\overline{\mathbf{Y}}_{3,1,c}$
has $q(q-1)^2$ connected components,
 and each component is defined by
$x^{q}y-xy^{q}=\bar{\zeta}$
with some $\bar{\zeta} \in
 \mathbb{F}^{\times}_q.$
However, we believe that 
the same phenomenon happens when 
$e_{F/\mathbb{Q}_p} \leq 2.$

Recall that
we have on the space 
$\mathbf{Y}_{3,1,c}$
in Definition \ref{fd}
\[v(u)=\frac{1}{q+1},\ 
v(X_1)=v(Y_1)=\frac{1}{q(q^2-1)},\ 
v(X_2)=v(Y_2)=\frac{1}{q^3(q^2-1)}.
\]
Let $\kappa$, $\gamma$ and 
$\gamma^{\frac{1}{q(q-1)}}$ be as in the previous subsection.
We change variables as follows
$u=\kappa^{q^3(q-1)}u_0,\ X_1=\kappa^{q^2}x_1,\ 
Y_1=\kappa^{q^2}y_1,\ 
X_2=\kappa x$ and $Y_2=\kappa y.$
We set $k=(q+1)/q.$ 
From now on, we assume $e_{F/\mathbb{Q}_p} \geq 3.$
Then, we acquire the following, 
by $[\pi]_u(X_1)=[\pi]_u(Y_1)=0$ and 
Lemma \ref{all}.3,  
\begin{equation}\label{rob1}
(1-\pi^{q-1})u_0 \equiv 
-x_1^{q(q-1)}-\frac{\gamma^q}{x_1^{q-1}} \equiv 
-y_1^{q(q-1)}-\frac{\gamma^q}{y_1^{q-1}}\ 
({\rm mod}\ 2+).
\end{equation}
Note that we have
 $\pi^{q-1}u_0 \equiv 0\ ({\rm mod}\ 2+)$
 on the left hand side of (\ref{rob1})
unless $q=3.$
By considering $(\ref{rob1}) \times (x_1y_1)^q$,
 we obtain $v(x_1^qy_1-x_1y_1^q)=1/2q.$
Furthermore, we acquire 
the following congruence
\begin{equation}\label{rob2}
x_1^qy_1-x_1y_1^q \equiv 
\zeta \gamma^{\frac{q}{q-1}}\ 
({\rm mod}\ k+)
\end{equation}
with some $\zeta \in 
\mu_{q-1}(\mathcal{O}_F)$.
On the other hand, we 
have the following congruences, 
by $X_1=[\pi]_u(X_2)$ 
and $Y_1=[\pi]_u(Y_2)$ and
 Lemma \ref{all}.3,  
\begin{equation}\label{rob3}
x_1 \equiv x^{q^2}+\gamma u_0 x^q+
\gamma^{\frac{q^2+q+1}{q}}x,\ 
y_1 \equiv 
y^{q^2}+\gamma u_0 y^q+
\gamma^{\frac{q^2+q+1}{q}}y\ ({\rm mod}\ k+).
\end{equation}
Substituting (\ref{rob3}) to the term 
$x_1^qy_1-x_1y_1^q$
in the left hand side of 
the congruence
(\ref{rob2}), we acquire 
the following congruence, 
by the assumption 
$e_{F/\mathbb{Q}_p} \geq 3$, 
\begin{equation}\label{rob5}
x_1^qy_1-x_1y_1^q
\equiv
(x^{q}y-xy^{q})^{q^2}
+\gamma u_0(x^{q^2}y-xy^{q^2})^q
+\gamma^{q+1}u_0^{q+1}(x^{q}y-xy^{q})^q
+\gamma^{\frac{q^2+q+1}{q}}(x^{q^3}y-xy^{q^3})
\end{equation}
$({\rm mod}\ k+).$
By (\ref{rob1}) and (\ref{rob3}), 
we obtain the following congruence
\begin{equation}\label{rob6}
u_0 \equiv -x^{q^3(q-1)}
+\gamma^q u_0^q x^{q^2(q-1)^2}
-\frac{\gamma^q}{x^{q^2(q-1)}}
-\frac{\gamma^{q+1}u_0}{x^{q(q^2-1)}}
\end{equation}
\begin{equation}\label{rob7}
 \equiv
 -y^{q^3(q-1)}
+\gamma^q u_0^q y^{q^2(q-1)^2}
-\frac{\gamma^q}{y^{q^2(q-1)}}
-\frac{\gamma^{q+1}u_0}{y^{q(q^2-1)}}
\ ({\rm mod}\ \biggl(\frac{q^2+1}{q^2}\biggr)+). 
\end{equation}
Hence, we acquire the following congruence 
on the right hand side of the congruence
(\ref{rob5})
\begin{equation}\label{rob8}
\gamma u_0(x^{q^2}y-xy^{q^2})^q
+\gamma^{q+1}u_0^{q+1}(x^{q}y-xy^{q})^q
\equiv
-\gamma(x^{q^3}y-xy^{q^3})^q
-\gamma^{q+1}(x^{q}y-xy^{q})^q
\end{equation}
modulo $k+.$
We set as follows
\begin{equation}\label{rob9}
\mathcal{Z}:=(x^qy-xy^q)^q
-\gamma^{1/q}(x^{q^3}y-xy^{q^3}).
\end{equation}
Then, by (\ref{rob2}), 
(\ref{rob5}) 
and (\ref{rob8}), the following congruence holds
\begin{equation}\label{rob11}
\zeta \gamma^{\frac{q}{q-1}} \equiv 
x_1^qy_1-x_1y_1^q \equiv 
\mathcal{Z}^q-\gamma^{q+1}\mathcal{Z}\ ({\rm mod}\ 
k+).
\end{equation}
We choose an element $\gamma_0$ such that
$\gamma_0^q-\gamma^{q+1}\gamma_0=\zeta\gamma^{\frac{q}{q-1}}.$
Then, we have $v(\gamma_0)=1/q^2.$
If we set 
\begin{equation}\label{rob12}
\mathcal{Z}=\gamma_0+\gamma^{\frac{q+1}{q-1}}c,
\end{equation}
by (\ref{rob11}), $c$ satisfies $c^q \equiv 
c\ ({\rm mod}\ 0+).$ Therefore, we have $\bar{c} \in
 \mathbb{F}_q$.
Set $\mathcal{Z}_1:=x^qy-xy^q.$
Note that we have $v(\mathcal{Z}_1)=1/q^3$ by (\ref{rob12}).
Then, the congruence (\ref{rob12}) induces the following congruence
\begin{equation}\label{rob13}
\mathcal{Z}_1^q-\gamma^{1/q}y^{q(q^2-1)}
\mathcal{Z}_1=\zeta \gamma^{1/(q-1)}\ ({\rm mod}\ (1/q^2)+).
\end{equation}
We set as follows
\begin{equation}\label{rob14}
\mathcal{Z}_1=x^qy-xy^q=\gamma^{1/q(q-1)}z.
\end{equation}
By substituting (\ref{rob14}) to (\ref{rob13}) and dividing it by 
$\gamma^{1/(q-1)},$ we acquire the following 
\begin{equation}\label{rob15}
z^q-y^{q(q^2-1)}z=\zeta\ ({\rm mod}\ 0+).
\end{equation}
If we set $z:=\bar{\zeta}(1+y^{q^2-1})+y^{q^2+q-1}w_1,$
the curve (\ref{rob15}) is 
isomorphic to a curve 
defined by the following 
$yw_1^q-y^qw_1=\bar{\zeta},$
because the function $y\ 
({\rm mod}\ 0+)$ 
is an invertible function.
Furthermore, we have $(x/y)
 \in \mathbb{F}^{\times}_q,$
because we have 
$\mathcal{Z}_1=x^qy-xy^q=0$
 modulo $0+$ by (\ref{rob14}).
Therefore, the reduction $\overline{\mathbf{Y}}_{3,1,c}$
has $q(q-1)^2$ connected components by (\ref{rob12})
and each component is defined by 
$x^qy-xy^q=\bar{\zeta}$
with some $\bar{\zeta} \in \mathbb{F}^{\times}_q.$
Hence, we have proved the following lemma.
\begin{lemma}
We assume $e_{F/\mathbb{Q}_p} \geq 3.$
Then, 
the reduction of the space $\mathbf{Y}_{3,1,c}$
has $q(q-1)^2$ connected components 
and each component is defined by $x^qy-xy^q=\bar{\zeta}$
with some $\bar{\zeta} \in 
\mathbb{F}^{\times}_q$.
\end{lemma}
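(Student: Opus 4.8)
The plan is to imitate, on the ``central'' piece $\mathbf{Y}_{3,1,c}$, the same rescaling-plus-approximation computation already used for $\mathbf{Y}_{3,1,0}$ and for $\mathbf{Z}_{1,1,c}$, and to locate precisely where the hypothesis $e_{F/\mathbb{Q}_p}\geq 3$ is needed. First I would fix coordinates. Starting from the valuations of $u,X_1,Y_1,X_2,Y_2$ on $\mathbf{Y}_{3,1,c}$ recorded in Definition \ref{fd}, choose $\kappa$ with $\kappa^{q^3(q^2-1)}=\pi$, set $\gamma:=\kappa^{q(q-1)(q^2-1)}$ and $\gamma^{1/q(q-1)}:=\kappa^{q^2-1}$, and rescale to unit-valuation variables via $u=\kappa^{q^3(q-1)}u_0$, $X_1=\kappa^{q^2}x_1$, $Y_1=\kappa^{q^2}y_1$, $X_2=\kappa x$, $Y_2=\kappa y$. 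Applying Lemma \ref{all}.3 to the Drinfeld relations $[\pi]_u(X_1)=[\pi]_u(Y_1)=0$ produces two expressions for $u_0$, one in $x_1$ and one in $y_1$; this is the first place where $e_{F/\mathbb{Q}_p}\geq 3$ is used, since it is what makes the $\pi^{q-1}u_0$ contribution negligible at the precision we work with (and $q=3$ is the delicate case). Multiplying the resulting identity by $(x_1y_1)^q$ and using that the Drinfeld condition forces $x_1\neq y_1$, one reads off $x_1^qy_1-x_1y_1^q\equiv\zeta\,\gamma^{q/(q-1)}$ modulo $k+$ with $k=(q+1)/q$, for a well-defined $\zeta\in\mu_{q-1}(\mathcal{O}_F)$.

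Next I would substitute $X_1=[\pi]_u(X_2)$ and $Y_1=[\pi]_u(Y_2)$, expanded again by Lemma \ref{all}.3, into $x_1^qy_1-x_1y_1^q$, and eliminate the high powers of $u_0$ that appear by feeding back the self-consistency relations for $u_0$ obtained above. The whole expression should collapse into an Artin--Schreier shape in the single quantity $\mathcal{Z}:=(x^qy-xy^q)^q-\gamma^{1/q}(x^{q^3}y-xy^{q^3})$, namely $\mathcal{Z}^q-\gamma^{q+1}\mathcal{Z}\equiv\zeta\,\gamma^{q/(q-1)}$ modulo $k+$; again one checks that the error monomials carrying a factor $\pi^{q-1}u_0$ or $q/p$ are killed under $e_{F/\mathbb{Q}_p}\geq 3$. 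Choosing $\gamma_0$ with $\gamma_0^q-\gamma^{q+1}\gamma_0=\zeta\,\gamma^{q/(q-1)}$ and writing $\mathcal{Z}=\gamma_0+\gamma^{(q+1)/(q-1)}c$ then forces $c^q\equiv c$ modulo $0+$, hence a reduction parameter $\bar c\in\mathbb{F}_q$.

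Finally I would localize on $\mathcal{Z}_1:=x^qy-xy^q$. The relation just obtained rearranges into $\mathcal{Z}_1^q-\gamma^{1/q}y^{q(q^2-1)}\mathcal{Z}_1=\zeta\,\gamma^{1/(q-1)}$, and the substitution $\mathcal{Z}_1=\gamma^{1/q(q-1)}z$ gives $z^q-y^{q(q^2-1)}z=\zeta$ modulo $0+$. Since $y$ is an invertible function on the locus under consideration, the change of variable $z=\bar\zeta(1+y^{q^2-1})+y^{q^2+q-1}w_1$ transforms this curve into $yw_1^q-y^qw_1=\bar\zeta$; and because $\mathcal{Z}_1\equiv 0$ modulo $0+$ forces $x/y=\zeta_1\in\mathbb{F}_q^\times$, one recovers the desired affine model $x^qy-xy^q=\bar\zeta$. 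Counting the free parameters $(\bar\zeta,(\bar c,\bar\zeta_1))\in\mathbb{F}_q^\times\times\mathbb{F}_q^2$ yields the claimed $q(q-1)^2$ connected components.

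The step I expect to be the main obstacle is the valuation bookkeeping: in each of the three invocations of Lemma \ref{all}.3 one must verify that every discarded monomial really lies strictly above the working precision $k+$, and the whole argument is structured so that exactly the terms involving $\pi^{q-1}u_0$ and $q/p$ become negligible only once $e_{F/\mathbb{Q}_p}\geq 3$ — this is why the lemma is stated under that assumption, even though one expects the same curve to appear for $e_{F/\mathbb{Q}_p}\leq 2$ as well.
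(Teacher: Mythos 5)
Your proposal reproduces the paper's own computation essentially step for step: the same rescaling, the same use of Lemma \ref{all}.3 on the two Drinfeld relations to get $x_1^qy_1-x_1y_1^q\equiv\zeta\gamma^{q/(q-1)}$ modulo $k+$, the same auxiliary quantity $\mathcal{Z}=(x^qy-xy^q)^q-\gamma^{1/q}(x^{q^3}y-xy^{q^3})$ leading to $\mathcal{Z}^q-\gamma^{q+1}\mathcal{Z}\equiv\zeta\gamma^{q/(q-1)}$, the same splitting $\mathcal{Z}=\gamma_0+\gamma^{(q+1)/(q-1)}c$, and the same final reduction $z^q-y^{q(q^2-1)}z=\zeta$ with the substitution $z=\bar\zeta(1+y^{q^2-1})+y^{q^2+q-1}w_1$ and the observation $x/y\in\mathbb{F}_q^{\times}$ giving the $q(q-1)^2$ components. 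This is correct and takes essentially the same route as the paper (just note that the parameter set is $\mathbb{F}_q^{\times}\times\mathbb{F}_q\times\mathbb{F}_q^{\times}$, consistent with your count).
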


\section{Action of the division 
algebra $\mathcal{O}^{\times}_D$ 
on the 
components in the stable reduction 
of $\mathcal{X}(\pi^2)$}\label{acd1}
Recall that we set $\mathcal{S}_0
=(\mathcal{O}_F/\pi^2)^{\times} \times 
\mathbb{P}^1(\mathbb{F}_q) 
\in (i,j)$ in \ref{zo2}.
Moreover, we set $
\mathcal{S}_1:=\mathcal{S}_0 \times  
\mu_{2(q^2-1)} \ni (i,j,k).$
In this section, we determine the right 
action of the division algebra
$\mathcal{O}^{\times}_D$
on the components $\overline{\mathbf{Y}}_{2,2}$, $X^{i}_j$ 
for $(i,j) \in \mathcal{S}$,
$\overline{\mathbf{Z}}_{1,1,\ast }\ (\ast =e_1,c)$, 
$W^i_{j,k}$ for $(i,j,k) \in \mathcal{S}_1$, 
which appear 
in the stable reduction of 
$\mathcal{X}(\pi^2).$ 
To compute the 
$\mathcal{O}_D^{\times}$-action, 
we use the description of 
the action of $\mathcal{O}^{\times}_D$ on the 
 Lubin-Tate space given in 
(\ref{dac1}). 
Then, the group $\mathcal{O}^{\times}_D$
acts on the stable reduction of 
$\mathcal{X}(\pi^2)$ by factoring through 
$\mathcal{O}^{\times}_3.$

First, we prepare some notations.
The reduced norm ${\rm Nrd}_{D/F}:
\mathcal{O}^{\times}_D \to \mathcal{O}^{\times}_F$
induces ${\rm Nrd}_{D/F}:
\mathcal{O}^{\times}_3 \to 
(\mathcal{O}_F/\pi^2)^{\times}.$
For $i \in (\mathcal{O}_F/\pi^2)^{\times}$
and $b \in 
\mathcal{O}^{\times}_3,$
we set $ib:={\rm Nrd}^{-1}_{D/F}(b) \times i 
\in (\mathcal{O}_F/\pi^2)^{\times}.$
It is well-known that, under some 
identification
$\pi_0(\mathcal{X}(\pi^2))\simeq 
(\mathcal{O}_F/\pi^2)^{\times},$ 
 the group $\mathcal{O}^{\times}_3 \ni b$ acts on 
 $\pi_0(\mathcal{X}(\pi^2))$
 by 
$i \mapsto ib.$ 
See Theorem \ref{caq}.
For an element 
$b \in \mathcal{O}^{\times}_3,$
we write 
$b=a_0+\varphi b_0+\pi a_1$ with
$a_0 \in 
\mu_{q^2-1}(\mathcal{O}_E)$
and $b_0,a_1 
\in \mu_{q^2-1}(\mathcal{O}_E) \cup \{0\}.$

\subsection{The action of 
$\mathcal{O}^{\times}_D$ 
on the reduction 
$\overline{\mathbf{Y}}_{2,2}$ 
and 
$\{X^i_j\}_{(i,j) \in \mathcal{S}}$}
In this subsection, 
first, we write down 
the action of $\mathcal{O}_D^{\times}$
on $\pi_0(\mathbf{Y}_{2,2})$ 
in Lemma \ref{conn}.
Secondly, we determine 
the action of $\mathcal{O}^{\times}_D$
on each connected component 
$\overline{\mathbf{Y}}^i_{2,2}$ 
with $i \in 
(\mathcal{O}_F/\pi^2)^{\times}$ 
in Proposition \ref{bb2}.
Thirdly, we calculate the 
$\mathcal{O}^{\times}_3$-action on
the components 
$\{X^i_j\}_{(i,j) \in \mathcal{S}}$ 
with each defined by $X^q+X=Y^{q+1}$
in Proposition \ref{giu}.
In the following computations, we freely use
the notations 
 in subsections \ref{yo1} and \ref{yo2}.

Recall that we have the following 
on the space $\mathbf{Y}_{2,2}$
\[v(u) \geq \frac{q}{q+1},\ 
v(X_1)=v(Y_1)=\frac{1}{q^2-1},\ 
v(X_2)=v(Y_2)=\frac{1}{q^2(q^2-1)}.
\]
We choose an element 
$\kappa_1$ such that
$\kappa_1^{q^3(q^2-1)}=\pi$ 
with $v(\kappa_1)=1/q^3(q^2-1).$ 
We set $\kappa:=\kappa_1^q$
 and 
$\gamma:=\kappa^{(q-1)(q^2-1)}$
as in subsection \ref{yo2}.
We write $\gamma^{\frac{1}{q(q^2-1)}}$
for $\kappa_1^{q-1}.$
In subsection \ref{yo2},
we change variables as follows
$u=\kappa^{q^3(q-1)} u_0,\ 
X_1=\kappa^{q^2}x_1,\ 
Y_1=\kappa^{q^2}y_1,\ 
X_2=\kappa x$
and $Y_2=\kappa y.$ 

Recall that the reduction 
$\overline{\mathbf{Y}}_{2,2}$
has $q(q-1)$ connected components. 
Each component is defined by 
\[Z_1^q=x^{q^3}y-xy^{q^3},\ 
x^qy-xy^q=\bar{\zeta}\]
with some 
$\bar{\zeta} \in 
\mathbb{F}_q^{\times}.$
See subsection 3.2 for more details.
In the following,
 we determine the action of 
$\mathcal{O}^{\times}_D$ 
on the parameters $(x,y,Z_1)$.

Let $b=a_0+\varphi b_0+\pi a_1
 \in \mathcal{O}^{\times}_3$.
Recall the 
definition of $b^\ast $ in (\ref{diq}).
Then, we have 
the following by (\ref{cop})
and $j^{-1}(X) \equiv X\ ({\rm mod}\ (\pi,u))$
\begin{equation}\label{div2}
b^\ast (x) \equiv 
\frac{x-(b_0/a_0)^q x^q \gamma^{1/(q^2-1)}-
((a_1/a_0)-(b_0/a_0)^{q+1})x^{q^2}
\gamma^{1/(q-1)}}{a_0}\
 ({\rm mod}\ (1/q^2)+).\ 
\end{equation} 
The same formula
 as (\ref{div2}) holds for $y.$
Recall that we have 
$x_1^qy_1-x_1y_1^q 
\in \mu_{q-1}(\mathcal{O}_F)$
modulo $1+$ as
proved in (\ref{aw1}).
We choose an element $\zeta \in 
\mu_{q-1}(\mathcal{O}_F).$
In the following, 
we assume that 
$x_1^qy_1-x_1y_1^q
=\zeta$ modulo $1+.$
We calculate 
$b^\ast (x_1)^q 
b^\ast (y_1)-b^\ast 
(x_1)b^\ast (y_1)^q$
in the following.
We acquire the following congruence, 
by $b^\ast (x_1) \equiv x_1/a_0$
and $b^\ast (y_1) \equiv y_1/a_0$ 
modulo $0+,$
\begin{equation}\label{div3}
b^\ast (x_1)^q b^\ast 
(y_1)-b^\ast (x_1)b^\ast (y_1)^q
\equiv \frac{x_1^qy_1
-x_1y_1^q}{a_0^{q+1}} 
\equiv \frac{\zeta}{a_0^{q+1}}\ ({\rm mod}\ 0+).
\end{equation}

Recall that we set in (\ref{aw3'})
\begin{equation}\label{yui0}
\mathcal{Z}:=(x^qy-xy^q)^q-\gamma(x^{q^3}y-xy^{q^3}).
\end{equation}
This parameter $\mathcal{Z}$
satisfies $\mathcal{Z}^q-
\gamma^q \mathcal{Z} \equiv 
\zeta+\pi(f^q-f)\ 
({\rm mod}\ 1+)$
as in (\ref{aw4}).
We fix an element $\gamma_0$ such that 
$\gamma_0^q-\gamma^q\gamma_0=\zeta.$
We set
$\mathcal{Z}=\gamma_0
-\gamma^{\frac{q}{q-1}}c$ and 
$\mu:=c+f.$
 Then, 
we obtain 
$\mu^q \equiv \mu\ 
({\rm mod}\ 0+).$
Now, we have chosen
 the following identification in (\ref{fc})
\begin{equation}\label{fix}
\pi_0({\mathbf{Y}}_{2,2})
\simeq \mathbb{F}^{\times}_q 
\times \mathbb{F}_q \ni (\bar{\zeta},
\frac{\bar{\mu}}{\bar{\zeta}}).
\end{equation}
Then, we choose a pair 
$(\bar{\zeta},
\tilde{\mu}) 
\in (\mathcal{O}_F/\pi^2)^{\times}.$
We put 
\begin{equation}\label{yui1}
b^\ast (\mathcal{Z}):=(b^\ast (x)^qb^\ast (y)
-b^\ast (x)b^\ast (y)^q)^q
-\gamma(b^\ast (x)^{q^3}b^\ast (y)
-b^\ast (x)b^\ast (y)^{q^3}).
\end{equation}
 We acquire the following congruences 
 by using (\ref{div2})
\begin{equation}\label{div4}
b^\ast (x)^q b^\ast (y)-b^\ast (x)b^\ast (y)^q
\equiv
\frac{(x^qy-xy^q)-
(b_0/a_0) \gamma^{\frac{q}{q^2-1}}
(x^{q^2}y-xy^{q^2})
+\gamma^{\frac{1}{q-1}}
(a_1/a_0)(x^qy-xy^q)^q}{a_0^{q+1}}
\end{equation}
and 
\[b^\ast (x)^{q^3} 
b^\ast (y)-b^\ast (x)b^\ast (y)^{q^3}
\equiv \]\begin{equation}\label{div5}
\frac{x^{q^3}y-xy^{q^3}
-(b_0/a_0)^q(x^{q^2}y-xy^{q^2})^q\gamma^{\frac{1}{q^2-1}}
+\gamma^{\frac{1}{q-1}}
\{(b_0/a_0)^{q+1}-(a_1/a_0)\}(x^qy-xy^q)^{q^2}}
{a_0^{q+1}}\
\end{equation}
modulo $(1/q^2)+.$
Hence, we obtain the following by (\ref{yui0}), 
(\ref{yui1}), 
(\ref{div4}) and (\ref{div5})
$$
b^\ast(\mathcal{Z})=
(b^\ast (x)^q b^\ast (y)
-b^\ast (x)b^\ast (y)^q)^q
-\gamma(b^\ast (x)^{q^3} 
b^\ast (y)-b^\ast (x)b^\ast (y)^{q^3}
) $$
\begin{equation}\label{div6}
\equiv
\frac{\mathcal{Z}}{a_0^{q+1}}+\gamma^{\frac{q}{q-1}}
\biggl(\frac{(a_1/a_0)^q+(a_1/a_0)-(b_0/a_0)^{q+1}}
{a_0^{q+1}}\biggr)(x^qy-xy^q)^{q^2}
\ ({\rm mod}\ (1/q)+).
\end{equation}
We set $f_b:=
f((b^\ast x)^{q^2}(b^\ast y)^q,
(b^\ast x)^q(b^\ast y)^{q^2}).$
Then, we obtain the following
$\pi f_b \equiv 
\pi 
\bigl(\frac{f}{a_0^{q+1}}\bigr)$ 
modulo $1+.$
Therefore, we acquire  
$(b^\ast \mathcal{Z})^q-
\gamma^q (b^\ast \mathcal{Z})=
\zeta/a_0^{q+1}
+\pi(f_b^q-f_b)\ ({\rm mod}\ 1+).$
We set
\[
h:=-(a_1/a_0)^q-(a_1/a_0)+(b_0/a_0)^{q+1}.
\]
Note that we have $\bar{h}
 \in \mathbb{F}_q.$
If we write 
$\mathcal{Z}=\gamma_0-\gamma^{q/(q-1)}c,$ 
we acquire the following
by (\ref{div6})
\begin{equation}\label{div7}
b^\ast \mathcal{Z}=
\frac{\gamma_0}{a_0^{q+1}}+
\frac{\gamma^{q/(q-1)}}{a_0^{q+1}}(
c+h\zeta)\ 
({\rm mod}\ (1/q)+).
\end{equation}
We write $b^\ast \mathcal{Z}
=(\gamma_0/a_0^{q+1})
-\gamma^{\frac{q}{q-1}}b^\ast c$
with $b^\ast c=(c+h\zeta)/a_0^{q+1}.$
Hence, we have proved the following lemma.
\begin{lemma}\label{conn}
Let $b=a_0+\varphi b_0+\pi a_1
 \in \mathcal{O}^{\times}_3$ 
 with $a_0 \in 
 \mu_{q^2-1}(\mathcal{O}_E)$
 and $b_0,a_1 \in 
 \mu_{q^2-1}(\mathcal{O}_
 E) 
 \cup \{0\}.$ We set
 $h:=-(a_1/a_0)^q-(a_1/a_0)+(b_0/a_0)^{q+1}.$
We consider the identification (\ref{fix}).
Then, the element $b$
 acts on the group 
$\pi_0(\overline{\mathbf{Y}}_{2,2})$
as follows
\[
b:\pi_0(\overline{\mathbf{Y}}_{2,2})
\to \pi_0(\overline{\mathbf{Y}}_{2,2})\ 
;\ i:=(\zeta,\tilde{\mu}) 
\mapsto 
ib:=(\zeta \bar{a}_0^{-(q+1)},
\tilde{\mu}+\bar{h}).
\]
\end{lemma}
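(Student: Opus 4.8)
The plan is to assemble the transformation of the two coordinates of $ib$ directly from the congruences established in the paragraph preceding the statement, and then to transport the result through the identification (\ref{fix}); no genuinely new computation is needed, and the work lies in the bookkeeping.

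For the first coordinate I would start from $b^\ast(x_1)\equiv x_1/a_0$ and $b^\ast(y_1)\equiv y_1/a_0$ modulo $0+$, which give $b^\ast(x_1^qy_1-x_1y_1^q)\equiv \zeta/a_0^{q+1}$ as in (\ref{div3}). Since $a_0\in\mu_{q^2-1}(\mathcal{O}_E)$ one has $(a_0^{q+1})^q=a_0^{q^2+q}=a_0^{q+1}$, so $a_0^{q+1}\in\mu_{q-1}(\mathcal{O}_E)$ and $\bar a_0^{q+1}=N_{\mathbb{F}_{q^2}/\mathbb{F}_q}(\bar a_0)\in\mathbb{F}_q^\times$; hence the new $\zeta$-invariant of $ib$ is $\zeta/a_0^{q+1}$, reducing to $\bar\zeta\,\bar a_0^{-(q+1)}$. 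I would also note, for consistency, that $\gamma_0/a_0^{q+1}$ is a legitimate choice of the constant attached to $b^\ast\mathcal{Z}$, since raising to the $q$th power fixes $a_0^{q+1}$ and therefore $(\gamma_0/a_0^{q+1})^q-\gamma^q(\gamma_0/a_0^{q+1})=\zeta/a_0^{q+1}$.

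For the second coordinate the key input is (\ref{div7}): writing $\mathcal{Z}=\gamma_0-\gamma^{q/(q-1)}c$ one gets $b^\ast\mathcal{Z}=\gamma_0/a_0^{q+1}-\gamma^{q/(q-1)}b^\ast c$ with $b^\ast c=(c+h\zeta)/a_0^{q+1}$. Combining this with $\mu=c+f$ and with the congruence $\pi f_b\equiv\pi(f/a_0^{q+1})$ modulo $1+$, the invariant $b^\ast\mu=b^\ast c+f_b$ of the component $ib$ reduces to $(\bar c+\bar h\bar\zeta+\bar f)/\bar a_0^{q+1}=(\bar\mu+\bar h\bar\zeta)/\bar a_0^{q+1}$. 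Under (\ref{fix}) the second coordinate of a component is $\bar\mu/\bar\zeta$, so the second coordinate of $ib$ is $(\bar\mu+\bar h\bar\zeta)/\bar\zeta=\tilde\mu+\bar h$, as claimed. Along the way I would check the two points that make the statement well posed: that $\bar h\in\mathbb{F}_q$, which holds because $h=-(a_1/a_0)^q-(a_1/a_0)+(b_0/a_0)^{q+1}$ is a sum of a trace and a norm from $\mathbb{F}_{q^2}$ to $\mathbb{F}_q$; and that $b^\ast c$ again satisfies $(b^\ast c)^q\equiv b^\ast c$ modulo $0+$, which follows from $(b^\ast\mathcal{Z})^q-\gamma^q(b^\ast\mathcal{Z})\equiv\zeta/a_0^{q+1}+\pi(f_b^q-f_b)$ by the same argument used in subsection \ref{yo1} to produce $\bar\mu\in\mathbb{F}_q$.

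The one delicate point is the $f$-bookkeeping feeding the $\mu$-coordinate: one must be sure that the auxiliary polynomial $f$ with $X^q-Y^q=(X-Y)^q+\pi f(X,Y)$, its twist $f_b=f((b^\ast x)^{q^2}(b^\ast y)^q,(b^\ast x)^q(b^\ast y)^{q^2})$, and the expansions (\ref{div4})--(\ref{div5}) fit together so that every error term of valuation at most $1/q$ that could contribute to the $\mu$-coordinate is accounted for exactly by the shift $\bar h$, with no residual contribution from the off-diagonal terms $(b_0/a_0)(x^{q^2}y-xy^{q^2})$ occurring in $b^\ast(x)$. Once this is confirmed — it is essentially (\ref{div6}) together with $\pi f_b\equiv\pi(f/a_0^{q+1})$ modulo $1+$ — the lemma follows by substituting the two computed invariants into the identification (\ref{fix}).
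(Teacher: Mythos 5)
Your proposal is correct and follows essentially the same route as the paper: the paper's proof of Lemma \ref{conn} is exactly the assembly of (\ref{div2})--(\ref{div7}), reading off the new $\zeta$-invariant $\zeta/a_0^{q+1}$ from (\ref{div3}) and the new $\mu$-invariant $b^\ast c+f_b\equiv(\mu+h\zeta)/a_0^{q+1}$ from (\ref{div6})--(\ref{div7}) together with $\pi f_b\equiv\pi(f/a_0^{q+1})$, and then dividing by the new $\zeta$-coordinate under (\ref{fix}) so that the $a_0^{q+1}$ factors cancel and the second coordinate shifts by $\bar h$. Your added checks that $\bar a_0^{q+1}\in\mathbb{F}_q^{\times}$, $\bar h\in\mathbb{F}_q$ (trace plus norm), and $(b^\ast c)^q\equiv b^\ast c$ are exactly the well-posedness points implicit in the paper's argument.
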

\begin{proof}
The required assertion follows from (\ref{div7}).
\end{proof}
\begin{remark}The group 
$\mathcal{O}^{\times}_D$ acts on 
the set of connected components
of the Lubin-Tate space
according to the inverse 
of the reduced norm. See subsection 2.5
for more details.
Let $\sigma \neq 1 \in {\rm Gal}(E/F)$.
Then, 
we have ${\rm Nrd}_{D/F}(\alpha+\varphi \beta)
=\alpha\alpha^{\sigma}-\pi \beta\beta^{\sigma}.$
Let $b=a_0+\varphi b_0+\pi a_1 
\in \mathcal{O}_3^{\times}$ with $a_0 \in 
\mu_{q^2-1}(\mathcal{O}_E)$ and $b_0,a_1
 \in \mu_{q^2-1}(\mathcal{O}_E) \cup \{0\}$.
Then, we have
 ${\rm Nrd}_{D/F}^{-1}(b)=\frac{1}{a_0^{q+1}}\bigl(1+\pi 
 h\bigr) \in 
 \bigl(\mathcal{O}_F/\pi^2\mathcal{O}_F)^{\times}.$
 Hence, 
 as observed in 
 Lemma \ref{conn}, 
 $\mathcal{O}^{\times}_D$
 acts on $\pi_0(\overline{\mathbf{Y}}_{2,2})$ 
 according to
 the inverse of the reduced norm. 
\end{remark}

Secondly, we write down the action 
of $\mathcal{O}^{\times}_D$
 on the reduction 
 $\overline{\mathbf{Y}}_{2,2}.$ 
We write 
$\{\overline{\mathbf{Y}}^i_{2,2}\}_
{i=(\zeta,\tilde{\mu}) \in 
\mathbb{F}^{\times}_q 
\times \mathbb{F}_q}$
for the connected 
components of 
$\overline{\mathbf{Y}}_{2,2}.$
Now, we compute the action of $b$ 
on the component 
$\overline{\mathbf{Y}}^i_{2,2}$
with
$i=(\bar{\zeta},\tilde{\mu}) 
\in (\mathcal{O}_F/\pi^2)^{\times}.$

Let $\tilde{\gamma}_{0},
\tilde{\gamma'}_0$ be elements such that
$\tilde{\gamma}_{0}^q=\gamma_0$ and 
$\tilde{\gamma'}_0=\tilde{\gamma}_0a_0^{-(q+1)}.$
Recall that we introduce new parameters $Z_1$ and 
$b^\ast Z_1$ as in (\ref{aw6}),
\begin{equation}\label{div8}
x^qy-xy^q=\tilde{\gamma}_{0}+\gamma^{1/q}Z_1,\ 
b^\ast (x)^qb^\ast (y)-b^\ast (x) b^\ast (y)^q
=\tilde{\gamma'}_0+\gamma^{1/q}(b^\ast Z_1).
\end{equation} 
As computed in subsection \ref{yo1},
 the components $\overline{\mathbf{Y}}^i_{2,2}$ and 
$\overline{\mathbf{Y}}^{ib}_{2,2}$ 
are defined by the following equations
respectively
\[x^qy-xy=\bar{\zeta},\ 
Z_1^q=x^{q^3}y-xy^{q^3},\]
\begin{equation}\label{rn1}
b^\ast (x)^qb^\ast (y)-b^\ast (x)b^\ast (y)^q
=\bar{\zeta} \bar{a}_0^{-(q+1)},\ 
(b^\ast Z_1)^q=b^\ast (x)^{q^3}b^\ast (y)-b^\ast (x) 
b^\ast (y)^{q^3}.
\end{equation}
\begin{proposition}\label{bb2}
Let $b \in 
\mathcal{O}^{\times}_D$.
We write $\bar{b}$ for the image of $b$
by $\mathcal{O}^{\times}_D \to
 \mathbb{F}^{\times}_{q^2}.$
We choose an element $i 
\in (\mathcal{O}_F/\pi^2)^{\times}.$
See (\ref{rn1}) for the 
defining equations of 
$\overline{\mathbf{Y}}^i_{2,2}$
and 
$\overline{\mathbf{Y}}^{ib}_{2,2}$.
Then, $b$ induces the 
following morphism
\[
b:\overline{\mathbf{Y}}^i_{2,2} \to
\overline{\mathbf{Y}}^{ib}_{2,2}\ ;\ 
(x,y,Z_1) \mapsto (\bar{b}^{-1}x,
\bar{b}^{-1}y,\bar{b}^{-(q+1)}Z_1).
\]
\end{proposition}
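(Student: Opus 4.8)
The plan is to deduce the proposition directly from the explicit description of $b^\ast$ on the coordinates $x,y,Z_1$ obtained in the computations leading up to it, the point being only to check that every correction term disappears upon passing to the reduction.

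First I would record that, writing $b=a_0+\varphi b_0+\pi a_1$ with $a_0\in\mu_{q^2-1}(\mathcal{O}_E)$, the image $\bar b$ of $b$ in $\mathbb{F}_{q^2}^{\times}$ is $\bar a_0$, and that, since $v(\gamma)=(q-1)/q^2$, the quantities $\gamma^{1/(q^2-1)}$, $\gamma^{1/(q-1)}$ and $\gamma^{1/q}$ all have strictly positive valuation. Applying the reduction map to formula (\ref{div2}) for $b^\ast(x)$ and to its analogue for $y$, both correction terms carry a positive power of $\gamma$ and hence vanish modulo $0+$; thus $b^\ast(x)\equiv x/a_0$ and $b^\ast(y)\equiv y/a_0$ modulo $0+$, so in the reduction $\overline{b^\ast x}=\bar b^{-1}x$ and $\overline{b^\ast y}=\bar b^{-1}y$.

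Next I would treat $Z_1$. Combining the two defining relations in (\ref{div8}) with the identity $\tilde{\gamma'}_0=\tilde{\gamma}_0 a_0^{-(q+1)}$ (recalled just before (\ref{div8})), one has
\[
\gamma^{1/q}\,(b^\ast Z_1)=\bigl(b^\ast(x)^q b^\ast(y)-b^\ast(x)b^\ast(y)^q\bigr)-\tilde{\gamma}_0 a_0^{-(q+1)}.
\]
Substituting (\ref{div4}) into the right-hand side and using (\ref{div8}) once more to replace $(x^q y-xy^q)-\tilde{\gamma}_0$ by $\gamma^{1/q}Z_1$, the right-hand side becomes $a_0^{-(q+1)}\gamma^{1/q}Z_1$ plus terms carrying a factor $\gamma^{q/(q^2-1)}$ or $\gamma^{1/(q-1)}$, together with the error of (\ref{div4}) of valuation $>1/q^2$. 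Since
\[
v\!\left(\gamma^{q/(q^2-1)}/\gamma^{1/q}\right)=\frac{1}{q^3(q+1)}>0,\qquad
v\!\left(\gamma^{1/(q-1)}/\gamma^{1/q}\right)=\frac{1}{q^3}>0
\]
and $1/q^2>v(\gamma^{1/q})$, dividing by $\gamma^{1/q}$ yields $b^\ast Z_1\equiv Z_1/a_0^{q+1}$ modulo $0+$, that is $\overline{b^\ast Z_1}=\bar b^{-(q+1)}Z_1$ in the reduction.

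Finally I would check that $(\bar b^{-1}x,\bar b^{-1}y,\bar b^{-(q+1)}Z_1)$ actually satisfies the defining equations (\ref{rn1}) of $\overline{\mathbf{Y}}^{ib}_{2,2}$: the first equation follows from Lemma \ref{conn}, which identifies the first coordinate of $ib$, and the second from the congruence $q(q+1)\equiv q^3+1$ modulo $q^2-1$, giving $\bar b^{-q(q+1)}=\bar b^{-(q^3+1)}$ in $\mathbb{F}_{q^2}^{\times}$. The only genuinely delicate point is the valuation bookkeeping: one must verify that, after the division by the relevant power of $\gamma$, every correction term picked up from (\ref{div2}) and (\ref{div4}) still has strictly positive valuation and is therefore killed by reduction. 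Since all these estimates are already implicit in (\ref{div2})--(\ref{div8}), the argument amounts to assembling those formulas.
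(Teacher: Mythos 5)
Your proposal is correct and follows essentially the same route as the paper: the paper's proof simply observes that (\ref{div2}) and (\ref{div8}) give $b^\ast(x)\equiv x/\bar{b}$, $b^\ast(y)\equiv y/\bar{b}$ and $b^\ast Z_1\equiv Z_1/\bar{b}^{q+1}$ modulo $0+$, which is exactly your computation with the valuation bookkeeping (and the consistency check against (\ref{rn1})) written out explicitly. Your numerical estimates $v(\gamma^{q/(q^2-1)}/\gamma^{1/q})=1/q^3(q+1)$, $v(\gamma^{1/(q-1)}/\gamma^{1/q})=1/q^3$ and $1/q^2>v(\gamma^{1/q})$ are all accurate, so the argument is complete.
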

\begin{proof}
By (\ref{div2}) and (\ref{div8}),
we acquire $b^\ast Z_1
\equiv Z_1/\bar{b}^{q+1}$ modulo $0+.$
Hence, the required assertion follows.
\end{proof}

Thirdly, we write down the action of $\mathcal{O}^{\times}_D$
on the irreducible components 
$\{X^i_j\}_{(i=(\zeta,\tilde{\mu}),j) 
\in \mathcal{S}}$.

Let $y_0$ be an element such that
$y_0^{q^2-1}+\tilde{\gamma}_0^{q-1}=0.$
We choose an element $\tilde{\gamma}_1$
such that $\tilde{\gamma}_1^q
+\tilde{\gamma}_0
+\gamma^{1/q}\tilde{\gamma}_1=0.$
Set $\tilde{\gamma'}_1
:=\tilde{\gamma}_1/a_0^{q+1}.$
Let $x_0$ be an element such that 
$x_0^qy_0-x_0y_0^q=
\tilde{\gamma}_0+\gamma^{1/q}\tilde{\gamma}_1.$
We choose elements $w,w_1$ such that
$w=\gamma^{1/q(q-1)}$
and
$w_1=y_0^q\gamma^{1/(q^2-1)}.$
Then, we have $v(w)=1/q^3$ and 
$v(w_1)=1/q^2(q+1).$
We set
$b^\ast (w_1)=w_1/a_0^q.$
Furthermore, for 
$j:=(\bar{x}_0,\bar{y}_0) 
\in \mathcal{S}^i_{00},$
we set  
$jb:=(\bar{a}_0^{-1}\bar{x}_0,\bar{a}_0^{-1}\bar{y}_0) 
\in \mathcal{S}^{ib}_{00}.$
Now, we determine 
a morphism $X^i_j \to 
X^{ib}_{jb}$ which is induced by $b$. 

We change variables as follows as in (\ref{ss})
\begin{itemize}
\item
$Z_1=\tilde{\gamma}_1+wa,\ 
y=y_0+w_1z_1$
\item
$b^\ast Z_1=\tilde{\gamma'}_1+w (b^\ast a),\ 
b^\ast y=\frac{y_0}{a_0}+w_1 (b^\ast z_1).$
\end{itemize}
See subsection \ref{yo2} for 
the definition of 
$c_0$.
We set $c'_0:=
(c_0+h\zeta) a_0^{-(q+1)}.$
Then, we acquire the following by (\ref{aw10})
\begin{equation}\label{ky}
a^q+a \equiv \zeta z_1^{q+1}-c_0,\ 
(b^\ast a)^q+b^\ast a 
\equiv a_0^{-(q+1)}\zeta(b^\ast z_1)^{q+1}-
c'_0\ ({\rm mod}\ 0+).
\end{equation}
These equations define the components 
$X^i_j$ and $X^{ib}_{jb}$ respectively.
On the term 
$x^{q^2}y-xy^{q^2}$ 
in the right hand side of the congruence (\ref{div4}), 
we acquire the following congruence
\begin{equation}\label{div9}
x^{q^2}y-xy^{q^2} \equiv 
\zeta\biggl(\frac{y^{q^2-1}+1}{y^{q-1}}\biggr)
\equiv \frac{\zeta}{y_0^q}w_1z_1\ ({\rm mod}\ (1/q^2(q+1))+).
\end{equation}
Therefore, we acquire the following by (\ref{div4})
\begin{equation}\label{go2}
b^\ast (x)^q b^\ast (y)-b^\ast (x)b^\ast (y)^q \equiv
\frac{-\tilde{\gamma}_1^q+\gamma^{1/q}w a
-\gamma_2^{1/q}(b_0/a_0)(w_1\zeta z_1/y_0^q)
+\gamma_2^{\frac{q+1}{q^2}}(a_1/a_0)\zeta}{a_0^{q+1}}\ 
\end{equation}
modulo $1/q^2+.$
On the other hand, 
we have the following by (\ref{div8})
\begin{equation}\label{go0}
b^\ast (x)^q b^\ast (y)-b^\ast (x)b^\ast (y)^q=
\tilde{\gamma'}_0+\gamma^{1/q}b^\ast Z_1=
-\tilde{\gamma'}_1^q+\gamma^{1/q} w (b^\ast a).
\end{equation}
Hence, by (\ref{go2}) and (\ref{go0}), 
we acquire the following
congruence
\begin{equation}\label{go3}
b^\ast a \equiv 
\frac{a-(b_0/a_0)
\zeta z_1+(a_1/a_0)\zeta}
{a_0^{q+1}}\ ({\rm mod}\ 0+).
\end{equation}

On the other hand, by considering 
the definitions of $z_1$ and $b^\ast z_1$,
 we obtain the 
following congruence by (\ref{div2})
\begin{equation}\label{div11}
b^\ast z_1 \equiv
a_0^{q-1}\biggl(z_1
-\biggl(\frac{b_0}{a_0}\biggr)^q\biggr)\ ({\rm mod}\ 0+).
\end{equation}

In the following proposition, we describe 
the action of $b$ on the irreducible components
$\{X^i_j\}_
{(i,j) \in \mathcal{S}}$
in subsection 3.3.

\begin{proposition}\label{giu}
Let $b=a_0+\varphi b_0+\pi a_1 \in \mathcal{O}^{\times}_3.$
We choose elements $i=(\zeta,\tilde{\mu}) \in 
(\mathcal{O}_F/\pi^2)^{\times}$ and 
 $j=(x_0,y_0) \in \mathcal{S}^i_{00}.$ 
We set $jb:=(x_0\bar{a}_0^{-1},y_0\bar{a}_0^{-1}) 
\in \mathcal{S}^{ib}_{00}.$
See (\ref{ky}) for the defining equations
of $X^i_j$ and $X^{ib}_{jb}$.
Then, the element $b$ induces the
 following morphism
\[
b:X^i_{j} \to 
X^{ib}_{jb}\ ;\ 
(a,z_1) \mapsto 
\biggl(\bar{a}_0^{-(q+1)}
(a-(\bar{b}_0/\bar{a}_0)
\zeta z_1
+(\bar{a}_1/\bar{a}_0)\zeta),
\bar{a}_0^{q-1}(z_1
-(\bar{b}_0/\bar{a}_0)^q)\biggr).
\]
\end{proposition}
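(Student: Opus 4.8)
The plan is to deduce the proposition directly from the two congruences \eqref{go3} and \eqref{div11} established just above, the point being that the action of $b\in\mathcal{O}^{\times}_D$ on the coordinates is nothing but the operator $b^\ast$ of \eqref{diq}, so that the morphism it induces on the reduction $X^i_j$ is recorded by the reductions modulo $0+$ of $b^\ast a$ and $b^\ast z_1$. The work therefore splits into identifying the source and target (and checking that $b$ does map one to the other) and then reading off the explicit formula from \eqref{go3} and \eqref{div11}.

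First I would pin down the source and target. By \eqref{dac1}, $b$ acts as an automorphism of $X(\pi^2)\times_F E$, given on the Lubin--Tate coordinates by $X_2\mapsto b^\ast(X_2)$, $Y_2\mapsto b^\ast(Y_2)$, $u\mapsto b(u)$. By Lemma \ref{conn} (via Theorem \ref{caq}) it carries the connected component $\overline{\mathbf{Y}}^i_{2,2}$ of the reduction onto $\overline{\mathbf{Y}}^{ib}_{2,2}$, and by Proposition \ref{bb2} it acts there by $(x,y,Z_1)\mapsto(\bar b^{-1}x,\bar b^{-1}y,\bar b^{-(q+1)}Z_1)$, where $\bar b=\bar a_0$; in particular the singular point $(x_0,y_0)\in\mathcal{S}^i_{00}$ goes to $(\bar a_0^{-1}x_0,\bar a_0^{-1}y_0)=jb\in\mathcal{S}^{ib}_{00}$. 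Hence $b$ restricts to an isomorphism of the underlying affinoid $\mathbf{X}^i_j$ of the singular residue class over $(x_0,y_0)$ onto $\mathbf{X}^{ib}_{jb}$, and passing to special fibres it induces an isomorphism $X^i_j\to X^{ib}_{jb}$ of the curves computed in subsection \ref{yo2}; that the image satisfies the defining equation of $X^{ib}_{jb}$ in \eqref{ky} is then automatic.

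Next I would read off the explicit formula. On the source and target one uses the change of variables $Z_1=\tilde{\gamma}_1+wa$, $y=y_0+w_1z_1$ and $b^\ast Z_1=\tilde{\gamma'}_1+w\,(b^\ast a)$, $b^\ast y=y_0/a_0+w_1(b^\ast z_1)$ introduced before the statement. Matching the two expressions \eqref{go2} and \eqref{go0} for $b^\ast(x)^qb^\ast(y)-b^\ast(x)b^\ast(y)^q$ and cancelling the common factor $\gamma^{1/q}w$ gives \eqref{go3}; reducing modulo $0+$ yields the first coordinate $\bar a_0^{-(q+1)}\bigl(a-(\bar b_0/\bar a_0)\zeta z_1+(\bar a_1/\bar a_0)\zeta\bigr)$. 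Substituting \eqref{div2} for $b^\ast y$ into $b^\ast y=y_0/a_0+w_1(b^\ast z_1)$, using $w_1=y_0^q\gamma^{1/(q^2-1)}$ together with $y\equiv y_0$ and $y^{q^2}\equiv y_0^{q^2}$ modulo the relevant positive power, gives \eqref{div11}; reducing modulo $0+$ yields the second coordinate $\bar a_0^{q-1}\bigl(z_1-(\bar b_0/\bar a_0)^q\bigr)$. Combining the two completes the proof.

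I expect the main obstacle to be precisely the valuation bookkeeping behind \eqref{go2}--\eqref{go3} and \eqref{div11}: one must track all the fractional powers of $\gamma$ (equivalently of $\kappa_1$) and of $w,w_1$ carefully enough to see which terms of \eqref{div2}, \eqref{div4} and \eqref{div9} survive after dividing by $\gamma^{1/q}w$ and reducing, and which are pushed into the error term; one must also check that the contributions of the polynomial $f$ (the constants $c_0$, $c'_0$) are absorbed into the change of the defining equation \eqref{ky}, consistently with the shift $\tilde{\mu}\mapsto\tilde{\mu}+\bar h$ of Lemma \ref{conn}, rather than into the coordinate formula. Once these estimates are in place, the passage to the reductions is immediate and the stated morphism is obtained.
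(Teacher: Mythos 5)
Your proposal is correct and takes essentially the same route as the paper, whose proof simply cites the congruences (\ref{go3}) and (\ref{div11}) established immediately before the statement; your preliminary identification of source and target via Theorem \ref{caq}, Lemma \ref{conn} and Proposition \ref{bb2} only makes explicit what the paper leaves implicit. The one bookkeeping point to watch is that the target change of variables must use $b^\ast w_1=w_1/a_0^q$ (as set in the text) rather than $w_1$ itself, which is exactly what produces the factor $\bar a_0^{q-1}$ in the second coordinate of (\ref{div11}).
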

\begin{proof}
The required assertion follows from 
(\ref{go3}) and (\ref{div11}).
\end{proof}


\subsection{The action of $\mathcal{O}_D^{\times}$ 
on the reduction $\overline{\mathbf{Z}}_{1,1,\ast}$
and $\{W^i_{\ast,j}\}_{ 
(i,j) \in \mathcal{S}_0}\ (\ast=0,\infty)$}
In this subsection, we 
compute the action of $\mathcal{O}^{\times}_D$
on the reduction $\overline{\mathbf{Z}}_{1,1,\ast}\ 
(\ast =0,\infty)$
and $\{W^i_{\ast,j}\}
_{\ast=0,\infty, 
(i,j) \in \mathcal{S}_0}$.
See Lemma \ref{op0} and Proposition \ref{op1}
for precise statements.
In the following, we use freely 
the notations 
in subsections \ref{zo1} and \ref{zo2}.
But, we briefly recall 
them used in this subsection.
Recall that we have the followings 
on the space $\mathbf{Z}_{1,1,0}$
\[v(u)=\frac{1}{2},\ 
v(X_1)=\frac{1}{2(q-1)},\ 
v(X_2)=\frac{1}{2q^2(q-1)},\ 
v(Y_1)=\frac{1}{2q(q-1)},\ v(Y_2)=\frac{1}{2q^3(q-1)}.
\]
We choose an element 
$\kappa_1$
 such that 
 $\kappa_1^{2q^4(q-1)}=\pi$,
 We set $\kappa:=\kappa_1^q$ 
 and 
$\gamma:=\kappa^{q(q-1)^2}.$
We write $\gamma^{\frac{1}{q^2(q-1)}}$
for $\kappa_1^{q-1}.$
Then, we have $v(\kappa_1)=1/2q^4(q-1)$ 
and $v(\gamma)=(q-1)/2q^2.$
We change variables as follows
$u=\kappa^{q^3(q-1)}u_0,\ 
X_1=\kappa^{q^3}x_1,\ 
Y_1=\kappa^{q^2}y_1$
and $X_2=\kappa^{q}x,\ Y_2=\kappa y.$
We choose an element $\zeta 
\in \mu_{q-1}(\mathcal{O}_F).$
We set $r:=(q+1)/2q^2.$

Let $b=a_0+\varphi b_0+\pi a_1
 \in \mathcal{O}^{\times}_3.$
Recall the definition of 
$b^\ast $ in (\ref{diq}).
Then, we have 
the following congruences
by (\ref{cop})
and $j^{-1}(X) \equiv X\ ({\rm mod}\ (\pi,u))$
\begin{equation}\label{dz1}
b^\ast(x) \equiv 
\frac{x-(b_0/a_0)^q 
\gamma^{1/(q-1)}x^q+
(-(a_1/a_0)
+(b_0/a_0)^{q+1})
\gamma^{\frac{q+1}{q-1}}
x^{q^2}}{a_0}\ 
({\rm mod}\ r+),
\end{equation}
\begin{equation}\label{dz2} 
b^\ast (y) \equiv 
\frac{y
-(b_0/a_0)^q\gamma^{\frac{1}{q(q-1)}}y^q
+(-(a_1/a_0)+(b_0/a_0)^{q+1})
\gamma^{\frac{q+1}{q(q-1)}}y^{q^2}}
{a_0}\ 
({\rm mod}\ (r/q)+).
\end{equation}
Then, the congruences 
(\ref{dz1}) 
induce the following congruence
\begin{equation}\label{dz3}
b^\ast (x)b^\ast (y)^q \equiv
\frac{xy^q-((b_0/a_0)^q (xy)^q
+(b_0/a_0) 
xy^{q^2})\gamma^{\frac{1}{q-1}}
+(b_0/a_0)^{q+1}
\gamma^{\frac{2}{q-1}}
(xy^q)^q}{a_0^{q+1}}
\end{equation}
modulo $(1/q^2)+.$
Recall that we set as in (\ref{aq6})
\[\mathcal{Z}=(xy^q)^q-
\gamma
\biggl(xy^{q^3}
+\frac{\zeta}{y^{q(q^2-1)}}\biggr),
\]
\begin{equation}\label{22}
 b^\ast \mathcal{Z}
 =(b^\ast 
 (x)b^\ast (y)^q)^q
 -\gamma
\biggl
(b^\ast (x)b^\ast (y)^{q^3}
+\frac{\zeta}{a_0^{q+1}
b^\ast (y)^{q(q^2-1)}}\biggr).
\end{equation}
By (\ref{dz1}), we have 
the following congruences
 on the right hand side of the 
above equality (\ref{22})
\begin{equation}\label{dz5'}
b^\ast (x)b^\ast (y)^{q^3} \equiv 
\frac{xy^{q^3}-(b_0/a_0)^q 
\gamma^{1/(q-1)}(xy^{q^2})^q+
(-(a_1/a_0)
+(b_0/a_0)^{q+1})
\gamma^{\frac{q+1}{q-1}}
(xy^q)^{q^2}}{a_0^{q+1}}, 
\end{equation}
and 
\begin{equation}\label{dz5}
\frac{\zeta}{a_0^{q+1}b^\ast (y)^{q(q^2-1)}}
\equiv
\zeta\frac
{y^q-(b_0/a_0)\gamma^{1/(q-1)}y^{q^2}
+(-(a_1/a_0)^q
+(b_0/a_0)^{q+1})
\gamma^{\frac{q+1}{q-1}}y^{q^3}}
{a_0^{q+1} y^{q^3}}
\end{equation}
modulo $r+.$
By substituting 
the congruences (\ref{dz3}), 
(\ref{dz5'}) and (\ref{dz5})
to the right hand
 side of the equality (\ref{22}) 
above, $b^\ast \mathcal{Z}$ 
is congruent to the following 
\begin{equation}\label{dz60}
\frac{\mathcal{Z}}{a_0^{q+1}}
+\gamma^{\frac{q}{q-1}}\frac{b_0}{a_0^{q+2}}
\biggl(\frac{\zeta-xy^q}{y^{q-1}}\biggr)^{q^2}
+\gamma^{\frac{2q}{q-1}}
\biggl(
\biggl(\frac{b_0}{a_0^2}\biggr)^{q+1}
(xy^q)^{q^2}
+
\frac{(a_1/a_0)^q+(a_1/a_0)
-2(b_0/a_0)^{q+1}}
{a_0^{q+1}}\zeta
\biggr)
\end{equation}
modulo $(1/q)+.$
Since we set 
$xy^q=\tilde{\gamma}_0+\gamma^{1/q}Z_1$
 in (\ref{aq7}),
we have 
$(\zeta-xy^q)^{q^2} \equiv 0\ 
({\rm mod}\ (1/2q)+)$ by the choice of 
$\tilde{\gamma}_0$ in \ref{zo2}.
Note that we have $\tilde{\gamma}_0
 \equiv \zeta\ 
({\rm mod}\ 0+).$
Hence, (\ref{dz60}) has the following form
\begin{equation}\label{dz6}
b^\ast \mathcal{Z} \equiv
\frac{\mathcal{Z}}{a_0^{q+1}}
+\gamma^{\frac{2q}{q-1}}
\biggl(
\frac{(a_1/a_0)^q+(a_1/a_0)
-(b_0/a_0)^{q+1}}{a_0^{(q+1)}}\biggr)\zeta\ 
({\rm mod}\ (1/q)+).
\end{equation}
Recall that we have set 
$\mathcal{Z}^q
-\gamma^{2q}\mathcal{Z}
=\zeta\ ({\rm mod}\ 1+)$
in (\ref{olo}).
On the other hand, we have 
$(b^\ast \mathcal{Z})^q
-\gamma^{2q} b^\ast \mathcal{Z}
=\zeta/a_0^{q+1}\ ({\rm mod}\ 1+).$
We choose an element 
$\gamma_0$ such that $\gamma_0^{q}
-\gamma^{2q}\gamma_0=\zeta.$
Then, we set 
$\gamma'_0:=\gamma_0/a_0^{q+1}.$
Then, we set as follows
$\mathcal{Z}:=\gamma_0
-\gamma^{\frac{2q}{q-1}}\mu$
and $b^\ast \mathcal{Z}:=\gamma'_0
-\gamma^{\frac{2q}{q-1}}b^\ast \mu.$
Hence, by (\ref{dz6}), we acquire
$b^\ast \mu=\frac{\mu}{a_0^{q+1}}
-\bigl(
\frac{(a_1/a_0)^q+(a_1/a_0)
-(b_0/a_0)^{q+1}}
{a_0^{(q+1)}}\bigr)\zeta\ 
({\rm mod}\ 0+).$ 
Therefore,  $\mathcal{O}^{\times}_D$ 
acts on $\pi_0(\mathbf{Z}_{1,1,0})$
according to 
the inverse of the reduced norm, under the identification
(\ref{fc2}).

Now, fixing $i=(\bar{\zeta},\tilde{\mu}) 
\in (\mathcal{O}_F/\pi^2)^{\times}
=\pi_0(\mathbf{Z}_{1,1,0})$,
 we will determine the morphism 
$\overline{\mathbf{Z}}^i_{1,1,0} \to 
\overline{\mathbf{Z}}^{ib}_{1,1,0},$ 
which is induced by $b$ 
in Lemma \ref{op0} below.

Let $\tilde{\gamma}_0,\tilde{\gamma'}_0$
be elements such that
$\tilde{\gamma}_0^q=\gamma_0
-\gamma^{\frac{2q}{q-1}}\mu$ 
and $\tilde{\gamma'}_0^q
=\gamma'_0-\gamma^{\frac{2q}{q-1}}
b^\ast \mu.$
See subsection \ref{zo2} for more details.
By definitions of $\tilde{\gamma}_0$
 and $\tilde{\gamma'}_0$
and (\ref{dz6}), we obtain
$
v(\tilde{\gamma'}_0
-\tilde{\gamma}_0a_0^{-(q+1)}) \geq 1/q^2.
$
Recall that, in (\ref{aq7}), 
we set as follows
\begin{equation}\label{dz7}
xy^q=\tilde{\gamma}_0+\gamma^{1/q}Z_1,\ 
b^\ast (x)b^\ast (y)^q
=\tilde{\gamma'}_0+\gamma^{1/q}b^\ast Z_1.
\end{equation}
Furthermore, we put
\[h(x,y):=\frac{b_0^q a_0 
(xy)^q+a_0^qb_0 xy^{q^2}}{a_0^{2(q+1)}}.\]
Then, by (\ref{dz3}) 
 and (\ref{dz7}), 
we acquire the following congruence
\begin{equation}\label{dz9}
b^\ast Z_1 \equiv 
\frac{Z_1}{a_0^{q+1}}
-\gamma^{\frac{1}{q(q-1)}}h(x,y)\ ({\rm mod}\ (1/2q^3)+).
\end{equation}

Recall that we have
introduced new 
parameters $Z_2$ and $b^\ast Z_2$
as follows, as in (\ref{aq10}), 
\begin{itemize}
\item $\gamma^{1/q^2}
y^{q^2-1}Z_2=
Z_1-\zeta(y^{q^2-1}+y^{-(q^2-1)}),$
\item $\gamma^{1/q^2}
(b^\ast y)^{q^2-1}
b^\ast Z_2=
b^\ast Z_1-
\frac{\zeta}{a_0^{q+1}}
((b^\ast y)^{q^2-1}
+(b^\ast y)^{-(q^2-1)}).$
\end{itemize}
By considering (\ref{dz9}) 
and the definitions of $Z_2$ and 
$b^\ast Z_2$ above, 
we acquire the following congruence
\begin{equation}\label{dz12'}
b^\ast Z_2 \equiv
\frac{Z_2}{a_0^{q+1}}
-\gamma^{\frac{1}{q^2(q-1)}}
\biggl(\frac{h(x,y)}{y^{q^2-1}}
-\frac{b_0^q}{a_0^{2q+1}}(Z_1y^{-q(q-1)}
-2y^{-(q-1)(2q+1)})
\biggr)
\ ({\rm mod}\ (1/2q^4)+).
\end{equation}
We set as follows
\begin{equation}\label{h_1}
h_1(y)=\zeta \{(b_0^q/a_0^{2q+1})y^{-q(q-1)}
+(b_0/a_0^{q+2})y^{q(q-1)}\}.
\end{equation}
Since we have 
$Z_1 \equiv \zeta
(y^{q^2-1}
+y^{-(q^2-1)})\ 
({\rm mod}\ 0+),$
the congruence
(\ref{dz12'}) has the following form
\begin{equation}\label{dz12}
b^\ast Z_2 \equiv
\frac{Z_2}{a_0^{q+1}}
-\gamma^{1/q^2(q-1)}
h_1(y)
\ ({\rm mod}\ (1/2q^4)+).
\end{equation}

As computed in subsection \ref{zo1}, 
the components 
$\overline{\mathbf
{Z}}^i_{1,1,0}$ 
and $\overline
{\mathbf{Z}}^{ib}_{1,1,0}$ are 
defined by the 
following equations respectively 
\[
Z_2^q=\bar{\zeta}
 (y^{q^2-1}+y^{-(q^2-1)}),\ 
(b^\ast Z_2)^q=\frac{\bar{\zeta}}
{\bar{a}_0^{q+1}} 
((b^\ast y)^{q^2-1}+(b^\ast y)^{-(q^2-1)}).
\]
Then, we have the following lemma.
\begin{lemma}\label{op0}
Let $b \in \mathcal{O}_D^{\times}$
 and $\bar{b}$
the image of $b$ by 
$\mathcal{O}^{\times}_D \to
 \mathbb{F}^{\times}_{q^2}.$
 We choose an element $i 
\in (\mathcal{O}_F/\pi^2)^{\times}.$
Let $\ast=0,\infty.$
Then, $b$ induces 
the following morphism 
\[
b:\overline{\mathbf{Z}}^i_{1,1,\ast}
 \to 
\overline
{\mathbf{Z}}^{ib}
_{1,1,\ast}\ ;\ 
(y,Z_2) 
\mapsto 
\biggl(\frac{y}{\bar{b}},
\frac{Z_2}
{\bar{b}^{q+1}}\biggr).
\]
\end{lemma}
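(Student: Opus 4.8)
The plan is to read the reduced morphism directly off the explicit description of the $\mathcal{O}_D^\times$-action on coordinates, exactly as in the proof of Proposition~\ref{bb2}. Since $\overline{\mathbf{Z}}^i_{1,1,0}$ is cut out in the parameters $(y,Z_2)$, it suffices to compute $b^\ast y$ and $b^\ast Z_2$ modulo $0+$ and to check that the resulting expressions involve $y$ and $Z_2$ alone.

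First I would record that, by \eqref{cop0}, the image $\bar b$ of $b=a_0+\varphi b_0+\pi a_1$ under $\mathcal{O}_D^\times\to\mathbb{F}_{q^2}^\times$ is $\bar a_0$. Then \eqref{dz2} gives $b^\ast(y)\equiv y/a_0$ modulo $(r/q)+$, and every correction term there carries a strictly positive power of $\gamma$ (recall $v(\gamma)=(q-1)/2q^2>0$, and that $y$ is a unit on the relevant affinoid), so $b^\ast y\equiv y/\bar b$ modulo $0+$. Likewise \eqref{dz12} reads $b^\ast Z_2\equiv Z_2/a_0^{q+1}-\gamma^{1/q^2(q-1)}h_1(y)$ modulo $(1/2q^4)+$; since $h_1(y)$ (see \eqref{h_1}) is, up to units, a sum of integral powers of the unit $y$, the correction term has positive valuation, whence $b^\ast Z_2\equiv Z_2/\bar b^{q+1}$ modulo $0+$. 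This yields the formula $(y,Z_2)\mapsto(y/\bar b,Z_2/\bar b^{q+1})$.

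It then remains only to confirm that this map carries $\overline{\mathbf{Z}}^i_{1,1,0}$ into $\overline{\mathbf{Z}}^{ib}_{1,1,0}$. This is forced by the fact that $b$ is an automorphism of $\mathcal{X}(\pi^2)$ permuting connected components according to the inverse of the reduced norm — the computation around \eqref{dz6} already identifies $ib$ under \eqref{fc2} — but it can also be checked directly by substituting $y=\bar b\,(b^\ast y)$ and $Z_2=\bar b^{q+1}(b^\ast Z_2)$ into $Z_2^q=\bar\zeta(y^{q^2-1}+y^{-(q^2-1)})$ and using $\bar b^{q^2-1}=1$. Finally, for $\ast=\infty$ I would invoke Definition~\ref{fd}.5, which interchanges $X_2$ and $Y_2$; since the action \eqref{dac1} is symmetric in $X_2$ and $Y_2$, the same formula in $(y,Z_2)$ holds verbatim.

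The substantive computations — deciding which $\gamma$-corrections survive the reduction — have already been folded into the derivations of \eqref{dz2} and \eqref{dz12}, so the only work left is routine valuation bookkeeping on those correction terms, which I expect to present no real obstacle. I would also add a sentence noting that, being induced by an automorphism of the rigid-analytic space, the morphism automatically extends to the normalizations and smooth compactifications, so nothing further is needed for $\overline{\mathbf{Z}}^{i,c}_{1,1,\ast}$.
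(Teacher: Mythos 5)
Your proposal is correct and follows essentially the same route as the paper: the paper's proof simply reads the morphism off the congruences $b^\ast(y)\equiv y/\bar{a}_0$ (the $y$-analogue of \eqref{dz1}, i.e.\ \eqref{dz2}) and \eqref{dz12}, exactly as you do, with the component-matching already settled by the reduced-norm discussion around \eqref{dz6} and the $\ast=\infty$ case by the $X_2\leftrightarrow Y_2$ symmetry. Your extra valuation bookkeeping and the direct substitution check are harmless elaborations of the same argument.
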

\begin{proof}
The required assertion follows from
 (\ref{dz1}) and (\ref{dz12}) immediately. 
\end{proof}
In the following, 
we determine the action 
of $\mathcal{O}^{\times}_{D}$
on the components 
$\{W^{i'}_{j'}\}_{(i',j') \in \mathcal{S}_0}$
where 
each $W_{j'}^{i'}$ 
is defined by the 
Artin-Schreier equation $a^q-a=s^2.$

Let $\iota \in \{\pm1\}.$
Recall that we choose $\tilde{\gamma}_1$
such that
$\tilde{\gamma}_1=
\iota2\zeta\{1+\gamma^{1/q^2}
(\tilde{\gamma}_1/\zeta)\}^{1/2}.$
Similarly, we 
choose $b^\ast \tilde{\gamma}_1$
 such that
\begin{equation}\label{zd1}
b^\ast \tilde{\gamma}_1^q
=\iota\frac{2\zeta}{a_0^{q+1}}
\{1+\gamma^{1/q^2}
a_0^{q+1}\frac{b^\ast 
\tilde{\gamma}_1}{\zeta}\}^{1/2}.
\end{equation}
Let $y_0$ and $x_0$ be elements 
such that $y_0^{q^2-1}=\iota/\{1+
\gamma^{1/q^2}
\tilde{\gamma}_1\zeta^{-1}\}^{1/2}$ 
and
$x_0y_0^q=
\tilde{\gamma}_0
+\gamma^{1/q}\tilde{\gamma}_1.$
We set $w:=y_0^{q+1}
\kappa^{(q-1)/q}$
and $b^\ast w:=w/a_0^{q+1}.$
Then, we have 
$\bar{y}_0 \in \mu_{2(q^2-1)}.$
We set 
$\bar{y}_0b:=\bar{y}_0/\bar{a}_0 
\in \mu_{2(q^2-1)}.$
Now, we determine the 
morphism 
$W^i_{\bar{y}_0} 
\to W^{ib}_{\bar{y}_0b}$
which is induced by $b$ 
in Proposition \ref{op1}.

In (\ref{st}), we have 
changed variables  
as follows
\begin{equation}\label{zd2}
Z_2=\tilde{\gamma}_1+w a,\ 
b^\ast Z_2=b^\ast \tilde{\gamma}_1+b^\ast w (b^\ast a).
\end{equation}
Then, by (\ref{dz12}) and (\ref{zd2}), 
we acquire the following congruence
\begin{equation}\label{zd3}
b^\ast (a) \equiv a
-a_0^{q+1}y_0^{-(q+1)}h_1(y_0)\ ({\rm mod}\ 0+).
\end{equation}
We choose an element $w_1$ such that
$y_0^{q^2-3}
(\zeta+\gamma^{1/q^2}
\tilde{\gamma}_1)w_1^2=w^q.$
We set $b^\ast y_0:=y_0/a_0$ 
and $b^\ast w_1:=w_1/a_0.$
As in (\ref{st}), 
we change variables 
 as follows
\[
y=y_0+w_1y_1,\ 
b^\ast (y)=b^\ast y_0+b^\ast w_1
b^\ast (y_1).
\]
Then, 
the congruence (\ref{dz1}) 
induces the following congruence
\begin{equation}\label{zd6}
b^\ast (y_1) \equiv y_1\ ({\rm mod}\ 0+).
\end{equation}

As proved in subsection \ref{zo2}, 
the components $W_{0,j}^i$ 
and $W^{ib}_{0,jb}$ are defined by the following 
equations respectively 
\begin{equation}\label{w_e}
a^q-a=y_1^2,\ 
b^\ast(a)^q-b^\ast (a)=b^\ast (y_1)^2.
\end{equation}
Then, we acquire the following proposition.
\begin{proposition}\label{op1}
Let $b=a_0+\varphi b_0+\pi a_1 
\in \mathcal{O}^{\times}_3.$
We choose elements 
$i=(\bar{\zeta},\tilde{\mu})
 \in \mathbb{F}^{\times}_q \times \mathbb{F}_q$ and 
 $\bar{y}_0 \in \mu_{2(q^2-1)}.$
 We set
 $\bar{y}_0b:=\bar{y}_0/\bar{a}_0.$
 Let $\ast=0,\infty.$
 See (\ref{w_e}) for the defining 
 equations of 
 $W^{i}_{\ast,\bar{y}_0}$ and $W^{ib}_{\ast,
\bar{y}_0b}.$
Then, the element $b$ induces the
 following morphism
\[
b:W^{i}_{\ast,\bar{y}_0} \to W^{ib}_{\ast,
\bar{y}_0b}\ ;\ (a,y_1) \mapsto
 \biggl(a-{\rm Tr}_{\mathbb{F}_{q^2}/\mathbb{F}_q}
 \biggl(\frac{\bar{b}_0}{\bar{a}_0
 {\bar{y}_0}^{2q}}\biggr)\bar{\zeta},y_1\biggr).
 \] 
\end{proposition}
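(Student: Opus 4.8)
The plan is to read the morphism off directly from the action of $b^{\ast}$ on the two parameters $(a,y_{1})$ that cut out the affinoid $\mathbf{W}^{i}_{\ast,\bar{y}_{0}}$, since that action has essentially already been computed: equation (\ref{zd6}) gives $b^{\ast}(y_{1})\equiv y_{1}\ ({\rm mod}\ 0+)$, so the second coordinate is fixed, and (\ref{zd3}) gives $b^{\ast}(a)\equiv a-a_{0}^{q+1}y_{0}^{-(q+1)}h_{1}(y_{0})\ ({\rm mod}\ 0+)$, so the first coordinate is translated by a single scalar $\delta$. Since (\ref{w_e}) presents both $W^{i}_{\ast,\bar{y}_{0}}$ and $W^{ib}_{\ast,\bar{y}_{0}b}$ as Artin--Schreier curves $a^{q}-a=y_{1}^{2}$, the map $(a,y_{1})\mapsto(a-\delta,y_{1})$ is automatically a morphism of curves once we know $\delta\in\mathbb{F}_{q}$, and all that is left is (i) to identify $\delta$, and (ii) to check that the target really is $W^{ib}_{\ast,\bar{y}_{0}b}$ with $\bar{y}_{0}b=\bar{y}_{0}/\bar{a}_{0}$; the latter is immediate from $\bar{a}_{0}\in\mu_{q^{2}-1}$ (so $\bar{y}_{0}b\in\mu_{2(q^{2}-1)}$) together with Lemma \ref{op0} and the identification of $\pi_{0}(\mathbf{Z}_{1,1,\ast})$ fixed in \ref{zo1}.

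To compute $\delta$, I would substitute the explicit expression (\ref{h_1}) for $h_{1}$ into $a_{0}^{q+1}y_{0}^{-(q+1)}h_{1}(y_{0})$; the powers of $a_{0}$ telescope and one is left, modulo the defining $0+$-congruence, with $\bar{\zeta}\bigl((\bar{b}_{0}/\bar{a}_{0})^{q}\bar{y}_{0}^{-(q^{2}+1)}+(\bar{b}_{0}/\bar{a}_{0})\bar{y}_{0}^{\,q^{2}-2q-1}\bigr)$. Now I would reduce the exponents of $\bar{y}_{0}$ using $\bar{y}_{0}\in\mu_{2(q^{2}-1)}$, i.e.\ $\bar{y}_{0}^{\,q^{2}-1}\in\{\pm1\}$ and hence $\bar{y}_{0}^{\,2q^{2}}=\bar{y}_{0}^{2}$: the two summands become Frobenius conjugates over $\mathbb{F}_{q^{2}}/\mathbb{F}_{q}$ of $\bar{b}_{0}/(\bar{a}_{0}\bar{y}_{0}^{\,2q})$, so their sum is ${\rm Tr}_{\mathbb{F}_{q^{2}}/\mathbb{F}_{q}}\bigl(\bar{b}_{0}/(\bar{a}_{0}\bar{y}_{0}^{\,2q})\bigr)$, giving $\delta={\rm Tr}_{\mathbb{F}_{q^{2}}/\mathbb{F}_{q}}\bigl(\bar{b}_{0}/(\bar{a}_{0}\bar{y}_{0}^{\,2q})\bigr)\bar{\zeta}$, which lies in $\mathbb{F}_{q}$ as required for consistency with the Artin--Schreier equations. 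The $\ast=\infty$ case follows verbatim after the $X_{2}\leftrightarrow Y_{2}$ swap, exactly as $\overline{\mathbf{Z}}_{1,1,\infty}$ was treated from $\overline{\mathbf{Z}}_{1,1,0}$ in \ref{zo1}--\ref{zo2}.

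The main obstacle is purely one of bookkeeping rather than of idea: one must be careful about the value of $\bar{y}_{0}^{\,q^{2}-1}$ when collapsing the $\bar{y}_{0}$-powers (this is a scalar in $\mathbb{F}_{q}$ and is what lets the two terms organize into a single trace), and one must make sure that the correction coming from $\tilde{\gamma}_{1}$ that was implicitly discarded in passing to (\ref{zd3}) really is negligible modulo $0+$ --- this is so because (\ref{zd1}) forces $a_{0}^{q+1}b^{\ast}\tilde{\gamma}_{1}$ to agree with $\tilde{\gamma}_{1}$ to precision far beyond $v(w)=1/2q^{4}$, so dividing the difference by $w$ still leaves something of positive valuation. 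Granting these, the proposition follows by combining (\ref{zd3}) and (\ref{zd6}) with the trace computation above.
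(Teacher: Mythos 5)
Your proposal is correct and follows essentially the same route as the paper: the paper's proof likewise reads the map off from (\ref{zd3}) and (\ref{zd6}) and reduces everything to the single identity $\bar{a}_0^{q+1}\bar{y}_0^{-(q+1)}h_1(\bar{y}_0)\equiv {\rm Tr}_{\mathbb{F}_{q^2}/\mathbb{F}_q}\bigl(\bar{b}_0(\bar{a}_0\bar{y}_0^{2q})^{-1}\bigr)\bar{\zeta}$ obtained from (\ref{h_1}), which is exactly your trace computation. Your extra remarks (the target index via Lemma \ref{op0}, the negligibility of the $\tilde{\gamma}_1$ correction, and the care needed with $\bar{y}_0^{q^2-1}$ when collapsing exponents) are consistent with, and if anything slightly more explicit than, the paper's one-line argument.
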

\begin{proof}
By (\ref{h_1}), we 
have the following 
$\bar{a}_0^{q+1}
\bar{y}_0^{-(q+1)}
h_1(\bar{y}_0) 
\equiv 
{\rm Tr}
_{\mathbb{F}_{q^2}/\mathbb{F}_q}
 \bigl(\bar{b}_0(\bar{a}_0
 {\bar{y}_0}^{2q})^{-1}\bigr)
 \bar{\zeta}$ in 
 $\mathbb{F}^{\times}_q.$
Hence, the required assertion follows from 
(\ref{zd3}) and (\ref{zd6}).
\end{proof}

\subsection{Action of 
$\mathcal{O}_D^{\times}$ 
on the components 
$\overline{\mathbf{Z}}_{1,1,c}$ and 
$\{W^i_{k,j}\}_{(i,j) \in \mathcal{S}_0,k 
\in \mathbb{F}^{\times}_q}$}
In this subsection,
we compute the action of $\mathcal{O}_D^{\times}$
on the reduction $\overline{\mathbf{Z}}_{1,1,c}$
 and $\{W^i_{k,j}\}_{(i,j) \in \mathcal{S}_0,k 
\in \mathbb{F}^{\times}_q}$
explicitly. 
Then, we obtain Propositions \ref{zd0}
and \ref{zd1} similar to Lemma \ref{op0}
and Proposition \ref{op1}.
In the following computations, 
we freely use the notations 
in subsection \ref{zo3}.

First, recall that we have 
the following on the space $\mathbf{Z}_{1,1,c}$
\[v(u)=\frac{1}{2},\ 
v(X_1)=v(Y_1)=\frac{1}{2q(q-1)},\ 
v(X_2)=v(Y_2)=\frac{1}{2q^3(q-1)}.
\]
We choose an element 
$\kappa_1$
 such that 
 $\kappa_1^{2q^4(q-1)}=\pi.$
We set $\kappa:=\kappa_1^q$ 
and 
$\gamma:
=\kappa^{q(q-1)^2}.$
We write $\gamma^{\frac{1}{q^2(q-1)}}$
for an element $\kappa_1^{q-1}.$
Then, we have
 $v(\kappa)=1/2q^3(q-1)$ 
and $v(\gamma)=(q-1)/2q^2.$
We change variables as follows
$u=\kappa^{q^3(q-1)}u_0,\ 
X_1=\kappa^{q^2}x_1,\ 
Y_1=\kappa^{q^2}y_1,\ 
X_2=\kappa x,\ 
Y_2=\kappa y.$
We set $m=(q^2+q+1)/2q^3.$
As in the previous subsection, we write
$b=a_0+\varphi b_0+\pi a_1 \in 
\mathcal{O}^{\times}_3$. 
Recall the definition of $b^\ast $ 
in (\ref{diq}).
Then, we acquire the following congruences
by (\ref{cop}) and $j^{-1}(X) \equiv X\ 
({\rm mod}\ (\pi,u))$
\begin{equation}\label{ggd1}
b^\ast (x) \equiv 
\frac{x-(b_0/a_0)^q \gamma^{1/q(q-1)}x^q
-((a_1/a_0)-(b_0/a_0)^{q+1})
\gamma^{\frac{q+1}{q(q-1)}}x^{q^2}
+c(b)\kappa^{q^3-1}x^{q^3}}
{a_0^{q+1}}\ ({\rm mod}\ m+)
\end{equation}
with some element $c(b) \in \mathcal{O}_{E}$.
The same congruence holds for $y.$
The congruence (\ref{ggd1})
induces the following congruence
\begin{equation}\label{ggd3}
b^\ast (x)^qb^\ast (y)-b^\ast (x) b^\ast (y)^q
\equiv
\frac{x^qy-xy^q-(b_0/a_0) 
\gamma^{1/(q-1)}(x^{q^2}y-xy^{q^2})
+(a_1/a_0)\gamma^{\frac{q+1}{q(q-1)}}(x^qy-xy^q)^q
}{a_0^{q+1}}
\end{equation}
modulo $(2q+1)/2q^3+.$
On the other hand, we acquire the following 
by (\ref{ggd1})
\[
b^\ast (x)^{q^3}b^\ast (y)-b^\ast (x)b^\ast (y)^{q^3} \equiv
\]
\begin{equation}\label{ggd4}
\frac{(x^{q^3}y-xy^{q^3})-(b_0/a_0)^q
\gamma^{1/q(q-1)}
(x^{q^2}y-xy^{q^2})^q-
((a_1/a_0)-(b_0/a_0)^{q+1})
\gamma^{\frac{q+1}{q(q-1)}}(x^qy-xy^q)^{q^2}}{a_0^{q+1}}
\end{equation} modulo 
$m+.$
We set as follows
\begin{equation}\label{gdd1}
\mathcal{Z}:=(x^qy-xy^q)^q-
\gamma^{\frac{q+1}{q}}(x^{q^3}y-xy^{q^3})
\end{equation}
and
\begin{equation}\label{gdd1'}
b^\ast \mathcal{Z}:=((b^\ast x)^qb^\ast y-b^\ast x(b^\ast y)^q)^q-
\gamma^{\frac{q+1}{q}}((b^\ast x)^{q^3}b^\ast y-b^\ast x(b^\ast y)^{q^3}).
\end{equation}
Recall that we have 
$\mathcal{Z}^q-\gamma^{2q+1}\mathcal{Z} \equiv \gamma^{\frac{q}{q-1}}\zeta\ 
({\rm mod}\ \bigl(\frac{2q+1}{2q}\bigr)+).$
Hence, by (\ref{ggd3}) and (\ref{ggd4}), the following holds
\begin{equation}\label{gddd1}
b^\ast \mathcal{Z} \equiv \frac{\mathcal{Z}+
((a_1/a_0)^q+(a_1/a_0)
-(b_0/a_0)^{q+1})\gamma^{\frac{q+1}{q-1}}
(x^qy-xy^q)^{q^2}}{a_0^{q+1}}\ 
({\rm mod}\ \biggl(\frac{2q+1}{2q^2}\biggr)+).
\end{equation}
Therefore, we have
$(b^\ast \mathcal{Z})^q-\gamma^{2q+1}b^\ast \mathcal{Z} \equiv 
\gamma^{\frac{q}{q-1}}\frac{\zeta}{\alpha^{q+1}}\ 
({\rm mod}\ \bigl(\frac{2q+1}{2q}\bigr)+).$
Then, we have $v(b^\ast \mathcal{Z})=v(\mathcal{Z})=1/2q^2.$

We choose an element $\tilde{\gamma}_0$
such that
$\tilde{\gamma}_0^q-\gamma^{(2q+1)/q}\tilde{\gamma}_0
=\gamma^{1/(q-1)}\zeta.$
We set $\tilde{\gamma'}_0=\tilde{\gamma}_0/a_0^{q+1}.$
We write  
$\tilde{\gamma}_0^q=\mathcal{Z}$ 
and $\tilde{\gamma'}_0^q=b^\ast \mathcal{Z}.$
We set 
\[\mathcal{Z}_1=x^qy-xy^q,\ 
b^\ast \mathcal{Z}_1=b^\ast (x)^qb^\ast (y)-b^\ast (x) b^\ast (y)^q.
\]
Note that 
we have $v(b^\ast \mathcal{Z}_1)=v(\mathcal{Z}_1)=1/2q^3$
by (\ref{gdd1}) and (\ref{gdd1'}), 
because of $v(\mathcal{Z})=v(b^\ast \mathcal{Z})=1/2q^2.$
We have the following
\[
x^{q^2}y-xy^{q^2}=\frac{\mathcal{Z}_1^q}{y^{q-1}}
+y^{q(q-1)}\mathcal{Z}_1 \equiv 
y^{q(q-1)}\mathcal{Z}_1\ ({\rm mod}\ (1/2q^3)+).
\]
Therefore, we acquire the following by (\ref{ggd3})
\begin{equation}\label{ggr1}
b^\ast \mathcal{Z}_1 \equiv 
\frac{(1
-(b_0/a_0) \gamma^{1/(q-1)}y^{q(q-1)})\mathcal{Z}_1}{a_0^{q+1}}\ 
({\rm mod}\ ((q+1)/2q^3)+).
\end{equation}

Furthermore, we introduce new parameters $Z$
and $b^\ast Z$ as follows
\begin{equation}\label{ty0}
\mathcal{Z}_1=\tilde{\gamma}_0+
y^{q^2-1}\gamma^{\frac{q+1}{q^2}}
\tilde{\gamma}_0^{1/q}
+y^{q^2-1}\gamma^{\frac{q^2+q-1}{q^2(q-1)}} Z
\end{equation}
and 
\begin{equation}\label{ty1}
b^\ast \mathcal{Z}_1=\tilde{\gamma'}_0+
(b^\ast y)^{q^2-1}\gamma^{\frac{q+1}{q^2}}
\tilde{\gamma'}_0^{1/q}
+(b^\ast y)^{q^2-1}\gamma^{\frac{q^2+q-1}{q^2(q-1)}}b^\ast Z.
\end{equation}
Then, by (\ref{ggd1}), on the right hand side of (\ref{ty1}), 
we acquire the following congruence
\[
\gamma^{(q+1)/q^2} \tilde{\gamma'}_0^{1/q} 
(b^\ast y)^{q^2-1}
\equiv 
\frac{\tilde{\gamma}_0\gamma^{(q+1)/q^2}y^{q^2-1}}
{a_0^{q+1}}
+\zeta \gamma^{\frac{q+1}{q(q-1)}}
\biggl(\frac{b_0^q}{a_0^{2q+1}}\biggr)
y^{q^2+q-2}\ 
({\rm mod}\ ((q+1)/2q^3)+).
\]
Hence, by (\ref{ggr1}), we obtain
\begin{equation}\label{noo1}
b^\ast Z \equiv
\frac{Z}{a_0^{q+1}}-
\gamma^{1/q^2(q-1)}\biggl(\frac{y^{q-1}b_0^q \zeta}{a_0^{2q+1}}+
\frac{b_0 \zeta}{a_0^{q+2}y^{q-1}}\biggr)\ ({\rm mod}\ (1/2q^4)+).
\end{equation}

Since $\mathcal{Z}_1=x^qy-xy^q \equiv
 0\ ({\rm mod}\ 0+)$ by (\ref{ty0}),
we acquire $x=\tilde{\zeta} y$ 
with $\tilde{\zeta} \in \mathbb{F}^{\times}_q.$
Note that $\overline{\mathbf{Z}}_{1,1,c}$
has $q(q-1)^2$ connected components.
Let $\{\overline{\mathbf{Z}}^i_{1,1,j}\}_{(i,j) 
\in (\mathbb{F}^{\times}_q 
\times \mathbb{F}_q) 
\times \mathbb{F}^{\times}_q}$ denote the 
connected components of 
$\overline{\mathbf{Z}}_{1,1,c}.$
The group $\mathcal{O}^{\times}_D$
 acts on $j \in \mathbb{F}^{\times}_q$
trivially.
The group $\mathcal{O}^{\times}_D$
 acts on $i=(\zeta,\mu) 
\in \mathbb{F}^{\times}_q 
\times \mathbb{F}_q$ according
 to the inverse of the reduced norm.
Assume that we have 
$x=\tilde{\zeta}y$ with 
$\tilde{\zeta} \in \mathbb{F}^{\times}_q.$
As computed in subsection \ref{zo3}, 
the components
$\overline{\mathbf{Z}}^i_{1,1,j}$ 
and $\overline{\mathbf{Z}}^{ib}_{1,1,j}$
are defined by the following equations
\[
Z^q=\bar{\zeta}(y^{q^2-1}+y^{-(q^2-1)}),\ 
(b^\ast Z)^q=
\frac{\bar{\zeta}}{\bar{a}_0^{q+1}}
(y^{q^2-1}+y^{-(q^2-1)})
\]
respectively.
Therefore, we acquire the 
following proposition by (\ref{noo1}).
\begin{proposition}\label{zd0} 
Let $b \in \mathcal{O}^{\times}_D$ and 
$\bar{b}$ the image of $b$ by 
$\mathcal{O}^{\times}_D \to 
\mathbb{F}^{\times}_{q^2}.$
We choose
 elements $i 
\in (\mathcal{O}_F/\pi^2)^{\times}$
and $j \in \mathbb{F}^{\times}_q.$
Then, the element $b$ induces
 the following morphism
\[
b:\overline{\mathbf{Z}}^i_{1,1,j} \to
 \overline{\mathbf{Z}}^{ib}_{1,1,j}\ ;\ 
(y,Z) \mapsto \biggl(\frac{y}{\bar{b}},
\frac{Z}{\bar{b}^{q+1}}\biggr).
\]
\end{proposition}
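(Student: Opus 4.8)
The plan is to read the morphism off directly from the congruence \eqref{noo1}, which has already been derived, together with the leading behaviour of $b^{\ast}$ modulo $0+$. Recall that $b=a_{0}+\varphi b_{0}+\pi a_{1}\in\mathcal{O}^{\times}_{3}$ acts on the rescaled coordinates of subsection \ref{zo3} through $b^{\ast}=j^{-1}\circ\tilde{b}^{-1}$, cf.\ \eqref{diq}, and that its image $\bar{b}\in\mathbb{F}^{\times}_{q^{2}}$ under $\mathcal{O}^{\times}_{D}\to\mathbb{F}^{\times}_{q^{2}}$ is $\bar{a}_{0}$. First I would reduce the expansion \eqref{ggd1} of $b^{\ast}(x)$ and its analogue for $b^{\ast}(y)$ modulo $0+$: each correction term carries a positive power of $\gamma$ or $\kappa$, so what survives is $b^{\ast}(y)\equiv y/\bar{a}_{0}=y/\bar{b}$, which gives the first coordinate of the asserted map.

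Next I would treat the $Z$-coordinate using \eqref{noo1}, which reads $b^{\ast}Z\equiv Z/a_{0}^{q+1}-\gamma^{1/q^{2}(q-1)}(\cdots)\ ({\rm mod}\ (1/2q^{4})+)$. On $\mathbf{Z}_{1,1,c}$ the variable $y$ is a unit, so the parenthesised factor has non-negative valuation, while $v(\gamma^{1/q^{2}(q-1)})=\frac{q-1}{2q^{2}}\cdot\frac{1}{q^{2}(q-1)}=\frac{1}{2q^{4}}>0$; hence the second summand vanishes modulo $0+$ and $b^{\ast}Z\equiv Z/\bar{a}_{0}^{q+1}=Z/\bar{b}^{q+1}$. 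Combining the two computations yields the claimed map $(y,Z)\mapsto(y/\bar{b},Z/\bar{b}^{q+1})$ on reductions.

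It then remains to check that this is a well-defined isomorphism between the stated affine models. The source $\overline{\mathbf{Z}}^{i}_{1,1,j}$ is cut out by $Z^{q}=\bar{\zeta}(y^{q^{2}-1}+y^{-(q^{2}-1)})$ for $i=(\bar{\zeta},\tilde{\mu})$, and the target $\overline{\mathbf{Z}}^{ib}_{1,1,j}$ by $Z^{q}=\frac{\bar{\zeta}}{\bar{a}_{0}^{q+1}}(y^{q^{2}-1}+y^{-(q^{2}-1)})$; substituting $(y/\bar{b},Z/\bar{b}^{q+1})$ and using $\bar{b}^{q^{2}-1}=1$ twice shows the first equation is carried onto the second, and the inverse map $(y,Z)\mapsto(\bar{b}y,\bar{b}^{q+1}Z)$ shows it is an isomorphism. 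Finally the index change $i\mapsto ib$, together with the triviality of the $\mathcal{O}^{\times}_{D}$-action on $j\in\mathbb{F}^{\times}_{q}$ and the fact that the $(\bar{\zeta},\bar{\mu})$-part moves by the inverse of the reduced norm, was already recorded just above the statement, which concludes the proof.

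The only real labour lies upstream, in establishing \eqref{noo1}: one has to push the expansions of $b^{\ast}(x)$ and $b^{\ast}(y)$ through the symmetric combinations \eqref{ggd3} and \eqref{ggd4}, then through the auxiliary parameters $\mathcal{Z}$ of \eqref{gdd1}, $\mathcal{Z}_{1}=x^{q}y-xy^{q}$, and $Z$ of \eqref{ty0}, keeping track at each stage of which $\gamma$- and $\kappa$-power error terms are annihilated by the successive congruence moduli ($m+$, $(2q+1)/2q^{3}+$, $(q+1)/2q^{3}+$, and finally $(1/2q^{4})+$). Granting \eqref{noo1}, the proposition itself reduces to the short argument above, so the main obstacle is this valuation bookkeeping rather than any conceptual difficulty.
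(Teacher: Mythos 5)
Your proposal is correct and follows essentially the same route as the paper: the paper deduces the proposition directly from the congruence (\ref{noo1}) together with the leading term of $b^{\ast}(y)$ from (\ref{ggd1}), exactly as you do, since the error terms carry positive powers of $\gamma$ (or $\kappa$) and die modulo $0+$. Your additional check that $(y,Z)\mapsto(y/\bar{b},Z/\bar{b}^{q+1})$ carries the defining equation of $\overline{\mathbf{Z}}^{i}_{1,1,j}$ onto that of $\overline{\mathbf{Z}}^{ib}_{1,1,j}$ (using $\bar{b}^{q^{2}-1}=1$) is a detail the paper leaves implicit, but it is the same argument.
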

Let $\{W^i_{j,k}\}_{i \in \mathbb{F}^{\times}_q 
\times \mathbb{F}_q,j \in \mathbb{F}^{\times}_q,
k \in \mu_{2(q^2-1)}}$ be the 
irreducible components defined by 
the Artin-Schreier equation
$a^q-a=s^2.$
In the same way as Proposition \ref{op1}, 
we acquire the following proposition
 by (\ref{ggd1}) and (\ref{noo1}).
\begin{proposition}\label{zd1}
Let $b=a_0+\varphi b_0+a_1 \pi 
\in \mathcal{O}^{\times}_3.$
We choose elements $j \in
 \mathbb{F}^{\times}_q$, 
$i=(\zeta,\tilde{\mu})
 \in \mathbb{F}^{\times}_q \times \mathbb{F}_q$
 and $\bar{y}_0 \in \mu_{2(q^2-1)}$. 
 We set $\bar{y}_0b:=\bar{y}_0/a_0.$
Then, the element $b$ induces 
the following morphism
\[
b:W^{i}_{j,\bar{y}_0} 
\to 
W^{ib}_{j,\bar{y}_0b}\ ;\ (a,y_1) \mapsto
 \biggl(a-{\rm Tr}_{\mathbb{F}_{q^2}/\mathbb{F}_q}
 \biggl(\frac{\bar{b}_0}{\bar{a}_0{\bar{y}_0}^{2q}}
 \biggr)\zeta,y_1\biggr).
 \]
\end{proposition}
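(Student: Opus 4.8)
The plan is to reproduce, step for step, the proof of Proposition \ref{op1}, substituting the two $b$-action congruences (\ref{ggd1}) and (\ref{noo1}) of subsection \ref{zo3} for the congruences (\ref{dz1}) and (\ref{dz12}) of subsection \ref{zo2}. First I would recall the change of variables that resolves the singular residue class at $y=k\in\mu_{2(q^2-1)}$ of the curve (\ref{cur}), i.e.\ the computation underlying Proposition \ref{fou}. As in subsection \ref{zo2}, one chooses $\tilde{\gamma}_1$, $y_0$, $w=y_0^{q+1}\gamma^{1/q^2(q-1)}$ and $w_1$ with $\bar{y}_0=k$, so that after writing $Z=\tilde{\gamma}_1+wa$ and $y=y_0+w_1y_1$ the underlying affinoid reduces to the Artin--Schreier curve $a^q-a=y_1^2$; on the target side one sets $b^\ast w:=w/a_0^{q+1}$, $b^\ast y_0:=y_0/\bar{a}_0$, $b^\ast w_1:=w_1/a_0$, together with $b^\ast Z=b^\ast\tilde{\gamma}_1+(b^\ast w)(b^\ast a)$ and $b^\ast y=b^\ast y_0+(b^\ast w_1)(b^\ast y_1)$, which presents the component $W^{ib}_{j,\bar{y}_0 b}$.

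Next I would feed these relations into (\ref{noo1}) and (\ref{ggd1}). Inserting $Z=\tilde{\gamma}_1+wa$ and $y=y_0+w_1y_1$ into (\ref{noo1}), the term $Z/a_0^{q+1}$ contributes $(b^\ast w)a$ after using $b^\ast\tilde{\gamma}_1\equiv\tilde{\gamma}_1/a_0^{q+1}$ to the required precision, while the error term $\gamma^{1/q^2(q-1)}\bigl(y^{q-1}b_0^q\zeta/a_0^{2q+1}+b_0\zeta/(a_0^{q+2}y^{q-1})\bigr)$ reduces to a constant once $y$ is replaced by $y_0$; dividing by $b^\ast w=w/a_0^{q+1}$ and using $w=y_0^{q+1}\gamma^{1/q^2(q-1)}$ should give $b^\ast a\equiv a-a_0^{q+1}y_0^{-(q+1)}\phi(y_0)\ ({\rm mod}\ 0+)$, where $\phi(y)=\zeta\bigl(y^{q-1}b_0^q/a_0^{2q+1}+b_0/(a_0^{q+2}y^{q-1})\bigr)$, in exact analogy with (\ref{zd3}). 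Likewise, inserting (\ref{ggd1}) into the change of variables for $y$ and dividing by $b^\ast w_1=w_1/a_0$ should yield $b^\ast y_1\equiv y_1\ ({\rm mod}\ 0+)$, just as in (\ref{zd6}). Together these two congruences give the asserted morphism on $W^i_{j,\bar{y}_0}$, once the additive constant is identified.

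The only genuinely new point beyond Proposition \ref{op1} is to check $a_0^{q+1}y_0^{-(q+1)}\phi(y_0)\equiv {\rm Tr}_{\mathbb{F}_{q^2}/\mathbb{F}_q}\bigl(\bar{b}_0/(\bar{a}_0\bar{y}_0^{2q})\bigr)\zeta$. Simplifying, this constant equals $\zeta b_0^q y_0^{-2}/a_0^q+\zeta b_0 y_0^{-2q}/a_0$; since $\bar{y}_0\in\mu_{2(q^2-1)}$ forces $\bar{y}_0^{2q^2}=\bar{y}_0^2$, the first summand is the $q$-power Frobenius over $\mathbb{F}_q$ of the second, so using $\bar{a}_0^{q^2}=\bar{a}_0$, $\bar{b}_0^{q^2}=\bar{b}_0$ and $\zeta^q=\zeta$ one rewrites the sum as ${\rm Tr}_{\mathbb{F}_{q^2}/\mathbb{F}_q}\bigl(\zeta\bar{b}_0/(\bar{a}_0\bar{y}_0^{2q})\bigr)$, which is exactly the expression in the statement. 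The main obstacle is purely the precision bookkeeping: since (\ref{ggd1}) and (\ref{noo1}) hold only modulo $m+$ and $(1/2q^4)+$ respectively, one has to verify that after dividing by the small parameters $b^\ast w$ and $b^\ast w_1$ all carried-along error terms drop into the ``$0+$'' range, so that the reduced map between Artin--Schreier curves is precisely the affine-linear map claimed and not a perturbation of it; one should also confirm that the exponent of $\bar{y}_0$ comes out to be $2q$ and that the sign ambiguity $\iota=\pm1$ in the choice of $y_0$ washes out, as it does because $\bar{y}_0^{q^2-1}=\iota$ enters only through $\iota^2=1$.
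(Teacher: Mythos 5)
Your proposal is correct and is exactly the paper's route: the paper proves this proposition "in the same way as Proposition \ref{op1}", deducing it from the congruences (\ref{ggd1}) and (\ref{noo1}) after the same change of variables $Z=\tilde{\gamma}_1+wa$, $y=y_0+w_1y_1$. Your identification of the additive constant, including the simplification $\bar{y}_0^{-2q^2}=\bar{y}_0^{-2}$ coming from $\bar{y}_0\in\mu_{2(q^2-1)}$ that yields the trace ${\rm Tr}_{\mathbb{F}_{q^2}/\mathbb{F}_q}\bigl(\bar{b}_0/(\bar{a}_0\bar{y}_0^{2q})\bigr)\zeta$, matches the computation the paper leaves implicit.
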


\section{The action of $G_2^F$ on the components in the 
stable reduction of $\mathcal{X}(\pi^2)$}
In this section, we determine the right 
action of 
$G_2^F$ on the 
components $\overline{\mathbf{Y}}_{2,2}$, 
$X^i_j\ ((i,j) \in \mathcal{S}),$ 
$\overline{\mathbf{Z}}_{1,1,0}$, $W^i_{j,k}$
for $(i,j,k) \in \mathcal{S}_1.$
First, we prepare some notations used through
 this section.
For $i \in (\mathcal{O}_F/\pi^2)^{\times}$
and $g \in G_2^F$, we set 
$ig:={\rm det}(g) \times i$.
It is well-known that, under some 
identification
$\pi_0(\mathcal{X}(\pi^2)) \simeq 
(\mathcal{O}_F/\pi^2)^{\times},$ 
 the group $G_2^F \ni g$ acts on 
 $\pi_0(\mathcal{X}(\pi^2))$
 by 
$i \mapsto ig.$ See Theorem \ref{caq}. 
For an element $\alpha \in \mathcal{O}_F/\pi^2$,
we write $\alpha=a_0+a_1\pi$ with $a_0 \in 
\mu_{q-1}(\mathcal{O}_F), a_1 \in 
\mu_{q-1}(\mathcal{O}_F) \cup \{0\}.$

\subsection{Action of 
$G_2^F$ 
on $\overline{\mathbf{Y}}_{2,2}$
 and 
$\{X^i_j\}_{(i,j) \in \mathcal{S}}$
}\label{gl1}
In this subsection, for each 
$i \in 
(\mathcal{O}_F/\pi^2)^{\times},$  
we determine 
the action of $G_2^F$
on the components 
$\overline{\mathbf{Y}}^i_{2,2}$ 
and $\{X^i_j\}
_{j \in \mathcal{S}^i_{00}}$
explicitly.
See Lemma \ref{yoo1} and 
Proposition \ref{actt} for 
 precise statements.
In the following, we use freely the notations 
in subsections \ref{yo1} and \ref{yo2}.

Let $g:=\left(
\begin{array}{cc}
a_0+a_1 \pi & b_0+b_1\pi \\
c_0+c_1\pi & d_0+d_1\pi
\end{array}
\right) \in G_2^F.$
Let $(X_2,Y_2,u) \in \mathbf{Y}_{2,2}$
and $X_1:=[\pi]_u(X_2),Y_1:=[\pi]_u(Y_2).$
Then, the element $g$ acts on 
the Drinfeld basis $(X_2,Y_2)$ 
as follows
\[X_2 \mapsto [a_0]_u(X_2)+_u[c_0]_u(Y_2)
+_u[a_1]_u(X_1)+_u[c_1]_u(Y_1)\]
\[Y_2 \mapsto [b_0]_u(X_2)
+_u[d_0]_u(Y_2)+_u
[b_1]_u(X_1)+_u[d_1]_u(Y_1).\]
We choose an element 
$\kappa_1$ 
such that
$\kappa_1^{q^3(q^2-1)}=\pi$ with 
$v(\kappa_1)=1/q^3(q^2-1).$
We set $\kappa:=\kappa_1^q$ and 
$\gamma:=\kappa^{(q-1)(q^2-1)}$ with 
$v(\gamma)=(q-1)/q^2.$
We write 
$\gamma^{\frac{1}{q(q^2-1)}}$
for an element 
$\kappa_1^{q-1}.$
In subsection 2.3,
we change variables as follows
$u=\kappa^{q^3(q-1)} u_0,\ 
X_1=\kappa^{q^2}x_1,\ 
Y_1=\kappa^{q^2}y_1,\ X_2=\kappa x$
and $Y_2=\kappa y.$ 
For $a,b \in \mathcal{O}_F$, we set as follows
\[
f_{a,b}(X,Y):
=\frac{aX^{q^2}+bY^{q^2}-(aX+bY)^{q^2}}{\pi}
\in \mathcal{O}_F[X,Y].
\]
Then, we acquire the followings by 
(\ref{sos})
\[
g^\ast (x) \equiv a_0x+c_0y
+\kappa^{q^2-1}\{(a_1x+c_1y)^{q^2}
+f_{a_0,c_0}(x,y)
\}\ ({\rm mod}\ (1/q^2)+),
\]
\begin{equation}\label{act1}
g^\ast (y) \equiv b_0x+d_0y
+\kappa^{q^2-1}
\{(b_1x+d_1y)^{q^2}
+f_{b_0,d_0}(x,y)
\}\ ({\rm mod}\ (1/q^2)+).
\end{equation}
Note that if ${\rm char}\ F=p>0$, 
or ${\rm char}\ F=0$ 
and $e_{F/\mathbb{Q}_p} \geq 2,$
the terms $f_{\ast,\ast}(x,y)$ 
in (\ref{act1})
do not appear.
We set as follows
\[
g(x,y):=\{(a_0x+c_0y)(b_1x+d_1y)^q
-(a_1x+c_1y)^q(b_0x+d_0y)\}^q \in
 \mathcal{O}_F[x,y]
\]
and
 \[
h(x,y):=(a_0x+c_0y)^{q^3}(b_1x+d_1y)^{q^2}-
(a_1x+c_1y)^{q^2}(b_0x+d_0y)^{q^3} \in \mathcal{O}_F[x,y].
\]
Furthermore, we put
\[
G_0(x,y):=(a_0x+c_0y)^qf_{b_0,d_0}(x,y)
-(b_0x+d_0y)^qf_{a_0,c_0}(x,y) \in \mathcal{O}_F[x,y],
\] 
\[
G_1(x,y):=(a_0x+c_0y)^{q^3}f_{b_0,d_0}(x,y)
-(b_0x+d_0y)^{q^3}f_{a_0,c_0}(x,y),\ 
g_1(x,y):=G_0(x,y)^q-G_1(x,y)
\] 
\[
f_0(x,y):=\frac{x^{q^2}y^q-x^qy^{q^2}
-(x^qy-xy^q)^q}{\pi},\ 
{\rm det}(\bar{g}):=a_0d_0-b_0c_0,\ 
k_0:=\frac{{\rm det}
(\bar{g})-{\rm det}(\bar{g})^q}{p}.
\]
Then, 
we acquire the following congruence
by (\ref{act1})
\begin{equation}\label{act2}
g^\ast(x)^qg^\ast (y)-g^\ast (x)g^\ast (y)^q
 \equiv {\rm det}(\bar{g})(x^qy-xy^q)+\kappa^{q^2-1}
\{g(x,y)+G_0(x,y)\}\ ({\rm mod}\ (1/q^2)+).
\end{equation}
We choose an element 
$\zeta \in \mu_{q-1}(\mathcal{O}_F).$
Recall that we set in (\ref{aw3'})
\[
\mathcal{Z}
:=(x^qy-xy^q)^q
-\gamma(x^{q^3}y-xy^{q^3}).\]
Then, we acquire the following
$\mathcal{Z}^q-\gamma^q\mathcal{Z}=\zeta+
p(f_0(x^q,y^q)^q-f_0(x^q,y^q))\
 ({\rm mod}\ 1+).$
We put $\mathcal{Z}
=\gamma_0-\gamma^{q/(q-1)}c.$
Furthermore, we set
$\mu:=
f_0(x^q,y^q)+c.$ 
Then, we obtain 
$\mu^q \equiv \mu\ 
({\rm mod}\ 0+).$
Similarly as above, 
we set 
\[
g^\ast \mathcal{Z}:=
(g^\ast (x)^qg^\ast (y)-g^\ast (x)g^\ast (y)^q)^q
-\gamma(g^\ast (x)^{q^3}g^\ast (y)-g^\ast (x)g^\ast (y)^{q^3})
\]
and $g^\ast \mathcal{Z}:={\rm det}(\bar{g})\gamma_0
-\gamma^{q/(q-1)}g^\ast c.$
Moreover, we put 
$g^\ast \mu:=
g^\ast c
+f_0((a_0x+c_0y)^q,
(b_0x+d_0y)^q)$.
Then, we obtain
$(g^\ast \mu)^q \equiv 
g^\ast \mu\ ({\rm mod}\ 0+).$
By (\ref{act1}) and (\ref{act2}),
we acquire the following
 by a direct computation
\begin{equation}\label{cnn0}
g^\ast \mathcal{Z}=
{\rm det}(\bar{g})\mathcal{Z}
+\gamma^{q/(q-1)}\{g(x,y)^q-h(x,y)
+g_1(x,y)
\}\ 
({\rm mod}\ (1/q)+).
\end{equation}
We set 
\[
\gamma(g):=-a_1d_0-a_0d_1+b_1c_0+b_0c_1.
\]
We can easily check that
\begin{equation}\label{c_x}
g(x,y)^q-h(x,y) \equiv \gamma(g)(x^qy-xy^q)^{q^2}
\equiv \gamma(g)\zeta\ ({\rm mod}\ 0+)
\end{equation}
and 
\begin{equation}\label{koq}
g_1(x,y)=f_0((a_0x+c_0y)^q,(b_0x+d_0y)^q)
-f_0(x^q,y^q)
-k_0(x^qy-xy^q)^{q^2}\ 
({\rm mod}\ 0+).
\end{equation}
Hence, we acquire by (\ref{cnn0}), 
(\ref{c_x}) and (\ref{koq})
\begin{equation}\label{c_y}
g^\ast \mu \equiv 
{\rm det}(\bar{g})\mu-\{
\gamma(g)-k_0\}\zeta\ 
({\rm mod}\ 0+).
\end{equation}
\begin{lemma} 
W choose the identification
$\pi_0(\mathbf{Y}_{2,2}) \simeq 
(\mathcal{O}_F/\pi^2)^{\times}$ 
in (\ref{fc}).
Then, the group 
$G_2^F$ acts 
on the set 
$\pi_0(\mathbf{Y}_{2,2})$
by ${\rm det}:G_2^F \to 
(\mathcal{O}_F/\pi^2)^{\times}$.
\end{lemma}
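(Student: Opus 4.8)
The plan is to read off the action of $g \in G_2^F$ on the label $(\bar\zeta, \bar\mu/\bar\zeta) \in \mathbb{F}_q^\times \times \mathbb{F}_q$ that (\ref{fc}) attaches to a connected component of $\mathbf{Y}_{2,2}$, directly from the computations (\ref{act1})--(\ref{c_y}) carried out above, and to match it with the action of $\det(g)$ on $(\mathcal{O}_F/\pi^2)^\times$ transported to $\mathbb{F}_q^\times \times \mathbb{F}_q$ through the group isomorphism $a_0 + a_1\pi \mapsto (\bar a_0, \bar a_1/\bar a_0)$. Throughout, one uses $\det(g) = \det(\bar g) - \gamma(g)\pi$ in $(\mathcal{O}_F/\pi^2)^\times$, with $\gamma(g) = -a_1 d_0 - a_0 d_1 + b_1 c_0 + b_0 c_1$.

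First I would dispose of the first coordinate. By (\ref{act2}) the function $x^q y - xy^q$ is multiplied, modulo $0+$, by $\det(\bar g)$ under $g^\ast$, the remaining term there having strictly positive valuation; since this function reduces to the constant $\bar\zeta$ on $\overline{\mathbf{Y}}^i_{2,2}$, the first label transforms by $\bar\zeta \mapsto \overline{\det(\bar g)}\,\bar\zeta$, which is exactly the first coordinate of multiplication by $\det(g)$.

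For the second coordinate I would invoke (\ref{c_y}): $g^\ast\mu \equiv \det(\bar g)\,\mu - (\gamma(g) - k_0)\,\zeta \pmod{0+}$. Combined with the first-coordinate computation this gives $\bar\mu/\bar\zeta \mapsto \bar\mu/\bar\zeta + (\bar k_0 - \overline{\gamma(g)})/\overline{\det(\bar g)}$. One must then check that this coincides with the second coordinate of $\det(g)\cdot i$. Here one puts both $\det(g)$ and $\det(\bar g)\cdot\zeta$ into the canonical form $a_0 + a_1\pi$ with $a_0 \in \mu_{q-1}(\mathcal{O}_F)$: the failure of $\det(\bar g) = a_0 d_0 - b_0 c_0$ to be a Teichm\"uller element modulo $\pi^2$ is governed by $(\det(\bar g)^q - \det(\bar g))/p = -k_0$, using that each $\binom{q}{k}$ with $0 < k < q$ is divisible by $p$ and that $\tau^q = \tau$ for $\tau \in \mu_{q-1}(\mathcal{O}_F)$; the same correction survives multiplication by $\zeta \in \mu_{q-1}(\mathcal{O}_F)$ since $\zeta^q = \zeta$. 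Substituting back, multiplication by $\det(g)$ reproduces exactly the transformation found above for $g^\ast$, so the two actions agree.

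The main obstacle is precisely this last reconciliation of the $k_0$-term in (\ref{c_y}) with the Teichm\"uller renormalization of $\det(\bar g)$ and of $\det(\bar g)\zeta$. It simplifies considerably when ${\rm char}\ F = p > 0$, or ${\rm char}\ F = 0$ and $e_{F/\mathbb{Q}_p} \geq 2$: then the terms $f_{\ast,\ast}$ in (\ref{act1}), hence $g_1$ and $k_0$, do not appear, and at the same time $\det(\bar g)$ is already congruent to its Teichm\"uller lift modulo $\pi^2$, so both sides collapse to $(\bar\zeta, \tilde\mu) \mapsto (\overline{\det(\bar g)}\,\bar\zeta,\, \tilde\mu - \overline{\gamma(g)}/\overline{\det(\bar g)})$; the case $e_{F/\mathbb{Q}_p} = 1$ is handled by choosing $p$ itself as the uniformizer, so that $k_0$ cancels the renormalization term. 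Alternatively, once (\ref{fc}) is checked to be compatible with the identification of $\pi_0$ induced by $\Delta^{(2)}$, the statement is immediate from Theorem \ref{caq} applied to $(g,1,1)$, since $\delta(g,1,1) = \det(g)$.
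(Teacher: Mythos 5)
Your proposal follows the paper's own route: the paper proves this lemma by noting that it follows from (\ref{c_y}) together with the identification (\ref{fc}), which is exactly the comparison you carry out, reading the first coordinate off (\ref{act2}) and the second off (\ref{c_y}). The only difference is that you spell out the Teichm\"uller renormalization of $\det(\bar{g})$ that the $k_0$-term absorbs (and mention the alternative via Theorem \ref{caq}, contingent on compatibility of (\ref{fc}) with $\Delta^{(2)}$), bookkeeping that the paper treats as immediate.
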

\begin{proof}
The required assertion follows from 
(\ref{c_y}) 
and the identification (\ref{fc})
immediately.
\end{proof}


Now, fixing an element
 $i=(\bar{\zeta},
\tilde{\mu}) 
\in (\mathcal{O}_F/\pi^2)^{\times},$
we determine the morphism 
$\overline{\mathbf{Y}}^i_{2,2} \to 
\overline{\mathbf{Y}}^{ig}_{2,2}$ which is induced by $g$
in Lemma \ref{yoo1}.

We choose an element $\gamma_0$ such that
$\gamma_0^q-\gamma^q \gamma_0=\zeta.$ 
Furthermore, we choose an element
$\tilde{\gamma}_0$ such 
that $\tilde{\gamma}_0^q=\gamma_0.$ 
We recall that
we set in (\ref{aw6})
\[
x^qy-xy^q=\tilde{\gamma}_0+\gamma^{1/q}Z_1,\ 
g^\ast (x)^qg^\ast (y)-g^\ast (x)g^\ast (y)^q
={\rm det}(\bar{g})
\tilde{\gamma}_0+\gamma^{1/q}g^\ast Z_1.
\]
Then, by (\ref{act2}),
we obtain the following congruence
\begin{equation}\label{act2'}
g^\ast Z_1 \equiv 
{\rm det}(\bar{g})Z_1+\gamma^{1/q(q-1)}
(g(x,y)+G_0(x,y))\ 
({\rm mod}\ (1/q^3)+).
\end{equation}
Recall that the components 
$\overline{\mathbf{Y}}^i_{2,2}$
and $\overline{\mathbf{Y}}^{ig}_{2,2}$
are defined by the following 
equations respectively
\begin{itemize}
 \item $x^qy-xy^q=\bar{\zeta},\ 
 Z_1^q=x^{q^3}y-xy^{q^3},$
 \item $g^\ast(x)^qg^\ast(y)-g^\ast (x)g^\ast (y)^q=
 {\rm det}(\bar{g})\bar{\zeta},\ 
 (g^\ast Z_1)^q=g^\ast(x)^{q^3}
 g^\ast(y)-g^\ast (x)g^\ast (y)^{q^3}.$
\end{itemize}
Then, we have 
the following lemma. 
\begin{lemma}\label{yoo1}
Let $g=
\left(
\begin{array}{cc}
a_0+a_1\pi & b_0+b_1\pi \\
c_0+c_1\pi & d_0+d_1\pi
\end{array}
\right)
 \in G_2^F.$
We choose an element $i 
\in (\mathcal{O}_F/\pi^2)^{\times}.$
Then, $g$ induces 
the following 
morphism
\[
\overline{\mathbf{Y}}^i_{2,2} \to 
\overline{\mathbf{Y}}^{ig}_{2,2}\ :\ 
(x,y,Z_1) \mapsto 
(\bar{a}_0x+\bar{c}_0y,\bar{b}_0x+\bar{d}_0y,
\overline{{\rm det}(\bar{g})}Z_1).
\]
\end{lemma}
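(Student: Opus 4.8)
The plan is to read off the lemma directly from the congruences (\ref{act1}) and (\ref{act2'}) by reducing modulo $0+$. First I would note that the right action of $G_2^F$ on $\mathcal{X}(\pi^2)$ is over $\mathcal{X}(1)$, so $g$ fixes the coordinate $u$ and therefore preserves $\mathbf{Y}_{2,2}=p_2^{-1}(\mathbf{TS}^0)$; by Theorem~\ref{caq} and the preceding lemma the action permutes the geometrically connected components by the determinant, so (for the identification (\ref{fc})) $g$ restricts to an isomorphism of affinoids $\mathbf{Y}^i_{2,2}\overset{\sim}{\to}\mathbf{Y}^{ig}_{2,2}$ with $ig={\rm det}(g)\times i$. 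Passing to reductions yields an isomorphism $\overline{\mathbf{Y}}^i_{2,2}\overset{\sim}{\to}\overline{\mathbf{Y}}^{ig}_{2,2}$, and it remains only to compute it on the coordinates $x,y,Z_1$.

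For this I would pull back the coordinate functions of the target component via $g$ and reduce mod $0+$. In (\ref{act1}) every correction term carries the factor $\kappa^{q^2-1}$, which has valuation $1/q^2>0$ (so in particular the terms $f_{\ast,\ast}(x,y)$, present when ${\rm char}\,F=0$ and $e_{F/\mathbb{Q}_p}=1$, are killed in the reduction), while $a_0,b_0,c_0,d_0$ map to $\bar a_0,\bar b_0,\bar c_0,\bar d_0\in\mathbb{F}_q$; hence modulo $0+$ one has $g^\ast(x)\equiv \bar a_0 x+\bar c_0 y$ and $g^\ast(y)\equiv \bar b_0 x+\bar d_0 y$. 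Likewise, the correction term in (\ref{act2'}) carries the factor $\gamma^{1/q(q-1)}$ of valuation $1/q^3>0$, so modulo $0+$ one has $g^\ast Z_1\equiv \overline{{\rm det}(\bar g)}\,Z_1$. (Here one uses that the changes of variables of Subsection~\ref{yo1} are compatible with the $g$-action, so that $g^\ast Z_1$ genuinely reduces to a regular function on $\overline{\mathbf{Y}}^i_{2,2}$; this is exactly what (\ref{act2'}) encodes.) Reading off these three reductions gives precisely the asserted morphism $(x,y,Z_1)\mapsto(\bar a_0 x+\bar c_0 y,\ \bar b_0 x+\bar d_0 y,\ \overline{{\rm det}(\bar g)}\,Z_1)$.

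As a consistency check one then verifies that the image point lies on $\overline{\mathbf{Y}}^{ig}_{2,2}$: since $\bar a_0^q=\bar a_0$, $\bar b_0^q=\bar b_0$, etc.\ in $\mathbb{F}_q$,
\[
(\bar a_0 x+\bar c_0 y)^q(\bar b_0 x+\bar d_0 y)-(\bar a_0 x+\bar c_0 y)(\bar b_0 x+\bar d_0 y)^q=\overline{{\rm det}(\bar g)}\,(x^qy-xy^q),
\]
which matches the reduction of (\ref{act2}) and the defining equation $x^qy-xy^q=\overline{{\rm det}(\bar g)}\,\bar\zeta$ of the target, and the Artin--Schreier-type relation between $Z_1$ and $x^{q^3}y-xy^{q^3}$ is carried over compatibly. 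There is no real obstacle here: the substantive work is the derivation of (\ref{act1}) and (\ref{act2'}), which is already done, and the only points needing care are confirming that all correction terms have strictly positive valuation (so they disappear in the reduction) and that the auxiliary parameter $Z_1$ transforms well under $g$.
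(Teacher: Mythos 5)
Your proposal is correct and is essentially the paper's own argument: the paper proves the lemma by citing (\ref{act1}) and (\ref{act2'}) directly, and your reduction modulo $0+$ (noting $v(\kappa^{q^2-1})=1/q^2>0$ and $v(\gamma^{1/q(q-1)})=1/q^3>0$ so the correction terms, including the $f_{\ast,\ast}$ terms, vanish in the reduction) just spells out why those congruences immediately give the stated morphism. The consistency check against the defining equations of $\overline{\mathbf{Y}}^{ig}_{2,2}$ is a harmless addition, not a different route.
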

\begin{proof}
The required assertion follows from
 (\ref{act1}) and (\ref{act2'}) immediately.
\end{proof}
We choose an element $\tilde{\gamma}_1$
such that 
$\tilde{\gamma}_1^q+\tilde{\gamma}_0
+\gamma^{1/q}\tilde{\gamma}_1=0.$
Let $y_0$ be an element such that
$y_0^{q^2-1}+\tilde{\gamma}_0^{q-1}=0.$
We choose an element $x_0$ such that 
$x_0^qy_0-x_0y_0^q=\tilde{\gamma}_0
+\gamma^{1/q}\tilde{\gamma}_1.$ 
Recall that we set $w:=\gamma^{1/q(q-1)}$
and $w_1:=y_0^q\gamma^{1/(q^2-1)}.$
Then, we have $v(w)=1/q^3,\ 
v(w_1)=1/q^2(q+1).$
We set $j:=(\bar{x}_0,\bar{y}_0) \in 
\mathcal{S}^i_{00}$ and 
$jg:=(\bar{a}_0\bar{x}_0
+\bar{c}_0\bar{y}_0,
\bar{b}_0\bar{x}_0
+\bar{d}_0\bar{y}_0) 
\in \mathcal{S}^{ig}_{00}.$
Now, we determine 
the morphism 
$X^i_j 
\to X^{ig}_{jg}$, 
which is induced by $g$
in Proposition \ref{actt}
 below.

As 
in (\ref{ss}), 
we change variables 
$Z_1=\tilde{\gamma}_1+w a,\ 
y=y_0+w_1 z_1.$
Then, we acquire the following defining equation of 
$X^i_j$
by (\ref{aw10})
\begin{equation}\label{act3}
a^q+a=\bar{\zeta} z_1^{q+1}
-\bar{c}_0 
\end{equation}
where $c_0:
=\mu-f(x_0^q,y_0^q).$

Let $g^\ast (y_0):
=b_0x_0+d_0y_0$ 
and $g^\ast w_1:=
g^\ast (y_0)^qw_1.$
Furthermore, we set 
$g^\ast Z_1={\rm det}(\bar{g})
\tilde{\gamma}_1+w g^\ast (a)$
and $g^\ast (y)
:=g^\ast (y_0)+g^\ast w_1 g^\ast (z_1).$
Then, similarly as (\ref{act3}), 
we acquire 
the following defining 
equation of $X^{ig}_{jg}$
\begin{equation}\label{act3'}
g^\ast (a)^q+g^\ast (a)=
\overline{{\rm det}(\bar{g})}
\bar{\zeta} 
(g^\ast z_1)^{q+1}
-\overline{g^\ast c_0}.
\end{equation}
Here, 
note that 
we have the following 
\[
\overline{g^\ast c_0} 
= 
\overline{{\rm det}(\bar{g})}\bar{c}_0
+\overline{{\rm det}(\bar{g})}
f_0(\bar{x}_0^q,\bar{y}_0^q)
-f_0((\bar{a}_0\bar{x}_0+\bar{c}_0\bar{y}_0)^q,
(\bar{b}_0\bar{x}_0+\bar{d}_0\bar{y}_0)^q)
-(\bar{\gamma(g)}-\bar{k}_0)\bar{\zeta}.
\]

By (\ref{act1}) and (\ref{act2'}), we acquire 
the following congruence
\begin{equation}\label{act4}
g^\ast (a) \equiv 
{\rm det}(\bar{g})a+g(x_0,y_0)
+G_0(x_0,y_0),\ 
g^\ast (z_1) \equiv z_1\ 
({\rm mod}\ 0+).
\end{equation}
Note that we have $g(x_0,y_0)
 \equiv -h(x_0,y_0)$
and hence $\gamma(g)\zeta=g(x_0,y_0)^q+g(x_0,y_0)$
modulo $0+.$
We also have $G_1(x_0,y_0)\equiv -G_0(x_0,y_0)$ and
 hence $G^q_0(x_0,y_0)+G_0(x_0,y_0)=-k_0\zeta+
 f_0((a_0x_0+c_0y_0)^q,
 (b_0x_0+d_0y_0)^q)-f_0(x_0^q,y_0^q)\ 
({\rm mod}\ 0+)$ by (\ref{koq}).
Hence, by (\ref{act4}),
 we easily check that 
$(g^\ast (a),g^\ast (z_1))$
 satisfies 
(\ref{act3'}).

As a result of the above computations,
 we acquire the following proposition.
\begin{proposition}\label{actt}
Let $g:=\left(
\begin{array}{cc}
a_0+a_1 \pi & b_0+b_1\pi \\
c_0+c_1\pi & d_0+d_1\pi
\end{array}
\right) \in G_2^F.$
Let $\bar{g}(x,y):=\{(\bar{a}_0x
+\bar{c}_0y)(\bar{b}_1x+\bar{d}_1y)^q
-(\bar{a}_1x+\bar{c}_1y)^q(\bar{b}_0x+\bar{d}_0y)\}^q
 \in \mathbb{F}_q[x,y].$
We define two polynomials
with coefficients 
in $\mathcal{O}_F$
\[f_{a,b}(X,Y):=\frac{ax^{q^2}+by^{q^2}-(ax+by)^{q^2}}{\pi},\ 
G_0(x,y):=(a_0x+c_0y)^qf_{b_0,d_0}(x,y)
-(b_0x+d_0y)^qf_{a_0,c_0}(x,y)
\]
for $a,b \in \mathcal{O}_F.$ 
Let $j=(x_0,y_0) \in \mathcal{S}^i_{00}.$
We set
$jg:=(\bar{a}_0x_0+\bar{c}_0y_0,
\bar{b}_0x_0+\bar{d}_0y_0) 
\in \mathcal{S}^{ig}_{00}.$
See (\ref{act3}) and (\ref{act3'})
 for the defining equations 
of $X^i_j$ and $X^{ig}_{jg}.$
We set $f(j,g):=
\bar{g}(x_0,y_0)
+\overline{G}_0(x_0,y_0).$
Then, $g$ induces the following morphism
\[g:X^i_j \to
X^{ig}_{jg}\ ;\ (a,z_1) \mapsto
\bigl(\overline{{\rm det}(\bar{g})}a+f(j,g), 
z_1
\bigr).\]
In particular, 
we have $\overline{G}_0(x,y)=0$ if ${\rm char}\ F>0$, 
or ${\rm char}\ F=0$ and $e_{F/\mathbb{Q}_p} \geq 2.$
\end{proposition}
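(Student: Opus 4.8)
The plan is to trace the right action of $g$ through the explicit model of $\mathcal{F}^{\rm univ}$ from (\ref{sos}) and the rescaled coordinates introduced in subsections \ref{yo1}--\ref{yo2}, reducing everything to leading-order congruences controlled by Lemma \ref{all}. First I would write down the action of $g$ on the Drinfeld $\pi^2$-basis $(X_2,Y_2)$ using the formula recalled in subsection \ref{sb1}, namely $X_2 \mapsto [a_0]_u(X_2) +_u [c_0]_u(Y_2) +_u [a_1]_u(X_1) +_u [c_1]_u(Y_1)$ and similarly for $Y_2$, then substitute the scalings $X_2=\kappa x$, $Y_2=\kappa y$, $X_1=\kappa^{q^2}x_1$, $Y_1=\kappa^{q^2}y_1$, $u=\kappa^{q^3(q-1)}u_0$ and expand using the logarithm $f$, its inverse, and the explicit coefficients (\ref{for}). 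Since $[\zeta]_u(X)=\zeta X$ for $\zeta\in\mu_{q-1}(\mathcal{O}_F)$, the linear part of $g^\ast(x)$ is $a_0x+c_0y$, and the first surviving correction comes from the $q^2$-power contribution of the addition law together with the level term, with coefficient of valuation $v(\kappa^{q^2-1})=1/q^3$; this yields (\ref{act1}), a linear form plus $\kappa^{q^2-1}\big((a_1x+c_1y)^{q^2}+f_{a_0,c_0}(x,y)\big)$ modulo $(1/q^2)+$, the $f_{\ast,\ast}$-terms disappearing when ${\rm char}\,F>0$ or $e_{F/\mathbb{Q}_p}\geq 2$.

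Next I would substitute (\ref{act1}) into the alternating form $x^qy-xy^q$, obtaining (\ref{act2}), and then iterate through the definitions (\ref{aw3'})--(\ref{aw6}) of $\mathcal{Z}$, $\gamma_0$, $\mu$, $Z_1$ and their $g$-transforms. This produces (\ref{cnn0}); combining it with the elementary polynomial identities (\ref{c_x}) and (\ref{koq}) — where $g(x,y)^q-h(x,y)\equiv \gamma(g)(x^qy-xy^q)^{q^2}$ modulo $0+$ and the $G_0$-part reduces similarly — gives (\ref{c_y}), which shows the action on $\pi_0(\mathbf{Y}_{2,2})$ is through ${\rm det}$, and (\ref{act2'}), the action on $Z_1$. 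I would then restrict to the singular residue class at $j=(x_0,y_0)\in\mathcal{S}^i_{00}$, performing the change of variables $Z_1=\tilde{\gamma}_1+wa$, $y=y_0+w_1z_1$ of subsection \ref{yo2} and its $g$-transformed analogue $g^\ast Z_1={\rm det}(\bar g)\tilde{\gamma}_1+w\,g^\ast(a)$, $g^\ast(y)=g^\ast(y_0)+g^\ast w_1\,g^\ast(z_1)$. Plugging (\ref{act1}) and (\ref{act2'}) in and reading off the leading terms yields (\ref{act4}): $g^\ast(a)\equiv {\rm det}(\bar g)a+g(x_0,y_0)+G_0(x_0,y_0)$ and $g^\ast(z_1)\equiv z_1$ modulo $0+$, which is exactly the asserted morphism once one identifies $f(j,g)=\bar g(x_0,y_0)+\overline{G}_0(x_0,y_0)$.

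Finally I would verify consistency: one checks directly that the pair $(g^\ast a, g^\ast z_1)$ solves the defining equation (\ref{act3'}) of $X^{ig}_{jg}$, using the identities $g(x_0,y_0)\equiv -h(x_0,y_0)$ and $G_1(x_0,y_0)\equiv -G_0(x_0,y_0)$ modulo $0+$ to match the constant term $\overline{g^\ast c_0}$ via (\ref{koq}), the factor ${\rm det}(\bar g)$ on $z_1^{q+1}$ being consistent with $z_1$ being fixed. The last sentence of the proposition, that $\overline{G}_0$ vanishes in equal characteristic or when $e_{F/\mathbb{Q}_p}\geq 2$, follows from the corresponding remark about (\ref{act1}).

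The main obstacle is the valuation bookkeeping: at each stage one must check that the stated modulus (e.g. $(1/q^2)+$, $(1/q^3)+$, $0+$) is exactly strong enough to keep the terms written in (\ref{act1})--(\ref{act4}) and discard all others, and in particular that in the mixed-characteristic case $e_{F/\mathbb{Q}_p}=1$ the corrections $f_{a,b}$ and $G_0$ genuinely survive into the formula for $g^\ast(a)$ — forcing them into $f(j,g)$ — while dropping out once $e_{F/\mathbb{Q}_p}\geq 2$. This is delicate because $f$ and $f^{-1}$ have coefficients with $\pi$ in the denominator, so the interaction between $v(u)$, $v(\kappa)$ and the exponents of the relevant monomials must be controlled throughout using Lemma \ref{all}.
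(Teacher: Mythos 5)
Your proposal follows essentially the same route as the paper: expand the $G_2^F$-action on the Drinfeld basis via the rescaled coordinates to get (\ref{act1})--(\ref{act2}), pass through $\mathcal{Z}$, $Z_1$ and the residue-class variables to obtain (\ref{act2'}) and (\ref{act4}), and then verify that $(g^\ast a, g^\ast z_1)$ satisfies (\ref{act3'}) using the identities $g(x_0,y_0)\equiv -h(x_0,y_0)$, $G_1(x_0,y_0)\equiv -G_0(x_0,y_0)$ and (\ref{koq}). This matches the paper's argument in subsection 5.1, including the remark explaining the vanishing of $\overline{G}_0$ when ${\rm char}\,F>0$ or $e_{F/\mathbb{Q}_p}\geq 2$, so the proof is correct as outlined.
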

\begin{proof}
The required assertion follows from (\ref{act4})
immediately.
\end{proof}
\begin{remark}
For $f(j,g),$ we have 
$f(j,g_1g_2)=\overline{{\rm det}
(\bar{g}_2)}f(j,g_1)+f(jg_1,g_2)$
for $g_1,g_2 \in G_2^F.$
\end{remark}

\subsection{Action of $G_2^F$ on 
 $\overline{\mathbf{Z}}_{1,1,0}$ 
 and $\{W^i_{0,j}\}_{(i,j) \in 
 \mathcal{S}_0}$}
In this subsection, we compute the action of 
$G_2^F$
on the components $\overline{\mathbf{Z}}_{1,1,0}$
and $W^i_{0,j}$ for ${(i,j) \in \mathcal{S}_0}$ 
explicitly in 
Lemma \ref{zoo1} and Proposition \ref{zg1}.
In the following computations, 
we freely use the notations in 
\ref{zo1} and \ref{zo2}.
Recall that
we have on the space $\mathbf{Z}_{1,1,0}$
\[v(u)=\frac{1}{2},\ 
v(X_1)=\frac{1}{2(q-1)},\ 
v(X_2)=\frac{1}{2q^2(q-1)},\ 
v(Y_1)=\frac{1}{2q(q-1)},\ 
v(Y_2)=\frac{1}{2q^3(q-1)}.
\]
We choose an element $\kappa_1$ such that
$\kappa_1^{2q^4(q-1)}=\pi.$ We set 
$\kappa:=\kappa_1^q$ and
 $\gamma:=\kappa_1^{q^2(q-1)^2}.$
We have $v(\kappa_1)=1/2q^4(q-1)$
and $v(\gamma)=(q-1)/2q^2.$
We write $\gamma^{\frac{1}{q^2(q-1)}}$
for $\kappa_1^{q-1}.$
We change variables as follows
$u=\kappa^{q^3(q-1)}u_0,\ 
X_1=\kappa^{q^3}x_1,\ 
Y_1=\kappa^{q^2}y_1$
and $X_2=\kappa^{q}x,\
 Y_2=\kappa y.$
We choose an element $\zeta 
\in \mu_{q-1}(\mathcal{O}_F).$
See subsection \ref{zo2} for
 an element $\tilde{\gamma}_0.$

Let $B \subset G_1^F$ be a subgroup
 of upper triangular matrices.
Let  
$\overline{\mathfrak{U}}^{\times}
 \subset G_2^F$
 be the inverse image of 
$B$ by $G_2^F 
\to G_1^F.$
The stabilizer 
of $\overline{\mathbf{Z}}_{1,1,0}$
 in $G_2^F$ is equal to
$\overline{\mathfrak{U}}^{\times}.$

In the following, for an element 
$i=(\bar{\zeta},\tilde{\mu}) 
\in (\mathcal{O}_F/\pi^2)^{\times},$ we determine 
the morphism 
$\overline{\mathbf{Z}}^i_{1,1,0} \to 
\overline{\mathbf{Z}}^{ig}_{1,1,0}$
which is induced by $g \in
 \overline{\mathfrak{U}}^{\times}$
  in Lemma \ref{zoo1} below.

Let $\left(
\begin{array}{cc}
a_0+a_1\pi & b_0+b_1\pi \\
c_1\pi & d_0+d_1\pi
\end{array}
\right) \in \overline{\mathfrak{U}}^{\times}.
$
Then, we acquire the following congruences
\begin{equation}\label{zoi1}
g^\ast (x) \equiv a_0x+
c_1\gamma^{1/(q-1)}y^{q^2}\ 
({\rm mod}\ (1/2q^2)+),\ 
g^\ast (y) \equiv d_0y
+b_0\gamma^{1/q(q-1)}x\ 
({\rm mod}\ (1/2q^3)+).
\end{equation}
The following congruence 
holds by (\ref{zoi1})
\begin{equation}\label{zoi3}
g^\ast (x) g^\ast (y)^q
\equiv
a_0d_0xy^q+\gamma^{1/(q-1)}(c_1d_0y^{q(q+1)}
+a_0b_0x^{q+1})\ ({\rm mod}\ (1/2q^2)+).
\end{equation}
Recall that we have set 
 $xy^q=\tilde{\gamma}_0+\gamma^{1/q}Z_1$ and 
 $g^\ast (x)g^\ast (y)^q
 =a_0d_0\tilde{\gamma}_0+\gamma^{1/q}
 (g^\ast Z_1)$
in (\ref{aq7}). Moreover, we put
$h(x,y):=c_1d_0y^{q(q+1)}+a_0b_0x^{q+1}.$
Then, 
the congruence (\ref{zoi3}) 
induces the following congruence
\begin{equation}\label{zoi4}
g^\ast Z_1 \equiv 
a_0d_0Z_1+\gamma^{1/q(q-1)}h(x,y)\
 ({\rm mod}\ (1/2q^3)+).
\end{equation}
We recall the following equality 
in (\ref{aq11})
\begin{equation}\label{e_1}
y^{q^2-1}\gamma^{1/q^2}Z_2=Z_1-
\tilde{\gamma}_0^{1/q}y^{q^2-1}
-\zeta y^{-(q^2-1)}.
\end{equation}
We have a similar equality for 
$(g^\ast Z_1,g^\ast Z_2,
g^\ast (y)).$ 
By (\ref{zoi1}), 
 (\ref{zoi4}) and (\ref{e_1}), 
we obtain the following
\begin{equation}\label{zoi5}
g^\ast Z_2 \equiv a_0d_0Z_2
+\gamma^{1/q^2(q-1)}g(x,y)\ 
({\rm mod}\ (1/2q^4)+)
\end{equation}
where we set
\[g(x,y):=y^{-(q^2-1)}
\{h(x,y)+\frac{\zeta b_0x}{d_0y}
\bigl(y^{q^2-1}-
y^{-(q^2-1)}\bigr)\}.
\]
As computed in \ref{zo1},
 the components 
$\overline{\mathbf{Z}}^i_{1,1,0}$
and $\overline{\mathbf{Z}}^{ig}_{1,1,0}$
are defined by
the following equations respectively 
\[Z_2^q=\bar{\zeta} (y^{q^2-1}+y^{-(q^2-1)}),\ 
(g^\ast Z_2)^q =
\bar{a}_0\bar{d}_0\bar{\zeta} 
((g^\ast y)^{q^2-1}+(g^\ast y)^{-(q^2-1)}).
\]
Then, we have the following lemma.
\begin{lemma}\label{zoo1}
Let $g=
\left(
\begin{array}{cc}
a_0+a_1\pi & b_0+b_1\pi \\
c_1\pi & d_0+d_1\pi
\end{array}
\right)
 \in \overline{\mathfrak{U}}^{\times}.$
 We choose an element $i 
\in (\mathcal{O}_F/\pi^2)^{\times}.$
Then, $g$ induces
 the following morphism
\[
\overline{\mathbf{Z}}^i_{1,1,0} \to 
\overline{\mathbf{Z}}^{ig}_{1,1,0}\ :\ 
(y,Z_2) \mapsto (\bar{d}_0y,\bar{a}_0\bar{d}_0Z_2).
\]
\end{lemma}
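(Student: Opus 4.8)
The plan is to read off the formula for the induced morphism from the chain of congruences (\ref{zoi1})--(\ref{zoi5}) established immediately above, by reducing modulo the maximal ideal. First I would write out the action of $g=\left(\begin{array}{cc} a_0+a_1\pi & b_0+b_1\pi \\ c_1\pi & d_0+d_1\pi\end{array}\right)$ on the Drinfeld $\pi^2$-basis $(X_2,Y_2)$ using the explicit group law (\ref{sos}) and (\ref{soso}) for $\mathcal{F}^{\rm univ}$, and substitute the scalings $u=\kappa^{q^3(q-1)}u_0$, $X_1=\kappa^{q^3}x_1$, $Y_1=\kappa^{q^2}y_1$, $X_2=\kappa^q x$, $Y_2=\kappa y$ fixed in subsection \ref{zo1}. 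The crucial point is that $g\in\overline{\mathfrak{U}}^{\times}$ means the lower-left entry is $c_1\pi$, divisible by $\pi$: consequently the $y$-term of $g^\ast(x)$ and the $x$-term of $g^\ast(y)$ each pick up a positive power of $\gamma$, which is exactly the content of (\ref{zoi1}). If $g$ were not in $\overline{\mathfrak{U}}^{\times}$ these mixed terms would survive in the reduction and $\overline{\mathbf{Z}}_{1,1,0}$ would not be preserved, which is precisely why its stabilizer is $\overline{\mathfrak{U}}^{\times}$.

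Next I would compute $g^\ast(x)g^\ast(y)^q$ from (\ref{zoi1}) to get (\ref{zoi3}), then translate into the coordinate $Z_1$ via the normalization $xy^q=\tilde{\gamma}_0+\gamma^{1/q}Z_1$ of (\ref{aq7}); here one checks that the leading coefficient of $g^\ast(x)g^\ast(y)^q$ is $a_0d_0\tilde{\gamma}_0$, matching the component $\overline{\mathbf{Z}}^{ig}_{1,1,0}$, and the remainder produces (\ref{zoi4}) after dividing by $\gamma^{1/q}$ and using $\tfrac{1}{q-1}-\tfrac1q=\tfrac{1}{q(q-1)}$. Feeding (\ref{zoi4}) and (\ref{zoi1}) through the relation defining $Z_2$ recorded in (\ref{aq11}) and (\ref{e_1}) then yields (\ref{zoi5}), namely $g^\ast Z_2\equiv a_0d_0Z_2+\gamma^{1/q^2(q-1)}g(x,y)$ modulo $(1/2q^4)+$ for the explicit polynomial $g(x,y)$ written there. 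Since $v(\gamma)=(q-1)/2q^2>0$, all the $\gamma$-multiples in (\ref{zoi1}), (\ref{zoi4}) and (\ref{zoi5}) vanish upon reduction, so on $\overline{\mathbf{Z}}^i_{1,1,0}$ the map is $\bar{y}\mapsto\bar{d}_0\bar{y}$ and $\bar{Z}_2\mapsto\bar{a}_0\bar{d}_0\bar{Z}_2$, which is the claimed formula. As a final check one verifies compatibility with the defining equations $Z_2^q=\bar{\zeta}(y^{q^2-1}+y^{-(q^2-1)})$ and $(g^\ast Z_2)^q=\bar{a}_0\bar{d}_0\bar{\zeta}((g^\ast y)^{q^2-1}+(g^\ast y)^{-(q^2-1)})$, using $\overline{{\rm det}(\bar{g})}=\bar{a}_0\bar{d}_0$ for $g\in\overline{\mathfrak{U}}^{\times}$.

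The main obstacle is purely one of bookkeeping the precisions: each congruence must be carried to exactly the modulus at which the next substitution remains legitimate --- this is why the moduli $(1/2q^2)+$, $(1/2q^3)+$, $(1/2q^4)+$ appear in (\ref{zoi1})--(\ref{zoi5}) --- and one must make sure that no subleading term of $g^\ast(x)$ or $g^\ast(y)$ contributes to the leading coefficient $a_0d_0$ of $g^\ast(x)g^\ast(y)^q$. Once these valuation estimates are in hand, the reduction step and the consistency check with the defining equations are immediate, so the lemma follows at once from (\ref{zoi1}) and (\ref{zoi5}).
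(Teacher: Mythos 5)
Your proposal is correct and follows essentially the same route as the paper: it reconstructs the chain of congruences (\ref{zoi1})--(\ref{zoi5}) (action on the Drinfeld basis, the observation that the lower-left entry $c_1\pi$ forces the mixed terms into positive powers of $\gamma$, passage to $Z_1$ via (\ref{aq7}) and to $Z_2$ via (\ref{e_1})), and then reduces modulo $0+$ so that all $\gamma$-multiples vanish, which is exactly how the paper deduces the lemma from (\ref{zoi1}) and (\ref{zoi5}).
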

\begin{proof}
The required assertion 
follows 
from (\ref{zoi1}) and (\ref{zoi5}) 
immediately.
\end{proof}
We choose elements $\tilde{\gamma}_1,y_0$
such that
$\tilde{\gamma}^q_1=\iota 2\zeta 
\{1+\gamma^{1/q^2}(\tilde{\gamma}_1/\zeta)\}^{1/2}$
and
$y_0^{q^2-1}=\frac{\iota}
{(1+\gamma^{1/q^2}(\tilde{\gamma}_1/\zeta))^{1/2}}$
where $\iota \in \{\pm1\}.$
We set $w:=y_0^{q+1}\kappa^{(q-1)/q}$ and choose 
$w_1$ such that
$y_0^{q^2-3}(\zeta+\gamma^{1/q^2}
\tilde{\gamma}_1)w_1^2=w^q.$
Then, we have $v(w)=1/2q^4$ and $v(w_1)=1/4q^3.$
Set $x_0:=\tilde{\gamma}_0/y_0^q.$
We choose a $2$-th root $(a_0/d_0)^{1/2}$
of $a_0/d_0.$
Furthermore, we put $g^\ast w:=d_0^2w$ and 
$g^\ast w_1:=d_0(d_0/a_0)^{1/2}w_1.$
We have $\bar{y}_0 \in \mu_{2(q^2-1)}$.
Put $\bar{y}_0g:=\bar{d}_0\bar{y}_0 
\in \mu_{2(q^2-1)}.$
Now, we determine the morphism
 $W^i_{\bar{y}_0} \to W^{ig}_{\bar{y}_0g}$
 which is induced by $g$ in 
 Proposition \ref{zg1} below.

As in (\ref{st}), 
we change variables as follows
\[Z_2=\tilde{\gamma}_1+wa,\ y=y_0+w_1s_1.
\]
(resp.\ 
\[
g^\ast Z_2=a_0d_0\tilde{\gamma}_1+g^\ast w(g^\ast a),\ 
g^\ast (y)
=d_0y_0+g^\ast w_1 (g^\ast s_1).)
\]
By (\ref{zoi1}) and (\ref{zoi5}),
the following congruences hold
\begin{equation}\label{zzoi}
g^\ast a \equiv 
\biggl(\frac{a_0}{d_0}\biggr)a+\frac{g(x_0,y_0)}{d_0^2y_0^{q+1}}\ 
({\rm mod}\ 0+),\ g^\ast s_1 \equiv 
(a_0/d_0)^{1/2}s_1\ ({\rm mod}\ 0+).
\end{equation}
Furthermore, we obtain the following 
$g(x_0,y_0) \equiv c_1d_0y_0^{q+1}
+a_0b_0y_0^{-(q+1)}\zeta^2\ ({\rm mod}\ 0+).$
Hence, we acquire the following
by (\ref{zzoi})
\begin{equation}\label{d_a}
g^\ast a \equiv
(a_0/d_0)
\bigl(a+(c_1/a_0)+(b_0/d_0)(\zeta/y_0^{q+1})^2\bigr),\ 
g^\ast s_1 \equiv (a_0/d_0)^{1/2}s_1\ ({\rm mod}\ 0+).
\end{equation}
As computed 
in subsection \ref{zo2}, 
the components 
$W^i_{0,\bar{y}_0}$ and $W^{ig}_{0,\bar{y}_0g}$ with 
$\bar{y}_0g=\bar{d}_0\bar{y}_0$ are defined by
the following 
equations 
\begin{equation}\label{zzoi1}
a^q-a=s^2,\ (g^\ast a)^q-g^\ast a=
(g^\ast s_1)^2
\end{equation} 
respectively.
Then, we have the following proposition.
\begin{proposition}\label{zg1}
Let $g=
\left(
\begin{array}{cc}
a_0+a_1\pi & b_0+b_1\pi \\
c_1\pi & d_0+d_1\pi
\end{array}
\right)
 \in \overline{\mathfrak{U}}^{\times}$. 
 We consider the 
 components 
 $\{W^{i'}_{0,\bar{y}'_0}\}_{(i',\bar{y}'_0)
  \in \mathcal{S}_0}.$
  We choose elements $i=(\zeta,\tilde{u}) \in 
  \mathbb{F}^{\times}_q \times \mathbb{F}_q$
  and $\bar{y}_0 \in \mu_{2(q^2-1)}.$
  Furthermore, 
we set $\bar{y}_0g:=\bar{d}_0\bar{y}_0.$
See (\ref{zzoi1}) for the defining equations of
$W^i_{0,\bar{y}_0}$ and $W^{ig}_{0,\bar{y}_0g}$.
Then, $g$ induces the following morphism 
\[g:W^i_{0,\bar{y}_0} 
\to 
W^{ig}_{0,\bar{y}_0g}\ ;\ (a,s) \mapsto
\biggl(
(\bar{a}_0/\bar{d}_0)\bigl
(a+(\bar{c}_1/\bar{a}_0)
+(\bar{b}_0/\bar{d}_0)(\zeta/\bar{y}_0^{q+1})^2\bigr), 
 (\bar{a}_0/\bar{d}_0)^{1/2}s_1
\biggr).\]
\end{proposition}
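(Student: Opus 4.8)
The plan is to push the matrix $g$ of the statement through exactly the chain of coordinate changes used in subsections \ref{zo1} and \ref{zo2} to extract the reduction $W^i_{0,\bar y_0}$, inserting $g$ at each stage. Since $g$ lies in $\overline{\mathfrak U}^\times$, the stabilizer of $\overline{\mathbf Z}_{1,1,0}$ in $G_2^F$, it preserves the subspace $\mathbf Z_{1,1,0}$ and carries the singular residue class at $\bar y_0$ to the one at $\bar d_0\bar y_0$. First I would record how $g$ acts on the rescaled parameters $x,y$: by the same kind of computation as in (\ref{act1}), using the formulas (\ref{sos}), (\ref{soso}) for $\mathcal F^{\rm univ}$ and the valuations of $\mathbf Z_{1,1,0}$ listed in Definition \ref{fd}, one obtains the congruences (\ref{zoi1}). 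From these I would form $g^\ast(x)g^\ast(y)^q$ to get (\ref{zoi3}), and then read off the action on the parameters $Z_1$ and $Z_2$ by feeding (\ref{zoi1}) and (\ref{zoi3}) into their defining relations (\ref{aq7}) and (\ref{aq11}); this gives (\ref{zoi4}) and (\ref{zoi5}), which already yield the morphism $\overline{\mathbf Z}^i_{1,1,0}\to\overline{\mathbf Z}^{ig}_{1,1,0}$ of Lemma \ref{zoo1}.

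Next I would descend to the singular residue class at $y=\bar y_0\in\mu_{2(q^2-1)}$. One performs the substitution (\ref{st}), $Z_2=\tilde\gamma_1+wa$, $y=y_0+w_1s_1$, together with its $g$-translate $g^\ast Z_2=a_0d_0\tilde\gamma_1+(g^\ast w)(g^\ast a)$, $g^\ast(y)=d_0y_0+(g^\ast w_1)(g^\ast s_1)$, substitutes into (\ref{zoi5}), and divides by the appropriate power of $\kappa$; this produces the congruences (\ref{zzoi}). The only computation genuinely new relative to subsections \ref{zo1}--\ref{zo2} is the evaluation at the singular point of the polynomial $g(x,y)$ appearing in (\ref{zoi5}): with $x_0=\tilde\gamma_0/y_0^q$ and $y_0^{q^2-1}$ reducing to $\iota\in\{\pm1\}$, a short calculation gives $g(x_0,y_0)\equiv c_1d_0y_0^{q+1}+a_0b_0y_0^{-(q+1)}\zeta^2\ ({\rm mod}\ 0+)$, which turns (\ref{zzoi}) into the clean formula (\ref{d_a}) for $g^\ast a$ and $g^\ast s_1$. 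Reducing (\ref{d_a}) modulo $0+$ and recalling from (\ref{zzoi1}) that $W^i_{0,\bar y_0}$ and $W^{ig}_{0,\bar y_0g}$ are the Artin--Schreier curves $a^q-a=s^2$ and $(g^\ast a)^q-g^\ast a=(g^\ast s_1)^2$, the asserted morphism drops out at once.

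The step I expect to be the main obstacle is the verification, packaged in the passage from (\ref{zoi5}) to (\ref{d_a}), that this affine-linear substitution really is an isomorphism between the two Artin--Schreier curves. Writing $g^\ast a=(\bar a_0/\bar d_0)(a+\lambda)$ with $\lambda=(\bar c_1/\bar a_0)+(\bar b_0/\bar d_0)(\zeta/\bar y_0^{q+1})^2$ and $g^\ast s_1=(\bar a_0/\bar d_0)^{1/2}s_1$, one must check that $(\bar a_0/\bar d_0)^q=\bar a_0/\bar d_0$, so that $(g^\ast a)^q-g^\ast a=(\bar a_0/\bar d_0)(a^q-a)+(\bar a_0/\bar d_0)(\lambda^q-\lambda)$; that $\lambda\in\mathbb{F}_q$, so the term $\lambda^q-\lambda$ vanishes; and that $((\bar a_0/\bar d_0)^{1/2}s_1)^2$ reproduces exactly the factor $\bar a_0/\bar d_0$ on the right-hand side. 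All three follow from $\bar a_0,\bar d_0\in\mu_{q-1}$, from $\bar y_0^{2(q+1)}\in\mu_{q-1}$ (a consequence of $\bar y_0^{q^2-1}=\iota$), and from $\zeta\in\mathbb{F}_q^\times$; the only subtlety is that $(\bar a_0/\bar d_0)^{1/2}$ may lie in $\mathbb{F}_{q^2}\setminus\mathbb{F}_q$, which is harmless since all these curves are considered over $k^{\rm ac}$. As in the remark after Proposition \ref{actt} one could also record the resulting cocycle relation in $g_1,g_2$, but it is not needed for the statement.
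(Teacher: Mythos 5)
Your proposal is correct and follows essentially the same route as the paper: the paper derives (\ref{zzoi}) from (\ref{zoi1}) and (\ref{zoi5}) after the change of variables (\ref{st}), evaluates $g(x_0,y_0)\equiv c_1d_0y_0^{q+1}+a_0b_0y_0^{-(q+1)}\zeta^2\ ({\rm mod}\ 0+)$ to obtain (\ref{d_a}), and Proposition \ref{zg1} is then read off exactly as you describe. Your closing verification that the affine-linear substitution respects the two Artin--Schreier equations is a harmless extra check not spelled out in the paper.
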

\begin{proof}
The required assertion follows
from
(\ref{d_a}).
\end{proof}

\begin{remark}
The canonical map 
$G_2^F \to G_1^F$
induces the 
following
bijective 
$G_2^F/
\overline
{\mathfrak{U}}
^{\times} 
\simeq G_1^F/B.$
Let 
$g:=\left(
\begin{array}{cc}
a_0 &  b_0\\
c_0 & d_0
\end{array}
\right)
 \in G_1^F.$
Then, we consider 
the right 
 $G_1^F$-action on 
$\mathbb{P}^1
(\mathbb{F}_q) 
\in [x:y]$
by $[x:y] \mapsto 
[a_0x+c_0y:
b_0x+d_0y].$ 
The stabilizer of 
$[0:1]$ in $G_1^F$
 is 
equal to $B.$
Hence, we obtain bijectives 
$G_2^F/
\overline{\mathfrak{U}}
^{\times} \simeq G_1^F/B \simeq
 \mathbb{P}^1(\mathbb{F}_q)$
 by $[g] \mapsto 
 [0:1]g=[c_0:d_0].$
 We set
$\mathcal{S}_2:=(\mathcal{O}_F/\pi^2)
 ^{\times} \times
  \mu_{2(q^2-1)}.$ 
In the following, we consider 
the components $
\overline{\mathbf{Z}}^i_{1,1,\ast}$
and $\{W^i_
{\ast,y_0}\}
_{(i,y_0) \in 
\mathcal{S}_2}$ for $\ast 
\in \mathbb{P}^1(\mathbb{F}_q)$
in \ref{zo1}, \ref{zo2} and \ref{zo3}.
Then, $G_2^F$ acts on the index set 
$\ast \in \mathbb{P}^1(\mathbb{F}_q)$
by $\ast \mapsto \ast g.$
Clearly,
 this action is transitive.
As mentioned 
at the 
beginning 
of this subsection, for each 
$i \in (\mathcal{O}_F/\pi^2)^{\times}$,  
the stabilizer 
of the components 
$\overline{\mathbf{Z}}
^i_{1,1,0}$ and 
$\{W^{i,c}_{0,y_0}\}
_{(i,y_0) 
\in \mathcal{S}_2}$
in $G_2^F$ is equal to 
$\overline{\mathfrak{U}}^{\times}.$
Hence,  
 we acquire the 
following isomorphism
as a $G_2^F$
-representation
\begin{equation}
\label{ind}
\bigoplus
_{(i,j,y_0) \in
 \mathcal{S}_1}
 H^1(W^{i,c}_{j,y_0},
 \overline{\mathbb{Q}}_l)
 \simeq \bigoplus_{(i,y_0) 
 \in \mathcal{S}_2}{\rm Ind}^{G_2^F}
 _{\overline{\mathfrak{U}}^{\times}}
 H^1(W^{i,c}_{0,y_0},
 \overline{\mathbb{Q}}_l)
\end{equation}
with $l \neq p.$
Hence, to understand 
the \'{e}tale
 cohomology group
in the left
 hand side
 of (\ref{ind}), 
 it suffices to 
 understand  
 $H^1(W^{i,c}_{0,y_0},
 \overline{\mathbb{Q}}_l)$
 as a 
 $\overline{\mathfrak{U}}
 ^{\times}$-representation. 
\end{remark}


\section{Inertia action on
 the components in the 
stable reduction 
of $\mathcal{X}(\pi^2)$}\label{aci1}
In this section, we determine the right 
action of 
inertia on the components $\overline{\mathbf{Y}}_{2,2}$, 
$X^i_j\ ((i,j) \in \mathcal{S}),$ 
$\overline{\mathbf{Z}}_{1,1,0}$, $W^i_{j,k}$
for $(i,j,k) \in \mathcal{S}_1.$
First, we prepare some notations used through
 this section.
For a finite extension $L/F,$ 
we have the Artin reciprocity map
$\mathbf{a}_L:W_L^{\rm ab} 
\overset{\sim}{\to} 
L^{\times}$, which is normalized such that 
the geometric Frobenius 
is sent to 
 a prime element $\pi_L$
 by this map.
 Then, for an extension $L/K,$
 we have $\mathbf{a}_K=
 {\rm Nr}_{L/K} \circ \mathbf{a}_L.$
Let ${\rm LT}_L$ denote the formal
$\mathcal{O}_L$-module
over 
$\mathcal{O}_{\hat{L}^{\rm ur}}$,  
with ${\rm LT}_L 
\otimes k^{\rm ac}$
of height $1$. 
For $n \geq 1$, 
let $\pi_{n,L} \in {\rm LT}_L[{\pi}_L^n](\mathbf{C})$
be primitive $\pi^n$-torsion points.
Then, by the classical 
Lubin-Tate theory, we have the following equality
\begin{equation}\label{clt}
\sigma(\pi_{n,L})=
[\mathbf{a}_L(\sigma)]_{{\rm LT}_L}(\pi_{n,L})
\end{equation}
for any $n \geq 1$, $\sigma \in I^{\rm ab}_L$.

We consider a case $L=F.$
We denote by the same letter 
$\mathbf{a}_F$ for the composite
\begin{equation}\label{van0}
\mathbf{a}_F:\ I^{\rm ab}_F 
\overset{\mathbf{a}_F|_{I_F^{\rm ab}}}{\to} 
\mathcal{O}^{\times}_F \to
(\mathcal{O}_F/\pi^2)^{\times}.
\end{equation}
We write
$
\mathbf{a}_F(\sigma)=(\zeta_0(\sigma),\lambda_0(\sigma))
\in (\mathcal{O}_F/\pi^2)^{\times}
 \simeq \mathbb{F}^{\times}_{q} 
\times \mathbb{F}_q.
$
Then, by (\ref{clt}),
 we have the followings 
\[
\zeta_0(\sigma)=
\overline{\sigma(\pi_1)/\pi_1},\ 
\lambda_0(\sigma)
=\overline{\biggl(\frac{\pi_1\sigma(\pi_2)
-\sigma(\pi_1)\pi_2}{\sigma(\pi_1)\pi_1}\biggr)}.
\]
For $i \in (\mathcal{O}_F/\pi^2)^{\times}$
and $\sigma \in I_F,$ 
we set $i \sigma:=
\mathbf{a}_F(\sigma)^{-1} \times i.$
It is well-known that, under some 
identification
$\pi_0(\mathcal{X}(\pi^2))\simeq 
(\mathcal{O}_F/\pi^2)^{\times},$ 
 the inertia $I_F \ni \sigma$ acts on 
 $\pi_0(\mathcal{X}(\pi^2))$
 by 
$i \mapsto i \sigma.$ See Theorem \ref{caq}.

\subsection{Action of Inertia}
We will recall the right action of 
inertia on the stable model of 
a curve over $F$
from \cite[Section 6]{CM2}.
Note that 
the inertia action
 considered in loc.\ cit.\ 
 is a left action.
In this paper, 
we want to consider 
the right action. 
Hence, the (right) action of inertia
is characterized as in (\ref{ine1}).

If $Y/F$
is a curve, and 
$\mathcal{Y}$ 
its stable model
 over $\mathbf{C},$
there is 
a homomorphism
$w_Y$
\begin{equation}\label{mc}
w_Y:I_F={\rm Gal}
(\mathbf{C}/F^{\rm ur})
\to 
{\rm Aut}(\overline{\mathcal{Y}}).
\end{equation}
It is characterized 
by the fact that for each $P \in Y(\mathbf{C})$
and $\sigma \in I_F,$
\begin{equation}\label{ine1}
\overline{\sigma^{-1}(P)}=(\overline{P})w_Y(\sigma).
\end{equation}
We have something similar if $\mathbf{Y}$
is a reduced affinoid over $F.$
Namely, we have a homomorphism
$w_Y:I_F \to
 {\rm Aut}(\overline{\mathbf{Y}}_{\mathbf{C}})$
characterized by (\ref{ine1}).
This follows from the fact that $I_F$
preserves 
$A(\mathbf{Y}_{\mathbf{C}})^{0}$
(power bounded elements of 
$A(\mathbf{Y}_{\mathbf{C}})$)
and $A(\mathbf{Y}_{\mathbf{C}})^v$
(topologically nilpotent elements of 
$A(\mathbf{Y}_{\mathbf{C}})$).


\subsection{Inertia action on 
$\overline{\mathbf{Y}}_{2,2}$ and 
$\{X^i_j\}_{(i,j) \in \mathcal{S}}$}
We determine the right 
action of the 
inertia on the components 
$\overline{\mathbf{Y}}_{2,2}$ and 
$\{X^i_j\}_{(i,j) \in \mathcal{S}}$ 
using (\ref{ine1}).
 These components are computed in 
 subsections in \ref{yo1} and \ref{yo2}.
 
In the following, we use  
the notations in \ref{yo1} and \ref{yo2}.
We briefly recall them.
Let $\kappa_1$ 
be an element such that 
$\kappa_1^{q^3(q^2-1)}=\pi.$
We set $\kappa:=\kappa_1^q$ 
and 
$\gamma:
=\kappa^{(q-1)(q^2-1)}.$
We write 
$\gamma^{\frac{1}{q(q^2-1)}}$
for $\kappa_1^{q-1}.$
We fix an element $\zeta \in 
\mu_{q-1}(\mathcal{O}_F).$
Moreover, we choose elements
 ${\gamma}_0$, 
 $\tilde{\gamma}_0$ and 
 $\tilde{\gamma}_1$
such that 
 $\gamma_0^q-\gamma^q\gamma_0=\zeta$, 
 $\tilde{\gamma}_0^q=\gamma_0$
 and $\tilde{\gamma}^q_1
 +\tilde{\gamma}_0+\gamma^{1/q}\tilde{\gamma}_1=0$
Let $y_0$ be an element such that
$y_0^{q^2-1}+\tilde{\gamma}_0^{q-1}=0.$ 
We set $w:=\gamma^{1/q(q-1)}$
and $w_1:=y_0^q\gamma^{1/(q^2-1)}.$
Then, we have $v(w)=1/q^3,\ 
v(w_1)=1/q^2(q+1).$
Let $x_0$ be an element such that
$x_0^qy_0-x_0y_0^q=
\tilde{\gamma}_0+\gamma^{1/q}
\tilde{\gamma}_1.$ 


\begin{lemma}\label{ci2}
Let the notation be as above.
Let $\sigma \in I_F.$
We write $\sigma(\kappa)=\zeta_\sigma \kappa$
with 
some $\zeta_{\sigma} \in \mu_{q^2(q^2-1)}.$
We choose elements 
$i=(\bar{\zeta},\tilde{\mu}) 
\in (\mathcal{O}_F/\pi^2)^{\times}$ 
and 
$j=(\bar{x}_0,\bar{y}_0) \in \mathcal{S}^i_{00}.$
We set
$j \sigma:=(\bar{\zeta}_{\sigma}^{-1}
\bar{x}_0,
\bar{\zeta}_{\sigma}^{-1} 
\bar{y}_0) \in 
\mathcal{S}^{i \sigma}_{00}.$
Furthermore, we set as follows
\[
a_0:=\frac{\sigma(\tilde{\gamma}_1)-\tilde{\gamma}_1}
{w},\ 
d_0:=\frac{\sigma(\tilde{\gamma}_0)
-\tilde{\gamma}_0}{\gamma^{1/(q-1)}}.
\]
Note that we have 
$\bar{a}_0 \in \mathbb{F}_{q^2},\ 
\bar{d}_0 \in 
\mathbb{F}_q$ and $\bar{a}_0^q+\bar{a}_0+\bar{d}_0=0.$
\\1.\ The element $\sigma$ induces the following morphism
\[
\overline{\mathbf{Y}}^i_{2,2} 
\to \overline{\mathbf{Y}}^{i \sigma}_{2,2}:\ 
(x,y,Z_1) \mapsto 
(\bar{\zeta}_{\sigma}^{-1}x,
\bar{\zeta}_{\sigma}^{-1}y,
\bar{\zeta}_{\sigma}^{-(q+1)}Z_1).
\]
\\2.\ The element $\sigma \in I_F$ 
also induces the following morphism
\[
X^i_j \to 
X^{i \sigma}_{j \sigma}:\ 
(a,z_1) \mapsto 
(\bar{\zeta}_{\sigma}^{-(q+1)}
(a-\bar{a}_0-\bar{d}_0),z_1).
\]
\end{lemma}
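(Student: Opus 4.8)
The plan is to use the characterization (\ref{ine1}) of $w_{\mathbf Y}(\sigma)$: it suffices to compute the reduction of $\sigma^{-1}(P)$ in the explicit coordinates introduced in subsections \ref{yo1} and \ref{yo2}, for $P$ in the affinoid $\mathbf Y^i_{2,2}$ (resp.\ in the singular residue class $\mathbf X^i_j$), and then read off the induced morphism on reductions. The two basic inputs are: (i) $\sigma\in I_F={\rm Gal}(\mathbf C/F_0)$ acts trivially on the residue field $k^{\rm ac}$ of $\mathbf C$ (since $F_0$ has residue field $k^{\rm ac}$), so $\overline{\sigma^{\pm1}(z)}=\overline z$ for every $z\in\mathcal O_{\mathbf C}$; and (ii) $\sigma$ fixes $F_0\supset F^{\rm nr}$, hence fixes $\pi$ and every root of unity of order prime to $p$. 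Since $X_2,Y_2$ are analytic functions defined over $F_0$, one has $X_2(\sigma^{-1}P)=\sigma^{-1}(X_2(P))$, and similarly for $Y_2$.

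For part 1, from $x=X_2/\kappa$ and $\sigma(\kappa)=\zeta_\sigma\kappa$ I would get $x(\sigma^{-1}P)=\zeta_\sigma^{-1}\sigma^{-1}(x(P))$ and likewise for $y$, so after reduction (using (i)) the coordinates transform by $x\mapsto\bar\zeta_\sigma^{-1}x$, $y\mapsto\bar\zeta_\sigma^{-1}y$, whence $x^qy-xy^q\mapsto\bar\zeta_\sigma^{-(q+1)}(x^qy-xy^q)$. Applying $\sigma^{-1}$ to (\ref{aw6}) and noting that $\gamma^{1/q}=\kappa_1^{(q-1)(q^2-1)}$ is multiplied by the root of unity $(\sigma^{-1}(\kappa_1)/\kappa_1)^{(q-1)(q^2-1)}$, whose reduction is $\bar\zeta_\sigma^{-q(q-1)(q^2-1)}=1$ because $\bar\zeta_\sigma\in\mu_{q^2-1}$ (here $\overline{\sigma^{-1}(\kappa_1)/\kappa_1}=\bar\zeta_\sigma^{-q}$ follows from $\zeta_\sigma=(\sigma(\kappa_1)/\kappa_1)^q$), one obtains $Z_1\mapsto\bar\zeta_\sigma^{-(q+1)}Z_1$ provided the discrepancy $\zeta_\sigma^{-(q+1)}\sigma^{-1}(\tilde\gamma_0)-\tilde\gamma_0^{(i\sigma)}$ between the chosen radicals on the source and the target components has valuation strictly larger than $v(\gamma^{1/q})$. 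To settle that last point I would use (\ref{clt}) together with the model $[\pi]_{\rm LT}(X)=\pi X-X^q$, which gives $\pi_1^{q-1}=\pi=\kappa^{q^2(q^2-1)}$ and hence $\zeta_0(\sigma)=\bar\zeta_\sigma^{q+1}$; this identifies the target component as $\overline{\mathbf Y}^{i\sigma}_{2,2}$ and shows that $\zeta_\sigma^{-(q+1)}\sigma^{-1}(\tilde\gamma_0)$ satisfies, to the required precision, the same Artin–Schreier-type equations that define $\tilde\gamma_0^{(i\sigma)}$ (again using $\bar\zeta_\sigma^{q^2-1}=1$). This yields the first morphism and simultaneously the admissibility of $j\sigma=(\bar\zeta_\sigma^{-1}\bar x_0,\bar\zeta_\sigma^{-1}\bar y_0)\in\mathcal S^{i\sigma}_{00}$, since $(\bar\zeta_\sigma^{-1}\bar x_0)^{q^2-1}=(\bar\zeta_\sigma^{-1}\bar y_0)^{q^2-1}=-1$.

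For part 2, I would substitute (\ref{ss}) and track $\sigma^{-1}$ through it. One checks $w=\kappa_1^{q^2-1}$ and $w_1=y_0^q\gamma^{1/(q^2-1)}$ are multiplied by roots of unity with reductions $\bar\zeta_\sigma^{-q(q^2-1)}=1$ and $\bar\zeta_\sigma^{-q}\bar\zeta_\sigma^{q}=1$-type collapses, so that $z_1\mapsto z_1$ and the multiplicative scalar on $a$ is again $\bar\zeta_\sigma^{-(q+1)}$ as in part 1. The additive shift arises solely because $\sigma$ moves the chosen radicals: by the very definitions $\sigma(\tilde\gamma_1)=\tilde\gamma_1+wa_0$ and $\sigma(\tilde\gamma_0)=\tilde\gamma_0+\gamma^{1/(q-1)}d_0$, and feeding these through (\ref{aw6}), (\ref{ss}) and the defining congruence (\ref{aw10}), then dividing by $w$ and reducing, produces $a\mapsto\bar\zeta_\sigma^{-(q+1)}(a-\bar a_0-\bar d_0)$. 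The relation $\bar a_0^q+\bar a_0+\bar d_0=0$ I would get by applying $\sigma$ to $\tilde\gamma_1^q+\tilde\gamma_0+\gamma^{1/q}\tilde\gamma_1=0$, subtracting the original identity, using the exact equalities $w^q=\gamma^{1/(q-1)}=\gamma^{1/q}w$, dividing by $\gamma^{1/(q-1)}$ and reducing; this also shows $\bar a_0\in\mathbb F_{q^2}$ and $\bar d_0\in\mathbb F_q$ together with the stated Artin–Schreier-type relation between them.

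The hard part is the valuation bookkeeping. One must verify that every error term produced by applying $\sigma^{-1}$ to the nested radical substitutions of subsections \ref{yo1}–\ref{yo2} has valuation strictly above the precision at which the corresponding coordinate is defined, so it vanishes in the reduction — the only exceptions being the leading obstructions in the Artin–Schreier equations for $\tilde\gamma_0$ and $\tilde\gamma_1$, which are designed to produce exactly the constants $\bar d_0$ and $\bar a_0$ — and that all the accumulated roots of unity collapse to the asserted powers of $\bar\zeta_\sigma$, the crucial facts being $\bar\zeta_\sigma\in\mu_{q^2-1}$ and $\zeta_0(\sigma)=\bar\zeta_\sigma^{q+1}$. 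These estimates are of exactly the same nature as those already performed when the reductions $\overline{\mathbf Y}_{2,2}$ and $X^i_j$ were computed, so the verification is routine, if somewhat lengthy, and I would simply refer back to those computations rather than repeat them.
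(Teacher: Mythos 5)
Your proposal follows the same route as the paper's own proof: both rest on the characterization (\ref{ine1}), the relation $X_2(\sigma^{-1}(P))=\sigma^{-1}(X_2(P))$ divided by $\kappa$ (and its analogue for $Y_2$), and the effect of $\sigma$ on the chosen radicals $\tilde{\gamma}_0,\tilde{\gamma}_1$ fed through (\ref{aw6}), (\ref{ss}) and (\ref{aw10}), yielding the scalar $\bar{\zeta}_{\sigma}^{-(q+1)}$, the additive shift $-\bar{a}_0-\bar{d}_0$, and the invariance of $z_1$. Your extra verifications (that $\bar{\zeta}_{\sigma}\in\mu_{q^2-1}$, that $\zeta_0(\sigma)=\bar{\zeta}_{\sigma}^{q+1}$, and that the translated radicals give an admissible parametrization of the target component $i\sigma$) are points the paper leaves implicit, handling them only later in Corollary \ref{ine}, so the argument is correct and essentially identical.
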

\begin{proof}
Let $\sigma \in I_F$ 
and $P \in \mathbf{Y}_{2,2}(\mathbf{C}).$
First, note that we have $X_2(\sigma^{-1}(P))
=\sigma^{-1}(X_2(P)).$
Since we have $X_2=\kappa x,$ we have 
$x(\sigma^{-1}(P))
=\zeta_{\sigma}^{-1}\sigma^{-1}(x(P)).$
The same thing holds for $y$.
 We consider the followings
  by (\ref{aw6})
\begin{itemize}
 \item $x(P)^qy(P)-x(P)y(P)^q
=\tilde{\gamma}_0+\gamma^{1/q}Z_1(P),$ \\
 \item $x(\sigma^{-1}(P))^qy(\sigma^{-1}(P))
 -x(\sigma^{-1}(P))y(\sigma^{-1}(P))^q
=\zeta_{\sigma}^{-(q+1)}\tilde{\gamma}_0
+\gamma^{1/q}Z_1(\sigma^{-1}(P)).$ \\
\end{itemize}
By applying $\sigma^{-1}$ to the first equality, 
we acquire
\begin{equation}\label{fgg}
Z_1(\sigma^{-1}(P))=\zeta_{\sigma}^{-(q+1)}
(\sigma^{-1}(Z_1(P))-wd_0).
\end{equation}
Note that we have 
$\bar{d}_0=
\overline{(\tilde{\gamma}_0
-\sigma^{-1}(\tilde{\gamma}_0))/
\gamma^{1/(q-1)}}.$
Since we have 
$y(\sigma^{-1}(P))
=\zeta_{\sigma}^{-1}\sigma^{-1}(y(P)) \equiv
 \zeta_{\sigma}^{-1}y(P)\ ({\rm mod}\ 0+)$
  and 
$\sigma^{-1}(Z(P)) \equiv Z(P)\ ({\rm mod}\ 0+),$
the required 
assertion $1$ follows 
from (\ref{fgg}).
We prove the assertion $2$.
We consider the followings 
in (\ref{ss})
\[
Z_1(P)=\tilde{\gamma}_1+wa(P),\ 
Z_1(\sigma^{-1}(P))=
\zeta_{\sigma}^{-(q+1)}
\tilde{\gamma}_1+wa(\sigma^{-1}(P)).
\]
By applying $\sigma^{-1}$ 
to the first equality,
 we obtain the following equality by (\ref{fgg})
\[
a(\sigma^{-1}(P))
=\zeta_{\sigma}^{-(q+1)}
(\sigma^{-1}(a(P))-a_0-d_0).
\]
Hence, by $\sigma^{-1}(a(P)) 
\equiv a(P)\ ({\rm mod}\ 0+),$
we acquire $\sigma(a)
=\zeta_{\sigma}^{-(q+1)}
(a-{a}_0-{d}_0)\ ({\rm mod}\ 0+).$
On the other hand, 
we consider the following equalities 
induced by (\ref{ss})
\begin{equation}\label{b_1}
y(P)=y_0+w_1z_1(P),\ 
y(\sigma^{-1}(P))
=\zeta_{\sigma}^{-1}y_0
+\zeta_{\sigma}^{-q}
w_1z_1(\sigma^{-1}(P)).
\end{equation}
We easily check  
$y(\sigma^{-1}(P))
 =\zeta_{\sigma}^{-1}\sigma^{-1}(y(P))$, 
 $\sigma^{-1}(z_1(P)) \equiv z_1(P)\ 
 ({\rm mod}\ 0+)$ and $\sigma^{-1}w_1 \equiv 
 \zeta_{\sigma}^{-(q-1)}w_1\ ({\rm mod}\ 0+).$
Hence, 
by applying $\sigma^{-1}$
 to the first equality
in (\ref{b_1}), 
 we acquire  
 \[
z_1(\sigma^{-1}(P))=\zeta_{\sigma}^{q-1}
(\sigma^{-1}(w_1)/w_1)
\sigma(z_1(P))
 \equiv z_1(P)\ ({\rm mod}\ 0+).
\]
Hence, we obtain the required assertion.
\end{proof}

In the following, 
we rewrite Lemma \ref{ci2}
as in Corollary \ref{ine}. 
Let $E/F$ 
denote the unramified 
quadratic extension.
We choose a model ${\rm LT}_E$ 
such that 
\[
[\pi]_{{\rm LT}_E}(X)=\pi X-X^{q^2}.
\]
We consider the following composite
\[
\mathbf{a}_E:I_F^{\rm ab} \simeq I_E^{\rm ab}
\overset{\mathbf{a}_E|_{I^{\rm ab}_E}}{\longrightarrow} 
\mathcal{O}^{\times}_E 
\to (\mathcal{O}_E/\pi^2)^{\times} \simeq 
\mathbb{F}^{\times}_{q^2} \times \mathbb{F}_{q^2}
\]
\[
\sigma \mapsto (\zeta(\sigma),\lambda(\sigma))
=\biggl(\overline{\biggl
(\frac{\sigma(\pi_{1,E})}{\pi_{1,E}}
\biggr)},
\overline{\biggl(\frac{\pi_{1,E}\sigma(\pi_{2,E})
-\sigma(\pi_{1,E})\pi_{2,E}}{\pi_{1,E}
\sigma(\pi_{1,E})}\biggr)}\biggr).
\]
By $\mathbf{a}_F={\rm Nr}_{E/F} \circ
\mathbf{a}_E$,
 we have $\zeta(\sigma)^{q+1}=\zeta_0(\sigma)$
 and 
 $\lambda(\sigma)^q+\lambda(\sigma)=\lambda_0(\sigma)$
  for $\sigma \in I_E.$
\begin{corollary}\label{ine}
Let $\sigma \in I_F.$
Then, the element $\sigma$ acts 
on the set of
 the connected 
components $\pi_0(\mathbf{Y}_{2,2})$
as follows
\[
i:=(\zeta,\tilde{\mu}) \mapsto
i \sigma=(\zeta_0(\sigma)^{-1}\zeta,
\tilde{\mu}-\lambda_0(\sigma)\zeta).
\]
The element
 $\sigma \in I_{F}$ acts 
on $\mathcal{S}^i_{00}$
as follows
\[
j:=(x_0,y_0) \mapsto 
j \sigma:=(\zeta(\sigma)^{-1}x_0,
\zeta(\sigma)^{-1}y_0).
\]
Moreover, $\sigma$ acts on the components $\{X^i_j\}
_{(i,j) \in \mathcal{S}}$
as follows
\[
\sigma: X^{i}_{j} \to X^{i \sigma}_{j \sigma}:\ 
(a,z_1) \mapsto 
(\zeta(\sigma)^{-(q+1)}(a+\lambda(\sigma)\zeta),z_1).
\]
\end{corollary}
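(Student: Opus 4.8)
The plan is to deduce Corollary \ref{ine} from Lemma \ref{ci2} by a purely class-field-theoretic bookkeeping of the constants $\zeta_\sigma$, $\bar a_0$, $\bar d_0$. First I would match the action on $\pi_0(\mathbf{Y}_{2,2})$: under the identification (\ref{fc}), $i\mapsto i\sigma$ must be the inverse of $\mathbf{a}_F$ composed with (\ref{van0}), which is the content of Theorem \ref{caq} applied to the central character side $\delta(1,1,\sigma)=\mathbf{a}_F(\sigma)^{-1}$, so the formula $i\sigma=(\zeta_0(\sigma)^{-1}\zeta,\ \tilde\mu-\lambda_0(\sigma)\zeta)$ is read off directly from the displayed formulas for $\zeta_0(\sigma),\lambda_0(\sigma)$ in terms of $\pi_1,\pi_2$ together with the identification $(\mathcal O_F/\pi^2)^\times\simeq\mathbb F_q^\times\times\mathbb F_q$ fixed in the Notation; one only has to check the sign of the second coordinate is consistent with how $\tilde\mu$ was normalized in (\ref{fc}).

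The main work is the translation of the constant $\zeta_\sigma$ (defined by $\sigma(\kappa)=\zeta_\sigma\kappa$ with $\zeta_\sigma\in\mu_{q^2(q^2-1)}$) into $\zeta(\sigma)\in\mathbb F_{q^2}^\times$, the image of $\sigma$ under $\mathbf{a}_E$ followed by reduction. Here I would use that $\kappa=\kappa_1^q$ with $\kappa_1^{q^3(q^2-1)}=\pi$, so $\kappa$ is, up to a unit root of unity in $F_0$, a specific root of $\pi$ of the type that appears in the classical Lubin-Tate construction over $E$: namely $\gamma^{1/q(q^2-1)}=\kappa_1^{q-1}$ and the various fractional powers of $\gamma$ are built from $\pi_{1,E},\pi_{2,E}$-type elements under the model $[\pi]_{{\rm LT}_E}(X)=\pi X-X^{q^2}$ chosen just before the corollary. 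Concretely, the reduction of $\sigma(\pi_{1,E})/\pi_{1,E}$ is $\zeta(\sigma)$, and since $\kappa$ differs from a $(q^2-1)$-power-related expression in $\pi_{1,E}$, one gets $\bar\zeta_\sigma=\zeta(\sigma)$ on the nose (this is exactly the normalization that makes $\mathbf{a}_E$ send a geometric Frobenius to a prime element, compatible with Theorem \ref{caq}). This identification then upgrades the "$\bar\zeta_\sigma^{-1}x$'' in Lemma \ref{ci2}.1 to "$\zeta(\sigma)^{-1}x$'', and likewise $\bar\zeta_\sigma^{-(q+1)}=\zeta(\sigma)^{-(q+1)}=\zeta_0(\sigma)^{-1}$ for the $Z_1$-coordinate, matching the claimed morphism on $\{X^i_j\}$ up to the constant $\bar a_0+\bar d_0$.

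The remaining point is to show $\bar a_0+\bar d_0=-\lambda(\sigma)\zeta$, i.e. that the Artin–Schreier shift $a\mapsto a-\bar a_0-\bar d_0$ of Lemma \ref{ci2}.2 equals $a+\lambda(\sigma)\zeta$. Since $\bar a_0^q+\bar a_0+\bar d_0=0$, we have $\bar a_0+\bar d_0=-\bar a_0^q$, so it suffices to show $\bar a_0^q=\lambda(\sigma)\zeta$, equivalently $\bar a_0=(\lambda(\sigma)\zeta)^{1/q}$. I would expand $a_0=(\sigma(\tilde\gamma_1)-\tilde\gamma_1)/w$ using $\tilde\gamma_1^q+\tilde\gamma_0+\gamma^{1/q}\tilde\gamma_1=0$ and $d_0=(\sigma(\tilde\gamma_0)-\tilde\gamma_0)/\gamma^{1/(q-1)}$, tracking that $\tilde\gamma_0$ is a $q$-th-root-tower built over $\gamma_0$ with $\gamma_0^q-\gamma^q\gamma_0=\zeta$, and that the whole tower $\kappa_1,\gamma,\gamma_0,\tilde\gamma_0,\tilde\gamma_1$ is governed by the Lubin–Tate extension $E_2/E$. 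The Galois action of $\sigma$ on these is computed via (\ref{clt}) for $L=E$: $\sigma(\pi_{2,E})=[\mathbf{a}_E(\sigma)]_{{\rm LT}_E}(\pi_{2,E})$, and $\bar d_0$ (resp.\ the relevant part of $\bar a_0$) picks out precisely the $\lambda(\sigma)$-term in the expansion of this Lubin–Tate multiplication, with the factor $\zeta$ coming from the $\mu_{q-1}$-parameter chosen in defining $\tilde\gamma_1$. The relations $\zeta(\sigma)^{q+1}=\zeta_0(\sigma)$ and $\lambda(\sigma)^q+\lambda(\sigma)=\lambda_0(\sigma)$ then close the loop, showing the formula is the $E$-level refinement of the $F$-level action on $\pi_0$. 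The hard part will be this last bookkeeping: correctly identifying $\bar d_0$ and $\bar a_0$ with the reductions of Lubin–Tate torsion ratios over $E$ and verifying the exponents and the sign, since several auxiliary roots ($\gamma^{1/q}$, $\gamma^{1/(q-1)}$, $\tilde\gamma_0^{1/q}$, $\kappa_1$) enter and one must confirm they are chosen compatibly with the model $[\pi]_{{\rm LT}_E}(X)=\pi X-X^{q^2}$ and with the normalization of $\mathbf{a}_E$.
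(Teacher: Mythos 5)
Your overall strategy coincides with the paper's own proof: the corollary is deduced from Lemma \ref{ci2} by identifying $\bar{\zeta}_{\sigma}$ with $\zeta(\sigma)$ and by evaluating the constants $\bar{a}_0,\bar{d}_0$ through the defining relations $\gamma_0^q-\gamma^q\gamma_0=\zeta$, $\tilde{\gamma}_0^q=\gamma_0$, $\tilde{\gamma}_1^q+\gamma^{1/q}\tilde{\gamma}_1+\tilde{\gamma}_0=0$ together with the Lubin--Tate description of the inertia action over $E$. The paper checks $\bar{d}_0=-\lambda_0(\sigma)\zeta$ and $\bar{a}_0^q=\lambda(\sigma)\zeta$ and then uses $\lambda(\sigma)^q+\lambda(\sigma)=\lambda_0(\sigma)$, which is exactly the bookkeeping you describe; your use of the relation $\bar{a}_0^q+\bar{a}_0+\bar{d}_0=0$ from Lemma \ref{ci2} to reduce everything to $\bar{a}_0^q=\lambda(\sigma)\zeta$ is a harmless streamlining for the third displayed formula.

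One point where the plan, as written, would not go through: you propose to obtain the first displayed formula (the action on $\pi_0(\mathbf{Y}_{2,2})$) from Theorem \ref{caq}. That theorem describes the inertia action on $\pi_0(\mathcal{X}(\pi^2))$ under the identification furnished by $\Delta^{(2)}$, whereas the corollary is stated for the ad hoc identification (\ref{fc}) fixed in subsection \ref{yo1}; the paper never proves that these identifications agree (Theorem \ref{caq} is cited only as motivating background), and indeed a naive transport of ``multiplication by $\mathbf{a}_F(\sigma)^{-1}$'' through the parametrization fixed in the Notation yields a shift $\tilde{\mu}-\lambda_0(\sigma)$ in the second coordinate, not the shift $\tilde{\mu}-\lambda_0(\sigma)\zeta$ appearing in the corollary, so the deferred ``normalization check'' is precisely where the content lies. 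In the paper this formula is instead read off from the explicit computation $\bar{d}_0=-\lambda_0(\sigma)\zeta$, the same kind of Lubin--Tate torsion-point calculation you already plan for $\bar{a}_0$ (and indeed mention for $d_0$); the fix is simply to carry out that computation of $\bar{d}_0$ and deduce the $\pi_0$-action from it, rather than appealing to Theorem \ref{caq}.
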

\begin{proof}
Clearly, we have 
$\bar{\zeta}_{\sigma}=\zeta(\sigma)$ in 
$\mathbb{F}^{\times}_{q^2}$.
By $\gamma_0^q-\gamma^q\gamma_0=\zeta$, $\tilde{\gamma}_0^q
=\gamma_0$
and $\tilde{\gamma}^q_1+\gamma^{1/q}\tilde{\gamma}_1
+\tilde{\gamma}_0=0,$
we easily check that 
$\bar{d}_0=-\lambda_0(\sigma)\zeta$ 
and $\bar{a}_0^q=\lambda(\sigma)\zeta$
in $\mathbb{F}_{q^2}$. Hence, by 
$\lambda(\sigma)^q+\lambda(\sigma)=\lambda_0(\sigma)$,
 we acquire 
$\bar{a}_0+\bar{d}_0=-\lambda(\sigma)\zeta.$
Therefore, the 
required assertion 
follows from Lemma \ref{ci2}.
\end{proof}

\subsection{Inertial action on 
$\overline{\mathbf{Z}}_{1,1,0}$
and $\{W^i_{0,j}\}_{(i,j) 
\in \mathcal{S}_0}$}
\label{inZ}
In this subsection, we determine 
the action of inertia
on the components 
$\overline{\mathbf{Z}}_{1,1,0}$
and $W^i_{0,j}$ for
$(i,j) 
\in \mathcal{S}_0$ in the same way 
as in the previous subsection.
We use the notations 
in subsections \ref{zo1} and
 \ref{zo2}
freely.

We briefly recall the notations in subsections 
\ref{zo1} and \ref{zo2}.
We choose an element $\kappa_1$
such that
$\kappa_1^{2q^4(q-1)}=\pi$ with 
$v(\kappa_1)
=1/2q^4(q-1).$ We set 
$\kappa:=\kappa_1^q.$
We set $\gamma:=\kappa^{q(q-1)^2}$ with 
$v(\gamma)=(q-1)/2q^2.$
We write $\gamma^{\frac{1}{q^2(q-1)}}$
for $\kappa_1^{q-1}$.
Let $\zeta \in \mu_{q-1}(\mathcal{O}_F).$
We choose elements $\tilde{\gamma}_1$ and 
$y_0$ such that
$\tilde{\gamma}_1^q=
\iota2\zeta \{1+\gamma^{1/q^2}
\bigl(\frac{\tilde{\gamma}_1}{\zeta}\bigr)\}^{1/2}$
and $y_0^{q^2-1}=
\iota/\{1+\gamma^{1/q^2}\bigl
(\frac{\tilde{\gamma}_1}{\zeta}\bigr)\}^{1/2}$
with $\iota \in \{\pm1\}.$
Set $w:=y_0^{q+1}\gamma^{\frac{1}{q^2(q-1)}}.$
Furthermore, we choose an element $w_1$
such that
$y_0^{q^2-3}(\zeta+\gamma^{1/q^2}\tilde{\gamma}_1)w_1^2=w^q.$
Then, we have $v(w)=1/2q^4$ and $v(w_1)=1/4q^3.$ 

\begin{lemma}\label{ci2''}
Let $\sigma \in I_F.$
We write $\sigma(\kappa)=\zeta_1\kappa$
with $\zeta_1 \in \mu_{2q^3(q-1)}.$
We choose elements
 $i=(\bar{\zeta},\tilde{\mu}) 
\in (\mathcal{O}_F/\pi^2)^{\times}$ and 
$\bar{y}_0 \in \mu_{2(q^2-1)}.$
We put $\bar{y}_0\sigma:
=\bar{\zeta}_1^{-1}\bar{y}_0.$
Furthermore, we set 
\[
a_0:=\overline
{\biggl(
\frac{\sigma(\tilde{\gamma}_1)
-\tilde{\gamma}_1}
{w}\biggr)},\ b_0:=\overline{\sigma(w)/w},\ 
c_0:=\overline{\sigma(w_1)/w_1}.
\]
Then, we have $a_0 \in \mathbb{F}_q$, $b_0 \in 
\{\pm1\}$ and $c_0^2=b_0.$
\\1.\ Then, $\sigma$ induces the following 
morphism
\[
\overline{\mathbf{Z}}^i_{1,1,0} \to 
\overline{\mathbf{Z}}^{i \sigma}_{1,1,0}:\ 
(Z_2,y) \mapsto 
(Z_2,\zeta_1^{-1} y).
\]
\\2.\ The element $\sigma$ 
induces the following morphism
\[
W^i_{0,\bar{y}_0} \to 
W^{i \sigma}_{0,
\bar{y}_0 \sigma}:\  
(a,s) \mapsto 
(b_0^{-1}a-a_0,c_0^{-1}s).
\]
\end{lemma}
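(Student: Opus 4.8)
The plan is to imitate the proof of Lemma~\ref{ci2} almost verbatim, replacing the change of variables of subsections~\ref{yo1}--\ref{yo2} by that of subsections~\ref{zo1}--\ref{zo2}. The only external input is the characterization (\ref{ine1}): for $\sigma\in I_F$ and $P\in\mathbf{Z}_{1,1,0}(\mathbf{C})$ one has $\overline{\sigma^{-1}(P)}=(\overline P)\,w(\sigma)$, where $w$ is the homomorphism attached to the reduced affinoid $\mathbf{Z}_{1,1,0}$. Two elementary facts are used at each step: first, the Drinfeld coordinates are defined over $F$, hence $X_2(\sigma^{-1}(P))=\sigma^{-1}(X_2(P))$ and $Y_2(\sigma^{-1}(P))=\sigma^{-1}(Y_2(P))$; second, $\sigma\in I_F$ induces the identity on $k^{\rm ac}$, so reduction commutes with $\sigma$ and $\overline{\sigma^{-1}(z)}=\overline z$ for every $z\in\mathcal{O}_{\mathbf{C}}$. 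Since $X_2=\kappa^q x$, $Y_2=\kappa y$ and $\sigma(\kappa)=\zeta_1\kappa$, the first fact gives $x(\sigma^{-1}(P))=\zeta_1^{-q}\sigma^{-1}(x(P))$ and $y(\sigma^{-1}(P))=\zeta_1^{-1}\sigma^{-1}(y(P))$, and reducing via the second fact already produces the action $y\mapsto\zeta_1^{-1}y$ on the coordinate $y$.

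For part~1 I would then propagate $\sigma$ through the relations $xy^q=\tilde\gamma_0+\gamma^{1/q}Z_1$ of (\ref{aq7}) and $\gamma^{1/q^2}y^{q^2-1}Z_2=Z_1-\tilde\gamma_0^{1/q}y^{q^2-1}-\zeta y^{-(q^2-1)}$ of (\ref{aq11}). Writing each of them at $P$ on the component $i$ and at $\sigma^{-1}(P)$ on the component $i\sigma$, applying $\sigma^{-1}$ to the ones at $P$, and comparing, one extracts first $Z_1(\sigma^{-1}(P))$ and then $Z_2(\sigma^{-1}(P))$ in terms of $\sigma^{-1}(Z_1(P))$ and $\sigma^{-1}(Z_2(P))$. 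The discrepancies are governed by $\sigma(\kappa_1)/\kappa_1$, by $\sigma(\gamma^{1/q})/\gamma^{1/q}$, and by $\sigma(\tilde\gamma_0)-\tilde\gamma_0$, which has strictly positive valuation because $\tilde\gamma_0^q=\mathcal{Z}$ and $\mathcal{Z}^q-\gamma^{2q}\mathcal{Z}=\zeta$ pin $\gamma_0$, hence $\tilde\gamma_0$, down modulo a positive power of $\gamma$. All these terms vanish modulo $0+$, leaving $\overline{Z_2(\sigma^{-1}(P))}=\overline{Z_2(P)}$; together with the action on $y$ this is exactly the morphism of part~1, and (\ref{aq13}) shows it lands in $\overline{\mathbf{Z}}^{i\sigma}_{1,1,0}$.

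For part~2 I would substitute the affinoid coordinates of (\ref{st}), $Z_2=\tilde\gamma_1+wa$ and $y=y_0+w_1 s$, into the relations already obtained and rerun the comparison. What survives reduction are precisely the defects $a_0=\overline{(\sigma(\tilde\gamma_1)-\tilde\gamma_1)/w}$, $b_0=\overline{\sigma(w)/w}$ and $c_0=\overline{\sigma(w_1)/w_1}$, giving $(a,s)\mapsto(b_0^{-1}a-a_0,\ c_0^{-1}s)$. The three constraints $a_0\in\mathbb{F}_q$, $b_0\in\{\pm1\}$, $c_0^2=b_0$ are obtained by applying $\sigma$ to the equations defining $\tilde\gamma_1$, $w=y_0^{q+1}\gamma^{1/q^2(q-1)}$ and $y_0^{q^2-3}(\zeta+\gamma^{1/q^2}\tilde\gamma_1)w_1^2=w^q$ and reducing; for instance $b_0$ is the $(q-1)$-th power of the reduction of $\sigma(\kappa_1)/\kappa_1$, whose $q$-th power is the reduction $\bar\zeta_1\in\mu_{2(q-1)}(k^{\rm ac})$, which forces $b_0^2=1$. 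Reassuringly, these are exactly the conditions under which $(a,s)\mapsto(b_0^{-1}a-a_0,c_0^{-1}s)$ carries $a^q-a=s^2$ to $a^q-a=s^2$, which serves as a sanity check.

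The argument is purely computational; the one genuine difficulty is bookkeeping. The auxiliary elements $\tilde\gamma_0,\tilde\gamma_1,y_0,w,w_1$ are fixed only up to choices, and the choices implicit on the source component $i$ need not be carried by $\sigma$ onto those chosen on the target component $i\sigma$. The heart of the matter is to verify that every such mismatch, after reduction modulo $0+$, either cancels or contributes exactly one of $a_0,b_0,c_0$; equivalently, that the valuation estimates invoked above --- for $\sigma(\tilde\gamma_0)-\tilde\gamma_0$, for $\sigma(\tilde\gamma_1)-\tilde\gamma_1$, and for the root-of-unity ambiguities in $w$ and $w_1$ --- are sharp and that no intermediate-order term has been overlooked.
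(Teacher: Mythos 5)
Your proposal is correct and takes exactly the paper's route: the paper's own proof of this lemma is simply ``we prove the assertion in the same way as in Lemma \ref{ci2}; we omit the detail,'' and your argument is precisely that transfer, using the change of variables of subsections \ref{zo1}--\ref{zo2}, the characterization (\ref{ine1}), and the defect constants $a_0$, $b_0$, $c_0$ read off from $\tilde{\gamma}_1$, $w$ and $w_1$ (with the correct verification that $b_0^2=1$ and $c_0^2=b_0$). Your closing caveat about the sharpness of the valuation estimates and the component-dependent choices of $\tilde{\gamma}_0$, $\tilde{\gamma}_1$, $y_0$ is exactly the bookkeeping the paper suppresses.
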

\begin{proof}
We prove the assertion in the same way as in 
Lemma \ref{ci2}.
We omit the detail.
\end{proof}
We prepare some notations.
Let $\bar{\zeta} \in \mathbb{F}^{\times}_q$
and $\bar{y}_0 \in \mu_{2(q^2-1)}.$
We consider an element 
 $a:=(\bar{\zeta}/\bar{y}_0^{q+1})^2 \in
  \mathbb{F}^{\times}_q.$
Let $\tilde{a} \in \mu_{q-1}(\mathcal{O}_F)$
denote the unique 
lifting of $a$.
Let $\tilde{a}_1$ be an element such that 
$\tilde{a}_1^2=\tilde{a}.$
We set $t:=\tilde{a}_1
\gamma^{\frac{q^2}{q-1}}$
and $E_1:=F[t].$
Then, $E_1/F$ is 
a totally ramified quadratic extension
and
$t$ is a uniformizer of $E_1.$
Moreover, we have $t^2=\tilde{a}\pi.$
Clearly, we have $v(t)=1/2.$
We choose a model of ${\rm LT}_{E_1}$ such that
\[
[t]_{\rm LT_{E_1}}(X)
=tX-X^q.
\]
We consider the following composite
\begin{equation}\label{van1}
\mathbf{a}_{E_1}:I_{E_1}^{\rm ab} 
\overset{\mathbf{a}_{E_1}|_{I_{E_1}^{\rm ab}}}
{\longrightarrow} 
\mathcal{O}^{\times}_{E_1} \to
(\mathcal{O}_{E_1}/\pi)^{\times} \simeq 
\mathbb{F}^{\times}_q \times \mathbb{F}_q
\end{equation} 
\[\sigma \mapsto
(\zeta(\sigma),\lambda(\sigma)):=
(\overline{\sigma(\pi_{1,{E_1}})/\pi_{1,{E_1}}},
\overline{\biggl(\frac{\pi_{1,{E_1}}
\sigma(\pi_{2,{E_1}})-\pi_{2,{E_1}}
\sigma(\pi_{1,{E_1}})}
{\pi_{1,{E_1}}\sigma(\pi_{1,{E_1}})}\biggr)}).
\]
Then, by Lemma \ref{ci2''}, 
we obtain the following proposition.
\begin{corollary}\label{zi1}
Let $\sigma \in I_{E_1}.$
We choose elements
 $i=(\bar{\zeta},\tilde{\mu}) \in 
 (\mathcal{O}_F/\pi^2)^{\times}$
 and $\bar{y}_0 \in \mu_{2(q^2-1)}.$
We set
$\bar{y}_0 \sigma:
=\zeta(\sigma)^{-1}\bar{y}_0
 \in \mu_{2(q^2-1)}.$
Furthermore, we set
\[
a(\sigma):=
-\frac{\bar{y}_0^{q^2-1}
2\bar{\zeta}^2}{
\bar{y}_0^{2(q+1)}
}\lambda(\sigma) 
\in \mathbb{F}_q.
\]
Then, the
 element $\sigma$ 
 acts on the components
$\{W^i_{0,\bar{y}_0}\}_{(i,\bar{y}_0) 
\in \mathcal{S}_0}$
as follows
\[
W^i_{0,\bar{y}_0} \to 
W^{i \sigma}_{0,\bar{y}_0 \sigma}:\  
(a,s) \mapsto 
(a-a(\sigma),
\zeta(\sigma)^{-(q-1)/2}s).
\]
\end{corollary}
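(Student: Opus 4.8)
The plan is to derive Corollary~\ref{zi1} from Lemma~\ref{ci2''} by rewriting the elements $\zeta_1$, $a_0$, $b_0$, $c_0$ occurring in that lemma in terms of the Lubin--Tate data $\zeta(\sigma),\lambda(\sigma)$ attached to $E_1$ in \eqref{van1}, using the model $[t]_{{\rm LT}_{E_1}}(X)=tX-X^q$ and the congruence \eqref{clt}. The first step records the arithmetic of $E_1$: since $t=\tilde{a}_1\gamma^{q^2/(q-1)}$ with $\gamma^{q^2/(q-1)}=\kappa_1^{q^4(q-1)}$, $\tilde{a}_1\in\mathcal{O}_F$, and $t^2=\tilde{a}\pi$, the field $E_1$ is the ramified quadratic extension of $F$ cut out by $\tilde{a}$, and every $\sigma\in I_{E_1}$ fixes $t$, hence fixes $\kappa_1^{q^4(q-1)}$. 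Writing $\sigma(\kappa_1)=\xi\kappa_1$ with $\xi\in\mu_{2q^4(q-1)}(\mathbf{C})$ --- legitimate because $\sigma$ fixes $\pi=\kappa_1^{2q^4(q-1)}$ --- this forces $\xi^{q^4(q-1)}=1$, so $\xi\in\mu_{q^4(q-1)}(\mathbf{C})$ and $\bar{\xi}\in\mu_{q-1}(k^{\rm ac})=\mathbb{F}_q^{\times}$. A primitive $t$-torsion point of ${\rm LT}_{E_1}$ satisfies $\pi_{1,E_1}^{q-1}=t$, so $\pi_{1,E_1}$ equals a prime-to-$p$ root of unity times $\kappa_1^{q^4}$; hence $\sigma(\pi_{1,E_1})/\pi_{1,E_1}=\xi^{q^4}$ and $\zeta(\sigma)=\bar{\xi}^{q^4}=\bar{\xi}$, which in turn gives $\overline{\zeta_1}=\overline{\sigma(\kappa)/\kappa}=\bar{\xi}^{q}=\bar{\xi}=\zeta(\sigma)$. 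In particular $\bar{y}_0\sigma=\overline{\zeta_1}^{-1}\bar{y}_0=\zeta(\sigma)^{-1}\bar{y}_0$, as claimed.

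The next step is to compute $b_0$ and $c_0$ for $\sigma\in I_{E_1}$. Choosing $y_0$ (and likewise $w_1$) as a fixed prime-to-$p$ root of unity times the canonical principal-unit root of its defining equation makes $\overline{\sigma(y_0)/y_0}=1$ for all $\sigma\in I_F$; combined with $w=y_0^{q+1}\kappa_1^{q-1}$ and $\bar{\xi}^{q-1}=1$ this yields $b_0=\overline{\sigma(w)/w}=1$. From $y_0^{q^2-3}(\zeta+\gamma^{1/q^2}\tilde{\gamma}_1)w_1^2=w^q$ one gets $\sigma(w_1)/w_1=(\text{principal unit})\cdot\xi^{q(q-1)/2}$, so $c_0=\bar{\xi}^{q(q-1)/2}=\zeta(\sigma)^{(q-1)/2}$, using $\zeta(\sigma)\in\mathbb{F}_q^{\times}$. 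Feeding $b_0=1$ and $c_0^{-1}=\zeta(\sigma)^{-(q-1)/2}$ into Lemma~\ref{ci2''}.2 produces the $s$-coordinate of the asserted morphism and reduces the corollary to the single identity $a_0=a(\sigma)$, i.e.\ to evaluating $\overline{(\sigma(\tilde{\gamma}_1)-\tilde{\gamma}_1)/w}$.

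The hard part is this last identity. I would start from $\tilde{\gamma}_1^q=\iota 2\zeta\{1+\gamma^{1/q^2}\tilde{\gamma}_1/\zeta\}^{1/2}\equiv\iota 2\zeta+\iota\gamma^{1/q^2}\tilde{\gamma}_1$ modulo a term of strictly larger valuation, so that applying $\sigma$ and subtracting shows $\sigma(\tilde{\gamma}_1)^q-\tilde{\gamma}_1^q$ is governed by $\sigma(\gamma^{1/q^2})-\gamma^{1/q^2}=(\xi^{(q-1)^2}-1)\gamma^{1/q^2}$. For $\sigma\in I_{E_1}$ the factor $\xi^{(q-1)^2}$ is a $p$-power root of unity whose deviation from $1$ is precisely the Lubin--Tate displacement encoded by $\lambda(\sigma)$ via \eqref{clt} applied to the higher $t$-power torsion of ${\rm LT}_{E_1}$; identifying the exact torsion point and normalization is the delicate point. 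A valuation count ($v(w)=1/2q^4$, $v(\gamma^{1/q^2})=(q-1)/2q^4$, $v(\tilde{\gamma}_1)=0$) then shows $\sigma(\tilde{\gamma}_1)-\tilde{\gamma}_1$ has valuation $\ge v(w)$, and bookkeeping the powers of $\bar{y}_0$ and $\bar{\zeta}$ coming from $\tilde{\gamma}_1^q\equiv 2\zeta\,\bar{y}_0^{-(q^2-1)}$ and from $w=y_0^{q+1}\kappa_1^{q-1}$ yields $a_0=-\frac{\bar{y}_0^{q^2-1}\,2\bar{\zeta}^2}{\bar{y}_0^{2(q+1)}}\lambda(\sigma)=a(\sigma)$, and along the way shows $a(\sigma)\in\mathbb{F}_q$. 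Beyond this matching, the only care required is to resolve the sign and root-of-unity ambiguities in the nested square roots and $q$-power roots defining $\tilde{\gamma}_1$, $y_0$, $w$, $w_1$ consistently, exactly as in the proof of Lemma~\ref{ci2''}.
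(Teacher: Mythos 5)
Your reductions of the easy parts are fine: for $\sigma\in I_{E_1}$ one indeed has $\sigma(t)=t$, hence $\xi:=\sigma(\kappa_1)/\kappa_1$ satisfies $\xi^{q^4(q-1)}=1$, $\bar\xi=\zeta(\sigma)\in\mathbb{F}_q^{\times}$, $\overline{\zeta}_1=\zeta(\sigma)$, and with structured (Teichm\"uller times canonical principal-unit) choices of $y_0,w,w_1$ the quantities $b_0,c_0$ of Lemma \ref{ci2''} come out as $1$ and $\pm\zeta(\sigma)^{(q-1)/2}$. This matches what the paper does implicitly. The problem is the step you yourself flag as delicate, $a_0=a(\sigma)$, where your proposed mechanism does not work. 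You claim that $\sigma(\tilde\gamma_1)-\tilde\gamma_1$ is ``governed by'' $\sigma(\gamma^{1/q^2})-\gamma^{1/q^2}=(\xi^{(q-1)^2}-1)\gamma^{1/q^2}$ and that the deviation of the $p$-power root of unity $\xi^{(q-1)^2}$ from $1$ encodes $\lambda(\sigma)$. A valuation count rules this out: since $\xi^{(q-1)^2}$ is a root of unity of $p$-power order dividing $q^4$, one has $v(\xi^{(q-1)^2}-1)\geq e_{F/\mathbb{Q}_p}/\bigl(p^{4a-1}(p-1)\bigr)>1/2q^4$ (where $q=p^a$), so $v\bigl((\xi^{(q-1)^2}-1)\gamma^{1/q^2}\bigr)>1/2q^4+(q-1)/2q^4=v(w^q)$. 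Writing $\delta:=\sigma(\tilde\gamma_1)-\tilde\gamma_1$ and comparing $\sigma$ of the defining equation of $\tilde\gamma_1$ with the equation itself gives $\delta^q-\iota\gamma^{1/q^2}\delta\equiv\iota(\xi^{(q-1)^2}-1)\gamma^{1/q^2}\tilde\gamma_1\ ({\rm mod}\ \text{higher terms})$, and since the right-hand side has valuation strictly larger than $v(w^q)$ it is invisible after dividing by $w^q$ and reducing: using $\gamma^{1/q^2}/w^{q-1}=y_0^{-(q^2-1)}$ and $\bar y_0^{q^2-1}=\iota$ one only obtains $a_0^q-a_0=0$, i.e.\ $a_0\in\mathbb{F}_q$ --- which is exactly the information already contained in Lemma \ref{ci2''} --- and nothing singles out the value $a(\sigma)$. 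So the displacement of $\kappa_1$ (equivalently of $\gamma^{1/q^2}$) under inertia is too small, at the scale relevant for $a_0$, to produce the $\lambda(\sigma)$-term.

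Moreover, even at a formal level the identification of the $p$-part of $\sigma(\kappa_1)/\kappa_1$ with the Lubin--Tate datum $\lambda(\sigma)$ is not a consequence of \eqref{clt}: the Galois action on $p$-power roots of a uniformizer is governed by an explicit reciprocity (Kummer/Hilbert-symbol) law, which you neither state nor prove, and which is not what the paper uses. The paper's proof gets $a_0=a(\sigma)$ by a different and more direct route: it expresses the relevant difference $\sigma(\tilde\gamma_1)-\tilde\gamma_1$ (to the needed precision, after normalizing by $w^{q^4}$) as a rational expression in the torsion points $\pi_{1,E_1},\pi_{2,E_1}$, then uses $\pi_{2,E_1}^q=t\pi_{2,E_1}-\pi_{1,E_1}$ together with $\sigma(t)=t$ to write $\sigma(\pi_{2,E_1}^q)\pi_{1,E_1}-\pi_{2,E_1}^q\sigma(\pi_{1,E_1})=t\bigl(\sigma(\pi_{2,E_1})\pi_{1,E_1}-\sigma(\pi_{1,E_1})\pi_{2,E_1}\bigr)$, whose reduction after dividing by $w^{q^4}\sigma(\pi_{1,E_1})\pi_{1,E_1}$ is a unit multiple of $\lambda(\sigma)$; this level-two torsion information is exactly what your argument is missing. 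To repair your proof you would have to supply this identification of $\tilde\gamma_1$ (or of $\gamma^{1/q^2}\tilde\gamma_1$, after raising to a suitable $q$-power) in terms of $\pi_{1,E_1}$ and $\pi_{2,E_1}$, i.e.\ essentially reproduce the paper's computation.
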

\begin{proof}
We have the following congruence 
by the definitions of
 $\pi_{i,E_1}$ for $i=1,2$
\[
\pi_{1,{E_1}}\sigma(\pi_{2,{E_1}}^q)-\pi_{2,{E_1}}^q
\sigma(\pi_{1,{E_1}})
 \equiv t\bigl(
 \sigma(\pi_{2,{E_1}})\pi_{1,{E_1}}
 -\sigma(\pi_{1,{E_1}})\pi_{2,{E_1}}\bigr)\ 
 ({\rm mod}\ 
 \biggl(\frac{q+1}{2(q-1)}\biggr)+).
\]
By this, 
we check the following by the definition of 
$\tilde{\gamma}_1$
\[
a_0 \equiv 
-\bar{y}_0^{q^2-1}2\bar{\zeta} \overline
{\biggl(
\frac{\sigma(\pi_{2,{E_1}}^q)
\pi_{1,{E_1}}-\pi^q_{2,{E_1}}\sigma(\pi_{1,{E_1}})}
{w^{q^4}\sigma(\pi_{1,{E_1}})\pi_{1,{E_1}}}\biggr)} \equiv 
-\overline{\biggl(\frac{y_0^{q^2-1}2\zeta t}{y_0^{q+1}
\gamma^{\frac{q^2}{q-1}}}\biggr)}\lambda(\sigma) 
\equiv a(\sigma)\ ({\rm mod}\ 0+).
\]
Hence, the required assertion follows.
\end{proof}

\section{Analysis of cuspidal part in 
the \'{e}tale cohomology of 
the Lubin-Tate space $\mathcal{X}(\pi^2)$}\label{acu1}



In this section, we investigate 
the cohomologies of the curves in the stable reduction 
of $\mathcal{X}(\pi^2)$ using the explicit
 descriptions of 
 the action of 
 $\mathcal{O}^{\times}_3,$ 
 $G_2^F$ and $I_F$ 
  given in 
  the previous sections.
Recall that 
the components $\{X^{i,c}_j\}_{(i,j) 
\in \mathcal{S}},$ 
with each having an 
affine model $X^q+X=Y^{q+1}$, 
and the components 
$\{W^{i,c}_{j,k}\}_{(i,j,k) 
\in \mathcal{S}_1}$,
with each having 
an affine model $a^q-a=s^2,$ 
appear in the 
stable reduction of 
$\mathcal{X}(\pi^2)$.
See Propositions 
\ref{fou-1}, 
\ref{fou0} and \ref{fou}.
We set
\[
W:=\bigoplus_{(i,j) 
\in \mathcal{S}}
H^1(X^{i,c}_{j},\overline{\mathbb{Q}}_l),\ 
W':=\bigoplus_
{(i,j,k) 
\in \mathcal{S}_1}
H^1(W^{i,c}_{j,k},\overline{\mathbb{Q}}_l)
\]
with $p \neq l.$
The dual graph of 
the stable reduction 
$\overline{\mathcal{X}(\pi^n)}$
is known to be a tree. For example, see 
\cite[Proposition 3.4]{W3} 
for this fact. 
Because of the fact, 
 these spaces $W$ and $W'$ are direct
 summands of
 $H^1(\mathcal{X}(\pi^2)_{\mathbf{C}},
 \overline{\mathbb{Q}}_l).$ 
 Furthermore, we can easily check that 
 these subspaces $W$ and $W'$ are 
 $\mathbf{G}:=G_2^F \times 
\mathcal{O}^{\times}_3 \times I_F$-stable.
Note that 
the group $\mathbf{G}$ 
acts on the spaces 
$W$ and 
$W'$ on the left.
In this section, we analyze these representations
by using the explicit action of $\mathbf{G}$ on $W$ and $W'$.
See Proposition \ref{gey}.2, 
Corollary \ref{lap} 
and Corollary \ref{dek2} for precise statements.
As a result, we know that the 
local Jacquet-Langlands correspondence 
and the $\ell$-adic 
local Langlands correspondence 
for unramified (resp.\ ramified) 
cuspidal representations
of ${\rm GL}_2(F)$ of 
level $1$ (resp.\  of level $1/2$)
are realized in $W$. (resp.\ $W'$.)
See \ref{con} for more details.

\subsection{Analysis of 
the \'{e}tale cohomology of 
the components $\{X^{i,c}_j\}_{(i,j) 
\in \mathcal{S}}$}\label{acc1}
In this subsection, we analyze 
the following 
\'{e}tale 
cohomology group
of 
the components 
$\{X^{i,c}_j\}_{(i,j) 
\in \mathcal{S}}$ computed 
in subsection \ref{yo2}
\[
W:=\bigoplus_
{(i,j) \in \mathcal{S}}
H^1(X^{i,c}_j,\overline{\mathbb{Q}}_l)
\]
as a $\mathbf{G}$-representation.
As a result, 
for unramified 
cuspidal representations of 
${\rm GL}_2(F)$ of level $1$,
we will check that the local 
Jacquet-Langlands correspondence and 
the local Langlands 
correspondence are realized in
 $W.$
See \ref{con} for 
a precise meaning of this.

Let
$\mathcal{S}^{\mathbb{F}^{\times}_q}_{00}
=\{(x_0,y_0) \in (k^{\rm ac})^2|\ 
\xi:=x_0^qy_0-x_0y_0^q \in \mathbb{F}^{\times}_q
,\ x_0^{q^2-1}=y_0^{q^2-1}=-1\}.$
Then, we have $\mathcal{S}=\mathbb{F}_q \times 
\mathcal{S}^{\mathbb{F}^{\times}_q}_{00}$.
Let 
$X_i$ be a projective smooth curve with 
an affine equation $X^{q^2}-X=\xi(Y^{q(q+1)}-Y^{q+1}).$
The curve $X_i$ has $q$ connected components, and 
each component has an affine model
defined by $X^q+X=\xi Y^{q+1}-c$ with 
some $c \in \mathbb{F}_q.$
Then, $W$ is written as follows
\begin{equation}\label{zent}
W \simeq \bigoplus_{i \in \mathcal{S}^{\mathbb{F}^{\times}_{q}}
_{00}}H^1(X_i,\overline{\mathbb{Q}}_l)
\end{equation} 
as a $\overline{\mathbb{Q}}_l$-vector space.
Since we have $|\mathcal{S}^{\mathbb{F}^{\times}_q}_{00}|
=|{\rm GL}_2(\mathbb{F}_q)|=q(q^2-1)(q-1)$ and
${\rm dim}\ H^1(X_i,\overline{\mathbb{Q}}_l)=q^2(q-1)$
for each $i$,  
we have ${\rm dim}\ W=q^3(q-1)^2(q^2-1)$.

Now, we write down 
the right action of $\mathbf{G}$
on the components $\{X_i\}
_{i \in 
\mathcal{S}
^{\mathbb{F}
^{\times}_{q}}_{00}}$.
This action induces 
the left action on the right 
hand side of (\ref{zent}).
Then, it is not difficult 
to check the isomorphism (\ref{zent})
is an isomorphism 
as a $\mathbf{G}$-representation.

\paragraph{$G_2^F$-action}
First, we recall the action of 
$G_2^F$ given 
in  Proposition \ref{actt}.
Let $g 
=\left(
\begin{array}{cc}
a_0+a_1\pi & b_0+b_1\pi \\
c_0+c_1\pi & d_0+d_1\pi
\end{array}
\right)
\in G_2^F$ with $a_j,b_j,c_j,d_j \in 
\mu_{q-1}(\mathcal{O}_F)\ (j=0,1).$
Then, $g \in G_2^F$ acts on 
$\mathcal{S}^{\mathbb{F}^{\times}_q}_{00}$ 
as follows, factoring through
$G_1^F,$ 
\begin{equation}\label{cvv0}
g\ :\ i:=(x_0,y_0) \mapsto 
ig:=(\bar{a}_0x_0+\bar{c}_0y_0,
\bar{b}_0x_0+\bar{d}_0y_0).
\end{equation}
Of course, this action of $G_1^F$ 
on $\mathcal{S}^{\mathbb{F}^{\times}_q}_{00}$
is simply transitive.
Furthermore, $g$ induces
\begin{equation}\label{cvv1}
g:X_i \longrightarrow X_{ig}\ ;\ (X,Y) \mapsto 
(\overline{{\rm det}(\bar{g})}X+f(i,g),Y).
\end{equation}

\paragraph{$\mathcal{O}^{\times}_3$-action}
We recall the action of 
$\mathcal{O}^{\times}_3$ given in
 Proposition \ref{giu}. 
 Let $E/F$ denote 
 the unramified 
 quadratic extension.
Let $b=a_0+\varphi b_0+\pi a_1 
\in \mathcal{O}^{\times}_3$ with
 $a_0 \in \mu_{q^2-1}(\mathcal{O}_E)$
 and $a_1,b_0 \in \mu_{q^2-1}(\mathcal{O}_E)
 \cup \{0\}.$
Then, $b$ acts 
on $\mathcal{S}^{\mathbb{F}^{\times}_q}_{00}$
as follows
\begin{equation}\label{dcc2}
i=(x_0,y_0) \mapsto ib:
=(\bar{a}_0^{-1}x_0,\bar{a}_0^{-1}y_0).
\end{equation}
Moreover, $b$ induces 
a morphism
\begin{equation}\label{dcc1}
X_i \to X_{ib}\ :\ 
(X,Y) \mapsto 
\bigl(\bar{a}_0^{-(q+1)}\bigl(X-
(\bar{b}_0\bar{a}_0^{-1})\xi 
Y+(\bar{a}_1\bar{a}_0^{-1})\xi\bigr),
\bar{a}_0^{q-1}\bigl(Y-
(\bar{b}_0\bar{a}_0^{-1})^q\bigr)\bigr).
\end{equation}
Let
$t:=a_0+\pi a_1 
\in \Gamma:=(\mathcal{O}_E/\pi^2)^{\times} 
\subset 
\mathcal{O}^{\times}_3.$
Then, $t$ induces the following 
by (\ref{dcc1})
\begin{equation}\label{dcc2}
X_i \to X_{it}\ :\ 
(X,Y) \mapsto 
\bigl(\bar{a}_0^{-(q+1)}
\bigl(X+(\bar{a}_1\bar{a}_0^{-1})\xi\bigr),
\bar{a}_0^{q-1}Y\bigr).
\end{equation}

\paragraph{Inertial action}
We recall the inertia action $I_F$ given in
 Corollary \ref{ine}.
Let ${\rm LT}_E$
be the formal $\mathcal{O}_E$-module
 over $\hat{E}^{\rm ur},$  
with ${\rm LT}_E \otimes k^{\rm ac}$ 
of height $1.$
Recall that we have
 chosen a model ${\rm LT}_E$
  such that
\[
[\pi]_{{\rm LT}_E}(X)=\pi X-X^{q^2}.
\]
Let $\pi_{i,E} \in {\rm LT}_E[\pi^i]$
for $i \geq 1$ be
 primitive elements.
We define
\[
\mathbf{a}_E:I_F \to 
I_F^{\rm ab} 
\simeq I_E^{\rm ab}
\to 
(\mathcal{O}_E/\pi^2)^{\times} 
\simeq 
\mathbb{F}^{\times}_{q^2} \times 
\mathbb{F}_{q^2}
\]
by
\[
\sigma \mapsto (\zeta(\sigma),
\lambda(\sigma))
=\biggl(\overline{\biggl(\frac{\sigma(\pi_{1,E})}
{\pi_{1,E}}\biggr)},
\overline{\biggl(\frac{\pi_{1,E}\sigma(\pi_{2,E})
-\sigma(\pi_{1,E})\pi_{2,E}}
{\pi_{1,E}\sigma(\pi_{1,E})}
\biggr)}\biggr).
\]
Then, $\sigma \in I_{F}$ acts 
on $\mathcal{S}^{\mathbb{F}^{\times}_{q}}_{00}$
as follows
\[
(x_0,y_0) \mapsto 
i \sigma:=(\zeta(\sigma)^{-1}x_0,\zeta(\sigma)^{-1}y_0).
\]
Moreover, $\sigma$ induces a morphism
\begin{equation}\label{aj1}
X_{i} \to X_{i \sigma}\ :\ 
(X,Y) \mapsto (\zeta(\sigma)^{-(q+1)}
(X+\lambda(\sigma)\xi),Y).
\end{equation}


Let $X$ be a projective smooth curve
with an affine model 
$X^q+X=Y^{q+1}$ with genus 
$q(q-1)/2.$
Then, we have ${\rm dim}\
 H^1(X,\overline{\mathbb{Q}}_l)
=q(q-1)$.
In the following, to investigate $W$, we 
prove some elementary facts on 
$H^1(X,\overline{\mathbb{Q}}_l)$ in 
Lemma \ref{oo3} and Corollary \ref{cb1}.

Let $\mathcal{I}:={\rm ker}\ {\rm Tr}_{\mathbb{F}_{q^2}/\mathbb{F}_q}.$
Then, the group $\mathcal{I} \ni a_1$
acts on $X$ by $(X,Y) \mapsto (X+a_1,Y).$
On the other hand,
$\mu_{q+1} \ni \zeta$ acts on $X$ by $(X,Y) \mapsto (X,\zeta Y).$
Therefore, we consider $H^1(X,\overline{\mathbb{Q}}_l)$
as a $\overline{\mathbb{Q}}_l[\mathcal{I} \times \mu_{q+1}]$-module.
\begin{lemma}\label{oo3}
Let the notation be as above.
Then, we have the following isomorphism
\begin{equation}\label{zent1}
H^1(X,\overline{\mathbb{Q}}_l)
\simeq \bigoplus_{\psi \neq 0 \in \mathcal{I}^{\vee}}
\bigoplus_{\chi \neq 1 \in \mu_{q+1}^{\vee}}
\psi \otimes \chi
\end{equation}
as a $\overline{\mathbb{Q}}_l[\mathcal{I} \times \mu_{q+1}]$-module.
\end{lemma}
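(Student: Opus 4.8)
The plan is to diagonalise $H^1(X,\overline{\mathbb{Q}}_l)$ under the abelian group $G:=\mathcal{I}\times\mu_{q+1}$, whose order $q(q+1)$ is prime to $l$, so that $H^1(X,\overline{\mathbb{Q}}_l)=\bigoplus_{(\psi,\chi)\in\mathcal{I}^\vee\times\mu_{q+1}^\vee}V_{\psi,\chi}$, where $V_{\psi,\chi}$ is the eigenspace on which $G$ acts by $\psi\otimes\chi$; I write $x,y$ for the coordinate functions of the affine model $x^q+x=y^{q+1}$. It then suffices to prove that $\dim V_{\psi,\chi}=1$ when $\psi\neq0$ and $\chi\neq1$ and that $V_{\psi,\chi}=0$ otherwise, for then the claimed isomorphism \eqref{zent1} of $\overline{\mathbb{Q}}_l[G]$-modules is immediate. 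As a running consistency check: there are exactly $(q-1)q$ pairs $(\psi,\chi)$ with $\psi\neq0$, $\chi\neq1$, while $\dim H^1(X,\overline{\mathbb{Q}}_l)=2g_X=q(q-1)$.

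For the vanishing I would identify the two intermediate quotients of $X$. The map $X\to\mathbb{A}^1_y$ is an $\mathcal{I}$-torsor (the $q$ roots of $x^q+x=y^{q+1}$ in $x$ form a coset of $\mathcal{I}$, using $(x+a)^q+(x+a)=x^q+x$ for $a\in\mathcal{I}$, since $a^q+a={\rm Tr}_{\mathbb{F}_{q^2}/\mathbb{F}_q}(a)=0$), so $X/\mathcal{I}$ has function field $k^{\rm ac}(y)$ and smooth compactification $\mathbb{P}^1$; similarly, since $y^{q+1}=x^q+x$, the subfield of $k^{\rm ac}(x,y)$ fixed by $\mu_{q+1}$ is $k^{\rm ac}(x)$, so $X/\mu_{q+1}\cong\mathbb{P}^1$. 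Hence $H^1(X,\overline{\mathbb{Q}}_l)^{\mathcal{I}}=H^1(\mathbb{P}^1,\overline{\mathbb{Q}}_l)=0$ and likewise $H^1(X,\overline{\mathbb{Q}}_l)^{\mu_{q+1}}=0$, which forces $V_{\psi,\chi}=0$ whenever $\psi=0$ or $\chi=1$.

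To handle the surviving eigenspaces I would first reduce to a one-parameter computation by means of an auxiliary torus. The group $\mathbb{F}_{q^2}^\times$ acts on $X$ by $c\cdot(x,y)=(c^{q+1}x,cy)$ --- this is well defined because $c^{q+1}\in\mathbb{F}_q^\times$ --- it commutes with $\mu_{q+1}$, and conjugating the translation $x\mapsto x+a$ by $c$ gives $x\mapsto x+c^{q+1}a$, so $\mathbb{F}_{q^2}^\times$ normalises $\mathcal{I}$ (an $\mathbb{F}_q$-line) and acts on it through ${\rm Nm}_{\mathbb{F}_{q^2}/\mathbb{F}_q}$, hence transitively on $\mathcal{I}^\vee\setminus\{0\}$. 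Therefore, for each fixed $\chi$, the automorphisms $c$ permute the spaces $\{V_{\psi,\chi}\}_{\psi\neq0}$ transitively, so $\dim V_{\psi,\chi}$ is independent of $\psi\neq0$; together with the vanishing of the $\psi=0$ part this yields $\dim H^1(X,\overline{\mathbb{Q}}_l)_\chi=(q-1)\dim V_{\psi_0,\chi}$ for any fixed $\psi_0\neq0$, where $H^1(X,\overline{\mathbb{Q}}_l)_\chi:=\bigoplus_\psi V_{\psi,\chi}$ is the $\chi$-isotypic part for $\mu_{q+1}$.

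It remains to compute $\dim H^1(X,\overline{\mathbb{Q}}_l)_\chi$ for $\chi\neq1$ by viewing $X\to X/\mu_{q+1}=\mathbb{P}^1_x$ as the cyclic $\mu_{q+1}$-cover $y^{q+1}=x^q+x$: this eigenspace equals $H^1$ of the extension by zero of the Kummer local system $\mathcal{L}_\chi$ on $\mathbb{P}^1_x$ attached to $\chi$ and the function $x^q+x$. Since $x^q+x$ has $q$ simple zeros and a pole of exact order $q$ at $\infty$, and $\gcd(q,q+1)=1$ because $p\mid q$, the sheaf $\mathcal{L}_\chi$ is tame with nontrivial local monodromy at exactly those $q+1$ points of $\mathbb{P}^1$, so the Euler--Poincar\'{e} formula gives $\dim H^1(X,\overline{\mathbb{Q}}_l)_\chi=(q+1)-2=q-1$, and dividing by $q-1$ gives $\dim V_{\psi_0,\chi}=1$; the total $(q-1)\cdot q\cdot1$ is indeed $q(q-1)=2g_X$. (Alternatively one can compute all $V_{\psi,\chi}$ at once by passing to the $V$-line $V=y^{q+1}$, writing $X$ as the fibre product over $\mathbb{A}^1_V$ of the finite \'{e}tale $\mathcal{I}$-cover $x^q+x=V$ with $Y\mapsto Y^{q+1}$, and evaluating $V_{\psi,\chi}$ as $H^1_c(\mathbb{G}_{m,V},\mathcal{G}_\psi\otimes\mathcal{K}_\chi)$ via Grothendieck--Ogg--Shafarevich, where $\mathcal{G}_\psi$ has Swan conductor $1$ at $\infty$.) The substantive point is the dimension-one claim, whose content lies entirely in the Euler--Poincar\'{e} (equivalently, Swan-conductor) count; the vanishing is formal, so I expect the only real care to be needed in the ramification bookkeeping --- verifying that the two quotient curves are genuinely rational, that $X\to\mathbb{A}^1_y$ is a connected $\mathcal{I}$-torsor, and the tameness and monodromy count for $\mathcal{L}_\chi$ at its $q+1$ branch points.
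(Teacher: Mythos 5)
Your argument is correct, but it takes a genuinely different route from the paper's. The paper works on the open curve: it deletes the fibre over $t=0$ together with the points at infinity, regards what remains as a finite Galois covering of $\mathbb{G}_m$ with group $\mathcal{I}\times\mu_{q+1}$ via $t=s^{q+1}$, computes every single piece $H^1_c(\mathbb{G}_m,\mathcal{L}_{\psi}(t)\otimes\mathcal{K}_{\chi}(t))$ at once by Grothendieck--Ogg--Shafarevich (the Artin--Schreier factor being wild at infinity with Swan conductor one), and then removes the boundary contribution through the exact sequence $0\to\bigoplus_{\psi}\psi\to H^1_c(X\setminus X(\mathbb{F}_q),\overline{\mathbb{Q}}_l)\to H^1(X,\overline{\mathbb{Q}}_l)\to 0$; this is essentially the computation you only sketch in your closing parenthesis. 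Your main argument instead stays on the complete curve: you kill the $\psi=0$ and $\chi=1$ eigenspaces by the rationality of $X/\mathcal{I}$ and $X/\mu_{q+1}$, use the auxiliary action $(x,y)\mapsto(c^{q+1}x,cy)$ of $\mathbb{F}_{q^2}^{\times}$, which commutes with $\mu_{q+1}$ and normalises $\mathcal{I}$ through the norm, to see that for fixed $\chi\neq 1$ the spaces $V_{\psi,\chi}$ with $\psi\neq 0$ are equidimensional, and then make a single purely tame Euler--Poincar\'{e} count for the Kummer sheaf attached to $\chi$ and $x^q+x$ on $\mathbb{P}^1_x$ to get $\dim H^1(X,\overline{\mathbb{Q}}_l)_{\chi}=q-1$. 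What your route buys is the complete absence of wild ramification and of any $H^1_c$-versus-$H^1$ bookkeeping; what the paper's route buys is that it needs no extra symmetry and isolates each $(\psi,\chi)$-line directly, which is why the same computation transfers verbatim to the companion statements for the curves $a^q-a=s^2$ later in the section. Two small points to tidy up: complete reducibility of $H^1(X,\overline{\mathbb{Q}}_l)$ under $\mathcal{I}\times\mu_{q+1}$ holds simply because $\overline{\mathbb{Q}}_l$ has characteristic zero, not because the group order is prime to $l$ (the prime $l$ may divide $q+1$); and in identifying the deck group of $X\to\mathbb{P}^1_y$ with $\mathcal{I}$ you should record that $\{a:\,a^q+a=0\}$ coincides with $\ker{\rm Tr}_{\mathbb{F}_{q^2}/\mathbb{F}_q}$, which uses $p\neq 2$ and is exactly the bookkeeping point you already flag.
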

\begin{proof}
We have the following short exact sequence
\begin{equation}\label{bb1}
0 \to 
\bigoplus_{\psi \in \mathcal{I}^{\vee}}\psi 
\to
H_c^1(X \backslash X(\mathbb{F}_q),\overline{\mathbb{Q}}_l)
\to H^1(X,\overline{\mathbb{Q}}_l) \to 0\ ({\rm exact})
\end{equation}
as 
 a $\overline{\mathbb{Q}}_l[\mathcal{I} \times \mu_{q+1}]$-module.

Let $\mathcal{L}_{\psi}(t)$
denote the smooth $\overline{\mathbb{Q}}_l$-sheaf 
associated to
the finite Galois \'{e}tale 
covering $a^q+a=t$ of 
$\mathbb{A}^1 \ni t$
and a character $\psi \in \mathcal{I}^{\vee}.$ 
Let $\mathcal{K}_{\chi}(t)$
denote the smooth $\overline{\mathbb{Q}}_l$-sheaf associated to
the Kummer covering $y^{q+1}=t$ of $\mathbb{G}_m \ni t$
and a character $\chi \in \mu^{\vee}_{q+1}.$ 
Since $X \backslash X(\mathbb{F}_q) 
\to \mathbb{G}_m;(a,s) \mapsto s^{q+1}$
is a finite Galois \'{e}tale
 covering 
 of Galois group 
 $\mathcal{I} \times \mu_{q+1},$
 the group 
 $H_c^1(X \backslash X(\mathbb{F}_q),
 \overline{\mathbb{Q}}_{l})$
 is isomorphic to
 \begin{equation}\label{bb3}
 \bigoplus_{\psi \in 
 \mathcal{I}^{\vee}}\bigoplus
 _{\chi \in \mu^{\vee}_{q+1}}
 H_c^1(\mathbb{G}_m,
 \mathcal{L}_{\psi}(t) 
 \otimes \mathcal{K}_{\chi}(t))
 \end{equation}
 as a $\overline{\mathbb{Q}}_l
 [\mathcal{I} \times \mu_{q+1}]$-module.
 Note that we have 
 ${\rm dim}\ H_c^1(\mathbb{G}_m,
 \mathcal{L}_{\psi}(t) \otimes \mathcal{K}_{\chi}(t))$
 is equal to $1$
 if $\psi \neq 1$ and $0$ otherwise
 by the Grothendieck-Ogg-Shafarevich formula. 
Furthermore, we have
\begin{equation}\label{bb2}
\bigoplus_{\psi \in \mathcal{I}^{\vee}}
\bigoplus_{\chi \in \mu^{\vee}_{q+1}}
 H_c^1(\mathbb{G}_m,\mathcal{L}_{\psi}(t) \otimes \mathcal{K}_{\chi}(t)) \simeq 
 \bigoplus_{\psi \neq 1 \in \mathcal{I}^{\vee}}
 \bigoplus_{\chi \neq 1 \in \mu^{\vee}_{q+1}}
 H_c^1(\mathbb{G}_m,\mathcal{L}_{\psi}(t) \otimes \mathcal{K}_{\chi}(t)) \oplus
 \bigoplus_{\psi \in 
 \mathcal{I}^{\vee}}\psi
 \end{equation}
 as a $\overline{\mathbb{Q}}_l[\mathcal{I} \times \mu_{q+1}]$-module.
By (\ref{bb1}), (\ref{bb3}) 
and (\ref{bb2}), 
the required assertion follows.
\end{proof}
\begin{corollary}\label{cb1}
Let $X$ be a projective smooth curve
with an affine model 
$X^{q^2}-X=Y^{q(q+1)}-Y^{q+1}.$
The group $(a,\zeta) \in 
\mathbb{F}_{q^2} \times 
\mu_{q+1}$ acts on $X$ by 
$(X,Y) \mapsto (X+a,\zeta Y).$
We consider 
$\mathbb{F}^{\vee}_q$ 
as a subgroup of 
$\mathbb{F}^{\vee}_{q^2}$ by 
${\rm Tr}_{\mathbb{F}_{q^2}/\mathbb{F}_q}^{\vee}.$
Then, 
we have the following isomorphism
\[H^1(X,\overline{\mathbb{Q}}_l)
 \simeq \bigoplus_{(\psi,\chi) \in (\mathbb{F}^{\vee}_{q^2} 
 \backslash \mathbb{F}^{\vee}_q) 
 \times (\mu_{q+1}^{\vee}
  \backslash \{1\})}\psi \otimes \chi\]
as a $\overline{\mathbb{Q}}_l[\mathbb{F}_{q^2}
 \times \mu_{q+1}]$-module.
\end{corollary}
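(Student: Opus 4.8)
The plan is to deduce this directly from Lemma~\ref{oo3} by decomposing $X$ into connected components, each isomorphic over $k^{\rm ac}$ to the curve $X^q+X=Y^{q+1}$ treated there, and then bookkeeping the resulting induced representation.

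First I would note the identities $X^{q^2}-X=(X^q+X)^q-(X^q+X)$ and $Y^{q(q+1)}-Y^{q+1}=(Y^{q+1})^q-Y^{q+1}$, valid in characteristic $p$. Substituting these into the defining equation of $X$ shows it is equivalent to $\bigl((X^q+X)-Y^{q+1}\bigr)^q=(X^q+X)-Y^{q+1}$; hence the regular function $c:=(X^q+X)-Y^{q+1}$ is $\mathbb{F}_q$-valued, so locally constant. Thus $X=\coprod_{c\in\mathbb{F}_q}X_c$, where $X_c$ is the smooth projective model of $X^q+X=Y^{q+1}+c$. Each $X_c$ is geometrically connected, and the substitution $X\mapsto X+\delta$ with $\delta^q+\delta=-c$ (solvable over $k^{\rm ac}$) defines an isomorphism $X_c\overset{\sim}{\to}X_0$, where $X_0$ is exactly the curve $X^q+X=Y^{q+1}$ of Lemma~\ref{oo3}.

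Next I would transport the group action. The subgroup $\mathbb{F}_{q^2}$ acts by $X\mapsto X+a$; since $a^q+a={\rm Tr}_{\mathbb{F}_{q^2}/\mathbb{F}_q}(a)\in\mathbb{F}_q$, such an $a$ carries $X_c$ to $X_{c+{\rm Tr}(a)}$, so $\mathbb{F}_{q^2}$ permutes the components $\{X_c\}_{c\in\mathbb{F}_q}$ through the quotient $\mathbb{F}_{q^2}/\mathcal{I}\cong\mathbb{F}_q$ (with $\mathcal{I}={\rm ker}\,{\rm Tr}_{\mathbb{F}_{q^2}/\mathbb{F}_q}$ as in Lemma~\ref{oo3}) simply transitively, while $\mathcal{I}$ stabilizes each $X_c$; under $X_c\cong X_0$ this $\mathcal{I}$-action is precisely the translation action of Lemma~\ref{oo3}. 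The subgroup $\mu_{q+1}$ acts by $Y\mapsto\zeta Y$, stabilizing each $X_c$ and commuting with all the $X$-substitutions, so under $X_c\cong X_0$ it becomes the $\mu_{q+1}$-action of Lemma~\ref{oo3}. Therefore, as $\overline{\mathbb{Q}}_l[\mathbb{F}_{q^2}\times\mu_{q+1}]$-modules,
\[
H^1(X,\overline{\mathbb{Q}}_l)\;=\;\bigoplus_{c\in\mathbb{F}_q}H^1(X_c,\overline{\mathbb{Q}}_l)\;\cong\;{\rm Ind}_{\mathcal{I}\times\mu_{q+1}}^{\mathbb{F}_{q^2}\times\mu_{q+1}}H^1(X_0,\overline{\mathbb{Q}}_l).
\]

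Finally I would insert Lemma~\ref{oo3}, namely $H^1(X_0,\overline{\mathbb{Q}}_l)\cong\bigoplus_{\psi\neq 0\in\mathcal{I}^\vee}\bigoplus_{\chi\neq 1\in\mu_{q+1}^\vee}\psi\otimes\chi$, and evaluate the induction on the abelian group $\mathbb{F}_{q^2}$: for $\psi\in\mathcal{I}^\vee$ one has ${\rm Ind}_{\mathcal{I}}^{\mathbb{F}_{q^2}}\psi=\bigoplus_{\eta\in\mathbb{F}_{q^2}^\vee,\ \eta|_{\mathcal{I}}=\psi}\eta$. Since $\eta|_{\mathcal{I}}=0$ exactly when $\eta$ factors through $\mathbb{F}_{q^2}/\mathcal{I}\cong\mathbb{F}_q$, i.e.\ when $\eta\in\mathbb{F}_q^\vee$ via ${\rm Tr}_{\mathbb{F}_{q^2}/\mathbb{F}_q}^\vee$, as $\psi$ runs over $\mathcal{I}^\vee\setminus\{0\}$ the characters $\eta$ run exactly once over $\mathbb{F}_{q^2}^\vee\setminus\mathbb{F}_q^\vee$, and $\mu_{q+1}$ acts on the summand $\eta$ by the corresponding $\chi$. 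This gives the claimed isomorphism, and a dimension check ($q\cdot q(q-1)=q^2(q-1)=(q^2-q)\cdot q$) confirms it. The only mildly delicate points are the geometric connectedness of each $X_c$ and the verification that the $\mathcal{I}$- and $\mu_{q+1}$-actions are carried over correctly by $X_c\cong X_0$; both are routine, so there is no real obstacle — Lemma~\ref{oo3} does the substantive work.
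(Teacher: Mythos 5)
Your proposal is correct and follows essentially the same route as the paper: decompose $X$ into its $q$ connected components (each an affine model $X^q+X=Y^{q+1}$ up to a shift), observe that $\mathbb{F}_{q^2}$ permutes them through the trace map with $\mathcal{I}$ stabilizing each one, write $H^1(X,\overline{\mathbb{Q}}_l)\simeq{\rm Ind}_{\mathcal{I}\times\mu_{q+1}}^{\mathbb{F}_{q^2}\times\mu_{q+1}}H^1(X_0,\overline{\mathbb{Q}}_l)$, and conclude from Lemma \ref{oo3}. Your write-up merely makes explicit the steps the paper leaves implicit (the $\mathbb{F}_q$-valued invariant $c=(X^q+X)-Y^{q+1}$, the shift $X\mapsto X+\delta$, and the evaluation of the induced character), all of which are correct.
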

\begin{proof}
The curve $X$ is a disjoint union of $q$ curves 
$\{X_i\}_{i \in \mathbb{F}_q}$
with an affine model
$X^q+X=Y^{q+1}.$
The set of connected components 
of $X$ is $\mathbb{F}_q.$
Then, $\mathbb{F}_{q^2}$ acts on the group
 $\mathbb{F}_q$, according to 
the trace map.
Hence, for a fixed $i_0 \in \mathbb{F}_q$, 
we have the following 
\[
H^1(X,\overline{\mathbb{Q}}_l) \simeq 
{\rm Ind}^{\mathbb{F}_{q^2}}
_{\mathcal{I}}H^1(X_{i_0},\overline{\mathbb{Q}}_l)
\]
as a $\overline{\mathbb{Q}}_l
[\mathbb{F}_{q^2} \times \mu_{q+1}]$-module.
Therefore, the required assertion follows from
Lemma \ref{oo3}.
\end{proof}

We set $\Gamma:=(\mathcal{O}_E/\pi^2)^{\times}.$
For a character $w\in \Gamma^{\vee}$,
 we say that $w$ is {\it
  strongly primitive}
 if the restriction of $w$
 to a subgroup 
 $\mathbb{F}_{q^2} \simeq 
 {\rm Ker}\ (\Gamma \to \mathbb{F}^{\times}_{q^2})$
does not factor through the trace map 
${\rm Tr}_{\mathbb{F}_{q^2}/\mathbb{F}_q}
:\mathbb{F}_{q^2} \to \mathbb{F}_q.$
Let $\Gamma^{\vee}_{\rm stp}
 \subset \Gamma^{\vee}$
denote
 the set of strongly primitive 
characters. 
In the following, for each 
$w \in 
\Gamma^{\vee}_{\rm stp},$ 
we define a representation
$\pi_w$, which is called 
{\it (strongly) cuspidal} representation
 in \cite[5.2]{AOPS} and \cite{Onn}.
Now, we fix an element $\zeta_0 
\in \mu_{q^2-1}(\mathcal{O}_E) 
\backslash \mu_{q-1}(\mathcal{O}_F).$
We fix the following 
 embedding 
\begin{equation}\label{ed1}
{\Gamma}:=(\mathcal{O}_E/\pi^2)^{\times}
\hookrightarrow G_2^F
\end{equation}
\[a+b\zeta_0 \mapsto 
a1_{2}+b\left(
\begin{array}{cc}
\zeta_0^q+\zeta_0 & 1 \\
-\zeta_0^{q+1} & 0
\end{array}
\right)
\]
with $a, b \in \mathcal{O}_F/\pi^2.$
We identify ${\Gamma} \simeq 
\mathbb{F}_{q^2}^{\times} \times
\mathbb{F}_{q^2}$
by $a_0+a_1\pi \mapsto 
(\bar{a}_0,(\bar{a}_1/\bar{a}_0)).$
For a character 
$\psi \in \mathbb{F}_{q^2}^{\vee}
 \backslash \mathbb{F}_q^{\vee}$
and an element $\zeta 
\in \mathbb{F}_{q^2}
 \backslash \mathbb{F}_q,$ 
we define a character 
$\tilde{\psi}_{\zeta}$ 
of $N$ by the following 
\begin{equation}\label{pi3} 
\tilde{\psi}_{\zeta}\ :\ N 
\ni \left(
\begin{array}{cc}
1+\pi a_1 & \pi b_1 \\
\pi c_1 & 1+\pi d_1
\end{array}
\right)
\mapsto 
\psi\biggl(-\frac{\bar{a}_1
\zeta+\bar{c}_1
-\zeta^q(\bar{b}_1\zeta+\bar{d}_1)}
{\zeta^q-\zeta}\biggr).
\end{equation}
Note that the restriction of 
$\tilde{\psi}_{\bar{\zeta_0}} \in N^{\vee}$ 
to a subgroup $\mathbb{F}_{q^2} 
\simeq {\Gamma} \cap N
\subset N$ is equal to $\psi.$
For a strongly primitive character
 $w=(\chi,\psi)
 \in {\Gamma}^{\vee}
\simeq (\mathbb{F}^{\times}_{q^2})^{\vee}
 \times \mathbb{F}_{q^2}^{\vee}$
i.e. $\psi \notin \mathbb{F}_q^{\vee}$,
we define a character $w$ of 
${\Gamma}N=\mu_{q^2-1}(\mathcal{O}_E)N$ by
\begin{equation}\label{cha}
w(xu)=\chi(\bar{x})\tilde{\psi}_{\bar{\zeta}_0}(u)
\end{equation}
for all $x \in \mu_{q^2-1}(\mathcal{O}_E)$ and $u \in N.$
We set
\[
\pi_w:={\rm Ind}^{G_2^F}_{{\Gamma}N}(w).
\]
Then, $\pi_w$ is called
 a (strongly) 
cuspidal representation 
of $G_2^F$ 
 in \cite[Section 5.2]{AOPS}, 
 \cite{Onn}, 
 \cite{Sta} and \cite{Sta2}.
Clearly, we have 
${\rm dim}\ \pi_w=q(q-1).$

Let us set
\[
\mathbf{H}:=G_2^F \times 
\Gamma \times I_F 
\subset \mathbf{G}.
\]
To analyze $W$ as 
a $\mathbf{G}$-representation,
we will 
investigate the 
restriction 
$W|_{\mathbf{H}}$ in
 Proposition \ref{gey}.
To do so, in the following, for each 
$w \in \Gamma^{\vee}_{\rm stp}$, 
we define a 
$\mathbf{H}$-subrepresentation
 $W_w \subset W|_{\mathbf{H}}.$ 
For a character $\psi \in \mathbb{F}^{\vee}_{q^2}$
and an element $a \in \mathbb{F}_{q^2},$
we write $\psi_a \in 
\mathbb{F}^{\vee}_{q^2}$ 
for the 
character $x \mapsto \psi(ax).$
For $i \in \mathcal{S}_{00}
^{\mathbb{F}^{\times}_q},$
we write $i=(x_0,y_0).$
Let $\nu:\Gamma \to \mu_{q+1};
a_0+a_1\pi \mapsto a_0^{q-1}.$ 
For $\chi_0 \in \mu_{q+1}^{\vee},$
we denote by the same letter $\chi_0$
 for the composite
$\chi_0 \circ \nu \in \Gamma^{\vee}.$
Then, by (\ref{zent}) and Corollary \ref{cb1}, 
we obtain the following isomorphism
as a 
$\overline{\mathbb{Q}}_l$
-vector space
\begin{equation}\label{tu}
W
\simeq \bigoplus_{i \in 
\mathcal{S}
^{\mathbb{F}^{\times}_q}_{00}}
\bigoplus_{(\psi,\chi_0) 
\in (\mathbb{F}^{\vee}_{q^2} 
 \backslash \mathbb{F}^{\vee}_q) 
 \times (\mu_{q+1}^{\vee} \backslash \{1\})} 
 \overline{\mathbb{Q}}_l
  e_{i,\psi,\chi_0}.
\end{equation}
The isomorphism (\ref{tu}) 
and 
the action of $\mathbf{H}$
on $W$
 induce the 
 $\mathbf{H}$-action
 on the right hand side of
  (\ref{tu}). 
 We write down 
 the action of $\mathbf{H}$ 
 on the basis 
 $\{e_{i,\psi,\chi_0}\}$
  in (\ref{tu})
 below. 
By (\ref{cvv0}) and (\ref{cvv1}), we have the 
following $G_2^F$-action
\begin{equation}\label{da1}
G_2^F \ni g^{-1}\ :\ e_{i,\psi,\chi_0} \mapsto 
\psi_{\overline{{\rm det}(\bar{g})}^{-1}}
(f(i,g))
e_{ig,\psi_{\overline{{\rm det}(\bar{g})}^{-1}},
\chi_0}
\end{equation}
and the following $\Gamma$-action 
by (\ref{dcc2})
\begin{equation}\label{da2}
\Gamma \ni t=a_0+a_1\pi:
\ e_{i,\psi,\chi_0} \mapsto
\psi(-(\bar{a}_1/\bar{a}_0)\xi)
\chi_0^{-1}(t)e_{it^{-1},
\psi_{\bar{a}_0^{-(q+1)}},
\chi_0}
\end{equation}
where we set $\xi:=x_0^qy_0-x_0y_0^q
 \in \mathbb{F}^{\times}_q.$
Furthermore, we have the following $I_F$-action
by (\ref{aj1})
\begin{equation}\label{da3}
I_F \ni \sigma\ :\ e_{i,\psi,\chi_0} \mapsto
\psi(-\lambda(\sigma)\xi)e_{i \sigma^{-1}, 
\psi_{\zeta(\sigma)^{-(q+1)}}, 
\chi_0}.
\end{equation}

Now, we choose an 
element $y_{00}
 \in \mu_{2(q^2-1)}$
such that $y_{00}^{q^2-1}=-1.$
For 
$w=(\psi,\chi) \in 
(\mathbb{F}^{\times}_{q^2})^{\vee} 
\times
 \mathbb{F}^{\vee}_{q^2}$, 
$\zeta \in \mathbb{F}_{q^2} 
\backslash \mathbb{F}_q$
and $\chi_0 \in \mu_{q+1}^{\vee} 
\backslash \{1\}$, 
we define a vector of $W$ as follows
\[
e^w_{\zeta,\chi_0}:=
\sum_{\mu \in \mathbb{F}_{q^2}}
\chi^{-1}(\mu)e_{(\zeta \mu y_{00},\mu y_{00}),
\psi_{-((\mu y_{00})^{q+1}(\zeta^q-\zeta))^{-1}}},
\chi_0 \in W.
\]
For $w \in 
\Gamma^{\vee}_{\rm stp},$
we define a 
subspace of $W$ as follows
\[
W_w:=\bigoplus_{\chi_0 
\in \mu^{\vee}_{q+1}
 \backslash \{1\}}
\bigoplus_{\zeta \in
 \mathbb{F}_{q^2} 
\backslash \mathbb{F}_q}
\overline{\mathbb{Q}}_l
e^w_{\zeta,\chi_0}.
\]
Note that we have 
${\rm dim}\ W_w=q^2(q-1).$
For $g
=\left(
\begin{array}{cc}
a_0+a_1\pi & b_0+b_1\pi \\
c_0+c_1\pi & d_0+d_1\pi
\end{array}
\right)
 \in G_2^F$, 
 we obtain 
 the following by (\ref{da1})
\begin{equation}\label{bg}
g^{-1} e^w_{\zeta,\chi_0}
=\chi(\bar{b}_0
\zeta+\bar{d}_0)
\psi_{\overline{{\rm det}(\bar{g})}^{-1}}
\biggl(-\frac{f((\zeta \mu y_{00},\mu y_{00}),g)}
{(\mu y_{00})^{q+1}(\zeta^q-\zeta)}\biggr)
e^w_{\frac{\bar{a}_0\zeta+\bar{c}_0}
{\bar{b}_0\zeta+\bar{d}_0},\chi_0}.
\end{equation}
The element $-f
((\zeta \mu y_{00},\mu y_{00}),g)/
(\mu y_{00})^{q+1}(\zeta^q-\zeta)$
does not depend 
on $\mu.$
If $g \in N,$ clearly we have 
$f((\zeta \mu y_{00},\mu y_{00}),g)
=\bar{g}(\zeta \mu y_{00},\mu y_{00})$
by $G_0(\zeta 
\mu y_{00},\mu y_{00}) \equiv 0$.
See Proposition \ref{actt} 
for the notations.
Hence, for $g \in N,$  
(\ref{bg}) has the following form 
by (\ref{pi3})
\begin{equation}\label{bg1}
g^{-1} e^w_{\zeta,\chi_0}=
\tilde{\psi}_{\zeta}(g)
e^w_{\zeta,\chi_0}.
\end{equation}
Furthermore, by (\ref{da2}) and (\ref{da3}), 
we obtain 
\begin{equation}\label{daa1}
t e^w_{\zeta,\chi_0}
=w(t)\chi^{-1}_0(t)
e^w_{\zeta,\chi_0},\ 
\sigma 
e^w_{\zeta,\chi_0}=
\chi(\zeta(\sigma))
\psi(\lambda(\sigma))
e^w_{\zeta,\chi_0}=
w \circ \mathbf{a}_E
(\sigma)e^w_{\zeta,\chi_0}
\end{equation}
for $t \in \Gamma 
\subset 
\mathcal{O}^{\times}_3$
and $\sigma \in I_F.$
Hence, $W_w$ is a 
$\mathbf{H}$-subrepresentation 
of
 $W|_{\mathbf{H}}$.

 Now, by decomposing
  the $\mathbf{H}$-representation
 $W|_{\mathbf{H}}$ 
 to a direct sum of 
irreducible components 
$\{W_w\}_{w \in 
\Gamma_{\rm stp}^{\vee}}$,
we have the following proposition.
\begin{proposition}\label{gey}
Let the notation be as above.
Then, we have the followings
\\1.\ The following isomorphism as a 
$\mathbf{H}$-representation holds
\begin{equation}\label{ci}
W_w \simeq \pi_w^{\vee} \otimes 
\biggl(
\bigoplus_{\chi_0 \in \mu_{q+1}^{\vee} 
\backslash \{1\}}
 \chi^{-1}_0w \biggr)
\otimes (w \circ \mathbf{a}_E).
\end{equation}
\\2.\ We have 
the following isomorphism as a 
$\mathbf{H}$-representation
\[
W \simeq 
\bigoplus_{w \in {\Gamma}^{\vee}_{\rm stp}}
\biggl(\pi_w^{\vee} \otimes 
\biggl(
\bigoplus_{\chi_0 \in \mu_{q+1}^{\vee}
 \backslash \{1\}}\chi^{-1}_0w \biggr)
\otimes (w \circ \mathbf{a}_E)\biggr).
\]
\end{proposition}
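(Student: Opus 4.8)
The plan is to deduce both parts from the explicit $\mathbf{H}$-action on the basis $\{e_{i,\psi,\chi_0}\}$ recorded in \eqref{da1}, \eqref{da2}, \eqref{da3}, together with the vectors $e^w_{\zeta,\chi_0}$ and their transformation formulas \eqref{bg}, \eqref{bg1}, \eqref{daa1}. First I would establish part 1. The formulas \eqref{bg1} and \eqref{daa1} already show that $W_w$ is $\mathbf{H}$-stable and that on $W_w$ the subgroup $\Gamma N \subset G_2^F$ acts through the character $w$ twisted by the character $\chi_0^{-1}$ of $\Gamma$ (the $\mu_{q+1}$-part entering via $\nu$), while $I_F$ acts through $w\circ\mathbf{a}_E$. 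Since $\dim W_w = q^2(q-1)$ and $[G_2^F:\Gamma N] = q(q-1) = \dim \pi_w$, a dimension count combined with Frobenius reciprocity will identify the $G_2^F$-part: for fixed $\chi_0$, the span $\bigoplus_{\zeta}\overline{\mathbb{Q}}_l e^w_{\zeta,\chi_0}$ is a $q(q-1)$-dimensional space on which $\Gamma N$ acts by the character $w\chi_0^{-1}|_{\Gamma N}$, hence by Frobenius reciprocity it is a quotient — and by dimension, isomorphic to — $\bigl({\rm Ind}^{G_2^F}_{\Gamma N}(w\chi_0^{-1})\bigr)^\vee$. One must check that $\pi_{w\chi_0^{-1}} \cong \pi_w \otimes \chi_0^{-1}$ as $G_2^F$-representations (the twist by the one-dimensional character $\chi_0^{-1}$ of $G_2^F$ obtained by inflating $\chi_0^{-1}\circ\nu$, or rather the character of $G_2^F$ restricting to $\chi_0^{-1}$ on $\Gamma N$); this is immediate from the induction formula $\pi_w = {\rm Ind}^{G_2^F}_{\Gamma N}(w)$. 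Summing over $\chi_0 \in \mu_{q+1}^\vee\setminus\{1\}$ and tensoring by $w\circ\mathbf{a}_E$ on the $I_F$-factor yields \eqref{ci}.

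For part 2, the plan is to show $W|_{\mathbf{H}} = \bigoplus_{w\in\Gamma^\vee_{\rm stp}} W_w$. I would verify this by comparing dimensions and by checking the $e^w_{\zeta,\chi_0}$ are linearly independent across all $w$. From \eqref{tu} we have $\dim W = |\mathcal{S}^{\mathbb{F}^\times_q}_{00}|\cdot |\mathbb{F}^\vee_{q^2}\setminus\mathbb{F}^\vee_q|\cdot|\mu_{q+1}^\vee\setminus\{1\}| = q(q^2-1)(q-1)\cdot(q^2-q)\cdot q = q^3(q-1)^2(q^2-1)$, matching the count already noted in the excerpt. On the other hand $|\Gamma^\vee_{\rm stp}| = |\Gamma| - |\text{non-strongly-primitive characters}|$: a character $(\chi,\psi)$ fails to be strongly primitive iff $\psi$ factors through the trace, so there are $(q^2-1)\cdot q^2 - (q^2-1)\cdot q = q(q-1)(q^2-1)$ strongly primitive characters, and $\sum_{w}\dim W_w = q(q-1)(q^2-1)\cdot q^2(q-1) = q^3(q-1)^2(q^2-1) = \dim W$. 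So it remains to see the sum $\sum_w W_w$ is direct, equivalently that the change-of-basis matrix from $\{e_{i,\psi,\chi_0}\}$ to $\{e^w_{\zeta,\chi_0}\}$ is invertible. Unwinding the definition, the coefficient of $e_{(\zeta\mu y_{00},\mu y_{00}),\psi',\chi_0}$ in $e^w_{\zeta,\chi_0}$ is $\chi^{-1}(\mu)$ together with the prescribed twist of $\psi$ depending on $\mu$; for fixed $\zeta$ and $\chi_0$ this is, up to a reindexing of the $\psi$-coordinate, a Fourier-type transform over $\mathbb{F}_{q^2}$ in the $\mu$-variable against the characters $\chi$, so invertibility follows from the nonvanishing of a Vandermonde/character-sum determinant over $\mathbb{F}^\times_{q^2}$.

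The main obstacle I anticipate is bookkeeping rather than conceptual: one must be careful that the character $\tilde\psi_\zeta$ of $N$ appearing in \eqref{bg1} is exactly the one used to define the cuspidal representation $\pi_w$ in \eqref{cha}, including the precise dependence of the $\psi$-subscript $-((\mu y_{00})^{q+1}(\zeta^q-\zeta))^{-1}$ on $\zeta$, and that the twisting cocycle $f((\zeta\mu y_{00},\mu y_{00}),g)$ from Proposition \ref{actt} reduces, for $g\in N$, to $\bar g(\zeta\mu y_{00},\mu y_{00})$ (using $\overline{G}_0 \equiv 0$ on $N$) in such a way that \eqref{bg1} genuinely reproduces $\tilde\psi_\zeta(g)$ via formula \eqref{pi3}. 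Verifying this compatibility for a general $g\in G_2^F$ — i.e. that \eqref{bg} exhibits the correct intertwining so that $\bigoplus_\zeta \overline{\mathbb{Q}}_l e^w_{\zeta,\chi_0}$ carries the induced representation ${\rm Ind}^{G_2^F}_{\Gamma N}(w\chi_0^{-1})^\vee$ and not merely its restriction to some subgroup — is where the bulk of the (routine but delicate) computation lies; once that identification of the $N$-character and the cocycle identity are in place, Frobenius reciprocity and the dimension counts above finish both parts.
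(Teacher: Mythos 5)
Your overall route is the same as the paper's: work with the explicit vectors $e^w_{\zeta,\chi_0}$, read off the action of $\mathbf{H}$ from \eqref{bg}, \eqref{bg1}, \eqref{daa1}, identify an induced-module structure (you via Frobenius reciprocity plus a dimension count and irreducibility of $\pi_w$, the paper via the transitive permutation of the lines and the stabilizer $\mathbf{H}_1=\Gamma N\times\Gamma\times I_F$ of the distinguished line), and then get part 2 from the direct sum over $w$. Your treatment of part 2 is fine and in fact supplies detail the paper leaves implicit (alternatively, directness is immediate because distinct $w$ give distinct $I_F$-characters $w\circ\mathbf{a}_E$, since $\mathbf{a}_E$ surjects onto $\Gamma$). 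However, there is a concrete error in your part 1: you attach the twist $\chi_0^{-1}$ to the $G_2^F$-action, i.e.\ you claim the block $\bigoplus_{\zeta}\overline{\mathbb{Q}}_l e^w_{\zeta,\chi_0}$ is $\bigl({\rm Ind}^{G_2^F}_{\Gamma N}(w\chi_0^{-1})\bigr)^{\vee}$ and that $\pi_{w\chi_0^{-1}}\cong\pi_w\otimes\chi_0^{-1}$. But by \eqref{bg} the scalar by which $g\in G_2^F$ acts is independent of $\chi_0$; the $\chi_0^{-1}$ enters only through the action of $\Gamma$ sitting in the \emph{second} factor $\mathcal{O}^{\times}_3$ of $\mathbf{H}$, via \eqref{daa1}. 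So for every $\chi_0$ the $G_2^F$-constituent is $\pi_w^{\vee}={\rm Ind}^{G_2^F}_{\Gamma N}(w)^{\vee}$, and the $\chi_0$-dependence lives entirely in the middle tensor factor of \eqref{ci}.

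Moreover the repair you propose, ``twist by the one-dimensional character of $G_2^F$ restricting to $\chi_0^{-1}$ on $\Gamma N$,'' is not available: one-dimensional characters of $G_2^F$ factor through $\det$, whose restriction to $\Gamma$ (embedded by \eqref{ed1}) is the norm, whereas $\chi_0^{-1}\circ\nu$ is $x\mapsto\chi_0^{-1}(x^{q-1})$ and does not factor through the norm for $\chi_0\neq 1$ (it is nontrivial on $\mu_{q+1}\cap\ker N$ when $p\neq 2$). Consequently $\pi_{w\chi_0^{-1}}$ is in general a cuspidal representation attached to a genuinely different strongly primitive character, not a twist of $\pi_w$, and carrying out your plan literally would yield $\bigoplus_{\chi_0}\pi_{w\chi_0^{-1}}^{\vee}\otimes(\cdots)$ rather than \eqref{ci}. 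A second, smaller slip: $\Gamma N\subset G_2^F$ does not act by a character on the whole block $\bigoplus_{\zeta}\overline{\mathbb{Q}}_l e^w_{\zeta,\chi_0}$; only the line with $\zeta=\bar{\zeta}_0$ carries the character $w$ of \eqref{cha} (via \eqref{bg1} for $N$ and $\chi^{-1}$ for $\mu_{q^2-1}$), the other lines carrying the conjugate characters $\tilde{\psi}_{\zeta}$ of $N$ — your Frobenius reciprocity step should be run from that single line. With the $\chi_0^{-1}$ moved to the $\Gamma\subset\mathcal{O}^{\times}_3$ factor and the reciprocity applied to the distinguished line, your argument becomes exactly the paper's and closes correctly.
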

\begin{proof}
We prove the first assertion.
We set
 $\mathbf{H}_1:=\Gamma N
  \times \Gamma \times I_F
\subset \mathbf{H}$ 
and $W^{\zeta}_w:
=\overline{\mathbb{Q}}_l
e^w_{\zeta,\chi_0}
  \subset W_w.$
By (\ref{bg}) and (\ref{daa1}), 
the stabilizer in $\mathbf{H}$ 
of a subspace
 $W_w^{\zeta_0}$
 is equal to $\mathbf{H}_1.$
 For $t \in 
 \mu_{q^2-1}(\mathcal{O}_E) 
 \subset
  \Gamma \subset G_2^F$,
 we have $t e^w_{\zeta_0,\chi_0}
 =\chi^{-1}(\bar{t})e^w_{\zeta_0,\chi_0}$. 
 Therefore, 
 by (\ref{bg1}), 
 the subgroup 
 $\Gamma N \subset G_2^F$ 
 acts on $W_w^{\zeta_0}$
via the character $w$ in (\ref{cha}).
 Hence, by (\ref{daa1}), 
 we acquire 
 \[
 W_w^{\zeta_0} 
 \simeq w^{-1} \otimes \chi^{-1}_0w 
 \otimes 
 (w \circ \mathbf{a}_E)
 \]
 as a $\mathbf{H}_1$-representation.
 Since $G_2^F \hookrightarrow 
 \mathbf{H}$ 
 permutes the subspaces
 $\{W^{\zeta}_w\}_{\zeta \in \mathbb{F}_{q^2} 
 \backslash \mathbb{F}_q}$
  transitively by (\ref{bg}), 
  we obtain the following isomorphism
  \[
  W_w \simeq \bigoplus_{\chi_0 
  \in \mu^{\vee}_{q+1} \backslash 
  \{1\}}{\rm Ind}^{\mathbf{H}}
  _{\mathbf{H}_1}(w^{-1} 
  \otimes 
  \chi_0^{-1}w 
  \otimes w \circ 
  \mathbf{a}_E) \simeq 
  \pi_w^{\vee} 
  \otimes \biggl(
  \bigoplus_{\chi_0 
  \in \mu^{\vee}_{q+1} 
  \backslash \{1\}}
  \chi_0^{-1} w \biggr)
  \otimes (w \circ \mathbf{a}_E)
  \]
  as a $\mathbf{H}$-representation.
  Hence, 
  the first assertion follows.
  
  The second assertion follows from
  the first one and the following isomorphism
  \[
  W|_{\mathbf{H}} \simeq \bigoplus_{w \in 
  \Gamma^{\vee}_{\rm stp}}
  W_w
  \] as a $\mathbf{H}$-representation.
Hence, the required assertions follow.
\end{proof}
Let 
$w=(\chi,\psi) \in 
\Gamma^{\vee}_{\rm stp}.$ 
We define a $\mathcal{O}^{\times}_3$-representation
\[
\rho_w:={\rm Hom}_{G_2^F \times I_F}
(\pi_w^{\vee} \otimes (w \circ \mathbf{a}_E),W).
\]
Then, we have 
\begin{equation}\label{i_1}
W \simeq \bigoplus_{w \in \Gamma^{\vee}_{\rm stp}}
\pi_w^{\vee} \otimes \rho_w \otimes (w \circ
 \mathbf{a}_E) 
\end{equation}
as a $\mathbf{G}$-representation.

By combining 
 Proposition \ref{gey} with some fact in 
the representation theory of a finite group in 
\cite[Lemma 16.2]{BH}, we understand $W$ as 
a $\mathbf{G}$-representation 
by the following corollary.
\begin{corollary}\label{lap}
Let the notation be as above.
Let $U:=U^2_D/U_D^3 \subset \Gamma 
\subset \mathcal{O}^{\times}_3$. Note that 
$U \simeq \mathbb{F}_{q^2}.$
The additive character 
$\psi \in \mathbb{F}^{\vee}_{q^2}
 \backslash \mathbb{F}^{\vee}_{q}$ 
 is considered as a character 
 of $U.$
Then, we have the followings:
\\1.\ The $\mathcal{O}^{\times}_3$-representation
$\rho_w$ is irreducible.
Moreover, we have the following isomorphism 
\begin{equation}\label{to}
\rho_w|_{\Gamma} \simeq 
\bigoplus_{\chi_0 \in \mu^{\vee}_{q+1} \backslash \{1\}}
\chi_0^{-1} w
\end{equation}
as a $\Gamma$-representation.
\\2.\ We have the followings
\begin{equation}\label{loy}
{\rm dim}\ \rho_w=q,\ 
\rho_w|_{U} \simeq \psi^{\oplus q},\ 
{\rm Tr}\ \rho_w(\zeta)
 =-\chi(\bar{\zeta})
\end{equation}
for $\zeta \in \mu_{q^2-1}(\mathcal{O}_E) 
 \backslash \mu_{q-1}(\mathcal{O}_F).$
 \end{corollary}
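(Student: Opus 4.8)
The plan is to deduce Corollary~\ref{lap} from Proposition~\ref{gey}.1, i.e.\ from the isomorphism
\[
W_w \simeq \pi_w^{\vee} \otimes
\Bigl(\bigoplus_{\chi_0 \in \mu_{q+1}^{\vee} \backslash \{1\}} \chi_0^{-1}w\Bigr)
\otimes (w \circ \mathbf{a}_E),
\]
together with the abstract fact about cuspidal representations of $G_2^F$ recorded in \cite[Lemma 16.2]{BH}. First I would observe that, by definition of $\rho_w$ and by \eqref{i_1}, the space $W_w$ (the $w$-isotypic part for the $\Gamma \times I_F$-action after twisting, restricted to $\mathbf{H}$) recovers $\rho_w|_\Gamma$ as $\mathrm{Hom}_{G_2^F \times I_F}(\pi_w^\vee \otimes (w\circ \mathbf{a}_E), W_w)$; combined with Proposition~\ref{gey}.1 this immediately yields \eqref{to}, the decomposition $\rho_w|_\Gamma \simeq \bigoplus_{\chi_0 \neq 1} \chi_0^{-1}w$, and hence $\dim \rho_w = |\mu_{q+1}^\vee \setminus\{1\}| = q$. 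Since the $\mathcal{O}_3^\times$-action extends the $\Gamma$-action, and restricting a $\chi_0^{-1}w$ to the subgroup $U = U_D^2/U_D^3 \subset \Gamma$ gives $\psi$ for every $\chi_0$ (the character $\chi_0 \circ \nu$ is trivial on $U$, as $\nu(1+a_1\pi)=1$), I get $\rho_w|_U \simeq \psi^{\oplus q}$ at once.

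The next step is the trace formula $\mathrm{Tr}\,\rho_w(\zeta) = -\chi(\bar\zeta)$ for $\zeta \in \mu_{q^2-1}(\mathcal{O}_E)\setminus\mu_{q-1}(\mathcal{O}_F)$. Here I would compute $\mathrm{Tr}\,(\rho_w|_\Gamma)(\zeta)$ from \eqref{to}: it equals $\sum_{\chi_0 \neq 1}(\chi_0^{-1}w)(\zeta) = \chi(\bar\zeta)\sum_{\chi_0 \neq 1}\chi_0^{-1}(\nu(\zeta))$. Since $\zeta \notin \mu_{q-1}(\mathcal{O}_F)$, we have $\nu(\zeta)=\bar\zeta^{q-1} \neq 1$ in $\mu_{q+1}$, so the inner sum is $\sum_{\chi_0 \in \mu_{q+1}^\vee} \chi_0^{-1}(\nu(\zeta)) - 1 = 0 - 1 = -1$ by orthogonality of characters; this gives $\mathrm{Tr}\,\rho_w(\zeta) = -\chi(\bar\zeta)$.

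Finally, irreducibility of $\rho_w$ as an $\mathcal{O}_3^\times$-representation is the point where I would invoke \cite[Lemma 16.2]{BH}: the above data — dimension $q$, restriction to $U$ equal to $\psi^{\oplus q}$ for a nontrivial additive character $\psi$, and the prescribed character values on $\mu_{q^2-1}(\mathcal{O}_E)$ — characterize a specific irreducible representation of $\mathcal{O}_3^\times = \mathcal{O}_D^\times/U_D^3$, namely the one attached to $w$ under the Jacquet--Langlands correspondence in that finite-level incarnation. Concretely, one knows that $\pi_w$ is cuspidal of $G_2^F$, hence the corresponding $\mathcal{O}_D^\times$-type is irreducible of dimension $q$, and its restriction to $U$ and its character on $\mu_{q^2-1}(\mathcal{O}_E)$ are exactly as displayed; matching these invariants with what we have extracted from $W_w$ forces $\rho_w$ to be that irreducible representation. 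The main obstacle I anticipate is organizing the identification of $\rho_w$ with the Jacquet--Langlands type cleanly: one must be careful that the $\mathcal{O}_3^\times$-action on $W$ used in \eqref{i_1} is genuinely the one from Propositions~\ref{giu}, \ref{op1}, \ref{zd1} (not merely its restriction to $\Gamma$), and that the normalization of the character $\psi$ and of the embedding $\Gamma \hookrightarrow \mathcal{O}_3^\times$ agrees with the conventions of \cite{BH}; once the bookkeeping of these normalizations is pinned down, the remaining verification is exactly the character comparison sketched above.
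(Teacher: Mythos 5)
Your derivation of \eqref{to} from Proposition \ref{gey}, and of \eqref{loy} from \eqref{to} (the dimension count, the fact that $\chi_0\circ\nu$ is trivial on $U$ so that each summand restricts to $\psi$ on $U$, and the orthogonality computation giving ${\rm Tr}\,\rho_w(\zeta)=-\chi(\bar{\zeta})$), is correct and is exactly how the paper passes from Proposition \ref{gey} to the first two displayed assertions.

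The irreducibility step, however, has a genuine gap. All the invariants you propose to match (dimension $q$, $\rho_w|_U\simeq\psi^{\oplus q}$, the traces on $\mu_{q^2-1}(\mathcal{O}_E)$) only see the restriction of $\rho_w$ to the subgroups $U\subset\Gamma$, and $\Gamma$ has index $q^2$ in $\mathcal{O}^{\times}_3$; agreement of these data with those of an irreducible representation does not exclude that $\rho_w$ is a direct sum of smaller $\mathcal{O}^{\times}_3$-representations whose restrictions to $\Gamma$ add up to the same thing. You would need a uniqueness statement valid among \emph{all} representations with these restrictions, which you do not supply. Moreover, appealing to ``the $\mathcal{O}^{\times}_D$-type attached to $w$ under Jacquet--Langlands'' is circular in this paper: the identification $\rho_{w,\chi}={\rm JL}(\pi_{w,\chi})$ is only established afterwards, in \ref{con}, using \cite[56.6]{BH} together with precisely the character data \eqref{loy} and the irreducibility being proved here. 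The missing idea, which is how the paper argues, is to restrict to the $p$-Sylow subgroup $H:=U_D^1/U_D^3\subset\mathcal{O}^{\times}_3$ of order $q^4$; it contains the elements $1+\varphi b_0$ that do not lie in $\Gamma$, and $U$ is central in it. Every irreducible constituent of $\rho_w|_H$ lies over $\psi$ on $U$, and \cite[Lemma 16.2]{BH}, applied to $G=H/\ker\psi$ and $N=U/\ker\psi$ (the nondegeneracy of the commutator pairing being exactly the condition $\psi\notin\mathbb{F}_q^{\vee}$, i.e.\ strong primitivity of $w$), says there is a unique such irreducible $H$-representation and that it has degree $q$. Hence every constituent of $\rho_w|_H$ has dimension $q$, and ${\rm dim}\,\rho_w=q$ forces $\rho_w|_H$, and a fortiori $\rho_w$, to be irreducible. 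With this substitution your argument coincides with the paper's proof.
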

\begin{proof}
The isomorphism (\ref{to})
follows from Proposition \ref{gey},
and (\ref{loy}) follows from (\ref{to})
immediately. Hence, it suffices to prove 
that $\rho_w$ is irreducible.
Let $H:=U_D^1/U_D^3 \subset \mathcal{O}^{\times}_3$
be a $p$-Sylow subgroup of order $q^4$. 
Then, we have $U \subset H.$
By applying \cite[Lemma 16.2]{BH} to the situation
$G=H/{\rm ker}\ \psi, N=U/{\rm ker}\ \psi$, 
we know that there exists a unique irreducible $H$-representation 
$\rho$ of degree $q$
 such that $\rho|_{U}$ is a multiple of $\psi.$
 Hence, by ${\rm dim}\ \rho_w=q$, the $\rho_w$ must be isomorphic
 to $\rho$, and hence irreducible.
Thereby, we have proved the required assertion.
\end{proof}


\subsection{Analysis of $\bigoplus_{(i,j,y_0) \in 
\mathcal{S}_1}
H^1(W^{i,c}_{j,y_0},\overline{\mathbb{Q}}_l)$}
\label{acu2}
Recall that, in subsections 
\ref{zo2} and \ref{zo3}, 
we have shown that 
the following curves
\[\{W^{i,c}_{j,y_0}\}_{(i,j,y_0)
 \in \mathcal{S}_1}\]
 with each having an affine model $a^q-a=s^2,$
 appear in the stable reduction of
  $\mathcal{X}(\pi^2)$.
In this subsection, we analyze 
the following 
cohomology group
\begin{equation}
W':=\bigoplus_{(i,j,y_0) \in 
\mathcal{S}_1}
H^1(W^{i,c}_{j,y_0},\overline{\mathbb{Q}}_l).
\end{equation}
Then, we understand $W'$ 
as a $\mathbf{G}$-representation
 very explicitly in
Corollary \ref{dek2}.
The $\mathbf{G}$-representation
$W'$ is related to 
ramified representation
of ${\rm GL}_2(F)$ of 
normalized level $1/2$.
See \ref{con} for a precise statement.

Let $Y$ be the smooth compactification
of an affine curve $Y_0:a^q-a=s^2.$
In the following, to analyze $W'$, 
we will 
show some elementary facts on
 $H^1(Y,\overline{\mathbb{Q}}_l)$
 in Lemma \ref{el1}.
We have ${\rm dim}\ H^1(Y,\overline{\mathbb{Q}}_l)=q-1$,
because the genus of $Y$ is $(q-1)/2.$
The complement $Y \backslash Y_0$
consists of one point.
Furthermore, we have
$H^1(Y,\overline{\mathbb{Q}}_l)
\simeq H_c^1(Y_0,\overline{\mathbb{Q}}_l).$
The curve $Y_0$ is a finite Galois \'{e}tale 
covering of $\mathbb{A}^1$ of Galois group 
$\mathbb{F}_q$ 
by $(a,s) \mapsto s.$
Then, we consider $H^1(Y,\overline{\mathbb{Q}}_l)$
as a $\overline{\mathbb{Q}}_l[\mathbb{F}_q]$-module.
Let $\alpha$ be an automorphism
of $Y_0$ such that $(a,s) \mapsto (a+1,s).$
On the other hand, a group  
$\mu_{2(q-1)} \ni b$ acts on $Y_0$ as follows
$\beta_b:(a,s) \mapsto (b^2a,bs).$
Note that the automorphism group of 
$Y_0$ is generated by $\alpha$ and $\beta_b$
with $b \in \mu_{2(q-1)}.$
See also \cite[Lemma 6.12]{CM2}
for the automorphism of 
the Artin-Schreier curve $a^q-a=s^2.$
For $\psi \in 
\mathbb{F}^{\vee}_q$,
let $\mathcal{L}_{\psi}(s^2)$
be the smooth $\overline{\mathbb{Q}}_l$-sheaf
on $\mathbb{A}^1$ defined by the covering 
$Y_0$ and $\psi.$

We introduce the following elementary lemma.
\begin{lemma}\label{el1}
Let the notation be as above.
\\1.\ Then, we have 
\[
H^1(Y,\overline{\mathbb{Q}}_l) \simeq \bigoplus_{\psi 
\in \mathbb{F}^{\vee}_q \backslash \{0\}}\psi=:V
\]
as a $\overline{\mathbb{Q}}_l[\mathbb{F}_q]$-module.
We write $\{e_{\psi}\}_{\psi 
\in \mathbb{F}^{\vee}_q \backslash \{0\}}$ 
for the basis of $V$ above.
\\2.\ Let $b \in \mu_{2(q-1)}$. 
For a character $\psi \in \mathbb{F}^{\vee}_q$ and 
$x \in \mathbb{F}^{\times}_q$, we write
 $\psi_x \in \mathbb{F}^{\vee}_q$
for a character $y \mapsto \psi(xy).$ 
Then, the automorphism $\beta_b$
 of $Y_0$ 
induces the following action on $V$
\[
\beta_b\ :\ e_{\psi} \mapsto c_{\psi,b}e_{\psi_{b^{-2}}}\]
with some constant 
$c_{\psi,b} \in \overline{\mathbb{Q}}^{\times}_l.$
Furthermore, we have 
$c_{\psi,-1}=1.$
\end{lemma}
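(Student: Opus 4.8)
The plan is to read off $H^1(Y,\overline{\mathbb{Q}}_l)$ from the Artin--Schreier structure of $Y_0$ and then to track how the automorphism group moves the resulting eigenlines. For part~1, since $Y_0\colon a^q-a=s^2$ is a finite Galois étale cover of $\mathbb{A}^1_s$ with group $\mathbb{F}_q$ acting by the translations $\alpha^t\colon (a,s)\mapsto(a+t,s)$, pushing forward gives $\pi_\ast\overline{\mathbb{Q}}_l\simeq\bigoplus_{\psi\in\mathbb{F}_q^\vee}\mathcal{L}_\psi(s^2)$, hence
\[
H^1_c(Y_0,\overline{\mathbb{Q}}_l)\simeq\bigoplus_{\psi\in\mathbb{F}_q^\vee}H^1_c(\mathbb{A}^1_s,\mathcal{L}_\psi(s^2))
\]
as $\overline{\mathbb{Q}}_l[\mathbb{F}_q]$-modules, with $\alpha^t$ acting on the $\psi$-summand through $\psi(t)$. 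For $\psi=0$ this summand is $H^1_c(\mathbb{A}^1,\overline{\mathbb{Q}}_l)=0$. For $\psi\neq0$ the sheaf $\mathcal{L}_\psi(s^2)$ is lisse of rank one, tamely ramified at every finite point, and, because $p\neq2$, wildly ramified at $\infty$ with Swan conductor $2$; the Grothendieck--Ogg--Shafarevich formula then gives $\chi_c(\mathbb{A}^1,\mathcal{L}_\psi(s^2))=1-2=-1$, while $H^0_c$ and $H^2_c$ vanish since $\mathcal{L}_\psi(s^2)$ has neither trivial sub nor quotient, so $\dim H^1_c(\mathbb{A}^1_s,\mathcal{L}_\psi(s^2))=1$. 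Because $Y\setminus Y_0$ is a single point, the excision sequence identifies $H^1(Y,\overline{\mathbb{Q}}_l)$ with $H^1_c(Y_0,\overline{\mathbb{Q}}_l)$ (as recalled above), and part~1 follows, with $e_\psi$ a chosen generator of the one-dimensional $\psi$-eigenspace.

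For part~2 I would observe that $\beta_b$ lies over the automorphism $s\mapsto bs$ of the base $\mathbb{A}^1_s$; equivalently, conjugation by $\beta_b$ on the deck group $\mathbb{F}_q$ is multiplication by $b^2$, which is immediate from $\beta_b\,\alpha^t\,\beta_b^{-1}=\alpha^{b^2 t}$ (note $b^2\in\mathbb{F}_q^\times$, since $b\in\mu_{2(q-1)}$). Therefore $\beta_b$ sends the $\psi$-eigenline to the eigenline on which $\alpha^t$ acts by $\psi(b^{-2}t)$, i.e.\ to $\overline{\mathbb{Q}}_l\,e_{\psi_{b^{-2}}}$, so $\beta_b(e_\psi)=c_{\psi,b}e_{\psi_{b^{-2}}}$, and $c_{\psi,b}\in\overline{\mathbb{Q}}_l^\times$ because $\beta_b$ is an automorphism.

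The remaining point is to determine the constant $c_{\psi,-1}$. Here $b^2=1$, so $\beta_{-1}\colon(a,s)\mapsto(a,-s)$ fixes each eigenline and $c_{\psi,-1}^2=1$, and the issue is the sign. I would compute it by pushing forward along $f\colon\mathbb{A}^1_s\to\mathbb{A}^1_v$, $s\mapsto v=s^2$: this gives $H^1_c(\mathbb{A}^1_s,\mathcal{L}_\psi(s^2))\simeq H^1_c(\mathbb{A}^1_v,\mathcal{L}_\psi\otimes f_\ast\overline{\mathbb{Q}}_l)$, under which $\beta_{-1}$ corresponds to the $s\mapsto -s$ involution of $f_\ast\overline{\mathbb{Q}}_l$; decomposing $f_\ast\overline{\mathbb{Q}}_l$ into its $\pm1$-eigensheaves and using $H^1_c(\mathbb{A}^1_v,\mathcal{L}_\psi)=0$ for $\psi\neq0$ to locate the surviving class then fixes $c_{\psi,-1}$. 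Alternatively one can read $c_{\psi,-1}$ off the action of $\beta_{-1}$ on a basis of regular differentials $a^i\,da/s$ of $Y$, or from the explicit automorphism description in \cite[Lemma~6.12]{CM2}. This sign bookkeeping, together with keeping the two distinct $\mathbb{F}_q$-actions (the translations $\alpha$ and conjugation by $\beta_b$) straight, is the only delicate part; the rest reduces to a routine Euler-characteristic computation.
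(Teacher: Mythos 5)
Your part~1 and the first half of part~2 are correct and essentially identical to the paper's argument: the same decomposition $H^1(Y,\overline{\mathbb{Q}}_l)\simeq H^1_c(Y_0,\overline{\mathbb{Q}}_l)\simeq\bigoplus_{\psi\neq 0}H^1_c(\mathbb{A}^1_s,\mathcal{L}_{\psi}(s^2))$ together with the Grothendieck--Ogg--Shafarevich count, and the same conjugation relation $\beta_b\,\alpha^t\,\beta_b^{-1}=\alpha^{b^2t}$ to see that $\beta_b$ carries the $\psi$-eigenline to the $\psi_{b^{-2}}$-eigenline.

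The gap is at the only delicate point, the value of $c_{\psi,-1}$: you never actually determine it --- you list three possible methods and assert that the ``sign bookkeeping'' will fix the constant, which is not a proof. Worse, carrying out your own method does not produce the stated value: in $H^1_c(\mathbb{A}^1_v,\mathcal{L}_{\psi}\otimes f_*\overline{\mathbb{Q}}_l)$ the invariant eigensheaf of $f_*\overline{\mathbb{Q}}_l$ contributes $H^1_c(\mathbb{A}^1_v,\mathcal{L}_{\psi})=0$, so the one-dimensional $\psi$-part is identified with $H^1_c(\mathbb{A}^1_v,\mathcal{L}_{\psi}\otimes j_!\mathcal{K}_{\chi_2})$, i.e.\ it lies entirely in the eigensheaf on which the covering involution $s\mapsto -s$ acts by $-1$; your computation therefore yields $c_{\psi,-1}=-1$. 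The paper's cited route (the Lefschetz trace formula, which is its entire proof of this point) gives the same value: $\beta_{-1}$ has exactly the $q+1$ non-degenerate fixed points $(a,0)$ with $a\in\mathbb{F}_q$ together with the point at infinity, so $\mathrm{Tr}\bigl(\beta_{-1}^{*}\mid H^1(Y,\overline{\mathbb{Q}}_l)\bigr)=2-(q+1)=1-q$, and since the $q-1$ eigenvalues are $\pm1$ they must all equal $-1$; equivalently, $Y/\beta_{-1}\simeq\mathbb{P}^1$ forces $H^1(Y,\overline{\mathbb{Q}}_l)^{\beta_{-1}}=0$. So the constant is $-1$, not $+1$ as asserted in the lemma: as written, your proposal leaves the key constant undetermined, and completing it along the lines you propose contradicts the equality $c_{\psi,-1}=1$ you are supposed to prove, so the sign in the statement (and in the later places where Lemma~\ref{el1}.2 is invoked) has to be revisited rather than taken as the target of the computation.
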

\begin{proof}
We have 
$H_c^1(Y_0,\overline{\mathbb{Q}}_l)
 \simeq \bigoplus_{\psi \in \mathbb{F}^{\vee}_q
\backslash 
\{0\}}
H_c^1(\mathbb{A}^1,\mathcal{L}_{\psi}(s^2))$
as a $\overline{\mathbb{Q}}_l[\mathbb{F}_q]$-module.
By the Grothendieck-Ogg-Shafarevich formula, 
we have ${\rm dim}\ 
H_c^1(\mathbb{A}^1,\mathcal{L}_{\psi}(s^2))=1$
and $H_c^1(\mathbb{A}^1,\mathcal{L}_{\psi}(s^2)) \simeq \psi$
as a $\overline{\mathbb{Q}}_l[\mathbb{F}_q]$-module.
Hence, the first assertion follows.

The second assertion follows from 
$\beta_b \alpha \beta^{-1}_b
=\alpha^{b^2}$ for any $b \in \mu_{2(q-1)}$.
The assertion $c_{\psi,-1}=1$ follows from the 
Lefschetz trace formula.
Hence, we have proved the required assertions.
\end{proof}

By ${\rm dim}\
 H^1(W^{i,c}_{j,y_0},\overline{\mathbb{Q}}_l)=q-1,$
 we have 
${\rm dim}\ W'=2q(q-1)(q^2-1)^2$.
We set
\[
\mathcal{T}:=\mathcal{S}_1 \times 
(\mathbb{F}^{\vee}_q 
\backslash \{0\})=
\mathbb{F}^{\times}_q \times 
\mathbb{F}_q \times 
\mathbb{P}^1(\mathbb{F}_q) 
\times \mu_{2(q^2-1)} \times 
(\mathbb{F}_q^{\vee} \backslash \{0\})
\]
Then, by Lemma \ref{el1}.1 and 
the identification
$(\mathcal{O}_F/\pi^2)
^{\times} \simeq 
\mathbb{F}_q^{\times} 
\times 
\mathbb{F}_q$, 
we have the following isomorphism
\begin{equation}\label{thq}
W' \simeq \bigoplus
_{((\zeta,\tilde{\mu}),j,y_0,\psi) \in 
\mathcal{T}}\overline{\mathbb{Q}}_l
e_{(\zeta,\tilde{\mu}),j,y_0,\psi}
\end{equation}
as a 
$\overline{\mathbb{Q}}_l$-vector space.
In the following, by 
the $\mathbf{G}$-action on $W'$, 
we consider
 the right hand
  side in 
  (\ref{thq})
 as a 
 $\mathbf{G}$-representation.

In the following, we 
define $\mathbf{G}$-subrepresentations 
$W^w_a \subset W'$ for $a \in \mathbb{F}_q^{\times}$
and $w \in ((\mathcal{O}_F/\pi^2)^{\times})^{\vee}$, 
and investigate a shape of $W^w_a$ as a 
$\mathbf{G}$-representation
in 
Proposition \ref{dek}.
As a result, we understand $W'$
very explicitly in Corollary \ref{dek2}.

Let $w=(w_1,w_2) 
\in ((\mathcal{O}_F/\pi^2)^{\times})^{\vee}
 \simeq 
(\mathbb{F}_q^{\times})^{\vee} 
\times \mathbb{F}_q^{\vee}$, 
 $\psi \in \mathbb{F}_q^{\vee} 
 \backslash \{0\}$, 
 $\zeta \in \mathbb{F}_q^{\times}$, 
 $j \in \mathbb{P}^1(\mathbb{F}_q)$
  and $y_0 \in \mu_{2(q^2-1)}.$
We define a vector of $W'$
under the isomorphism 
(\ref{thq})
\[
e^w_{\zeta,y_0,j,\psi}:=
\sum_{(\mu,\tilde{\mu}) \in 
\mathbb{F}_q^{\times} \times \mathbb{F}_q}
w_1^{-1}(\mu)w_2^{-1}(\tilde{\mu})
e_{(\mu^2\zeta ,\tilde{\mu}),j,\mu y_0,\psi}.
\]
Then, clearly we have 
\begin{equation}\label{vb}
e^w_{{\mu_1}^2\zeta,\mu_1 y_0,j,\psi}
=w_1(\mu_1)e^w_{\zeta,y_0,j,\psi}
\end{equation}
for any $\mu_1 \in \mathbb{F}_q^{\times}$.

We consider a set $\mathbb{F}^{\times}_q
\times \mu_{2(q^2-1)}$
and the following equivalence relation
on the set:
\[
(\zeta,y_0) \sim (\zeta',y_0') \Leftrightarrow 
(\zeta',y_0')=(\mu^2 \zeta,\mu y_0) 
\]
for some $\mu \in
 \mathbb{F}^{\times}_q.$
Let $\mathcal{U}:=(\mathbb{F}^{\times}_q 
\times \mu_{2(q^2-1)})/\sim.$
Then, we have $|\mathcal{U}|=2(q^2-1).$ 
We write $[(\zeta,y_0)] \in \mathcal{U}.$
For each 
$a \in \mathbb{F}^{\times}_q,$
we set
\[
\mathcal{K}_a:=\{
[(\zeta,y_0)] \in \mathcal{U}\ |\ 
a=\zeta^2y_0^{-2(q+1)}
\} \subset \mathcal{U}.
\]
Then,
 we have $|\mathcal{K}_a|=2(q+1).$
For $w \in 
((\mathcal{O}_F/\pi^2)
^{\times})^{\vee}$, 
$[(\zeta,y_0)] \in \mathcal{U}$,   
$j \in \mathbb{P}^1(\mathbb{F}_q)$
 and $\psi \in \mathbb{F}_q^{\vee} 
\backslash \{0\}$, 
We define $W^w_{[(\zeta,y_0)],j,\psi}:=
\overline{\mathbb{Q}}_l
e^w_{\zeta,y_0,j,\psi} \subset W'.$
This one-dimensional 
$\overline{\mathbb{Q}}_l$-vector 
subspace depends on
$(w,[(\zeta,y_0)],j,\psi)$
by (\ref{vb}).

We fix a character $\psi \in \mathbb{F}_q^{\vee} 
\backslash \{0\}$.
We set $\mathcal{T}_1:=\mathbb{F}^{\times}_q \times 
((\mathcal{O}_{F}/\pi^2)^{\times})^{\vee}.$
Obviously, we have $|\mathcal{T}_1|=q(q-1)^2.$
Let $(a,w) \in \mathcal{T}_1.$
We write
 $w=(w_1,w_2) 
\in ((\mathcal{O}_F/\pi^2)^{\times})^{\vee} \simeq 
(\mathbb{F}_q^{\times})^{\vee} 
\times \mathbb{F}_q^{\vee}$.
Then, we define a $\mathbf{G}$-subrepresentation of $W'$
as follows
\begin{equation}\label{w_1}
W^w_{a}:=
\bigoplus_{[(\zeta,y_0)] \in \mathcal{K}_a}
\bigoplus_{j \in \mathbb{P}^1(\mathbb{F}_q)}
\bigoplus_{\zeta_1 \in \mathbb{F}^{\times}_q}
W^w_{[(\zeta_1\zeta,y_0)],j,\psi_{\zeta_1^{-1}}} \subset W'.
\end{equation}
Then, we have ${\rm dim}\ W_a^w=2(q+1)(q^2-1).$
Then, by (\ref{thq}), 
we easily check 
the following isomorphism
\begin{equation}\label{vb2}
W' \simeq 
\bigoplus_{(a,w) \in \mathcal{T}_1}W^{w}_{a}
\end{equation}
as a $\overline{\mathbb{Q}}_l$-vector space.
By the action of $\mathbf{G}$ on $W'$, 
we consider 
the right hand side of (\ref{vb2}) as 
a $\mathbf{G}$-representation.
Then, the subspace 
$W^w_a$ is $\mathbf{G}$-stable
for each $(a,w) \in \mathcal{T}_1$.
In the following, we will explicitly write 
down the action of $\mathbf{G}$ on $W^w_a.$
To do so, we prepare some notations.

We fix elements 
$(a,(w_1,w_2)) \in \mathcal{T}_1$
and $[(\zeta,y_0)] \in \mathcal{K}_a.$ 
Let $\tilde{\zeta}, 
\tilde{a} \in \mu_{q-1}(\mathcal{O}_F)$
be the unique liftings of $ 
\zeta,a \in \mathbb{F}^{\times}_q$ 
respectively.
 Let $E/F$ be the unramified 
 quadratic extension as before.
Let 
\begin{equation}\label{aa1}
\alpha:=\left(
\begin{array}{cc}
0 &  1\\
\pi \tilde{a} & 0
\end{array}
\right) \in {\rm GL}_2(F).
\end{equation} 
Of course, we have $\alpha^2=\pi \tilde{a}.$
We set ${E_1}:=F(\alpha) \subset {\rm GL}_2(F).$
Then, ${E_1}$ is a quadratic 
ramified extension of $F.$
We choose an embedding 
\[
{E_1} \hookrightarrow D\ :\ 
a_0+b_0\alpha \mapsto a_0+b_0 \alpha' 
\]
for $a_0,b_0 \in F.$
Note that we have 
$\alpha'/\varphi \equiv {\zeta}/
y_0^2\ ({\rm mod}\ \varphi).$
Let 
\[
{\rm M}_2(F) \supset 
\mathfrak{U}:=
\left(
\begin{array}{cc}
\mathcal{O}_F & \mathcal{O}_F \\
\mathfrak{p}_F & \mathcal{O}_F
\end{array}
\right) \supset 
\mathfrak{B}_{\mathfrak{U}}:=
\left(
\begin{array}{cc}
\mathfrak{p}_F & \mathcal{O}_F \\
\mathfrak{p}_F & \mathfrak{p}_F
\end{array}
\right),\ U_{\mathfrak{U}}^n:
=1+\mathfrak{B}^n_{\mathfrak{U}}
 \subset {\rm GL}_2(\mathcal{O}_F)
\]
for $n \geq 1.$
Note that $\mathfrak{U} \subset 
{\rm M}_2(F)$ is a chain order
and $\mathfrak{B}_{\mathfrak{U}}$
is the Jacobson radical 
of the order.
See \cite[Section 12]{BH} 
for more details.
Note that $U_{\mathfrak{U}}^n$
is a compact open subgroup of 
${\rm GL}_2(F)$.
The image of $\mathfrak{U}$ and 
${U}^n_{\mathfrak{U}}$
under 
${\rm GL}_2(\mathcal{O}_F) 
\to G_2^F$
is denoted by $\overline{\mathfrak{U}}$
and $\overline{U}^n_{\mathfrak{U}}$
 respectively.
Similarly, we denote by 
$\overline{\mathcal{O}}^{\times}_{E_1}$
the image of $\mathcal{O}^{\times}_{E_1} 
\subset {\rm GL}_2(\mathcal{O}_F)$
under ${\rm GL}_2(\mathcal{O}_F) 
\to G_2^F$.
The subgroup 
$\overline{\mathcal{O}}^{\times}_{E_1}
\overline{U}^1_{\mathfrak{U}}
\subset 
\overline{\mathfrak{U}}^{\times}$
 is a normal subgroup 
and its index is equal to $q-1.$
Thereby, 
we have 
$[G_2^F:
\overline{\mathcal{O}}^{\times}_{E_1}
\overline{U}^1_{\mathfrak{U}}]=q^2-1.$
Similarly, let $\overline{\mathcal{O}}
^{\times}_{E_1} 
\subset 
\mathcal{O}^{\times}_3$ and 
$\overline{U}^n_D \subset 
\mathcal{O}^{\times}_3$
be the images of
 $\mathcal{O}^{\times}_{E_1} 
\hookrightarrow 
\mathcal{O}^{\times}_D$ and 
$U_D^n \subset 
\mathcal{O}^{\times}_D$ 
under the canonical map
$\mathcal{O}^{\times}_D \to 
\mathcal{O}^{\times}_3$
respectively.
Then, we have $
[\mathcal{O}^{\times}_3:
\overline{\mathcal{O}}^{\times}_{E_1}
\overline{U}_D^1]=q+1.$

Now, we describe
 the $\mathbf{G}$-action on $W^w_a.$
\paragraph{$G_2^F$-action}
First, we consider the action of 
$G_2^F.$
Let $g=
\left(
\begin{array}{cc}
a_0+a_1\pi & b_0+b_1\pi \\
c_1\pi & d_0+d_1\pi
\end{array}
\right)
 \in \overline{\mathfrak{U}}^{\times}$
 with $a_i,b_i,c_1,d_i \in 
 \mu_{q-1}(\mathcal{O}_F)
 \cup \{0\}\ (i=0,1).$ 
Let $\tilde{w}_2:G_2^F \to 
\overline{\mathbb{Q}}^{\times}_l$ be a character 
defined by 
the composite of 
${\rm det}:G_2^F 
\to (\mathcal{O}_F/\pi^2)^{\times}$
and $(\mathcal{O}_F/\pi^2)^{\times} \simeq
 \mathbb{F}^{\times}_q \times \mathbb{F}_q 
 \overset{\rm pr_2}{\to} \mathbb{F}_q \overset{w_2}{\to} 
 \overline{\mathbb{Q}}^{\times}_l$.
 Let $b \in \mu_{2(q^2-1)}$
  be an element such that 
 $b^2=\bar{a}_0/\bar{d}_0.$
 Let $[(\zeta',y'_0)] \in \mathcal{K}_a.$
 Then, $g^{-1}$ acts on $W_a^w$ as follows by 
 Proposition \ref{zg1} and 
 Lemma \ref{el1}.2
 \begin{equation}\label{zzg1}
g^{-1}: e^w_{\zeta'\zeta_1,y'_0,j,\psi_{\zeta_1^{-1}}} 
\mapsto
 c_{\psi_{\zeta_1^{-1}},b}w_1(\bar{d}_0)
 \tilde{w}_2(g)
 \psi_{\zeta_1^{-1}}(\bar{c}_1\bar{a}_0^{-1}
 +\bar{b}_0\bar{d}_0^{-1}\zeta_1^2a)
 e^w_{ \bar{a}_0\bar{d}_0^{-1}\zeta' \zeta_1,
 y'_0,j,\psi_{(\bar{a}_0\bar{d}_0^{-1}\zeta_1)^{-1}}}.
 \end{equation}
By (\ref{zzg1}), the quotient 
$\overline{\frak{U}}^{\times}
/\overline{\mathcal{O}}^{\times}_{E_1}
\overline{U}^1_{\mathfrak{U}}$ 
acts on the index set
$\zeta_1 \in \mathbb{F}_q^{\times}$ 
of $W^w_a$ 
simply transitively.
In particular, let $g=
\left(
\begin{array}{cc}
a_0+a_1\pi & b_0+b_1\pi \\
c_1\pi & a_0+d_1\pi
\end{array}
\right)
 \in \overline{\mathcal{O}}^{\times}_{E_1}
\overline{U}^1_{\mathfrak{U}} \subset 
\overline{\mathfrak{U}}^{\times}.$ 
Then, $g^{-1}$ 
acts on $W_a^w$
as follows by (\ref{zzg1}) and Lemma \ref{el1}.2
\begin{equation}\label{xz1}
g^{-1}: e^w_{\zeta'\zeta_1,y'_0,j,\psi_{\zeta_1^{-1}}} 
\mapsto
 w_1(\bar{a}_0)\tilde{w}_2(g)
 \psi_{\zeta_1^{-1}}
 (\bar{a}_0^{-1}(\bar{c}_1+\bar{b}_0a\zeta_1^2))
 e^w_{\zeta' \zeta_1,
 y'_0,j,\psi_{\zeta_1^{-1}}}.
\end{equation}
Thereby, the restriction
 $W_a^w|_{\overline{\mathcal{O}}^{\times}_{E_1}
\overline{U}^1_{\mathfrak{U}} 
\times \{1\} \times \{1\}}$
is a direct sum of characters.
 Let us define characters  
 \begin{equation}\label{c_1}
 \Lambda_{w_1,a}\ :\ \overline{\mathcal{O}}^{\times}_{E_1}
\overline{U}^1_{\mathfrak{U}} \to 
\overline{\mathbb{Q}}^{\times}_l:
g \mapsto w_1(\bar{a}_0)
 \psi(\bar{a}_0^{-1}(\bar{c}_1+\bar{b}_0a)).
 \end{equation} 
 and 
\begin{equation}\label{ch1}
 \tilde{w}_2\Lambda_{w,a}\ :\ 
 \overline{\mathcal{O}}^{\times}_{E_1}
\overline{U}^1_{\mathfrak{U}} \to 
\overline{\mathbb{Q}}^{\times}_l:
g \mapsto \Lambda_{w_1,a}(g)\tilde{w}_2(g).
 \end{equation}

\paragraph{$\mathcal{O}^{\times}_3$-action}
Secondly, we consider the action of 
$\mathcal{O}^{\times}_3$
on $W_a^w.$
Let $b:=a_0+\varphi b_0+\pi a_1 \in 
\mathcal{O}^{\times}_3$ with $a_0 \in 
\mu_{q^2-1}(\mathcal{O}_{E})$ 
and $a_1,b_0 \in \mu_{q^2-1}(\mathcal{O}_{E}) \cup 
\{0\}.$
Let $\tilde{w}_2^D:\mathcal{O}^{\times}_3 \to 
\overline{\mathbb{Q}}^{\times}_l$
be the composite
 of ${\rm Nrd}_{D/F}:\mathcal{O}^{\times}_3 
 \to (\mathcal{O}_F/\pi^2)^{\times}$
and $(\mathcal{O}_F/\pi^2)^{\times} 
\overset{\rm pr_2}{\to} 
\mathbb{F}_q \overset{w_2}{\to} 
\overline{\mathbb{Q}}^{\times}_l$.
Then, $b$ acts on $W_a^w$ as follows
by Proposition \ref{op1}
\[
b:e^w_{\zeta'\zeta_1,y'_0,j,\psi_{{\zeta_1}^{-1}}} 
\mapsto
\tilde{w}_2^D(b)
\psi_{\zeta_1^{-1}}({\rm Tr}_{\mathbb{F}_{q^2}/\mathbb{F}_q}
(\bar{b}_0\bar{a}^{-1}_0{y'_0}^{-2q})\zeta'\zeta_1)
e^w_{\zeta'\zeta_1\bar{a}_0^{q+1},
y'_0\bar{a}_0,j,\psi_{{\zeta_1}^{-1}}}.
\]
In particular, 
if $b=a_0+\varphi b_0+\pi a_1 \in 
\overline{\mathcal{O}}^{\times}_{E_1}
\overline{U}^1_{D}$
with $a_0 \in \mu_{q-1}(\mathcal{O}_F),$
$b$ acts on $W^w_a$ as follows by (\ref{vb}) 
\begin{equation}\label{dx1}
b:e^w_{\zeta'\zeta_1,y'_0,j,\psi_{{\zeta_1}^{-1}}} \mapsto
w_1(\bar{a}_0)\tilde{w}_2^D(b)
\psi_{\zeta_1^{-1}}({\rm Tr}
_{\mathbb{F}_{q^2}/
\mathbb{F}_q}(\bar{b}_0\bar{a}^{-1}_0
{y'_0}^{-2q})\zeta'\zeta_1)
e^w_{\zeta'\zeta_1,y'_0,j,\psi_{{\zeta_1}^{-1}}}.
\end{equation}
Hence, $W_a^w|_{\{1\} \times \overline{\mathcal{O}}
^{\times}_{E_1}\overline{U}^1_{D} \times \{1\}}$
is a direct sum of characters.
Let us define characters
\begin{equation}\label{c_2}
\Lambda_{w_1,a}^D:\overline{\mathcal{O}}
^{\times}_{E_1}\overline{U}^1_{D} \to 
\overline{\mathbb{Q}}^{\times}_l:
b \mapsto w_1(\bar{a}_0)
\psi({\rm Tr}
_{\mathbb{F}_{q^2}/
\mathbb{F}_q}(\bar{b}_0
\bar{a}^{-1}_0y_0^{-2q})\zeta)
\end{equation}
and 
\begin{equation}\label{ch2}
\tilde{w}_2^D
\Lambda_{w_1,a}^D\ :\ 
\overline{\mathcal{O}}
^{\times}_{E_1}\overline{U}^1_{D} \to 
\overline{\mathbb{Q}}^{\times}_l:
b \mapsto \tilde{w}^D_2(b)
\Lambda_{w_1,a}^D(b).
\end{equation}

\paragraph{Inertial action}
Finally, we consider the action of inertia on $W_a^w.$
Recall the following map in (\ref{van1})
\[
\mathbf{a}_{E_1}:I_{E_1}^{\rm ab} \to 
(\mathcal{O}_{E_1}/\pi)^{\times} \simeq 
\mathbb{F}^{\times}_q \times \mathbb{F}_q\ ;\ 
\sigma \mapsto (\zeta(\sigma),\lambda(\sigma)).
\]
Let $\tilde{w}'_2:
(\mathcal{O}_{E_1}/\pi)^{\times} 
\to 
\overline{\mathbb{Q}}^{\times}_l$
be the composite 
$(\mathcal{O}_{E_1}/\pi)^{\times} 
\overset{\rm Nr_{{E_1}/F}}{\to} 
(\mathcal{O}_F/\pi^2)^{\times} 
\overset{\rm pr_2}{\to} \mathbb{F}_q 
\overset{w_2}{\to} \overline{\mathbb{Q}}^{\times}_l.$
We write $\tilde{w}'_2 \circ \mathbf{a}_{E_1}$
for the composite of $I^{\rm ab}_{E_1}
 \overset{\mathbf{a}_{E_1}}{\to}
  \mathcal{O}^{\times}_{E_1} \to 
  (\mathcal{O}_{E_1}/\pi)^{\times}$ and $\tilde{w}'_2.$
Let
\[
\mathbf{a}_F:I_F^{\rm ab} \to 
\mathcal{O}^{\times}_F \to 
(\mathcal{O}_F/\pi^2)^{\times}
\simeq \mathbb{F}^{\times}_q \times \mathbb{F}_q
\]
\[
\sigma \mapsto (\zeta_0(\sigma),\lambda_0(\sigma))
\]
be the reciprocity map 
in (\ref{van0}).
Let $\sigma \in I_F$ 
and $\kappa$ an element 
such that $\kappa^{2q^3(q-1)}=\pi.$ 
We write $\sigma(\kappa)=\zeta_1(\sigma)\kappa$
with $\zeta_1(\sigma) \in \mu_{2q^3(q-1)}.$
Let the notation be as 
in Lemma \ref{ci2''}.
Then, $\sigma$ acts on $W^w_a$ as follows 
by Lemma \ref{ci2''}
\[
\sigma:e^w_{\zeta_1\zeta',y'_0,j,\psi_{{\zeta_1}^{-1}}} \mapsto
w_2(\lambda_0(\sigma))
\psi_{\zeta_1^{-1}}(a_0b_0)
c_{\psi_{\zeta_1^{-1}},c_0}
e^w_{\zeta'\zeta_1
\bar{\zeta}_1(\sigma)^2,
\bar{\zeta}_1(\sigma)y'_0,j,
\psi_{\bar{\zeta}_1(\sigma)^{q-1}\zeta_1^{-1}}}.
\]
Note that we have $\bar{\zeta}_1(\sigma)^2
=\zeta_0(\sigma)$.
If $\sigma \in I_{E_1}$, then, we have 
$\zeta(\sigma)=\bar{\zeta}_1(\sigma) 
\in \mathbb{F}^{\times}_q$ and $c_0 \in \{\pm 1\}.$
Hence, in particular, 
$\sigma \in I_{E_1}$ acts on $W_a^w$ as follows
by Corollary \ref{zi1},
 Lemma \ref{el1} and (\ref{vb})
\begin{equation}\label{ix1}
\sigma:e^w_{\zeta_1\zeta',
y'_0,j,\psi_{\zeta_1^{-1}}} \mapsto
w_1(\zeta(\sigma))
\tilde{w}'_2 \circ \mathbf{a}_{E_1}(\sigma)
\psi_{\zeta_1^{-1}}(-y_0^{q^2-1}2a\lambda(\sigma)\zeta_1^2)
e^w_{\zeta_1\zeta',y'_0,j,\psi_{\zeta_1^{-1}}}.
\end{equation}
Note that we have $y_0^{q^2-1} \in \{\pm1\}.$
Let us define characters 
\begin{equation}\label{c_3}
\Lambda'_{w_1,a}\ :\ 
(\mathcal{O}_{E_1}/\pi)^{\times} 
\to \overline{\mathbb{Q}}^{\times}_l:
\zeta_0+\lambda_0 \alpha 
\mapsto w_1(\bar{\zeta}_0)
\psi(2a\frac{\bar{\lambda}_0}{\bar{\zeta}_0})
\end{equation}
with $\zeta_0 \in \mu_{q-1}(\mathcal{O}_{E_1}),
\lambda_0 \in \mu_{q-1}(\mathcal{O}_{E_1}) \cup 
\{0\}$
and 
\begin{equation}\label{ch3}
(\tilde{w}'_2 
\Lambda'_{w_1,a}) \circ \mathbf{a}_{E_1}:
I_{E_1}^{\rm ab} \to \overline{\mathbb{Q}}^{\times}_l:
\tilde{w}'_2 \circ \mathbf{a}_{E_1}(\sigma)
\Lambda_{w_1,a} \circ \mathbf{a}_{E_1}(\sigma).
\end{equation}

\begin{lemma}\label{keyr}
Let $\Lambda_{w_1,a}$, 
$\Lambda^D_{w_1,a}$ 
and $\Lambda'_{w_1,a}$ 
be the 
characters defined 
 in (\ref{c_1}), 
 (\ref{c_2}) and (\ref{c_3})
  respectively.
Then, we have the followings;
\\1.\ The restriction $\Lambda_1$ 
of $\Lambda_{w_1,a}$
 to a subgroup $\overline{\mathcal{O}}
 ^{\times}_{E_1}$
and the restriction 
$\Lambda_2$ of  
$\Lambda_{w_1,a}^D$
to a subgroup 
$\overline{\mathcal{O}}^{\times}_{E_1}$
factor through 
$(\mathcal{O}_{E_1}/\pi)^{\times}$.
Moreover, we have 
$\Lambda_1=\Lambda_2=\Lambda'_{w_1,a}$.
\\2.\ The restriction of the character $\Lambda_{w_1,a}$ 
to a subgroup $\overline{U}^1_{\mathfrak{U}}$
is given by $g \mapsto \psi({\rm Tr}
(\frac{\alpha}{\pi}(g-1)))$,
where ${\rm Tr}$ is the composite $G_2^F 
\overset{\rm Trace}{\longrightarrow}
\mathcal{O}_F/\pi^2 \overset{\rm can.}{\to} \mathbb{F}_q.$
Similarly, the restriction of 
the character $\Lambda_{w_1,a}^D$
to a subgroup $\overline{U}^1_{D}$
is given by $b \mapsto 
\psi(\Tilde{{\rm Trd}}_{D/F}
(\frac{\alpha'}{\pi}(b-1)))$,
where $\Tilde{\rm Trd}_{D/F}$
is the composite 
$\mathcal{O}_3^{\times} \overset{\rm 
Trace}{\to} \mathcal{O}_F/\pi^2 
\overset{\rm can.}{\to} \mathbb{F}_q.$
The restriction of $\Lambda'_{w_1,a}$
to a subgroup $x \in \overline{U}^1_{E_1}$
is given by $\psi 
\circ {\rm Tr}_{{E_1}/F}(\frac{\alpha}{\pi}(x-1)).$
\end{lemma}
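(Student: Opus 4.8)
The plan is to verify Lemma \ref{keyr} by unwinding the definitions of the three characters $\Lambda_{w_1,a}$, $\Lambda_{w_1,a}^D$ and $\Lambda'_{w_1,a}$ given in \eqref{c_1}, \eqref{c_2} and \eqref{c_3}, and comparing them on the relevant subgroups. All three are built from the additive character $\psi$ and the multiplicative character $w_1$, so the content is a matching of embeddings of $E_1$ into $\mathrm{GL}_2(F)$ and into $D$, together with the local class field theory normalization for $E_1/F$. First I would set up coordinates: recall $\alpha=\left(\begin{smallmatrix}0&1\\\pi\tilde a&0\end{smallmatrix}\right)$ with $\alpha^2=\pi\tilde a$, and the embedding $E_1\hookrightarrow D$ sending $a_0+b_0\alpha\mapsto a_0+b_0\alpha'$ with $\alpha'/\varphi\equiv\bar\zeta/\bar y_0^{\,2}\pmod{\varphi}$. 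The identity $a=\zeta^2 y_0^{-2(q+1)}$ (i.e.\ $[(\zeta,y_0)]\in\mathcal K_a$) is what makes the three formulas line up, and I would use it repeatedly to rewrite $\bar\zeta/\bar y_0^{\,2q}$ and similar quantities in terms of $a$.

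For part 1, I would argue as follows. An element of $\overline{\mathcal O}^\times_{E_1}$ inside $\overline{\mathcal O}^\times_{E_1}\overline U^1_{\mathfrak U}$ is a scalar-type matrix $a_0+b_0\alpha$ modulo $\pi^2$ with $a_0\in\mu_{q-1}(\mathcal O_F)$, $b_0\in\mu_{q-1}(\mathcal O_F)\cup\{0\}$; writing such an element as $\left(\begin{smallmatrix}a_0+a_1\pi&b_0+b_1\pi\\c_1\pi&a_0+d_1\pi\end{smallmatrix}\right)$ in the notation of \eqref{c_1} forces (from $\alpha^2=\pi\tilde a$) the congruences $c_1\equiv \tilde a b_0$, $a_1\equiv d_1$, $b_1\equiv 0\pmod\pi$ for the off-diagonal structure to match the image of $\alpha$; plugging $\bar c_1=a\bar b_0$ (using $\tilde a\equiv a$) into $\Lambda_{w_1,a}$ gives $w_1(\bar a_0)\psi(\bar a_0^{-1}(a\bar b_0+a\bar b_0))=w_1(\bar a_0)\psi(2a\bar b_0/\bar a_0)$, which depends only on the image of $a_0+b_0\alpha$ in $(\mathcal O_{E_1}/\pi)^\times$. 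Under the identification $(\mathcal O_{E_1}/\pi)^\times\ni\zeta_0+\lambda_0\alpha$ with $\zeta_0\in\mu_{q-1}(\mathcal O_{E_1})$, $\lambda_0\in\mu_{q-1}(\mathcal O_{E_1})\cup\{0\}$, this is exactly $\Lambda'_{w_1,a}$ of \eqref{c_3}. For $\Lambda_{w_1,a}^D$ on $\overline{\mathcal O}^\times_{E_1}$ I would do the same: an element $a_0+\varphi b_0$ with $a_0\in\mu_{q-1}(\mathcal O_F)$ coming from $E_1$ must have $\varphi b_0$ proportional to $\alpha'$, so $\bar b_0=\bar\beta\,\bar\zeta/\bar y_0^{\,2}$ for the corresponding scalar $\bar\beta$, and then $\mathrm{Tr}_{\mathbb F_{q^2}/\mathbb F_q}(\bar b_0\bar a_0^{-1}y_0^{-2q})\zeta=\mathrm{Tr}_{\mathbb F_{q^2}/\mathbb F_q}(\bar\beta\bar a_0^{-1}\bar\zeta y_0^{-2q-2})\zeta$; using $a=\zeta^2 y_0^{-2(q+1)}$ and that $\bar\beta\in\mathbb F_q$ so the trace doubles it, this reduces to $\psi(2a\bar\beta/\bar a_0)$, matching $\Lambda'_{w_1,a}$ again. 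So $\Lambda_1=\Lambda_2=\Lambda'_{w_1,a}$.

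For part 2, on the pro-$p$ subgroups the characters become linear. For $g\in\overline U^1_{\mathfrak U}$, write $g=1+\left(\begin{smallmatrix}a_1\pi&b_1\\c_1\pi&d_1\pi\end{smallmatrix}\right)$ with entries in $\mathcal O_F$; then $\frac\alpha\pi(g-1)$ has matrix trace $\frac1\pi\,\mathrm{Tr}\!\left(\left(\begin{smallmatrix}0&1\\\pi\tilde a&0\end{smallmatrix}\right)\left(\begin{smallmatrix}a_1\pi&b_1\\c_1\pi&d_1\pi\end{smallmatrix}\right)\right)=\frac1\pi(c_1\pi+\tilde a b_1)=c_1+ab_1\pmod\pi$, while from \eqref{c_1} the restriction of $\Lambda_{w_1,a}$ is $g\mapsto\psi(\bar c_1+\bar b_0 a)$; since on $\overline U^1_{\mathfrak U}$ we have $a_0\equiv1$ and the relevant off-diagonal entry is $b_1$ rather than $b_0$, the two agree after identifying the normalizations, giving $g\mapsto\psi(\mathrm{Tr}(\frac\alpha\pi(g-1)))$. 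The same bookkeeping with $\alpha'\in D$, $\alpha'^2=\pi\tilde a$, and the reduced trace $\mathrm{Trd}_{D/F}$ gives the $D$-side statement, and for $x\in\overline U^1_{E_1}$ the ambient trace restricts to $\mathrm{Tr}_{E_1/F}$, yielding $x\mapsto\psi\circ\mathrm{Tr}_{E_1/F}(\frac\alpha\pi(x-1))$. The main obstacle throughout is keeping the normalizations straight: which matrix/quaternion entry ($b_0$ vs.\ $b_1$, $c_1$ vs.\ $\tilde a b_1$) corresponds to a given coordinate of $E_1$, and correctly using $\tilde a\equiv a$, $y_0^{q^2-1}\in\{\pm1\}$ and the defining relation $a=\zeta^2y_0^{-2(q+1)}$; once those identifications are pinned down, each verification is a short direct computation with $2\times2$ matrices and with the standard description $\mathcal O_D=\mathcal O_E\oplus\varphi\mathcal O_E$.
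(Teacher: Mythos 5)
Your proposal is correct and follows the same route as the paper, whose proof of this lemma is simply the assertion that the claims follow by direct computation; your unwinding of \eqref{c_1}--\eqref{c_3} via the matrix $\alpha$, the embedding $E_1\hookrightarrow D$ through $\alpha'$, and the relation $a=\zeta^2y_0^{-2(q+1)}$ (with the trace doubling $\mathbb{F}_q$-elements) is exactly the intended verification. The only caveat is notational: for $g\in\overline{U}^1_{\mathfrak{U}}$ the upper-right entry of $g-1$ is $b_0+b_1\pi$ with reduction $\bar{b}_0$ entering the character, so the matching of ${\rm Tr}(\frac{\alpha}{\pi}(g-1))\equiv \bar{c}_1+a\bar{b}_0$ with \eqref{c_1} is immediate and needs no appeal to ``identifying the normalizations.''
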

\begin{proof}
The required assertions 
are checked by direct computations. 
\end{proof}
\begin{remark}
For the meaning of the above lemma,
see \cite[19.2 and 19.3]{BH} and \cite[56.5]{BH}.
\end{remark}

\begin{proposition}\label{dek}
Let the notation be as in 
(\ref{w_1}), 
(\ref{ch1}), (\ref{ch2}) and (\ref{ch3}).
We set
\[
\pi_{w,a}:={\rm Ind}^{G^F_2}_{\overline{
\mathcal{O}}_{E_1}^{\times} \overline{U}^1_{\mathfrak{U}}}
(\tilde{w}'_2 \Lambda_{w_1,a}),\ 
\rho_{w,a}:=
{\rm Ind}^{\mathcal{O}^{\times}_3}
_{\overline{\mathcal{O}}_{E_1}^{\times}
\overline{U}^1_{D}}
(\tilde{w}_2^D\Lambda_{w_1,a}^D),\ 
\pi'_{w,a}:={\rm Ind}_{{E_1}/F}((\tilde{w}'_2 \Lambda'_{w_1,a})
 \circ \mathbf{a}_{E_1}).
\]
\\1.\ Then, 
$\pi_{w,a}$ and $\pi'_{w,a}$ are irreducible.
On the other hand, $\rho_{w,a}$ is not 
irreducible.
We have 
${\rm dim}\ \pi_{w,a}=q^2-1$, ${\rm dim}\ \rho_{w,a}=q+1$
and ${\rm dim}\ \pi'_{w,a}=2.$
\\2.\ The following isomorphism
as a $\mathbf{G}$-representation holds
\[
W^w_a \simeq \pi_{w,a}^{\vee} \otimes \rho_{w,a}
\otimes \pi'_{w,a}.
\]
\end{proposition}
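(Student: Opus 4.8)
The plan is to prove the two assertions of Proposition \ref{dek} by first identifying the three induced representations individually, and then assembling the tensor product decomposition of $W^w_a$ directly from the explicit $\mathbf{G}$-action on the basis $\{e^w_{\zeta,y_0,j,\psi}\}$ computed in (\ref{zzg1}), (\ref{dx1}), (\ref{ix1}) (and their non-split analogues). For part 1, the irreducibility of $\pi_{w,a}$ and of $\pi'_{w,a}$, and the failure of irreducibility for $\rho_{w,a}$, all follow from Mackey's irreducibility criterion applied to the respective pairs of groups $(\overline{\mathcal{O}}^{\times}_{E_1}\overline{U}^1_{\mathfrak{U}}, G_2^F)$, $(\mathcal{O}^{\times}_{E_1}, E_1^\times)$ (via $\mathbf{a}_{E_1}$, so $\pi'_{w,a}$ is just an induced character from an index-$2$ subgroup), and $(\overline{\mathcal{O}}^{\times}_{E_1}\overline{U}^1_D, \mathcal{O}^{\times}_3)$; here I would invoke Lemma \ref{keyr}, which records that these characters restrict correctly to the unipotent radicals $\overline{U}^1_{\mathfrak{U}}$, $\overline{U}^1_D$, $\overline{U}^1_{E_1}$ via the appropriate trace/reduced-trace pairings, exactly as in the Bushnell--Kutzko style computations of \cite[19.2, 19.3, 56.5]{BH}. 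The dimension count $[G_2^F:\overline{\mathcal{O}}^{\times}_{E_1}\overline{U}^1_{\mathfrak{U}}]=q^2-1$, $[\mathcal{O}^{\times}_3:\overline{\mathcal{O}}^{\times}_{E_1}\overline{U}^1_D]=q+1$, $[E_1^\times$-index$]=2$ is already in the excerpt, giving the stated dimensions; $\rho_{w,a}$ is reducible because the relevant intertwining set is larger (the character $\Lambda^D_{w_1,a}$ has a nontrivial $\mathcal{O}^{\times}_3$-conjugacy stabilizer beyond $\overline{\mathcal{O}}^{\times}_{E_1}\overline{U}^1_D$, coming from the full $\mu_{q+1}$ worth of ``twists'' visible in the $y_0$-index).

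For part 2, the strategy is to exhibit an explicit $\mathbf{G}$-equivariant isomorphism. First I would restrict $W^w_a$ to the subgroup $H_0:=\overline{\mathcal{O}}^{\times}_{E_1}\overline{U}^1_{\mathfrak{U}}\times \overline{\mathcal{O}}^{\times}_{E_1}\overline{U}^1_D\times I_{E_1}$ and read off, from (\ref{xz1}), (\ref{dx1}) and (\ref{ix1}), that each line $W^w_{[(\zeta',y'_0)],j,\psi_{\zeta_1^{-1}}}$ is an $H_0$-eigenline; the point is that on such a line the three factors act respectively through $\tilde{w}'_2\Lambda_{w_1,a}$, $\tilde{w}^D_2\Lambda^D_{w_1,a}$ and $(\tilde{w}'_2\Lambda'_{w_1,a})\circ\mathbf{a}_{E_1}$ up to the book-keeping characters $\psi_{\zeta_1^{-1}}$ that depend on the indices $(\zeta',y'_0,j)$. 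Next I would analyze how the three ``big'' groups permute these lines: $G_2^F$ acts transitively on the $\zeta_1\in\mathbb{F}_q^\times$ index (as noted after (\ref{zzg1})), $\mathcal{O}^\times_3$ acts on the $j\in\mathbb{P}^1(\mathbb{F}_q)$ index and on the $[(\zeta',y'_0)]$ index, and $I_F$ moves the $[(\zeta',y'_0)]$ index via $\zeta_0(\sigma)$; together with the normalizing characters these actions are precisely those of the induced representations $\pi_{w,a}^\vee$, $\rho_{w,a}$, $\pi'_{w,a}$ on the respective coordinates. Since $\dim W^w_a = 2(q+1)(q^2-1) = (q^2-1)\cdot(q+1)\cdot 2 = \dim\pi_{w,a}^\vee\cdot\dim\rho_{w,a}\cdot\dim\pi'_{w,a}$, matching the permutation actions and the eigencharacters on a single line $W^w_{[(\zeta,y_0)],j_0,\psi}$ suffices to produce the asserted $\mathbf{G}$-isomorphism $W^w_a\simeq \pi_{w,a}^\vee\otimes\rho_{w,a}\otimes\pi'_{w,a}$.

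Concretely, I would fix the base line $e^w_{\zeta,y_0,j_0,\psi}$ and define the isomorphism by sending it to $v^\vee\otimes v'\otimes v''$ for chosen eigenvectors in the three induced models, then extend $\mathbf{G}$-equivariantly; well-definedness reduces to checking that the stabilizer in $\mathbf{G}$ of this line acts by the same character on both sides, which is exactly the content of combining (\ref{xz1}), (\ref{dx1}), (\ref{ix1}) with Lemma \ref{keyr}. The transitivity statements guarantee the map is surjective, hence an isomorphism by the dimension equality. The main obstacle I anticipate is purely organizational rather than conceptual: keeping straight the three independent index sets ($\zeta_1$ moved by $G_2^F$, $j$ and part of $[(\zeta',y'_0)]$ moved by $\mathcal{O}^\times_3$, the rest of $[(\zeta',y'_0)]$ moved by $I_F$) and verifying that the ``cocycle'' factors $c_{\psi,b}$, $c_{\psi,c_0}$ appearing in (\ref{zzg1}) and the inertia action cancel correctly — here Lemma \ref{el1}.2, in particular $c_{\psi,-1}=1$, is what makes the $\mu_{2(q^2-1)}$ versus $\mu_{q+1}$ discrepancy harmless. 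Once the combinatorics of the indices is set up cleanly, each of the three equivariance checks is a short direct computation referring back to Propositions \ref{zg1}, \ref{op1} and Corollary \ref{zi1}.
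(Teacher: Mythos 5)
Your plan is essentially the paper's own proof: Mackey's criterion (together with Lemma \ref{keyr}) for the three inductions in part 1, and for part 2 the identification of a single line $W^w_{[(\zeta,y_0)],j,\psi}$ as an eigenline for its stabilizer $\mathbf{H}_1=\overline{\mathcal{O}}^{\times}_{E_1}\overline{U}^1_{\mathfrak{U}}\times\overline{\mathcal{O}}^{\times}_{E_1}\overline{U}^1_{D}\times I_{E_1}$, followed by transitivity of $\mathbf{G}$ on the set of lines and the dimension count, so that $W^w_a$ is the induction of the product character from $\mathbf{H}_1$ and hence the external tensor product $\pi_{w,a}^{\vee}\otimes\rho_{w,a}\otimes\pi'_{w,a}$. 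Your remark that the constants $c_{\psi,b}$ are harmless because one only needs the character on the stabilizer (where, by Lemma \ref{el1}.2, they disappear) is also how the paper argues.

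Two points need repair, although neither changes the architecture. First, your index bookkeeping is off: by Proposition \ref{zd0} and the construction of $W'$ via (\ref{ind}), $\mathcal{O}^{\times}_3$ acts \emph{trivially} on $j\in\mathbb{P}^1(\mathbb{F}_q)$; it is $G_2^F$ that permutes $j$ transitively (with stabilizer $\overline{\mathfrak{U}}^{\times}$) in addition to moving $\zeta_1\in\mathbb{F}^{\times}_q$ as after (\ref{zzg1}), and this is exactly why the $G_2^F$-orbit of a line has size $(q+1)(q-1)=q^2-1=\dim\pi_{w,a}$, while $\mathcal{O}^{\times}_3$ and $I_F$ together account only for the $2(q+1)$ classes $[(\zeta',y'_0)]\in\mathcal{K}_a$. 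As written, your attribution would give a $G_2^F$-orbit of size $q-1$ and an $\mathcal{O}^{\times}_3$-orbit too large for $\dim\rho_{w,a}=q+1$, so the step ``matching the permutation actions with the three induced representations'' would not close; once the attribution is corrected, the argument via ${\rm Ind}_{\mathbf{H}_1}^{\mathbf{G}}$ goes through as in the paper. Second, your justification that $\rho_{w,a}$ is reducible is only a heuristic (``a $\mu_{q+1}$ worth of twists''); the paper's proof consists in choosing a concrete $s\in\overline{\mathcal{O}}^{\times}_{E}\setminus\overline{\mathcal{O}}^{\times}_{E_1}\overline{U}^1_{D}$ (note that $\overline{\mathcal{O}}^{\times}_{E_1}\overline{U}^1_{D}$ is normal in $\mathcal{O}^{\times}_3$, being the preimage of $\mathbb{F}^{\times}_q$) and verifying the invariance $\tilde{w}^s=\tilde{w}$ before applying Mackey. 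This verification is the genuinely nontrivial point of that assertion, since conjugation by such an $s$ a priori rescales the $\varphi$-component of elements of $\overline{U}^1_D$ by $\bar{s}^{1-q}\in\mu_{q+1}$ and thus could move the character $\Lambda^D_{w_1,a}$; your write-up must carry out this computation explicitly rather than assert the larger intertwining set.
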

\begin{proof}
We set $\xi:=(\tilde{w}'_2 \Lambda'_{w_1,a})
\circ \mathbf{a}_{E_1}.$
Let $\tau \neq 1 \in {\rm Gal}({E_1}/F).$
Then, we easily check $\xi^{\tau} \neq \xi.$
Hence, $\pi'_{w,a}$ is irreducible.
By Mackey's irreducibility 
criterion in 
\cite[Proposition 7.23]{Se},
we check that $\pi_{w,a}$ 
is irreducible.

We show that $\rho_{w,a}$
is not irreducible.
We consider the character (\ref{ch2}).
For simplicity, we set
$\tilde{w}:=\tilde{w}_2^D\Lambda_{w_1,a}^D$.
The group  
$\overline{\mathcal{O}}^{\times}_{E}$
is contained in the stabilizer
 of $\overline{\mathcal{O}}_{E_1}^{\times}
\overline{U}^1_{D}$ in 
$\mathcal{O}_3^{\times}$.
We choose an element 
$s \in \overline{\mathcal{O}}^{\times}_E
\backslash \overline{\mathcal{O}}_{E_1}^{\times}
\overline{U}^1_{D}.$ 
Then, we have $\tilde{w}^s(x)
:=\tilde{w}(s^{-1}xs)=\tilde{w}(x)$ for $x \in
\overline{\mathcal{O}}_{E_1}^{\times}
\overline{U}^1_{D}$.
Hence, again by Mackey's irreducibility criterion,
$\rho_{w,a}$ is not irreducible.

We prove the second assertion. 
We consider the subspace 
$\tilde{W}:=
W^w_{[(\zeta,y_0)],j,\psi} \subset W^w_a$.
The stabilizer of $\tilde{W}$ in $\mathbf{G}$
is equal to 
$\mathbf{H}_1:=\overline{\mathcal{O}}^{\times}_{E_1}
\overline{U}^1_{\mathfrak{U}} \times 
\overline{\mathcal{O}}^{\times}_{E_1}\overline{U}^1_D \times 
I_{E_1}$.
Furthermore, we have 
an isomorphism 
by (\ref{xz1}), (\ref{dx1})
 and (\ref{ix1})
\[
\tilde{W} \simeq (\tilde{w}_2 \Lambda_{w_1,a})
\otimes 
(\tilde{w}_2^D \Lambda_{w_1,a}^D)
\otimes ((\tilde{w}'_2 \Lambda'_{w_1,a})^{\tau^{-y_0^{q^2-1}}}
 \circ \mathbf{a}_{E_1})
\]
as a $\mathbf{H}_1$-representation.
On the other hand, by (\ref{ind}), 
we easily check that $W^w_a$ is a direct sum   
\[
\bigoplus_{[(\zeta,y_0)] \in \mathcal{K}_a}
\bigoplus_{j \in \mathbb{P}^1(\mathbb{F}_q)}
\bigoplus_{\zeta_1 \in \mathbb{F}^{\times}_q}
W^w_{[(\zeta_1\zeta,y_0)],j,\psi_{\zeta_1^{-1}}}
\]
of subspaces 
permuted transitively by the action of $\mathbf{G}.$
Hence, the required assertion follows.
\end{proof}
\begin{corollary}\label{dek2}
Let the notation be as in Proposition \ref{dek}.
Then, we have the following isomorphism
\[
W' \simeq \bigoplus_{(w,a) \in \mathcal{T}_1}
 \pi_{w,a}^{\vee} \otimes \rho_{w,a}
\otimes \pi'_{w,a}
\]
as a $\mathbf{G}$-representation.
\end{corollary}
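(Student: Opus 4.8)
The plan is to deduce Corollary \ref{dek2} immediately from Proposition \ref{dek} together with the decomposition (\ref{vb2}). Recall that in (\ref{vb2}) we established the isomorphism $W' \simeq \bigoplus_{(a,w) \in \mathcal{T}_1} W^w_a$ of $\overline{\mathbb{Q}}_l$-vector spaces, and we observed that each summand $W^w_a$ is a $\mathbf{G}$-subrepresentation. Hence the displayed isomorphism in (\ref{vb2}) is in fact an isomorphism of $\mathbf{G}$-representations. First I would invoke this to reduce the statement to computing each $W^w_a$ as a $\mathbf{G}$-representation.

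Second, I would apply Proposition \ref{dek}.2, which gives $W^w_a \simeq \pi_{w,a}^{\vee} \otimes \rho_{w,a} \otimes \pi'_{w,a}$ as a $\mathbf{G}$-representation, for each $(w,a) \in \mathcal{T}_1$. Substituting this into the direct sum decomposition yields
\[
W' \simeq \bigoplus_{(w,a) \in \mathcal{T}_1} \pi_{w,a}^{\vee} \otimes \rho_{w,a} \otimes \pi'_{w,a}
\]
as a $\mathbf{G}$-representation, which is exactly the claim. The only point requiring a word of care is that the index set $\mathcal{T}_1 = \mathbb{F}^{\times}_q \times ((\mathcal{O}_F/\pi^2)^{\times})^{\vee}$ used in (\ref{vb2}) matches the one in the statement of Corollary \ref{dek2}, which is immediate from the definitions.

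There is essentially no obstacle here: the corollary is a formal consequence of results already proven. The genuine work has all been done in establishing (\ref{thq}), the $\mathbf{G}$-stability of the subspaces $W^w_a$, the explicit action formulas (\ref{zzg1}), (\ref{dx1}) and the inertia formula preceding (\ref{ix1}), and finally Proposition \ref{dek}; the main subtlety in that chain was the tensor-factorization of $W^w_a$ via the stabilizer subgroup $\mathbf{H}_1 = \overline{\mathcal{O}}^{\times}_{E_1}\overline{U}^1_{\mathfrak{U}} \times \overline{\mathcal{O}}^{\times}_{E_1}\overline{U}^1_D \times I_{E_1}$ and the transitivity of the $\mathbf{G}$-action on the constituent lines, both of which are handled in the proof of Proposition \ref{dek}. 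Thus the proof of the corollary is a one-line assembly: combine (\ref{vb2}) with Proposition \ref{dek}.2. I would simply write that sentence and refer back to those two results.
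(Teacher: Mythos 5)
Your proposal is correct and is exactly the paper's own argument: the paper's proof of Corollary \ref{dek2} is precisely to combine the $\mathbf{G}$-equivariant decomposition (\ref{vb2}) with Proposition \ref{dek}.2. Nothing further is needed.
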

\begin{proof}
This follows from Proposition \ref{dek} 
and (\ref{vb2}) immediately.
\end{proof}


\subsection{Conclusion}\label{con}
As mentioned in the 
introduction, we 
investigate a relationship
between our $\mathbf{G}$-representations 
$W$ and $W'$, and the 
local Jacquet-Langlands 
correspondence and the 
local Langlands correspondence.
In the following, 
we often quote several facts
from the book \cite{BH}.
First,
we briefly recall
 the $\ell$-adic local 
Langlands correspondence
and the local Jacquet-Langlands 
correspondence 
for ${\rm GL}_2.$
For example, see 
\cite[Sections 34 and 35]{BH}.

Let $\mathbf{\mathcal{G}}_2(F,\overline{\mathbb{Q}}_l)$
denote the set of equivalence classes of $2$-dimensional, 
semisimple, Deligne representations of the Weil group $W_F,$
over $\overline{\mathbb{Q}}_l.$
See \cite[p.200 and p.221]{BH} for more details.
Let $\mathbf{\mathcal{A}}_2(F,\overline{\mathbb{Q}}_l)$
denote the set of equivalence classes of irreducible 
smooth representations of ${\rm GL}_2(F)$
over $\overline{\mathbb{Q}}_l$.
See \cite[p.212]{BH} for more details.
 
We recall the $\ell$-adic local 
Langlands correspondence in 
\cite[p.222 and p.223]{BH}.
There is a unique bijection
\[
{\rm LL}_{\ell}\ :\ 
\mathbf{\mathcal{G}}_2(F,\overline{\mathbb{Q}}_l)
\longrightarrow 
\mathbf{\mathcal{A}}_2(F,\overline{\mathbb{Q}}_l)
\]
which commutes with automorphisms 
of $\overline{\mathbb{Q}}_l.$
Furthermore, for any isomorphism 
$\iota:\overline{\mathbb{Q}}_l \simeq \mathbb{C}$, 
the correspondence ${\rm LL}_{\ell}$
satisfies
\[
L(\chi{\rm LL}_{\ell}(\sigma)
^{\iota},s)=L(\chi \sigma^{\iota},s-\frac{1}{2})
\]
\[
\epsilon(\chi{\rm LL}_{\ell}
(\sigma)^{\iota},s,\psi)
=\epsilon(\chi \sigma^{\iota},s-\frac{1}{2},\psi)
\]
for all $\sigma \in \mathbf{\mathcal{G}}_2
(F,\overline{\mathbb{Q}}_l)$, all $\chi \in 
(F^{\times})^{\vee}$ and all $\psi \in F^{\vee}.$
See \cite[Section 6]{BH} for L-function and local constants.

Let $\mathbf{\mathcal{A}}_2^{\diamondsuit}
(F,\overline{\mathbb{Q}}_l)$
denote the set of equivalence classes of irreducible 
smooth representations of 
${\rm GL}_2(F)$ which are
 essentially square-integrable, 
 over $\overline{\mathbb{Q}}_l$.
See \cite[17.4 and 56.1]{BH}. 
We write $\mathbf{\mathcal{A}}_1
(D,\overline{\mathbb{Q}}_l)$
for the set of equivalence classes 
of irreducible smooth representations 
of $D^{\times}$ over $\overline{\mathbb{Q}}_l$.
The local Jacquet-Langlands correspondence
is a bijection
\[
{\rm JL}\ :\ 
\mathbf{\mathcal{A}}_2^{\diamondsuit}
(F,\overline{\mathbb{Q}}_l) \longrightarrow 
\mathbf{\mathcal{A}}_1(D,\overline{\mathbb{Q}}_l)
\]
satisfying the appropriate trace identity.
See \cite[p.334 and 56.9]{BH} 
and \cite{JL}
for more details.

\paragraph{unramified case\ :}
We consider the $\mathbf{G}$-representation
$W$ in subsection 
\ref{acc1}.
As in (\ref{i_1}), 
we have the following isomorphism
\[
W \simeq \bigoplus_{w \in \Gamma_{\rm stp}^{\vee}}
\pi_w^{\vee} \otimes \rho_w \otimes \chi \circ \mathbf{a}_{E}
\]
as a $\mathbf{G}$-representation.

Let $\chi:\mathbb{Z} \to \overline{\mathbb{Q}}_l^{\times}$
be a character. 
In the following, 
we identify $F^{\times} 
\simeq \mathbb{Z} \times 
\mathcal{O}^{\times}_F$ by 
$x=\pi^{v(x)}u 
\mapsto (v(x),u).$
We also 
identify $E^{\times} \simeq \mathbb{Z} 
\times \mathcal{O}^{\times}_E$ 
in the same manner as 
$F^{\times}$.
Then, we define
 an irreducible and 
  cuspidal representation 
 $\pi_{w,\chi}$
of ${\rm GL}_2(F)$ of level $1$ 
in the following.
The level means the normalized level 
in \cite[p.91]{BH}.
First, we also write $\pi_w$ for the inflation 
to ${\rm GL}_2(\mathcal{O}_F)$ of
 the ${G}_2^F$-representation
$\pi_w.$ We extend $\pi_w$ to 
a representation of 
$F^{\times}
{\rm GL}_2(\mathcal{O}_F)$
by using $\chi,$ which we denote 
by $\pi_{w} \otimes 
\chi.$
Then, we define
$\pi_{w,\chi}:=
{\rm{c-Ind}}_{F^{\times}
{\rm GL}_2(\mathcal{O}_F)}
^{{\rm GL}_2(F)}(\pi_w 
\otimes \chi)$.
Then, $\pi_{w,\chi}$ 
is an irreducible 
and cuspidal 
representation 
of ${\rm GL}_2(F)$
as in \cite[5.4]{AOPS}.
We are able to 
construct $\pi_{w,\chi}$ 
in another way.
Recall that we fixed 
the embedding 
$\Gamma \hookrightarrow G_2^F$ 
in (\ref{ed1}).
Let $U^n_{\mathfrak{M}}$ be a 
compact open normal 
subgroup $1_2+\pi^n
{\rm M}_2(\mathcal{O}_F) 
\subset {\rm GL}_2(\mathcal{O}_F).$
We inflate
 a character $w$ (\ref{cha}) of $\Gamma N$ to 
 $\mathcal{O}^{\times}_EU^1_{\mathfrak{M}} 
 \subset {\rm GL}_2(F)$, for which 
we write $\tilde{w}$.
Then, we extend $\tilde{w}$ 
to a character $\tilde{w}_{\chi}$ of 
$J^1_{\mathfrak{M}}:
=E^{\times}U^1_{\mathfrak{M}}$ by using $\chi.$
We consider 
$\pi^0_{w,\chi}:=
{\rm c}-{\rm Ind}_{J^1_{\mathfrak{M}}}
^{{\rm GL}_2(F)}\tilde{w}_{\chi}.$
We easily check that 
$\pi^0_{w,\chi}$ is isomorphic to
$\pi_{w,\chi}.$
The fact that the representation 
$\pi^0_{w,\chi} \simeq \pi_{w,\chi}$ 
is an irreducible
 and cuspidal representation 
is also verified 
by \cite[Theorem 15.3]{BH}.
It is not difficult to 
check that 
the representation $\pi_{w,\chi}$
contains an 
unramified simple stratum. 
This follows from 
Definition \ref{cus} 
almost immediately.
Hence, 
by \cite[Lemma 20.3]{BH}, 
the representation 
$\pi_{w,\chi}$ 
is an {\it unramified}
 irreducible cuspidal 
 representation 
in a sense of \cite[20.1]{BH}.
(i.e. there exists an unramified character $\phi \neq 1$
of $F^{\times}$ such that 
$\pi_{w,\chi} 
\phi \simeq \pi_{w,\chi}.$)

Secondly, we define a smooth representation
$\rho_{w,\chi}$ of 
$D^{\times}$ in the following.
We have the 
reduction map $\mathcal{O}^{\times}_D
\to \mathcal{O}^{\times}_3.$
If this map is restricted to 
a subgroup $\mathcal{O}^{\times}_EU_D^1$, 
it induces a surjection
$\mathcal{O}^{\times}_EU_D^1
 \to \mathcal{O}^{\times}_3$
First, we write $\rho_w$ 
for the inflation
to $\mathcal{O}_E^{\times}U^1_D$
of $\rho_w.$
Then, we extend $\rho_w$ to 
 a representation 
of $J_D^1:=E^{\times}U_D^1$
by using $\chi$, 
which we denote 
by $\rho_w \otimes \chi.$
Then, we define a $D^{\times}$-representation 
$\rho_{w,\chi}$
by 
\[
\rho_{w,\chi}:=
{\rm c-Ind}_{J_D^1}^{D^{\times}}(\rho_w \otimes \chi).
\]

Thirdly, we define a $2$-dimensional
 representation
  $\pi'_{w,\chi}$ of $W_F$
in the following.
Let $w_{\chi}$ be 
a character of $E^{\times}$ 
defined by 
$\chi$ and $w$.
Moreover, let $\Delta$ be 
the 
unramified character of $E^{\times}$
of order $2$ 
as in \cite[34.4]{BH}.
We write $v_F:W_F \to \mathbb{Z}$
for the canonical map taking 
a geometric Frobenius element to $1$.
We set $||x||:=q^{-v_F(x)}$ for $x \in W_F.$
We define $\pi'_{w,\chi}$
 by 
\[
\pi'_{w,\chi}:=||\cdot||^{-\frac{1}{2}}
{\rm Ind}_{E/F}((\Delta w_{\chi}) 
\circ \mathbf{a}_E).
\]
See \cite[p.219 and p.222]{BH} for this normalization.

\paragraph{ramified case\ :}
We consider the $\mathbf{G}$-representation
 $W'$ in subsection \ref{acu2}.
Now, we recall the 
following isomorphism
\[
W' \simeq \bigoplus_{(w,a) \in \mathcal{T}_1}
 \pi_{w,a}^{\vee} \otimes \rho_{w,a}
\otimes \pi'_{w,a}
\]
as a $\mathbf{G}$-representation 
in Corollary \ref{dek2}.

As in the unramified case, we 
define representations of 
${\rm GL}_2(F),$ $D^{\times}$
 and $W_F$ one by one.
 We choose a character $\mathbb{Z} \to 
 \overline{\mathbb{Q}}^{\times}_l.$
 First, we define a 
 ramified cuspidal representation of 
 ${\rm GL}_2(F)$ of level $1/2$.
We inflate the character $\tilde{w}_2:G_2^F \to 
\overline{\mathbb{Q}}^{\times}_l$ 
to ${\rm GL}_2(\mathcal{O}_F)$, and extend it 
to a character of ${\rm GL}_2(F)$
by using $\chi,$ 
 which we denote by $w_{2,\chi}.$ 
We inflate the character $\Lambda_{w_1,a}:
\overline{\mathcal{O}}^{\times}_{E_1}
\overline{U}^1_{\mathfrak{U}} \to 
\overline{\mathbb{Q}}^{\times}_l$
to $\mathcal{O}^{\times}_{E_1}U^1_{\mathfrak{U}} \subset
 {\rm GL}_2(\mathcal{O}_F),$ which we denote 
 by the same letter $\Lambda_{w_1,a}.$
 Then, we extend it to a character 
 of $J^1_{\mathfrak{U}}
 :=E_1^{\times}U^1_{\mathfrak{U}}$
 by using $\chi$, 
 which we denote by $\Lambda_{w_1,a,\chi}.$
We set as follows
\[
\pi_{w_1,a,\chi}:={\rm c-Ind}
^{{\rm GL}_2(F)}
_{J^1_{\mathfrak{U}}}\Lambda_{w_1,a,\chi}.
\]
Recall the definition of 
the element 
$\alpha 
\in \mathfrak{U}$ in (\ref{aa1}).
Since $\frac{\alpha}{\pi} \in E_1$
is minimal in a sense of 
\cite[Definition 13.4]{BH}, 
a triple $(\mathfrak{U},1,\frac{\alpha}{\pi})$
is a ramified simple stratum by 
\cite[Proposition 13.5]{BH}.
See \cite[p.96]{BH} for a definition of 
ramified simple stratum.
Since $\pi_{w_1,a,\chi}$ contains 
$(\mathfrak{U},1,\frac{\alpha}{\pi}),$
 $\pi_{w_1,a,\chi}$ is an 
irreducible and cuspidal representation 
of ${\rm GL}_2(F)$ of level $1/2$.
Furthermore, $\pi_{w_1,a,\chi}$ is minimal i.e.
 $l(\pi_{w_1,a,\chi}) \leq l(\pi_{w_1,a,\chi}\phi')$
 for any character $\phi'$ of $F^{\times}$,
 where $l(\cdot)$ means the normalized level.
We have $l({w}_{2,\chi}
\pi_{w_1,a,\chi})=1/2.$

Secondly, we define a smooth 
$D^{\times}$-representation.
Let $\chi'$ denote the character of $\mathbb{Z}$
defined by $n \mapsto (-1)^n \chi(n).$
We inflate the character 
$\tilde{w}^D_2:\mathcal{O}_3^{\times} \to 
\overline{\mathbb{Q}}^{\times}_l$
to $\mathcal{O}^{\times}_D$, and extend it to
a character of $D^{\times}$ by using $\chi$,
 which we denote by $w^D_{2,\chi}.$
We inflate a character 
$\Lambda_{w_1,a}^D:
\overline{\mathcal{O}}^{\times}_{E_1}
\overline{U}^1_{D} \to 
\overline{\mathbb{Q}}^{\times}_l$
to ${\mathcal{O}}^{\times}_{E_1}
{U}^1_{D}$, which we denote by 
the same letter 
$\Lambda_{w_1,a}^D.$
Then, we extend it to a character of
 a group $J'^1_D:=E_1^{\times}U_D^1 
 \subset D^{\times}$
 by using $\chi'$, which we denote by 
 $\Lambda_{w_1,a,\chi}^D.$ See \cite[56.5]{BH}.
Now, we define 
\[
\rho_{w_1,a,\chi}:=c-{\rm Ind}^{D^{\times}}
_{J'^1_{D}}\Lambda_{w_1,a,\chi}^D.
\]

Thirdly, we define a $2$-dimensional 
$W_F$-representation.
We inflate the character 
$w_2:(\mathcal{O}_F/\pi^2)^{\times} \to 
\overline{\mathbb{Q}}^{\times}_l$ 
to $\mathcal{O}^{\times}_F$,
and extend it to a character $F^{\times}$
by using $\chi$, which we denote by 
$w'_{2,\chi}$.
We inflate a character 
$\Lambda'_{w_1,a}:(\mathcal{O}^{\times}_{E_1}/\pi)^{\times} 
\to 
\overline{\mathbb{Q}}^{\times}_l$
to $\mathcal{O}^{\times}_{E_1},$ and extend it to
a character $E_1^{\times}$ by using $\chi$, which we denote by
$\Lambda'_{w_1,a,\chi}.$
By the local class field theory, 
we obtain the character 
$\Lambda'_{w_1,a,\chi} \circ \mathbf{a}_{E_1}:
W^{\rm ab}_{E_1} \simeq E_1^{\times}
 \to \overline{\mathbb{Q}}^{\times}_l.$
For a pair $(E_1/F,\Lambda'_{w_1,a,\chi}),$
there exists a character $\Delta$ of
  $E^{\times}_1$ of level zero, which is defined 
  in \cite[34.4]{BH}.
We define 
\[
\pi'_{w_1,a,\chi}:=
||\cdot||^{-\frac{1}{2}}{\rm Ind}_{E_1/F}
((\Delta\Lambda'_{w_1,a,\chi}) \circ \mathbf{a}_{E_1}).
\]
Then, we have the following main theorem in this paper.
\begin{theorem}
Let the notation be as above.
\\1.\ Then, we have the following,
 for the unramified case, 
\[
\rho_{w,\chi}={\rm JL}(\pi_{w,\chi}),\ 
\pi_{w,\chi}=
{\rm LL}_{\ell}(\pi'_{w,\chi}).
\]
\\2.\ We consider the ramified case.
We set
\[
\pi_{w,a,\chi}:=w_{2,\chi}\pi_{w_1,a,\chi},\ 
\rho_{w,a,\chi}:=w^{D}_{2,\chi}\rho_{w_1,a,\chi},\ 
\pi'_{w,a,\chi}:=w'_{2,\chi}\pi'_{w_1,a,\chi}.
\]
Then, we have the following
\[
\rho_{w,a,\chi}
={\rm JL}(\pi_{w,a,\chi}),\ 
\pi_{w,a}={\rm LL}_{\ell}(\pi'_{w,a,\chi}).
\]
\end{theorem}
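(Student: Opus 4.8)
The plan is to verify the two correspondences by matching the explicit data on both sides. The key point is that all the representations entering the statement have been constructed in Proposition \ref{gey} (and \ref{i_1}), Corollary \ref{lap}, Proposition \ref{dek} and Corollary \ref{dek2} as very explicit induced representations from the same kind of group-theoretic input (characters and small-dimensional representations of $\Gamma N$, $\mathcal{O}^{\times}_E U_D^1$, $E^{\times}$ in the unramified case, and $J^1_{\mathfrak{U}}$, $J'^1_D$, $E_1^{\times}$ in the ramified case). Hence the verification reduces to comparing these inducing data against the standard constructions of cuspidal representations and their Jacquet--Langlands / Langlands transfers in \cite{BH}. I would treat the unramified and ramified cases separately but in exactly parallel fashion.

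\textbf{Unramified case.} First I would recall from \cite[Sections 15, 19, 20]{BH} that the unramified cuspidal representation of ${\rm GL}_2(F)$ attached to the admissible pair $(E/F, w_{\chi})$ has the compactly-induced description $\mathrm{c}\text{-}\mathrm{Ind}$ from $E^{\times}U^1_{\mathfrak{M}}$ of a character extending $w$ on $\mathcal{O}^{\times}_E U^1_{\mathfrak{M}}$; this matches the two descriptions $\pi_{w,\chi}\simeq\pi^0_{w,\chi}$ given in \ref{con}, using that the strongly primitive $w$ cuts out a simple stratum (Definition \ref{cus}). Then ${\rm JL}(\pi_{w,\chi})=\rho_{w,\chi}$ is obtained by comparing the two compact inductions: both $\pi_{w,\chi}$ and $\rho_{w,\chi}$ are induced from the "same" character twisted by $\chi$ on $E^{\times}$ times a pro-$p$ part, and \cite[56.9]{BH} (the explicit Jacquet--Langlands for cuspidals via admissible pairs) identifies the $D^{\times}$-side transfer precisely with $\mathrm{c}\text{-}\mathrm{Ind}_{E^{\times}U^1_D}(\rho_w\otimes\chi)$; the only thing to check is that the $\rho_w$ appearing here, characterized in Corollary \ref{lap} by ${\rm dim}\,\rho_w=q$, $\rho_w|_U\simeq\psi^{\oplus q}$ and ${\rm Tr}\,\rho_w(\zeta)=-\chi(\bar\zeta)$, is exactly the restriction to $\mathcal{O}^{\times}_3$ of the $D^{\times}$-representation dictated by the admissible-pair recipe --- and this is precisely the content of \cite[Lemma 16.2]{BH} used in the proof of Corollary \ref{lap}. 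For the Langlands side, $\pi'_{w,\chi}=||\cdot||^{-1/2}{\rm Ind}_{E/F}((\Delta w_{\chi})\circ\mathbf{a}_E)$ is a dihedral (automorphically induced) Galois representation, and ${\rm LL}_{\ell}$ on such representations is given by the classical theory of admissible pairs (\cite[34.4]{BH}): the tame character twist $\Delta$ is exactly the correction term in that dictionary, so ${\rm LL}_{\ell}(\pi'_{w,\chi})$ is the cuspidal attached to $(E/F,w_{\chi})$, i.e. $\pi_{w,\chi}$. One must check the action of $I_F$ on $W$ computed in Proposition \ref{gey} (the factor $w\circ\mathbf{a}_E$, refined in Corollary \ref{ine}) is compatible with this Weil-group normalization; this is a bookkeeping matter with the reciprocity map $\mathbf{a}_E={\rm Nr}_{E/F}\circ$ stuff and the half-twist $||\cdot||^{-1/2}$.

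\textbf{Ramified case.} The argument is formally identical but now the field is the ramified quadratic extension $E_1/F$ with uniformizer $\alpha$ (resp.\ $\alpha'$ in $D$). By Proposition \ref{dek}, $\pi_{w,a}^{\vee}$, $\rho_{w,a}$, $\pi'_{w,a}$ are induced from $\tilde w'_2\Lambda_{w_1,a}$ on $\overline{\mathcal{O}}^{\times}_{E_1}\overline{U}^1_{\mathfrak{U}}$, from $\tilde w^D_2\Lambda^D_{w_1,a}$ on $\overline{\mathcal{O}}^{\times}_{E_1}\overline{U}^1_D$, and from $(\tilde w'_2\Lambda'_{w_1,a})\circ\mathbf{a}_{E_1}$ on $I_{E_1}$; Lemma \ref{keyr} shows all three inducing characters restrict to the same character of $(\mathcal{O}_{E_1}/\pi)^{\times}$ and, on the pro-$p$ radicals, are given by $\psi\circ{\rm Tr}$ of multiplication by $\alpha/\pi$ (resp.\ $\alpha'/\pi$). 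This is exactly the data of a ramified simple stratum $(\mathfrak{U},1,\alpha/\pi)$ in the sense of \cite[13.4, 13.5]{BH}, and the pair $(E_1/F,\Lambda'_{w_1,a,\chi})$ is admissible. I would then quote: (i) \cite[Theorem 15.3]{BH} to see $\pi_{w_1,a,\chi}$ is the irreducible cuspidal of level $1/2$ attached to this stratum; (ii) \cite[56.5, 56.9]{BH} to identify its Jacquet--Langlands transfer with $\rho_{w_1,a,\chi}$ --- note the twist $\chi\mapsto\chi'$ with the sign $(-1)^n$ is precisely the normalization correction in \cite[56.5]{BH} coming from the reduced norm on $E_1^{\times}\subset D^{\times}$; (iii) \cite[34.4]{BH} for ${\rm LL}_{\ell}$ of the dihedral representation ${\rm Ind}_{E_1/F}((\Delta\Lambda'_{w_1,a,\chi})\circ\mathbf{a}_{E_1})$, where again $\Delta$ is the level-zero correction character. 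Finally the abelian twists by $w_{2,\chi}$, $w^D_{2,\chi}$, $w'_{2,\chi}$ (defined via ${\rm det}$, ${\rm Nrd}$, and $\mathbf{a}_F$ respectively) match under both ${\rm JL}$ and ${\rm LL}_{\ell}$ because these maps are equivariant for twisting by characters of $F^{\times}$ --- a standard property recorded in \cite[Sections 34, 35, 56]{BH}.

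\textbf{Main obstacle.} The calculations themselves (determining dimensions, checking irreducibility via Mackey) are already done in Propositions \ref{gey}, \ref{dek} and Corollaries \ref{lap}, \ref{dek2}. The genuinely delicate point is tracking the normalizations: matching the inertia action as computed geometrically in Section \ref{aci1} (via the reciprocity maps $\mathbf{a}_E$, $\mathbf{a}_{E_1}$ and the elements $\zeta(\sigma),\lambda(\sigma)$) with the Weil-group side of ${\rm LL}_{\ell}$, including the half-twist $||\cdot||^{-1/2}$, the sign/level-zero character $\Delta$, and the $\chi\leftrightarrow\chi'$ discrepancy on the division algebra side. Getting all of these twists to cancel correctly, and in particular confirming that the "$\Delta$" forced by our explicit cohomological computation is the very same tame character appearing in \cite[34.4]{BH}, is where the real care is needed; everything else is assembling citations from \cite{BH}.
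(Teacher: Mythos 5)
Your proposal is correct and takes essentially the same route as the paper: the paper's proof likewise deduces the unramified case from Corollary \ref{lap} together with the explicit description of ${\rm JL}$ in \cite[56.6]{BH} and of ${\rm LL}_{\ell}$ via admissible pairs \cite[p.219, p.223]{BH}, and the ramified case from Lemma \ref{keyr} combined with \cite[34, 35]{BH} for ${\rm LL}_{\ell}$ and \cite[56.5]{BH} for ${\rm JL}$. Your extra bookkeeping on the twists ($\Delta$, $||\cdot||^{-1/2}$, $\chi$ versus $\chi'$) is exactly the content the paper delegates to those references, so nothing is missing.
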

\begin{proof}
We prove the assertion $1.$
The first (resp.\ second ) equality follows 
from the description of ${\rm JL}$
(resp.\ ${\rm LL}_{\ell}$)
given in \cite[56.6]{BH} 
and Corollary \ref{lap} 
(resp.\ \cite[p.219 and p.223]{BH}).
The required 
assertion $2$ is proved 
by Lemma \ref{keyr} and 
\cite[34,35]{BH} for ${\rm LL}_{\ell}$, and 
\cite[56.5]{BH} for ${\rm JL}.$
\end{proof}

\noindent
Tetsushi Ito\\ 
Department of Mathematics, Faculty of Science,
Kyoto University, Kyoto 606-8502, Japan\\ 
tetsushi@math.kyoto-u.ac.jp\\ 

\noindent
Yoichi Mieda\\ 
Faculty of Mathematics,
Kyushu University, 744 Motooka, Nishi-ku, 
Fukuoka city, 
Fukuoka 819-0395, Japan\\ 
mieda@math.kyushu-u.ac.jp\\

\noindent
Takahiro Tsushima\\ 
Faculty of Mathematics,
Kyushu University, 744 Motooka, Nishi-ku, 
Fukuoka city, 
Fukuoka 819-0395, Japan\\
tsushima@math.kyushu-u.ac.jp\\

\end{document}